\documentclass[11pt, reqno, a4paper]{amsart}
\usepackage{fullpage} 
\usepackage{amssymb, amsmath, amscd}
\usepackage{amsthm}

\usepackage{verbatim}

\usepackage{color}

\usepackage{etex}
\usepackage{dsfont}
\usepackage[matrix, arrow, curve, frame, arc]{xy}
\usepackage{pgf}
\usepackage{tikz-cd}
\usetikzlibrary{arrows,shapes,automata,backgrounds,petri,calc}
\usepackage[square,sort,comma,numbers]{natbib}
 \usepackage{url}
 \usepackage{hyperref}
 \usepackage[shortlabels]{enumitem} %to index the enumeration

\newcommand{\tikzAngleOfLine}{\tikz@AngleOfLine}
\def\tikz@AngleOfLine(#1)(#2)#3{%
\pgfmathanglebetweenpoints{%
\pgfpointanchor{#1}{center}}{%
\pgfpointanchor{#2}{center}}
\pgfmathsetmacro{#3}{\pgfmathresult}%
}

\newcommand{\bS}{\mathbb{S}}

\newcommand{\bN}{\mathbb{N}}
\newcommand{\bM}{\mathbb{M}}
\newcommand{\bE}{\mathbb{E}}
\newcommand{\bA}{\mathbb{A}}

\newcommand{\bfQ}{{\bf Q}}
\newcommand{\bbQ}{\mathbb{Q}}

\newcommand{\bZ}{\mathbb{Z}}
\newcommand{\cB}{\mathcal{B}}
\newcommand{\bD}{\mathbb{D}}
\newcommand{\wt}{\widetilde}
\newcommand{\ba}{\bar{\alpha}}
\newcommand{\La}{\Lambda}
\newcommand{\vf}{\varphi}
\newcommand{\ve}{\varepsilon}

\newcommand{\cA}{\mathcal{A}}

\newcommand{\cK}{\mathcal{K}}

\newcommand{\rad}{\operatorname{rad}}
\newcommand{\soc}{\operatorname{soc}}
\renewcommand{\mod}{\operatorname{mod}}
\newcommand{\Hom}{\operatorname{Hom}} 
\newcommand{\End}{\operatorname{End}} 
\newcommand{\Adj}{\operatorname{Ad}}
 
\newcommand{\op}{\operatorname{op}} 
\newcommand{\proj}{\operatorname{proj}} 
\newcommand{\add}{\operatorname{add}} 
\newcommand{\ima}{\operatorname{im}} 
\newcommand{\cok}{\operatorname{coker}} 
\newcommand{\alg}{\operatorname{-alg}}

\def\vec#1{\left[\begin{smallmatrix}#1\end{smallmatrix}\right]}

\begin{document}

\newtheorem{defi}{Definition}[section]
\newtheorem{rem}[defi]{Remark}
\newtheorem{prop}[defi]{Proposition}
\newtheorem{ques}[defi]{Question}
\newtheorem{lemma}[defi]{Lemma}
\newtheorem{cor}[defi]{Corollary}
\newtheorem{thm}[defi]{Theorem}
\newtheorem{expl}[defi]{Example} 
\newtheorem*{prob}{Problem}
\newtheorem*{mthm}{Main Theorem}

\parindent0pt

\title[Small TSP4 algebras]{Tame symmetric algebras of period four with small Gabriel quivers} 

\author[K. Erdmann]{Karin Erdmann}
\address[Karin Erdmann]{Mathematical Institute, University of Oxford, ROQ, Oxford OX2 6GG, United Kingdom} 
\email{erdmann@maths.ox.ac.uk} 

\author[A. Jaworska-Pastuszak]{Alicja Jaworska-Pastuszak}
\address[Alicja Jaworska-Pastuszak]{Faculty of Mathematics and Computer Science, 
Nicolaus Copernicus University, Chopina 12/18, 87-100 Toru\'n, Poland}
\email{czogori@mat.umk.pl}

\author[A. Skowyrski]{Adam Skowyrski}
\address[Adam Skowyrski]{Faculty of Mathematics and Computer Science, 
Nicolaus Copernicus University, Chopina 12/18, 87-100 Toru\'n, Poland}
\email{skowyr@mat.umk.pl} 

\subjclass[2020]{Primary: 05E16, 16D50, 16G60, 16E05, 16E40, 16Z99, 18G10} 
\keywords{Tame Algebra, Symmetric Algebra, Periodic Algebra, Generalized quaternion type, Gabriel quiver, Mutation, 
Weighted Surface Algebra, Triangulation Quiver}

\begin{abstract} The tame symmetric algebras of period four, TSP4 algebras for short, form an important class 
of algebras, with interesting links to various branches of modern algebra. The study of this class has been   
recently developed in two major directions. The first embraces new classes of examples of TSP4 algebras, 
such as virtual mutations \cite{VM} and generalized weighted surface algebras \cite{GWSA}, both extending the known class 
of weighted surface algebras, investigated in \cite{WSA,WSAG,WSAC}. The second provides new classifications of TSP4 
algebras (based on known results for $2$-regular case \cite{AGQT}), which handle algebras, whose Gabriel quivers 
satisfy more general properties; see \cite{EHS3} for the classification in biregular case (the biserial case is 
a current work in progress). An ongoing project \cite{PS1,PS2} sheds a new light on the combinatorics of such 
algebras, introducing a new useful tool for their classification, called periodicity shadows. In 
this paper, we attack the problem of classification of TSP4 algebras, from another perspective, namely, 
we give a classification of all TSP4 algebras with not too big Gabriel quivers, i.e. having 
at most $5$ vertices -- but with no restrictions on their structure, as it was the case for previous 
classifications. The result is based on the application of the notion of periodicity shadow, 
which allows computing all possible Gabriel quivers of such algebras (for small number of vertices), 
and recent results on iterated mutations of algebras with periodic simple modules \cite{mut}. The main result 
shows that TSP4 algebras with Gabriel quivers having at most $5$ vertices are generalized weighted surface algebras, 
confirming a general conjecture in this case. 
\end{abstract}

\maketitle

\section{Introduction}\label{sec:1} 

Throughout the paper, by an {\it algebra} we mean a finite-dimensional, basic and connected $K$-algebra over a fixed 
algebraically closed field. Given an algebra $\La$, we denote by $\mod\La$ the category of finitely generated 
right $\La$-modules. It is well known that any algebra $\La$ admits a {\it presentation}, that is, we 
have an isomorphism $\La\cong KQ/I$, where $KQ$ is the path algebra of a (finite) quiver $Q=(Q_0,Q_1,s,t)$, and $I$ 
is an admissible ideal of $KQ$. Such quiver $Q$ (denoted by $Q_\La$) is unique up to permutation of vertices and 
it is called the {\it Gabriel quiver} of $\La$. For more details about basic notions we refer the reader to Section 
\ref{sec:2}. For the basic background on the representation theory of algebras we refer the reader to books 
\cite{ASS,SkY}. \medskip 

Our main concern is the classification of TSP4 algebras, which reduces to a description of possible 
presentations $\La=KQ/I$ of TSP4 algebras based on the three properties: tameness, symmetricity and periodicity 
(period $4$). \smallskip 

We recall that algebras split into two disjoint classes of {\it tame} and {\it wild} algebras, 
due to the well-known Tame and Wild Theorem \cite{Dro}, where the algebra $\La$ is tame if and only if in each 
dimension $d$, the indecomposable modules occur in a finite number of discrete and a finite number of one-parameter 
families. We will not use this definition in the sequel, however, tameness implies some useful restrictions on the 
Gabriel quiver of the algebra (i.e. it has not 'too much' arrows), discussed in the first part of Section \ref{sec:2}. \smallskip 

The symmetric algebras form a prominent class of algebras, including classical examples such as the blocks of 
group algebras of finite groups or the Hecke algebras of finite Coxeter groups. Recall that an algebra 
is {\it symmetric} if and only if there exists an associative non-degenerate symmetric $K$-bilinear form 
on $\La$, or equivalently, $\La$ and $D(\La)$ are isomorphic as $\La$-bimodules, where $D=\Hom_K(-,K)$ is 
the standard duality on $\mod\La$. We note that symmetric algebras are examples of the {\it self-injective} 
algebras, i.e. algebras with projective $\La$-modules being injective. In particular, if $\La=KQ/I$ is symmetric, then 
$P_i\simeq I_i$, for any $i\in Q_0$, where $P_i$ (respectively, $I_i$), for $i\in Q_0$, form a complete set 
of indecomposable projective (injective) $\La$-modules, indexed by vertices of the Gabriel quiver $Q=Q_\La$ 
of $\La$. \smallskip 

For an algebra $\La$, we denote by $\Omega:=\Omega_\La$, the {\it syzygy} operator, which assigns to a 
module $X$ in $\mod\La$, the kernel $\Omega(X):=\ker d$ of an arbitrary projective cover $d=d_0:P\to X$ of 
$X$ in $\mod\La$. Taking all iterations $\Omega^k(X)$ one obtains the projective resolution of $X$, 
determined by a sequence of projective covers $d_{k}:P_k\to\Omega^k(X)$, $k\geqslant 0$. If for some 
$d\geqslant 1$, we have $\Omega^d(X)\simeq X$, then we call $X$ a {\it periodic} module. The smallest 
such a number $d$ is called {\it period} of $X$, and a periodic module of period $d$ is simply called a 
{\it $d$-periodic} module. 

We note that an algebra $\La$ is called {\it periodic} (of period $d$), if it is $d$-periodic as a $\La$-bimodule, 
or equivalently, as a (right) module over its enveloping algebra $\La^e=\La^{\op}\otimes_K\La$. 
These two notions are related: periodicity of the algebra $\La$ implies periodicity of all non-projective 
modules in $\mod\La$. \smallskip 

The class of tame symmetric algebras of period $4$ (TSP4) is of special interest for us. Classical examples 
of such algebras come from the modular representation theory, especially the seminal classification of 
blocks of group algebras with dihedral, semidihedral or quaternion defect groups \cite{E90}. Motivated by known 
properties of blocks with generalized quaternion defect group Erdmann introduced the so called {\it algebras of 
quaternion type}, which are by definition, the tame symmetric algebras of infinite representation type with 
non-singular Cartan matrix, whose indecomposable non-projective modules are periodic of period dividing $4$. 
A natural extension of this class leads to the so called {\it algebras of generalized quaternion type} \cite{AGQT} 
(GQT algebras for short), that is, the tame symmetric algebras of infinite representation type with all simple modules 
periodic of period $4$. We note that every TSP4 algebra of infinite type is automatically GQT, and there are no 
known examples of GQT algebras which are not TSP4. It is conjectured that these two classes coincide and most of 
the results of this paper are true for all GQT (so TSP4) algebras, however at some point we need TSP4, because GQT is 
too weak assumption (see proofs of Propositions \ref{prop:6.1}-\ref{prop:6.3}). \smallskip   

The classification of TSP4 algebras in general is yet not so easy, but we can formulate the 
following problem, whose solution seems within reach.  

\begin{prob} Classify up to isomorphism all TSP4 (relatively, GQT) algebras $\La=KQ/I$ with $n=|Q_0|\leqslant 6$, 
i.e. with the set $Q_0$ having at most $6$ vertices. \end{prob} \medskip 

In this paper, we will provide a solution for $n\leqslant 5$. Cases $n\leqslant 3$, called very small, are treated 
separately, since they are rather direct, and require no new methods (see Sections \ref{subs:6.1}-\ref{subs:6.2}). 
For $n\geqslant 3$, the classification is accessible thanks to the notion of periodicity shadow \cite{PS1,PS2}, which 
is used to get a rigorous bound for the shapes of Gabriel quivers, and ultimately, to get a full description 
of those (for $n=3$ this can be still done by hand, but shadows give a direct answer quicker). For $n=3,4$ and 
$5$ computations performed in \cite{PS2} left us with relatively small number of cases to consider, and this is the 
first natural step. For $n=6$ the number is too big for a case-by-case inspection, 
and this case is planned as a next step, however needing more conceptual approach based on restrictions obtained 
in cases $n\leqslant 5$. \medskip 

Generally speaking, the notion of a periodicity shadow is strictly related to study the `shadow invariant', 
that is, the following function 
$$\bS_\bullet:K\alg(n)\to\bM^*_n,$$ 
between the set $K\alg(n)$ of all algebras $\La$ with $n$-vertex Gabriel quivers and the set $\bM^*_n$ of all 
skew-symmetric integer matrices, which associates to any algebra $\La\in K\alg(n)$ its {\it shadow}, i.e. the 
signed adjacency matrix 
$$\bS_\La=\Adj_{Q_\La}$$ 
of its Gabriel quiver, where for a quiver $Q$, the matrix $\Adj_Q=[a_{ij}]$ has entries $a_{ij}$ equal to the 
difference between the number of arrows $i\to j$ and the number of arrows $j\to i$ in $Q$. It is known that 
skew-symmetric matrices are in one-to-one correspondence with $2$-acyclic quivers, and one can view the 
shadow $\bS_\La$ of $\La$ as the skew-symmetric matrix associated to the reduced Gabriel quiver $Q_\La^\times$, 
where for a quiver $Q$, the quiver $Q^\times$ is obtained from $Q$ by deleting all loops and $2$-cycles. \smallskip 

The shadow can be defined for arbitrary algebra, but it is especially interesting notion in case of TSP4 algebras. 
Namely, in \cite{PS1}, we defined a set $\bS(n)$ of so called {\it tame periodicity shadows}, which contains the 
set $\mathbb{TSP}4:=\bS_{TSP4}$ of all shadows of TSP4 algebras. 
The technical definition can be found in Section \ref{sec:4}, here we only mention that the set $\bS(n)$ is 
computable for small $n$. Moreover, in \cite{PS2} we presented an algorithm allowing to compute the set $\bS(n)$, 
and introduced a bit stronger notion of the {\it essential shadow}, or in other words, we studied a smaller 
superset $\bE(n)$ of $\mathbb{TSP}4$: $\mathbb{TSP}4\subset\bE(n)\subset\bS(n)$. We also computed the 
sets $\bS(3),\bS(4)$ and $\bE(5)$, which are briefly recalled in Section \ref{sec:4}, since they are crucial for the 
classification. Having the set $\bE(n)$ of all essential shadows, for some $n$, we can reconstruct all 
Gabriel quivers of TSP4 algebras $\La$ with $n$ vertices. Indeed, for a fixed $A\in\bE(n)$, we first 
find all possible quivers $Q=Q_\La$, $\La\in$TSP4, obtained from the $2$-acyclic quiver 
identified with $A$ by adding $2$-cycles or loops. This is done using a non-trivial result from \cite{PS1}, 
called the Reconstruction Theorem (see Theorem \ref{thm:recon}), which gives very strong conditions restricting 
the position of $2$-cycles in Gabriel quivers of TSP4 (GQT) algebras. Position of loops is also enough controlled to get a full 
description of possible Gabriel quivers $Q=Q_\La$ of TSP4 (GQT) algebras $\La$ for small $n$ (see Section 
\ref{sec:4}). In other words, for any $A\in\bE(n)$ we can describe the Gabriel quivers of TSP4 algebras 
in the fibre $\bS_\bullet^{-1}(A)$. In some cases (see Sections \ref{subsec:n=4} and \ref{subsec:n=5}), 
we obtain empty intersection $\bS_\bullet^{-1}(A)\cap$TSP4, so that the shadow 
$A$ can be excluded, that is, there are no TSP4 (GQT) algebras $\La$ with $\bS_\La=A$. 
After such analysis for all shadows $A$ in $\bE(4)$ or $\bE(5)$, we obtain the list 
of all possible Gabriel quivers of TSP4 (or GQT) algebras in Propositions \ref{prop:Ga4} and \ref{prop:Ga5}. The 
case $n=3$ is rather direct, and requires no case study; in cases $n=1$ or $2$, the classification of all tame 
symmetric algebras is known, and the problem reduces to check which of them are GQT (or TSP4) (see Sections 
\ref{subs:6.1}-\ref{subs:6.2}). 
\medskip 

The main result of this article shows that all TSP4 algebras with Gabriel quivers having $n\in\{3,4,5\}$ vertices 
are the so called {\it generalized weighted surface algebras}, GWSA for short, which form a central class of algebras 
for this paper. Such algebras were introduced and investigated in \cite{GWSA}, where a detailed description 
of their presentation was also given. We only mention that this class appeared naturally as a generalization of the 
{\it weighted surface algebras} \cite{WSA,WSAC,WSAG}, defined by triangulation quivers (see Section \ref{subs:3.2}), 
and their virtual mutations \cite{VM}. We want to avoid technicalities concerning generators of admissible ideals $I$ 
determining presentation $\La=KQ/I$ of a GWSA (these are not really important for the paper), so we only 
discuss the necessary details needed for the description of the Gabriel quivers appearing in the classification 
(and some general remarks on the shape of generators of admissible ideals $I$). For the complete list of 
generators, see \cite[Definition 4.1]{GWSA}. \smallskip 

By a {\it generalized weighted surface algebra} we mean a quotient algebra $\La=KQ/\wt{I}$, given 
by so called generalized triangulation quiver $Q$, and an ideal $\wt{I}$, which is not always 
admissible, but its generators are uniquely determined by the shape of $Q$. A quiver $Q$ is called 
a {\it generalized triangulation quiver} if it is a glueing of a finite number $B_1,\dots,B_m$ of of blocks of 
the following five types 
$$\xymatrix@C=0.3cm@R=0.2cm{\\ \ar@(lu, ld)[]_{\alpha}\circ} \qquad \qquad
\xymatrix@C=0.6cm@R=0.2cm{&\\ \ar@(lu, ld)[]_{\gamma}\bullet\ar@<.35ex>[r]^{\alpha}&\ar@<.35ex>[l]^{\beta}\circ} 
\quad \qquad 
\xymatrix@C=0.3cm@R=0.2cm{&\circ\ar[ld]_{\alpha}&\\ \circ\ar[rr]_{\beta}&&\circ\ar[lu]_{\gamma}}\quad \qquad 
\xymatrix@C=0.6cm@R=0.1cm{&\bullet\ar[ld] &\\ \circ\ar[rr] &&\circ\ar[ld]\ar[lu] \\&\bullet\ar[lu]&} \qquad 
\xymatrix@C=0.4cm@R=0.2cm{ \bullet \ar@/_23pt/[rdd] && \ar[ll] \bullet \ar[lld] \\ 
\bullet \ar[rd] && \bullet \ar[ll] \ar[llu] \\ 
& \circ \ar[ru] \ar@/_23pt/[ruu] & }$$ 
$$\mbox{ I }\qquad\quad\qquad\quad\quad\mbox{ II }\qquad\qquad\qquad\mbox{ III }\qquad\qquad\qquad\mbox{ IV }
\qquad\qquad\qquad\mbox{ V}$$
where by a glueing we mean that each vertex $\circ$ in every block $B_i$ is glued with exactly one 
vertex $\circ$ in a block $B_j$, for $j\neq i$. Then the ideal $\wt{I}$ is precisely determined by the blocks 
$B_1,\dots,B_m$ defining $Q$ and collections of integer weights $m_\bullet\geqslant 1$ and parameters 
$c_\bullet\in K^*$ associated to arrows of $Q$. Actually, weights and parameters are constant on $g$-orbits of 
some permutation $g$ of arrows of $Q$, and the $g$-orbits together with weights $m_\bullet$ determine the 
Gabriel quiver of $\La$. Namely, $Q_\La\subset Q$ is obtained from $Q$ by deleting so called {\it virtual arrows}, 
which are arrows $\alpha$ in $Q_1$ satisfying $m_\alpha n_\alpha=2$, where $n_\alpha$ is the length of $g$-orbit 
of $\alpha$. Then we may read off the presentation of $\La=KQ_\La/I$, where generators of $I$ are obtained from 
generators of $\wt{I}$ by substituting virtual arrows (a bit more details concerning generators in Section 
\ref{subs:3.2}). \medskip 

Now, the main result of this paper is stated as follows. 

\begin{mthm} Let $\La$ be a TSP4 algebra with Gabriel quiver $Q=Q_\La$ having $n=3,4$ or $5$ vertices. Then 
$\La$ is socle equivalent to a generalized weighted surface algebra. \end{mthm}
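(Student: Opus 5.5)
The plan is to reduce the Main Theorem to a finite list of Gabriel quivers and then treat each quiver separately. First, for each $n\in\{3,4,5\}$, I take the computed sets of shadows $\bS(3)$, $\bS(4)$ and $\bE(5)$ recalled in Section \ref{sec:4}. Since $\mathbb{TSP}4\subset\bE(n)\subset\bS(n)$, the shadow $\bS_\La$ of our algebra is one of finitely many skew-symmetric matrices. For each such matrix $A$, I apply the Reconstruction Theorem (Theorem \ref{thm:recon}) to decide where $2$-cycles can be added to the $2$-acyclic quiver of $A$. I then use the tameness restrictions from Section \ref{sec:2}, together with the fact that every vertex of a GQT algebra has in- and out-degree at most $2$, to decide where loops can be added. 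This produces the lists of Propositions \ref{prop:Ga4} and \ref{prop:Ga5} (and the direct list for $n=3$). Shadows whose fibre contains no admissible quiver are discarded.

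Second, for each surviving quiver $Q$, I show that $Q$ is the Gabriel quiver of some generalized weighted surface algebra. Concretely, I exhibit $Q$ as $Q_\Gamma$ for a generalized triangulation quiver $\Gamma$ glued from blocks I--V, with weights chosen so that the arrows of $\Gamma$ missing from $Q$ are exactly the virtual ones, i.e. those with $m_\alpha n_\alpha=2$.

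Third, I show that $\La$ is socle equivalent to such an algebra. For quivers that are $2$-regular, or biregular, I invoke the known classifications (\cite{AGQT}, \cite{EHS3}): these give that $\La$ is socle equivalent to a (generalized) weighted surface algebra. For the remaining non-regular quivers, which necessarily contain vertices of degree one, I use the results of \cite{mut} on iterated mutations of algebras with periodic simple modules. The idea is to mutate $\La$ at a suitable vertex to obtain an algebra $\La'$ which is again TSP4 and whose Gabriel quiver is more regular, with the same or fewer vertices. I then identify $\La'$ using the previous cases and transport the generalized weighted surface structure back, using that the class of generalized weighted surface algebras is closed under the relevant (virtual) mutations \cite{VM,GWSA}.

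For the cases $n\le 2$ and the very small cases I appeal to Sections \ref{subs:6.1}--\ref{subs:6.2}. For a few exceptional quivers, where the mutation argument does not apply cleanly, I plan to argue directly. There I compute the minimal projective resolutions of the simple modules, which must have the form $0\to S_i\to P_i\to P^1\to P^0\to P_i\to S_i\to 0$. From these I extract the relations of $I$, showing they are the generalized weighted surface relations up to socle deformation.

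I expect the main obstacle to be this direct analysis for the exceptional quivers. The reason is that the GQT hypothesis alone does not seem to pin down the relations there, and one must use the full period-$4$ bimodule periodicity (TSP4) to exclude extra socle deformations or unwanted parameters. What I would try first is to compare the Cartan matrix and dimensions of the projectives forced by the resolutions with those of the candidate generalized weighted surface algebra.
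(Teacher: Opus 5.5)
Your architecture matches the paper's: shadows and the Reconstruction Theorem for the quivers, Theorem \ref{class:bireg} for biregular quivers, and mutation plus Theorem \ref{mut:per} for the rest. However, two steps fail as written.

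\textbf{First step: the degree bound is false.} You assert that every vertex of a GQT algebra has in- and out-degree at most $2$. This is false, and false exactly where the theorem has content. The shadow $\bbQ_{17}$ has a vertex with three outgoing arrows. The loop vertex of $Q^{(17)}$ is a $(3,3)$-vertex. $Q^{(6)}$ (for $n=4$), $Q^{(14)}$, $Q^{(15)}$ and $Q^{(16)}$ have $(2,3)$- and $(3,2)$-vertices. Taken literally, your bound discards $Q^{(6)}$, $Q^{(12)}$ and $Q^{(14)},\dots,Q^{(17)}$. That leaves the theorem unproved for most virtual mutations and for the block V case. The correct restriction is only that a loop can sit at a vertex which is at most $2$-regular in $Q^\circ$ (Remark \ref{loops:rk0}), refined by Lemma \ref{lem:loops} and Corollaries \ref{rmkloops}, \ref{nregloops}.

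Moreover, tameness only bounds loops and $2$-cycles from above. To reach Propositions \ref{prop:Ga3}, \ref{prop:Ga4}, \ref{prop:Ga5} you must also force loops and exclude shadows using periodicity. The paper uses the identities $p_i^-=p_i^+$ together with Lemmas \ref{lem:3.1}, \ref{lem:3.2}, Theorem \ref{bireg1}, Corollary \ref{12wD}, Propositions \ref{prop:typeN}, \ref{prop:biser}, \ref{blockQ5}, and the fact that $2$-regular GQT quivers are triangulation quivers. Nothing in your first step excludes, for instance, the loopless triangle for $n=3$. Your second step (exhibiting every surviving quiver as the Gabriel quiver of a generalized weighted surface algebra) already fails for it.

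\textbf{Third step: the mutation argument is not carried out.} This is where the real work lies. You do not say at which vertex to mutate. You do not compute $Q_{\mu_i(\La)}$, which needs information on the relations of $\La$. You set aside ``exceptional'' quivers for a direct computation of relations that you neither carry out nor need. There are no exceptional cases: the non-biregular quivers are $Q^{(6)}$ for $n=4$ and $Q^{(12)},\dots,Q^{(17)}$ for $n=5$, and all are handled by mutation.
\begin{itemize}
\item For the five containing a block of type IV (notation of Proposition \ref{prop:6.1}), tameness forces, after relabelling, $\tau\beta\prec I$. This can be normalized to $\tau\beta=0$, so $\Omega^2_\La(S_c)$ is generated by $\tau$. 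From this one shows that $Q_{\mu_c(\La)}$ is obtained by reversing $\alpha,\beta$ and deleting $\tau$. That quiver is biregular, so $\mu_c(\La)$ is a weighted surface algebra.
\item For $Q^{(13)}$ (notation of Proposition \ref{prop:6.2}), the same mutation, with $\sigma\gamma$ playing the role of $\tau$, creates an arrow $b\to a$ induced by $\beta\alpha$ and lands in $Q^{(18)}$ or $Q^{(19)}$.
\item $Q^{(17)}$ has no $1$-vertex at all, contrary to your claim. One mutates at its $(2,1)$-vertex $x_1$ and lands in $Q^{(13)}$.
\end{itemize}

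Theorem \ref{mut:per} (no loop at the mutated vertex) then gives that $\La$ and $\mu_c(\mu_c(\La))$ are socle equivalent. This is exactly where ``socle equivalent'' in the statement comes from; no Cartan matrix comparison or exclusion of socle deformations is involved. Likewise, TSP4 rather than GQT is needed not to pin down parameters, but because bimodule periodicity is a derived invariant (Theorem \ref{der_per}), so $\mu_c(\La)$ stays in the class. This is also how the paper forces the loops in $Q^{(12)}$, $Q^{(13)}$ and $Q^{(14)}$, which the GQT quiver analysis leaves undetermined.
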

  
The proof of the above theorem is divided into two major parts. The first part is devoted to determine all 
possible Gabriel quivers of TSP4 (even GQT) algebras with $n=3,4$ or $5$ vertices, and this is done in Section \ref{sec:6}. 
In particular, we confirm that the form of the Gabriel quivers is exactly as expected, i.e. these are the Gabriel 
quivers of GWSA's, which are generalized triangulation quivers modulo virtual arrows. 
With this, the second part deals with describing all possible TSP4 algebra structures $\La=KQ/I$ given by quivers $Q$ 
computed in the first, and this is done in Section \ref{sec:algstr}. More precisely, we show there that 
for each allowed $Q$, the only TSP4 algebras $\La$ with $Q_\La=Q$ are the GWSA's with this quiver. \smallskip 

The rest part of the article provides necessary preparation for the proof. In Section \ref{sec:2} we recall a few 
basic notions from the representation theory, including derived equivalence and related notion of mutation of 
a symmetric algebra at a vertex of its Gabriel quiver. Section \ref{sec:3} is devoted to present some preparatory 
results on TSP4 (GQT) algebras, among others, the results concerning position of non-regular vertices in biserial 
Gabriel quivers of GQT algebras, which are crucial for analysis in Section \ref{sec:6}. 
The remaining Section \ref{sec:4} contains a quick introduction to the notion of periodicity shadow, where in particular, 
we present necessary computations needed for description of the Gabriel quivers in Section \ref{sec:6}. \medskip 

\section*{Acknowledgements} The third named author has been supported from the research grant no. 2023/51/D/ST1/01214 
of the Polish National Science Center. 

\section{Preliminaries}\label{sec:2} 

By a quiver we mean a quadruple $Q=(Q_0,Q_1,s,t)$, where $Q_0$ is a finite set of vertices, $Q_1$ a finite set 
of arrows and $s,t:Q_1\to Q_0$ functions assigning to every arrow $\alpha$ its source $s(\alpha)$ and target 
$t(\alpha)$. For a quiver $Q$, we denote by $KQ$ the {\it path algebra} of $Q$, whose $K$-basis is given by all 
paths of length $\geqslant 0$ in $Q$. Recall that the Jacobson radical of $KQ$ is the ideal $R_Q$ of $KQ$ generated 
by all paths of length $\geqslant 1$, and an ideal $I$ of $KQ$ is called {\it admissible}, provided that 
$R_Q^m\subseteq I \subseteq R_Q^2$, for some $m\geqslant 2$. Moreover, the trivial paths $\ve_i$ (of length $0$) 
at vertices $i\in Q_0$ form a complete set of (pairwise orthogonal) primitive idempotents of $KQ$ with 
$\sum_{i\in Q_0}\ve_i$ being the identity of $KQ$. We assume further that the vertices of a quiver $Q$ are always 
labeled with numbers $1,\dots,n$, i.e. $Q_0=\{1,\dots,n\}$. \medskip 

If $Q$ is a quiver and $I$ an admissible ideal $I$ of $KQ$, then $(Q,I)$ is said to be a {\it bound quiver}, 
and the associated quotient algebra $KQ/I$ is called a {\it bound quiver algebra}. Any algebra $\Lambda$ over an 
algebraically closed field admits a presentation as a bound quiver algebra $\Lambda\cong KQ/I$. In this case, the cosets 
$e_i=\varepsilon_i+I\in \La$ form a complete set of primitive orthogonal idempotents of $\La$ and $\sum_{i\in Q_0}e_i$ 
is the identity of $\Lambda$. For a path $w$ in $Q$, we will write $w\prec I$, if $w$ appears as a summand 
in one of the relations $\rho_1,\dots,\rho_n$ forming a set of minimal generators of $I$. \smallskip 

We recall that for any algebra $\La$ and a complete set of primitive orthogonal idempotents $e_i$, $i\in\{1,\dots,n\}$, 
with $\sum e_i=1_\La$, the {\it Gabriel quiver} of $\Lambda$ (sometimes also called the {\it ordinary quiver}) is the 
quiver $Q_\La$ with vertex set $Q_0=\{1,\dots,n\}$ and the arrows $i\to j$ in a bijective correspondence with 
a basis of the $K$-vector space $e_i(J/J^2)e_j$, where $J$ is the Jacobson radical $J=J_\La=R_Q+I$ of 
$\Lambda$. This quiver is uniquely determined by the algebra (up to permutation of vertices), and does not depend 
on the choice of a complete set of primitive othogonal idempotents \cite[see II.3]{ASS}. Moreover, for any presentation 
$\Lambda=KQ/I$, the Gabriel quiver of $\La$ is $Q_\La=Q$, and $Q_\Lambda$ is connected if and only if $\La$ is 
indecomposable as an algebra. \smallskip 

We note that the modules $P_i:=e_i\La$, for $i\in\{1,\dots,n\}$, form a complete set of indecomposable projective 
modules in $\mod\La$, and the modules $I_i:=D(\La e_i)$, $i\in\{1,\dots,n\}$, form a complete set of indecomposable 
injective modules in $\mod\La$. Then the modules $S_i=e_i\La/e_iJ$, $i\in\{1,\dots,n\}$, form 
a complete set of simple modules in $\mod\La$. We assume without mentioning that all algebras are self-injective 
(even symmetric), and we will only work with projective modules. 

We use notation $p_i$ for a dimension vector of the projective $P_i$ associated to a vertex $i\in Q_0$, that is, 
vector $x=[x_s]^T\in\bN^n$, where $x_s=\dim_K e_i\La e_s$. Additionally, for a vertex $i$, we denote by $P_i^+$ 
the direct sum $\bigoplus_{i\to j} P_j$ of projectives corresponding to targets of arrows starting from $i$, and 
by $p_i^+$ the dimension vector of $P_i^+$. Similarly, $P_i^-$ denotes the direct sum $\bigoplus_{j\to i} P_j$ of 
projectives corresponding to sources of arrows ending at $i$, and $p_i^-$ its dimension vector. \medskip

For a vertex $i$ of a quiver $Q$, we write $i^+$ for the set $\{\alpha\in Q_1| \ s(\alpha)=i \}$ of arrows 
starting at $i$, and $i^-$ for the set $\{\alpha\in Q_1| \ t(\alpha)=i\}$ of arrows ending at $i$ (including loops 
in $i^-\cap i^+$). Then $i$ is called a {\it $(p,q)$-vertex}, or sometimes {\it $(p,q)$-regular}, provided that 
$|i^-|=p$ and $|i^+|=q$. By an $r$-vertex (or an $r$-regular vertex) we mean an $(r,r)$-vertex. Moreover, a vertex 
is said to be {\it at most (at least) $r$-regular}, if it is a $(p,q)$-vertex with $p,q\leqslant r$ ($\geqslant r$). 
Vertices which are not $r$-regular, for any $r\geqslant 1$, are called {\it non-regular}. \smallskip 
 
We will frequently use quivers called {\it $2$-regular}, {\it biregular} or {\it biserial}. By these we mean the 
quivers $Q$ such that any vertex $i\in Q_0$ is, $2$-regular, $1$- or $2$-regular or at most $2$-regular, respectively. 
We say that an algebra is {\it $2$-regular}, {\it biregular} or {\it biserial}, if so is its Gabriel quiver. \medskip

Recall that for any vertex $i$ such that $S_i$ is $4$-periodic, we have the following 
exact sequence 
$$0\to S_i\to P_i \stackrel{d_3}\to  P_i^- \stackrel{d_2}\to  P_i^+ \stackrel{d_1}\to P_i \to S_i\to 0$$ 
with $Im(d_k)\cong\Omega^k(S_i)$, for $k\in\{1,2,3\}$. One can assume that $d_1$ is given by arrows 
starting from $i$ and $d_3$ by arrows ending at $i$ (see \cite[Lemma 4.1 and Proposition 4.3]{AGQT}). Moreover, 
comparing dimension vectors, we get that $p_i^-=p_i^+$, for any $i\in Q_0$, which gives rise to the crucial 
equation involving the Cartan matrix of $\La$, and leading to the notion of shadow (see condition (PS3), Section 
\ref{sec:4}). \medskip

Algebras considered later have their Gabriel quivers built from some smaller `blocks', as mentioned briefly in 
the Introduction; for more details see 
Section \ref{subs:3.2}. Here we only mention that {\it a block} (in $Q$) is a subquiver $\Gamma=(\Gamma_0,\Gamma_1)$ of 
$Q$, such that its set of vertices $\Gamma_0$ is a disjoint union $\Gamma_0=B\cup W$, where any arrow 
$\alpha\in Q_1\setminus\Gamma_1$ has a source and target in $Q_0\setminus B$. 
The vertices of $\Gamma$ contained in $B$ are labelled by $\bullet$, whereas vertices in $W$, by $\circ$. If $\Gamma$ is 
a block in $Q$, then $Q$ is a `glueing' of $\Gamma$ with the remaining part of $Q$ consisting 
of arrows between vertices in $Q_0\setminus B$. \smallskip 

For two quivers $Q$ and $Q'$ having the same vertex set $Q_0=Q_0'$, we 
denote by $Q\sqcup Q'$ the quiver $(Q_0,Q_1\sqcup Q'_1)$ on the same vertex set, whose set of arrows 
is a disjoint union of the sets of arrows $Q_1$ and $Q'_1$. This will be 
sometimes called a {\it disjoint union of quivers}, which is different from glueing of blocks, in the above 
intuitive sense. Although, we can view glueing as a disjoint union. Namely, if $\Gamma=(\Gamma_0,\Gamma_1)$ 
is a block in $Q=(Q_0,Q_1)$, then we have two associated subquivers $\Gamma_{*}=(Q_0,\Gamma_1)$ and 
$Q'=(Q_0,Q_1')$, where $Q_1'$ consists of all arrows between vertices in $Q_0\setminus B$. Then $Q$ is a 
glueing of $(Q_0\setminus B,Q_1')$ and $\Gamma$ (identifying common vertices in $W$), or a disjoint union 
$Q=\Gamma_{*}\sqcup Q'$ of quivers on the same set of vertices. \bigskip 

We only mention that all algebras in this paper are assumed to be tame (see the Introduction). We shall present now 
some consequences of this assumption on the Gabriel quivers of algebras. We omit mostly elementary arguments; 
more details can be found in \cite[Section 2]{PS1}. \medskip 

First, every Gabriel quiver $Q=Q_\La$ of a tame algebra has at most $2$ arrows between any pair of vertices, 
which is indicated in the condition (T1) from the definition of a tame shadow (see Section \ref{sec:4}). Clearly, 
also any vertex $i\in Q_0$ admits at most $1$ loop (if $|Q_0|\geqslant 2$) and at most $4$ distinct arrows starting 
(respectively, ending) at $i$; see also condition (T3) in the definition of a tame shadow. Similarily, if $Q$ 
contains a subquiver of the form 
$$K_2^+=\xymatrix@C=0.3cm{\circ&&\ar[ll]\circ\ar@<+0.4ex>[rr]\ar@<-0.4ex>[rr] && \circ} \ \ \mbox{ or } \ \ 
K_2^-=\xymatrix@C=0.3cm{\circ\ar[rr]&& \circ && \ar@<+0.4ex>[ll]\ar@<-0.4ex>[ll] \circ} $$  
then its path algebra $A$ is a wild specialization of $\La$ in the sense of \cite[see 1.2]{Rin} (obtained 
from $\La$ by deleting all vertices not in the subquiver). In particular, it follows that $\La$ is wild, 
hence $Q$ cannot contain such a subquiver. This restriction is reflected in the condition (T2). \medskip  

Secondly, we say that $\Delta$ is a subquiver {\it of type} $K_2^*$ if $\Delta$ is one of the following two quivers 
$$ \xymatrix@C=0.6cm{\circ \ar@<+0.4ex>[r]^{\ba}\ar@<-0.4ex>[r]_{\alpha}&\circ\ar[r]^{\beta}&\circ} \ \mbox{ or } \ 
\xymatrix@C=0.6cm{\circ \ar[r]^{\beta}&\circ \ar@<+0.4ex>[r]^{\ba}\ar@<-0.4ex>[r]_{\alpha}&\circ}$$
and both $\alpha\beta, \bar{\alpha}\beta\nprec I$ (or both $\beta\alpha,\beta\bar{\alpha}\nprec I$). Note that 
if $\La$ is tame, its Gabriel quiver $Q=Q_\La$ does not contain a subquiver of type $K_2^*$. Indeed, it is 
sufficient to see that if $\Delta$ is a subquiver in $Q$ of type $K_2^*$, then $\La$ admits a specialization 
to the path algebra $K\Delta$, which is a wild hereditary algebra (obtained by deleting all vertices and arrows 
not in $\Delta$). Some of the subquivers of type $K_2^*$ are excluded in the definition of the essential shadow; 
see condition (PS5). \medskip 

Finally, we will use the following abbreviation. We say that an algebra $\La$ {\it has a subcategory $A$ 
in covering}, if there is a factor algebra $B=\La/N$ of $\La$ (more generally, a specialization) such that $B$ 
admits a Galois covering $F:\tilde{B}\to B$ and $A$ is a full subcategory of $\tilde{B}$. In particular, it 
follows from the well known results (see \cite[Proposition 2]{DS1} and \cite[Theorem]{DS2}) that anytime $\La$ 
has $A$ as a subcategory in covering and $A$ is a wild algebra, then also $\La$ is wild. In the sequel, we will 
frequently apply this fact without mentioning. 

\bigskip 

We finish this section with recalling a few basic facts on derived equivalences of algebras and related 
notion of the mutation at vertex. Let $\La$ be an algebra. We denote by $\cK^b(\mod\La)$ the homotopy category of 
bounded complexes of modules. We will use 
notation $\cK^b_\La$ for the subcategory $\cK^b(\proj\La)$ formed by bounded complexes of projective modules. 
The derived category $D^b(\mod\La)$ of $\La$ is the localization of $\cK^b(\mod\La)$ with respect to quasi-isomorphisms, 
and admits the structure of a triangulated category, where the suspension functor is given by left shift $(-)[1]$ 
(see \cite{Hap}). Two algebras $\La$ and $\La'$ are called {\it derived equivalent} provided their derived categories 
$D^b(\mod\La)$ and $D^b(\mod\La')$ are equivalent as triangulated categories. Recall that a complex $T\in\cK^b_\La$ is 
called a {\it tilting complex} \cite{Rick1}, if the following conditions are satisfied: \begin{enumerate} 
\item[(1)] $\Hom(T, T[i])=0$, for all integers $i\neq 0$,
\item[(2)] $\add T$ generates $\cK^b_\La$ as triangulated category. \smallskip 
\end{enumerate} 

We have the following well known criterion for verifying derived equivalence of algebras \cite[see Theorem 6.4]{Rick1}.

\begin{thm}\label{der_eq} Two algebras $\La$ and $\La'$ are derived equivalent if and only if there exists a 
tilting complex $T\in\cK^b_\La$ such that $\End_{\cK^b_\La}(T)\cong \La'$.   
\end{thm}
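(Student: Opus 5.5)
The statement is Rickard's Morita theorem for derived categories, quoted from \cite{Rick1}. In the paper it is used as a black box, so the following is only a plan for how I would reprove it. I would use the dg-enhancement route (Keller) rather than Rickard's original construction, and treat the two directions separately.

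\emph{Necessity.} Let $F\colon D^b(\mod\La')\to D^b(\mod\La)$ be a triangle equivalence. First I would show that $F$ restricts to an equivalence $\cK^b_{\La'}\to\cK^b_\La$. For this I would characterize the perfect complexes intrinsically, for instance as those $X$ such that for every $Y$ the group $\Hom(X,Y[i])$ vanishes for all but finitely many $i$. Then set $T:=F(\La')$, where $\La'$ is viewed as a stalk complex. Condition (1) follows because $\Hom(\La',\La'[i])=H^i(\La')=0$ for $i\neq 0$. Condition (2) follows because $\add\La'$ generates $\cK^b_{\La'}$ as a triangulated category, by brutal truncations. Finally $\End(T)\cong\End(\La')\cong\La'$.

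\emph{Sufficiency.} Let $T\in\cK^b_\La$ be tilting. I would work with the dg algebra $E:=\mathcal{E}nd^\bullet_\La(T)$, the total Hom complex. Its cohomology satisfies $H^i(E)=\Hom_{\cK^b_\La}(T,T[i])$, which is $0$ for $i\neq 0$ by (1) and equals $\La'$ for $i=0$. Hence $E$ is quasi-isomorphic to $\La'$, via the zig-zag $E\leftarrow\tau_{\le 0}E\to H^0(E)$ through the truncation. Next, $T$ becomes a dg $E$-$\La$-bimodule, and I would consider
$$\mathbf{R}\Hom_\La(T,-)\colon D(\mathrm{Mod}\,\La)\to D(E)\simeq D(\mathrm{Mod}\,\La').$$
Since $T$ is compact, this functor commutes with coproducts. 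It is fully faithful on the localizing subcategory generated by $T$, by a d\'evissage from $\Hom(T,T[i])$. By (2), $T$ generates $\La$, so that subcategory is all of $D(\mathrm{Mod}\,\La)$. Thus the functor is an equivalence sending $T$ to $\La'$.

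The equivalence restricts to compact objects, giving $\cK^b_\La\simeq\cK^b_{\La'}$. To get the bounded derived categories of finitely generated modules, I would characterize $D^b(\mod\La)$ inside $D(\mathrm{Mod}\,\La)$ as those $X$ with $\bigoplus_i\Hom(P,X[i])$ finite dimensional for all compact $P$. This condition is preserved by the equivalence.

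The main obstacle is producing an honest triangle functor out of $T$. Writing down the additive equivalence $\add T\simeq\proj\La'$ does not by itself extend to complexes, because of the homotopy-coherence problem with total complexes. The dg endomorphism algebra and the formality argument above are exactly what fix this. If I wanted to avoid dg methods, I would instead follow Rickard's construction through $\cK^-(\proj)$ and a two-sided tilting complex, but I expect the dg route to be cleaner.
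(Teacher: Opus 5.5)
Your argument is correct, but it does not follow the route behind this statement. The paper gives no proof at all: it quotes Rickard's Theorem~6.4, so the relevant comparison is with Rickard's original argument.

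\textbf{Necessity.} Here you do essentially what Rickard does. An intrinsic Hom-vanishing characterization of $\cK^b_\La$ inside $D^b(\mod\La)$ forces $F$ to restrict to perfect complexes, and $T=F(\La')$ is then the required tilting complex.

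\textbf{Sufficiency.} Here you follow Keller's dg method, whereas Rickard stays inside homotopy categories of projectives. He realizes complexes over $\add T\simeq\proj\La'$ as genuine complexes of projective $\La$-modules by a hand-made totalization; this is where the vanishing of $\Hom(T,T[i])$ for $i\neq 0$ is used. That yields an equivalence $K^-(\proj\La')\to K^-(\proj\La)$ taking $\La'$ to $T$, which he then cuts down to the bounded categories by intrinsic characterizations.

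\textbf{What each route buys.} Rickard's route is elementary, with no dg modules. It pays for this with exactly the coherence bookkeeping you flag. It also does not show by itself that the equivalence is a derived tensor functor; that came later with two-sided tilting complexes. Your route absorbs the coherence into $E=\mathcal{E}nd^\bullet_\La(T)$ and the zig-zag $E\leftarrow\tau_{\le 0}E\to\La'$. It produces the equivalence explicitly as $\mathbf{R}\Hom_\La(T,-)$, so a bimodule complex comes essentially for free, and it generalizes to compact generators of dg module categories. The price is the dg machinery and the descent from $D(\mathrm{Mod}\,\La)$ to $D^b(\mod\La)$. Your compactness and finite-total-cohomology criteria handle that descent correctly, given that the equivalence is $K$-linear and that $D^b(\mod\La)\to D(\mathrm{Mod}\,\La)$ is fully faithful.

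\textbf{One implicit step.} You do not argue essential surjectivity of $\mathbf{R}\Hom_\La(T,-)$. It is easy: its essential image is a localizing subcategory of $D(E)$ containing the compact generator $E$, hence all of $D(E)$.
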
 \medskip 

We recall also the following two theorems (see \cite[Corollary 5.3]{Rick3} and \cite[Theorem 2.9]{ES08}) 
showing that symmetricity and periodicity are also preserved under derived equivalences. 

\begin{thm}\label{der_sym} Let $\La$ and $\La'$ be derived equivalent algebras. Then 
$\La$ is symmetric if and only if $\La'$ is symmetric. \end{thm}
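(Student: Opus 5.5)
The statement is Rickard's theorem that symmetry is invariant under derived equivalence. My plan is to pass from derived equivalence to a two-sided tilting complex. By Theorem \ref{der_eq} there is a tilting complex $T\in\cK^b_\La$ with $\End_{\cK^b_\La}(T)\cong\La'$. I will then invoke Rickard's result that such an equivalence is induced by a two-sided tilting complex. This is a bounded complex $X$ of $\La'$-$\La$-bimodules, projective on each side, with a partner complex $Y$ such that
$$X\otimes^{\mathbf L}_{\La}Y\cong\La' \quad\text{and}\quad Y\otimes^{\mathbf L}_{\La'}X\cong\La$$
in the derived categories of bimodules. Then $F=-\otimes^{\mathbf L}_{\La'}X$ is a derived equivalence $D^b(\mod\La')\to D^b(\mod\La)$, and $G=-\otimes^{\mathbf L}_{\La}Y$ is its quasi-inverse.

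The key step is to characterise symmetry intrinsically in terms of the derived category of bimodules. First, $\La$ is symmetric exactly when $D\La\cong\La$ as bimodules. The bimodule $D\La$ represents the Serre functor on $\cK^b_\La=\cK^b(\proj\La)$: for perfect complexes $P,M$ we have $D\Hom(P,M)\cong\Hom(M,P\otimes^{\mathbf L}_\La D\La)$. A derived equivalence restricts to an equivalence of perfect complexes, so it commutes with Serre functors. Hence $F\circ(-\otimes^{\mathbf L}_{\La'}D\La')\cong(-\otimes^{\mathbf L}_{\La}D\La)\circ F$. Lifted to bimodule complexes, this reads
$$D\La\cong Y\otimes^{\mathbf L}_{\La'}D\La'\otimes^{\mathbf L}_{\La'}X.$$
I would prove this directly by adjunction, using $D\Hom_{\La'}(X,\La')\cong X\otimes D\La'$-type identities and $Y\cong\mathbf{R}\Hom_{\La}(X,\La)$. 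Now suppose $\La'$ is symmetric. Then the right-hand side becomes $Y\otimes^{\mathbf L}_{\La'}X\cong\La$, so $D\La\cong\La$ as complexes of bimodules. Both sides are bimodules concentrated in degree $0$, so this is an isomorphism of bimodules, and $\La$ is symmetric. The converse follows by symmetry of the roles of $\La$ and $\La'$.

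The main obstacle is the first step: passing from a one-sided tilting complex to a two-sided one. A mere equivalence of triangulated categories, or an isomorphism of endomorphism rings, does not a priori give bimodule data, and Rickard's construction of the bimodule complex is the delicate part. A secondary subtlety is ensuring that the Serre functor identification is natural enough to yield a bimodule isomorphism, and not just objectwise isomorphisms. For that reason I would work with the explicit formula $D\La\cong Y\otimes D\La'\otimes X$ in the bimodule derived category, rather than with abstract Serre functors.
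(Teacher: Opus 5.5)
Your proposal is correct and is essentially Rickard's own argument for \cite[Corollary 5.3]{Rick3}, which the paper only cites and does not prove. Via a two-sided tilting complex $X$ whose inverse satisfies $Y\cong\mathbf{R}\Hom_\La(X,\La)\cong\mathbf{R}\Hom_{(\La')^{\op}}(X,\La')$, the duality isomorphisms $X\otimes_\La D\La\cong DY\cong D\La'\otimes_{\La'}X$ give $D\La\cong Y\otimes^{\mathbf L}_{\La'}D\La'\otimes^{\mathbf L}_{\La'}X$, and symmetry transfers exactly as you describe. Note that this step needs both one-sided descriptions of $Y$, not only $\mathbf{R}\Hom_\La(X,\La)$.

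The Serre-functor remark is only a heuristic: for a general finite-dimensional algebra, $-\otimes^{\mathbf L}_\La D\La$ sends $\cK^b_\La$ to bounded complexes of injectives rather than into $\cK^b_\La$. Your explicit bimodule computation does not depend on it.
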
 \medskip 

\begin{thm}\label{der_per} Let $\La$ and $\La'$ be derived equivalent algebras. Then $\La$ is periodic if and only 
if $\La'$ is periodic. Moreover, if this is the case, then both have the same period. \end{thm}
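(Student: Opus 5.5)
The plan is to transfer periodicity through the stable module category of the enveloping algebras. The key point is that a derived equivalence between $\La$ and $\La'$ lifts to one between $\La^e$ and $\La'^e$ that sends the bimodule $\La$ to $\La'$. Periodicity of the bimodule can then be read off in a triangulated category where $\Omega$ is the inverse suspension.

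\emph{Step 1 (self-injectivity).} If $\La$ is periodic, then $\La$ is self-injective. Indeed, a periodic bimodule resolution restricts, on one side, to a periodic resolution of every simple module by projectives, and $\Omega^d(\Lambda)\cong\Lambda$ forces $\La$ to be Gorenstein of dimension $0$. Self-injectivity is invariant under derived equivalence by results of Rickard and Al-Nofayee. So $\La'$ is self-injective as well, and hence so are $\La^e$ and $\La'^e$.

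\emph{Step 2 (two-sided tilting complex).} By Theorem \ref{der_eq} and Rickard's theory of two-sided tilting complexes, there is a complex $X$ of $\La'$-$\La$-bimodules, projective on each side, with inverse $Y$. These satisfy $X\otimes^{\mathbf L}_\La Y\cong\La'$ and $Y\otimes^{\mathbf L}_{\La'}X\cong\La$. The functor
\[
F=X\otimes^{\mathbf L}_\La-\otimes^{\mathbf L}_\La Y\colon D^b(\mod\La^e)\to D^b(\mod\La'^e)
\]
is then a triangle equivalence, since it is given by a tilting complex over $\La^e$ (namely $X\otimes_K Y^{\op}$ suitably). It sends the bimodule $\La$, concentrated in degree $0$, to $X\otimes_\La Y\cong\La'$, also concentrated in degree $0$.

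\emph{Step 3 (stable categories).} For a self-injective algebra $A$ one has Rickard's triangle equivalence $D^b(\mod A)/\cK^b(\proj A)\simeq\underline{\mod}\,A$, under which modules in degree $0$ correspond to themselves and the shift $[-1]$ corresponds to $\Omega_A$. Since $F$ preserves perfect complexes, it induces a triangle equivalence $\underline F\colon\underline{\mod}\,\La^e\to\underline{\mod}\,\La'^e$. This functor commutes with $\Omega$, because $\Omega$ is the inverse suspension on both sides, and it satisfies $\underline F(\La)\cong\La'$. Hence $\Omega^d_{\La^e}(\La)\cong\La$ in $\underline{\mod}\,\La^e$ if and only if $\Omega^d_{\La'^e}(\La')\cong\La'$ in $\underline{\mod}\,\La'^e$. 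Neither bimodule has nonzero projective summands, since $\La$ is indecomposable and non-projective over $\La^e$ when $\La$ is not semisimple. Therefore the stable isomorphism is an honest isomorphism. This gives periodicity of $\La'$ with the same minimal $d$, and by symmetry the converse.

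The main obstacle is Step 3. One has to verify carefully that the induced stable functor sends $\La$ to $\La'$ itself rather than to a shift or to $\La'$ plus projective summands. The essential input is that the two-sided tilting complex yields $X\otimes_\La Y\cong\La'$ exactly, as a complex concentrated in degree $0$. The equality of the minimal periods then follows, since $\underline F$ is an equivalence commuting with $\Omega$.
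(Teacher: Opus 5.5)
Your proof is correct. The paper gives no argument of its own here; it only cites \cite[Theorem 2.9]{ES08}. Your route is essentially the standard proof of that cited result: a two-sided tilting complex gives a derived equivalence $D^b(\mod\La^e)\to D^b(\mod\La'^e)$ sending $\La$ to $\La'$, and once both algebras are self-injective this descends to a stable equivalence $\underline{\mod}\,\La^e\to\underline{\mod}\,\La'^e$ commuting with $\Omega$.

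Two points are worth tightening. First, self-injectivity of a periodic algebra is cleanest obtained by tensoring the periodic bimodule sequence, which splits on one side, with an arbitrary module such as $D(\La)$; this embeds it into a projective. Second, $\La'$ is again connected and non-semisimple because the centre and finiteness of global dimension are derived invariants.
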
 \medskip 

Representation type of two derived equivalent algebras is a bit more subtle issue, it is not true in general, 
that it is preserved. However, in the class of self-injective algebras, we have the following result, 
whose short proof can be found in \cite[see Theorem 2.4]{VM}.  

\begin{thm}\label{der_type} Let $\La$ and $\La'$ be derived equivalent self-injective algebras. Then 
$\La$ is tame if and only if $\La'$ is tame. \end{thm}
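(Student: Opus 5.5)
The plan is to pass from the derived category to the stable module category, where representation type can be compared more directly. Let $\La$ and $\La'$ be derived equivalent self-injective algebras. By Theorem \ref{der_eq} there is a tilting complex $T\in\cK^b_\La$ with $\End_{\cK^b_\La}(T)\cong\La'$.

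The first step is Rickard's result that, for self-injective algebras, a derived equivalence induces a stable equivalence $\underline{\mod}\,\La\simeq\underline{\mod}\,\La'$. The mechanism is that $D^b(\mod\La)/\cK^b_\La\simeq\underline{\mod}\,\La$ as triangulated categories when $\La$ is self-injective. The equivalence of derived categories sends perfect complexes to perfect complexes, since they are the compact objects. It therefore descends to the Verdier quotients. I expect to quote this in the form "derived equivalent self-injective algebras are stably equivalent", and in fact stably equivalent of Morita type, via a two-sided tilting complex. Before quoting it, I will check that its hypotheses (self-injectivity of both algebras, not only of $\La$) are satisfied. This is fine here, because self-injectivity is assumed for both, and it is also preserved by derived equivalence.

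The second step is to show that tameness is invariant under stable equivalence of self-injective algebras. This is Krause's theorem, and I regard it as the main obstacle: the naive approach of matching one-parameter families dimension by dimension fails, because a stable equivalence need not control dimensions of modules. If I want to avoid quoting Krause's general theorem, I would use the Morita type structure from the first step. There are bimodules ${}_{\La}M_{\La'}$ and ${}_{\La'}N_{\La}$, projective on each side, with $M\otimes_{\La'}N\cong\La\oplus P$ and $N\otimes_\La M\cong\La'\oplus P'$ for projective bimodules $P,P'$. The functor $-\otimes_\La M$ is then exact and bounds dimensions linearly: $\dim(X\otimes M)\le c\dim X$. It sends indecomposable non-projective $\La$-modules to modules having a unique indecomposable non-projective summand, and it is injective on isoclasses modulo projectives. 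Hence a one-parameter family in dimension $d$ for $\La'$ pulls back, via $-\otimes_{\La'}N$, to finitely many families in dimensions at most $c'd$ for $\La$. Tameness of $\La$ then bounds the number of families for $\La'$. Finally, using Drozd's tame–wild dichotomy, wildness of $\La'$ would force wildness of $\La$ by the same transfer argument.

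Combining the two steps gives the claim, and it is symmetric in $\La$ and $\La'$. The projective summands cause no problem, since projective modules are finite in number and never affect tameness.
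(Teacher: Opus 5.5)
Your proposal is correct and follows the same route as the proof the paper cites from \cite{VM}: Rickard's theorem that a derived equivalence between self-injective algebras induces a stable equivalence (indeed one of Morita type), combined with Krause's theorem that stable equivalence preserves tameness. Only your optional Krause-free sketch would need more care: it exhibits each indecomposable $\La'$-module $Y$ of dimension $d$ only as a direct summand of $B\otimes_{K[x]}K[x]/(x-\lambda)$ with a projective complement, and that complement must be split off uniformly in $\lambda$ (e.g.\ after localizing $K[x]$), which your closing remark about projectives does not address.
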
 

This can be extended also to finite representation type, because finite type is preserved by stable 
equivalences of Morita type.  

\begin{rem}\label{der_inf} \normalfont Without loss of generality, we can assume that all tame symmetric 
algebras under consideration are of infinite representation type. It is a consequence of the assumption on the size of 
quivers defining algebras. Indeed, all quivers have $n=|Q_0|\geqslant 3$ vertices. This implies that any such 
algebra must be representation-infinite, since the symmetric algebras of finite representation type satisfy 
$n\leqslant 2$, due to \cite[see Lemma 3.1]{E26}. \end{rem} 
\medskip  

In this paper, we will consider a special kind of derived equivalence, induced from silting mutation. 
Recall that the {\it mutation} of a symmetric algebra $\La=KQ/I=\oplus_{i\in Q_0} P_i$ {\it at vertex} 
$i\in Q_0$ is the following algebra 
$$\mu_i(\La):=\End_{\cK^b_\La}(T),$$ 
where $T=\oplus_{j\in Q_0} T_j$ is a complex in $\cK^b_\La$ with summands $T_j=(P_j)$ concentrated in 
degree $0$, for $j\neq i$, and 
$$T_i=(\xymatrix{P_{i} \ar[r]^{f} & P^-_i})$$ 
is concentrated in degrees $-1$ and $0$ with $f=[\alpha_1\ \cdots \alpha_s]^T$ defined by the arrows in  
$i^-=\{\alpha_1,\dots,\alpha_s\}$. In other words, $f$ is a left $\add Q$-approximation 
of $P_i$, where $Q$ is a projective module such that $\La=P_i\oplus Q$, and $T$ is a silting mutation of $\La$ 
with respect to the indecomposable direct summand $P_i$ (cf. \cite{AI}; see also \cite[]{mut}). It is known from 
general theory that $T$ is a silting complex, because $\La$ is silting, and in our setup ($\La$ is symmetric), 
silting complexes coincide with tilting complexes. Therefore, the mutation $\mu_i(\La)$ is derived equivalent with 
$\La$, by Theorem \ref{der_eq}, and moreover, $\mu_i(\La)$ is again symmetric and periodic, if $\La$ is (of 
the same period, if this is the case), hence mutations of TSP4 algebras are again TSP4. This property will be 
crucial in the proof of the Main Theorem. Namely, it is used in Section \ref{sec:algstr}, where we prove that the only 
possible TSP4 algebra structures on the Gabriel quivers computed in Section \ref{sec:6}, are the generalized 
weighted surface algebras. \smallskip 

Besides, we will also need the following property \cite[Corollaries 1 and 4]{mut}. \medskip 

\begin{thm}\label{mut:per} Let $\La$ be a tame symmetric algebra, and $i$ a vertex in $Q=Q_\La$ such that $S_i$ 
is $4$-periodic. If $Q$ has no loop at $i$, then $\mu_i^2(\La)$ and $\La$ are socle equivalent. In case $\La$ is 
a weighted surface algebra, socle equivalence can be replaced by isomorphism. \end{thm}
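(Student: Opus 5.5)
The plan is to make the composite mutation explicit as a single tilting complex, and then compare endomorphism rings modulo socles using the $4$-periodic resolution of $S_i$.

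\emph{Step 1: the double mutation as one complex.} Since $Q$ has no loop at $i$, the summand $P_i$ does not occur in $P_i^-$ or in $P_i^+$. Hence $f=[\alpha_1\ \cdots\ \alpha_s]^T$ is a minimal left $\add(\La/P_i)$-approximation, and $T_i=(P_i\stackrel{f}{\to}P_i^-)$ is indecomposable.

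Mutating $\mu_i(\La)$ once more at its vertex $i$ means taking a minimal left approximation of $T_i$ by $\add\bigl(\bigoplus_{j\neq i}P_j\bigr)$ and forming the cone. I expect this approximation to be $T_i\to P_i^+$, induced by $d_2\colon P_i^-\to P_i^+$ from the periodic sequence
$$0\to S_i\to P_i \stackrel{d_3}\to  P_i^- \stackrel{d_2}\to  P_i^+ \stackrel{d_1}\to P_i \to S_i\to 0 .$$
The reason is that maps $P_i^-\to P_j$ killing $\ima(f)=\ima(d_3)=\ker(d_2)$ factor through $d_2$. Consequently $\mu_i^2(\La)\cong\End_{\cK^b_\La}(T'')$, where
$$T''=\bigoplus_{j\neq i}P_j\ \oplus\ \bigl(P_i\stackrel{d_3}\to P_i^-\stackrel{d_2}\to P_i^+\bigr),$$
with the complex at $i$ placed in degrees $-2,-1,0$. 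Verifying that this is really the minimal approximation uses exactness at $P_i^-$ together with the fact that $d_1,d_3$ are given by arrows (\cite[Lemma 4.1, Proposition 4.3]{AGQT}). I expect this verification to be routine.

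\emph{Step 2: compare $\End(T'')$ with $\La$.} The complex $T''_i$ has homology $\Omega^{3}$-type data: its cokernel is $\ima d_1=\rad P_i$ in degree $0$, and its kernel is $\soc P_i\cong S_i$ in degree $-2$. I would build an algebra map $\La\to\End(T'')$ as follows.
\begin{itemize}
\item On the summands $P_j$ with $j\neq i$ it is the identity.
\item A path $u\colon P_i\to P_j$ (respectively $P_j\to P_i$) is sent to a chain map $T''_i\to P_j$ (respectively $P_j\to T''_i$). This chain map is obtained by factoring $u$ through $d_1$ (respectively $d_3$), using that $u$ lies in $\rad$ and that $\ima d_1=\rad P_i$.
\item Endomorphisms of $P_i$ are lifted degreewise along the resolution.
\end{itemize}
These lifts are unique up to homotopy, except for an ambiguity coming from maps that factor through the degree $-2$ socle $S_i$. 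This is exactly why one only gets compatibility modulo the socle.

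Comparing dimensions of homomorphism spaces then shows that the induced map $\La/\soc\La\to\End(T'')/\soc$ is an isomorphism. The dimensions can be computed from Cartan data: the alternating sums agree because $p_i^-=p_i^+$. Hence $\La$ and $\mu_i^2(\La)$ are socle equivalent.

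\emph{Step 3: the weighted surface case, and the main obstacle.} The main obstacle is to control the homotopy ambiguity in Step 2. One must show that multiplication is preserved up to elements of the socle only, and not up to arbitrary radical terms. I would handle this by using the grading by path length: the homotopies land in the top degree $\soc P_i$, since both $d_3$ and $d_1$ are given by arrows.

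For a weighted surface algebra I would then invoke the known rigidity of socle deformations. A socle deformation of a weighted surface algebra that is again symmetric and $4$-periodic is isomorphic to a weighted surface algebra with the same weights and parameters, except in explicitly listed exceptional surfaces, where the extra socle parameter vanishes in the non-singular case. Combining this with the identification of the quiver and of the weights of $\mu_i^2(\La)$ obtained in Step 2 upgrades socle equivalence to isomorphism.
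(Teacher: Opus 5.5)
\textbf{The gap is in Step 3: the isomorphism assertion for weighted surface algebras is not proved.} Your Steps 1--2 give a reasonable direct route to the socle-equivalence assertion; the paper itself does not prove this theorem and simply quotes Corollaries 1 and 4 of the cited work on iterated mutations.

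The rigidity principle you invoke in Step 3 is not available. Erdmann and Skowro\'nski's socle deformed weighted surface algebras are built on triangulation quivers with border loops, with the relation at a border loop perturbed by a socle term. They are symmetric, tame, periodic of period $4$, and socle equivalent to the undeformed weighted surface algebra. Yet in characteristic $2$ they need not be isomorphic to it, which is exactly why the statement you are proving separates socle equivalence from isomorphism. So knowing that $\mu_i^2(\La)$ is TSP4 and socle equivalent to $\La$ cannot by itself give $\mu_i^2(\La)\cong\La$.

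What you would have to exploit is where the defect sits. Your map $\Phi\colon\La\to\End_{\cK^b_\La}(T'')$ is strictly multiplicative except on products landing in $e_i\La e_i$. There it is multiplicative only modulo $K\sigma$, where $\sigma=(s_i,0,0)$ and $s_i$ is the socle endomorphism of $P_i$. You then need one of two things: an argument that, for a weighted surface algebra and a vertex $i$ without a loop (in particular without a border loop), this socle cocycle can be removed by re-choosing the lifts $\phi_x$ of $x\in e_i\La e_i$ (equivalently, by a change of generators); or an explicit presentation of $\mu_i^2(\La)$. Neither is in your proposal.

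\textbf{Step 2 also needs repair, although its conclusion is right.} The lifting ambiguity cannot be controlled by a path-length grading, since these algebras are not graded: the relations $\alpha f(\alpha)-c_{\ba}A_{\ba}$ are inhomogeneous. Use exactness of the periodic sequence instead. Let $\psi$ be a chain endomorphism of $T''_i=(P_i\xrightarrow{d_3}P_i^-\xrightarrow{d_2}P_i^+)$ with $d_1\psi^0=0$. Then $\psi^0$ factors through $d_2$. After subtracting two homotopies, the second using $\ker d_2=\ima d_3$, $\psi$ becomes $(\psi^{-2},0,0)$ with $\ima\psi^{-2}\subseteq\ker d_3=\soc P_i$. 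Hence the kernel of $\End_{\cK^b_\La}(T''_i)\to\End(\rad P_i)$ is exactly $K\sigma$, and $K\sigma$ lies in the socle of $\End(T'')$.

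For $j\neq i$, the componentwise maps are bijections:
\[
e_j\La e_i\to\Hom(T''_i,P_j),\quad u\mapsto ud_1,
\qquad
e_i\La e_j\to\Hom(P_j,T''_i),\quad v\mapsto \text{a lift of } v \text{ along } d_1 .
\]
Note the lift is along $d_1$, not $d_3$ as you wrote. In addition, $e_i\La e_i/Ks_i\to\End(T''_i)/K\sigma$ is bijective. This supplies the injectivity that a bare dimension count does not.

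\textbf{Step 1 needs one further check.} You should verify that $Q_{\mu_i(\La)}$ has no loop at $i$. This holds because a radical endomorphism of $T_i$ is homotopic to one factoring through $T_i\to P_i^+$, as in part (1) of the proof of Proposition \ref{prop:6.1}. Without this, the arrow-based definition of the second mutation need not coincide with the minimal left approximation you use.
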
 \smallskip 

The idea of using the above theorem is as follows. For a fixed quiver $Q$ we want to classify 
all TSP4 algebras $\La=KQ/I$ given by $Q=Q_\La$. Take any $\La=KQ/I$ which is TSP4, and let $i\in Q_0$ be a vertex 
without loop, so that the mutation $\La'=\mu_i(\La)$ is again TSP4 and $\La\cong \mu_i^2(\La)=\mu_i(\La')$ is 
a mutation of $\La'$. We will use appropriate vertex $i$ to get $\La'=KQ'/I'$ with $Q'=Q_{\La'}$ being 
$2$-regular or biregular, where the classifications are known (see Theorem \ref{class:bireg}). Then we know all TSP4 
structures on $Q'$, and we can `lift' them to TSP4 structures on $Q$ via mutation, modulo socle equivalence (see Section \ref{sec:algstr} for details). 

\section{Algebras of generalized quaternion type}\label{sec:3} 

In this section we recall some preparatory results on GQT algebras, needed in the classification. 

\subsection{General properties}\label{subs:3.1} Fix a GQT algebra $\La=KQ/I$. We have the following consequences of infinite type 
\cite[see Lemmas 2.1 and 2.4]{EHS1}. 

\begin{lemma}\label{lem:3.1}  For any vertex $i\in Q_0$, we have $\hat{p}_i\neq p_i$, where $\hat{p}_i$ denotes 
the vector $p_i^-=p_i^+$ corresponding to vertex $i$. \end{lemma}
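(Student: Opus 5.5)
We argue by contradiction and assume $\hat{p}_i=p_i$ for some vertex $i\in Q_0$. The idea is to show that $S_i$ would then have a $4$-periodic resolution too small for an algebra of infinite type. From Section~\ref{sec:2}, the exact sequence
$$0\to S_i\to P_i\to P_i^-\to P_i^+\to P_i\to S_i\to 0$$
gives $\dim_K\Omega^2(S_i)=\dim P_i^+ - \dim \Omega(S_i) = \dim P_i^+ - (\dim P_i -1)$. In terms of dimension vectors this reads $\underline{\dim}\,\Omega^2(S_i)=\hat p_i - p_i + e_i$. Under our assumption this is just $e_i$, the dimension vector of $S_i$. Since $\Omega^2(S_i)$ is a module whose dimension vector is $e_i$, we get $\Omega^2(S_i)\cong S_i$. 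So $S_i$ is periodic of period dividing $2$, not exactly $4$.

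Next we locate $\Omega^2(S_i)$ inside $P_i^+$. Since $\Omega^2(S_i)\cong S_i$ is simple, the map $d_2\colon P_i^-\to P_i^+$ has simple image. As $\Omega^2(S_i)$ is the kernel of $d_1\colon P_i^+\to P_i$, this kernel is a simple submodule of $P_i^+$. Dually, $\Omega^{-1}$ of $\Omega(S_i)$ is $S_i$, so $\Omega(S_i)=\rad P_i$ satisfies $\Omega(\rad P_i)\cong S_i$, i.e. $\Omega^2 S_i\cong S_i$.

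Now we use that $\La$ is symmetric and $\Omega(S_i)\cong\Omega^{-1}(S_i)$. The projective cover of $\rad P_i$ is $P_i^+$, and its kernel is simple, isomorphic to $S_i$. Since $\La$ is symmetric, the socle of each $P_j$ is $S_j$. A simple submodule of $P_i^+=\bigoplus_{i\to j}P_j$ that is isomorphic to $S_i$ must therefore lie in the socle of a summand $P_j$ with $j=i$. This forces $i^+$ to contain a loop at $i$. Moreover, $\ker d_1$ being simple forces $P_i^+$ to be indecomposable, because for two or more summands the kernel of a projective cover of the radical would have larger socle contributions. Hence $i^+$ consists of a single loop and $P_i^+=P_i$. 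The quiver is connected, so this gives $Q_0=\{i\}$ with one loop. This contradicts $n\geqslant 3$; more intrinsically, the local algebra $K[x]/(x^m)$ is of finite type, contradicting infinite type (Remark~\ref{der_inf}).

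The main obstacle is the indecomposability step, namely passing from "$\ker(P_i^+\to \rad P_i)$ simple" to "$P_i^+$ has one summand". The plan there is to compare tops and socles. Because the algebra is symmetric, $\Omega^2$ preserves the number of indecomposable summands up to projectives, and $\rad P_i/\soc P_i$ decomposes accordingly. If that route stalls, an alternative is to use that $\Omega^2 S_i\cong S_i$ gives $\rad P_i/S_i\cong\Omega^{-1}(S_i)/\ldots$, which makes $P_i$ uniserial. A uniserial non-simple projective in an algebra of infinite type, combined with $2$-periodicity of $S_i$, should lead to a Nakayama-type component and hence to finite type.
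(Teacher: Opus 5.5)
Your dimension count is correct. Exactness of $0\to\Omega^2(S_i)\to P_i^+\to P_i\to S_i\to 0$ gives $\underline{\dim}\,\Omega^2(S_i)=\hat p_i-p_i+\underline{\dim}\,S_i$, so $\hat p_i=p_i$ forces $\Omega^2(S_i)\cong S_i$. In the paper's setting this already finishes the proof: for a GQT algebra, period means minimal period and every simple has period exactly $4$. You actually write this down (``not exactly $4$'') and then walk past it.

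Your continuation is only needed if you assume merely $\Omega^4(S_i)\cong S_i$. That is the TSP4 situation, where infinite type genuinely enters. The paper gives no argument of its own, only a reference, and calls the lemma a consequence of infinite type. In that weaker setting your argument has a real gap exactly at the step you flag. The sentence ``for two or more summands the kernel of a projective cover of the radical would have larger socle contributions'' just restates what must be proved. Neither fallback is an argument either. That $\Omega$ preserves the number of non-projective indecomposable summands says nothing about how many summands the projective cover $P_i^+$ has. The uniserial/Nakayama-component route is speculation.

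The missing step takes one line, using the ingredient $\Omega(S_i)\cong\Omega^{-1}(S_i)$, which you state but never use. Since $\La$ is symmetric, $\soc P_i\cong S_i$ and $P_i$ is the injective envelope of $S_i$. Hence $\Omega^{-1}(S_i)=P_i/\soc P_i$, a non-zero quotient of the local module $P_i$. So $\rad P_i=\Omega(S_i)\cong\Omega^{-1}(S_i)$ has simple top $S_i$, which means $P_i^+\cong P_i$ and $i^+$ consists of a single loop. Alternatively, Lemma~\ref{lem:exseq} applied to the projective cover $P_i^+\to\rad P_i$ of the indecomposable non-projective module $\rad P_i$ gives $\soc P_i^+\subseteq\Omega^2(S_i)\cong S_i$.

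Your passage from ``single loop'' to $Q_0=\{i\}$ also needs one more line, via $p_i^-=p_i^+$. The loop already puts a summand $P_i$ into $P_i^-$, so any further arrow $j\to i$ with $j\neq i$ would give $p_i^-\geqslant p_i+p_j>p_i=p_i^+$. Only then do you get $\La\cong K[x]/(x^m)$, which is of finite type, and hence the desired contradiction.
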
 

\begin{lemma}\label{lem:3.2} There is no arrow $\alpha: i\to j$ with $i^+ = \{ \alpha\} = j^-$. \end{lemma}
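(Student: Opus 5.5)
The plan is to argue by contradiction: assuming such an arrow $\alpha$ exists, I will compute the syzygies of $S_i$ explicitly and show that $4$-periodicity then forces $\La$ to be representation-finite.

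Suppose $\alpha:i\to j$ satisfies $i^+=\{\alpha\}=j^-$. Since $\alpha$ is the only arrow starting at $i$, we have $e_iJ=\alpha\La$. Hence $\Omega(S_i)=\alpha\La$, and the projective cover of $\Omega(S_i)$ is $P_j\to P_i$, $x\mapsto \alpha x$. Its kernel is therefore
$$\Omega^2(S_i)\cong r(\alpha)=\{x\in e_j\La \mid \alpha x=0\}.$$

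The key step is to identify $r(\alpha)$ with $\soc(P_j)$, using that $\alpha$ is the only arrow ending at $j$. From $j^-=\{\alpha\}$ we get $Je_j=\La\alpha$. Let $x\in r(\alpha)$. Then
$$Jx=Je_jx=\La\alpha x=0,$$
so $x$ lies in the left socle of $\La$. Since $\La$ is symmetric, its left and right socles coincide, so $x\in\soc(\La)\cap e_j\La=\soc(P_j)$. Conversely, $J$ annihilates the socle on both sides, so $\alpha\,\soc(P_j)=0$. Thus $r(\alpha)=\soc(P_j)\cong S_j$; this is one-dimensional because $P_j\cong I_j$. Consequently
$$\Omega^2(S_i)\cong P_j/\soc(P_j)\cong \Omega^{-1}(S_j),$$
and hence $\Omega^3(S_i)\cong S_j$. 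I expect this identification to be the main point to check carefully: it is exactly where symmetry (equality of the two socles, and $\soc P_j\cong S_j$) and the hypothesis $j^-=\{\alpha\}$ are used.

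Now apply periodicity. Since $S_i$ is $4$-periodic, $\Omega(S_j)\cong\Omega^4(S_i)\cong S_i$. Hence $e_jJ$ is simple, so $P_j$ is uniserial of Loewy length $2$. Its top is $S_j$ and its socle is $S_i$, but by symmetry the socle of $P_j$ is $S_j$, so $S_i\cong S_j$ and $i=j$.

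Then $\alpha$ is a loop at $i$ with $i^+=i^-=\{\alpha\}$, so no other arrows touch $i$. By connectedness $Q$ is a single vertex with one loop, and $\La\cong K[x]/(x^m)$. This algebra is representation-finite, contradicting the infinite type of a GQT algebra.
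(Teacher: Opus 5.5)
There is an off-by-one error at exactly the step you singled out as the one to check. You correctly prove $\Omega^2(S_i)\cong r(\alpha)$ and $r(\alpha)=\soc(P_j)\cong S_j$, and together these give $\Omega^2(S_i)\cong S_j$. The module $P_j/\soc(P_j)=P_j/r(\alpha)\cong\alpha\La$ is $\Omega(S_i)$, not $\Omega^2(S_i)$. So what you have actually shown is $\Omega(S_i)\cong\Omega^{-1}(S_j)$ and $\Omega^2(S_i)\cong S_j$.

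Hence $4$-periodicity gives only $\Omega^2(S_j)\cong S_i$, not $\Omega(S_j)\cong S_i$. The steps that follow ($e_jJ$ simple, $P_j$ of Loewy length $2$, hence $i=j$) have no basis. They are in fact false under the hypotheses used so far, namely symmetry and $4$-periodicity of $S_i$. Take the symmetric Nakayama algebra on $1\rightleftarrows 2$ with arrows $\alpha:1\to 2$, $\beta:2\to 1$ and relations $(\alpha\beta)^m\alpha=0=(\beta\alpha)^m\beta$, $m\geqslant 1$. It satisfies $1^+=\{\alpha\}=2^-$, $\Omega^2(S_1)\cong S_2$ and $\Omega^4(S_1)\cong S_1$. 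Yet $\Omega^3(S_1)=\beta\La$ has length $2m\geqslant 2$, and $1\neq 2$. So your argument does not exclude the case $i\neq j$.

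The repair is short. From $\Omega^2(S_i)\cong S_j$ and period $4$ you get $\Omega^{-2}(S_i)\cong\Omega^{2}(S_i)\cong S_j$. Hence $\Omega^{-1}(S_i)\cong\Omega(S_j)$, i.e. $P_i/\soc(P_i)\cong\rad P_j$, and both sides are nonzero.

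Comparing tops shows that $j^+$ consists of a single arrow $\beta:j\to i$. For comparing socles, use that for symmetric $\La$ the socle of $P_i/\soc(P_i)$ is $\bigoplus_{k\to i}S_k$, one summand per arrow ending at $i$. Since $\soc(\rad P_j)=\soc(P_j)\cong S_j$, this gives $i^-=\{\beta\}$. Equivalently, $\Omega^2(S_i)\cong S_j$ has projective cover $P_j$, so the middle term $P_i^-$ of $0\to S_i\to P_i\to P_i^-\to P_i^+\to P_i\to S_i\to 0$ is $P_j$.

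Thus no arrows other than $\alpha,\beta$ touch $i$ or $j$. By connectedness, $Q$ is a single loop (if $i=j$) or the $2$-cycle $i\rightleftarrows j$. In both cases $\La$ is a self-injective Nakayama algebra, hence representation-finite, contradicting infinite type. With this replacement your proof is complete and self-contained; the paper does not reprove the lemma but quotes it from \cite{EHS1}.
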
 

The next result gives a useful tool for constructing triangles in $Q$ (i.e. cycles of length $3$) induced from 
relations defining $\La=KQ/I$ \cite[see Proposition 4.1]{EHS1}. 

\begin{lemma}\label{lem:3.3} Assume $\alpha: i\to j$ and $\beta: j\to k$ are arrows such that $\alpha\beta \prec I$. 
Then there is an arrow in $Q$ from $k$ to $i$, so that  $\alpha$ and $\beta$ are part of a triangle in $Q$. \end{lemma}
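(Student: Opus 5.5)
We prove Lemma~\ref{lem:3.3} from the minimal projective resolution of $S_k$, using that $S_k$ is $4$-periodic. Recall the exact sequence
$$0\to S_k\to P_k \stackrel{d_3}\to P_k^- \stackrel{d_2}\to P_k^+ \stackrel{d_1}\to P_k \to S_k\to 0,$$
where $d_1$ is given by the arrows starting at $k$ and $d_3$ by the arrows ending at $k$. The summands of $P_k^-$ are $P_j$ for the arrows $j\to k$, so the arrow $\beta$ gives a summand $P_j$ of $P_k^-$. The strategy is to use minimal relations ending in $\beta$ to read off the second differential $d_2$ at this summand.

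\textbf{Step 1: $d_2$ encodes the minimal relations.} Dualising via symmetricity, or applying the standard description at the other end of the resolution, the component of $d_2$ from the summand $P_j$ of $P_k^-$ is given by a relation. Concretely, $\Omega^2(S_k)$ is generated modulo its radical by elements corresponding to a minimal set of relations $\rho$ with $\rho=\rho e_k$ (equivalently, by symmetry, relations ending at $k$). These are in bijection with the arrows $j\to k$, since $P_k^-\to\Omega^2(S_k)$ is a projective cover. It is cleaner to apply this to $S_i$: the relation $\alpha\beta$ starts at $i$, and the resolution of $S_i$ has $P_i^-$ as the projective cover of $\Omega^2(S_i)$.

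\textbf{Step 2: produce the arrow $k\to i$.} Since $\alpha\beta\prec I$, some minimal relation $\rho\in e_i I e_k$ contains $\alpha\beta$ as a summand. In the projective resolution of $S_i$, $\Omega^2(S_i)\subseteq P_i^+=\bigoplus_{\gamma\in i^+}P_{t(\gamma)}$ is the kernel of $d_1=(\gamma)_\gamma$. A minimal relation $\rho=\sum_\gamma \gamma\rho_\gamma$ gives an element $(\rho_\gamma)_\gamma\in\Omega^2(S_i)e_k$ that is not in $\Omega^2(S_i)J$, because $\rho$ is a minimal generator. Hence it is a top element of $\Omega^2(S_i)$ lying at vertex $k$. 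Therefore $S_k$ is a direct summand of the top of $\Omega^2(S_i)$, so $P_k$ is a summand of the projective cover $P_i^-=\bigoplus_{l\to i}P_l$. This forces an arrow $k\to i$, and together with $\alpha$ and $\beta$ it closes a triangle.

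\textbf{Main obstacle.} The delicate point is the claim that a minimal generator of $I$ yields an element of $\Omega^2(S_i)$ outside its radical. This requires checking that if $(\rho_\gamma)$ lay in $\Omega^2(S_i)J$, then $\rho$ would be a combination of other relations multiplied by paths, contradicting minimality. The contradiction comes from identifying $\Omega^2(S_i)$ with $(e_iI+e_iR_Q^2)/e_iIR_Q$, the standard identification, as in \cite[Lemma 4.1, Proposition 4.3]{AGQT}. We also need $P_i^-$ to be the projective cover of $\Omega^2(S_i)$, not merely a projective surjecting onto it. This holds because $P_i^-\cong P_i^+$ up to dimension vectors by the periodicity equation $p_i^-=p_i^+$, and because the cited description gives $d_3$ by arrows ending at $i$, so $\ker d_2=\Omega^3(S_i)$ has simple top $S_i$ and the resolution is minimal.
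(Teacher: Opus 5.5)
Your Step 2 is correct and is the standard argument behind the result the paper cites, \cite[Proposition 4.1]{EHS1}. A relation in $e_iIe_k$ containing $\alpha\beta$ gives a top element of $\Omega^2(S_i)$ at vertex $k$. Since $d_3$ is given by arrows, $\ima d_3\subseteq\rad P_i^-$, so $P_i^-=\bigoplus_{l\to i}P_l$ is the projective cover of $\Omega^2(S_i)$, and this forces an arrow $k\to i$.

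You should make three fixes:
\begin{itemize}
\item Delete the muddled Step 1. The top of $\Omega^2(S_k)$ is governed by relations starting at $k$, not ending at $k$, and the step is never used.
\item The identification is $\Omega^2(S_i)\cong e_iI/e_iR_QI$, via $\rho=\sum_\gamma\gamma\rho_\gamma\mapsto(\rho_\gamma)_\gamma$. Then $(\rho_\gamma)_\gamma\in\Omega^2(S_i)J$ would give $\rho\in e_iR_QI+e_iIR_Q\subseteq R_Q^3$. This is impossible because $\rho$ contains the length-two path $\alpha\beta$, so you do not even need minimality.
\item The equality $p_i^-=p_i^+$ plays no role in showing $P_i^-$ is the projective cover.
\end{itemize}
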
 \medskip 

The following result from \cite[see Lemma 4.3]{EHS1} shows how relations propagate in triangles. 

\begin{lemma}\label{lem:3.4} Assume $Q$ contains  a triangle
        \[
 \xymatrix@R=1.pc@C=1.8pc{
%  \xymatrix@C=.8pc{
    x
    \ar[rr]^{\gamma}
    && i
    \ar@<.35ex>[ld]^{\alpha}
    \\
    & j
    \ar@<.35ex>[lu]^{\beta}
  }
\]
with $\alpha\beta\prec I$. If $\gamma$ is the unique arrow $x\to i$, then $\gamma\alpha\prec I$ and 
$\beta\gamma\prec I$. If we have double arrows $\gamma,\bar{\gamma}: x\to i$, then there is one  
$\delta\in\{\gamma,\bar{\gamma}\}$ such that $\delta\alpha\prec I$ and $\beta\delta\prec I$.

\end{lemma}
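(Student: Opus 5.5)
The plan is to read relations off the minimal projective resolution of simple modules, using the $4$-periodic exact sequence
$$0\to S_i\to P_i \stackrel{d_3}\to  P_i^- \stackrel{d_2}\to  P_i^+ \stackrel{d_1}\to P_i \to S_i\to 0$$
recalled in Section~\ref{sec:2}, with $d_1$ given by the arrows starting at $i$ and $d_3$ by the arrows ending at $i$. Since $P_i^-=\bigoplus_{y\to i}P_y$, the minimal generators of $\Omega^2(S_i)=\ima d_2\subseteq P_i^+$ are indexed by the arrows ending at $i$. In path language, the generator indexed by an arrow $\delta:y\to i$ is a minimal relation $\rho_\delta=\sum_{\eta\in i^+}\eta\, u_{\eta,\delta}\in e_iIe_y$, a relation from $i$ to $y$. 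Moreover, every minimal relation starting at $i$ is, modulo $J I+IJ$, a combination of the $\rho_\delta$.

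\textbf{Step 1 (locating $\alpha\beta$).} The path $\alpha\beta$ runs from $i$ to $x$. So $\alpha\beta\prec I$ forces $\alpha\beta$ to occur with nonzero coefficient in $\rho_\delta$ for some arrow $\delta:x\to i$. If $\gamma$ is the unique arrow $x\to i$, this is $\rho_\gamma$. In the double-arrow case, I would choose $\delta\in\{\gamma,\bar\gamma\}$ after a change of basis of the arrows $x\to i$, so that $\alpha\beta$ appears in $\rho_\delta$. In matrix terms, the entry of $d_2$ in position $(\delta,\alpha)$, which is a map $P_x\to P_j$, has nonzero component along $\beta$, i.e. it lies in $\beta+J^2$ up to a scalar.

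\textbf{Step 2 ($\beta\delta\prec I$).} I would use exactness at $P_i^-$, i.e. $\ker d_2=\ima d_3$ with $d_3=[\delta']_{\delta'\in i^-}$, together with the fact that $\ima d_2\cong \Omega^2(S_i)$. Alternatively, and more cleanly, I would use symmetry of $\La$ through the non-degenerate form. The key identity is that the $(\delta,\alpha)$-entry of $d_2$ for $S_i$ reappears as an entry of $d_2$ for $S_j$: the generator of $\Omega^2(S_j)$ indexed by the arrow $\alpha:i\to j$ is a relation $\rho'_\alpha$ from $j$ to $i$, and its component through the arrow $\beta\in j^+$ is $\beta\delta$ up to higher terms. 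To justify this identity I would argue via $\Omega^3(S_i)\cong\Omega^{-1}(S_i)$ and the duality $D(\La e_i)\cong e_i\La$. Applying $\Hom_\La(-,\La)$ to the resolution of $S_i$ yields (up to the Nakayama functor, trivial here) the resolution of the left simple, with transposed matrices. Comparing this with the resolutions of $S_j$ and $S_x$ at the level of first-order terms shows that the linear-in-arrows parts of the rows of the $d_2$ matrices form a cyclically symmetric, potential-like datum. Hence $\beta\delta\prec I$, and by the same mechanism, applied at $S_x$ with arrow $\beta:j\to x$, we get $\delta\alpha\prec I$.

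The main obstacle is making Step 2 rigorous: showing that the entries of the second differentials of the various simples are compatible, rather than merely asserting a ``potential.'' I would first attempt it directly. Take the relation $\rho_\delta=\alpha\beta+(\text{other terms})$ and left-multiply by $\delta$. The product $\delta\rho_\delta$ is a cycle at $x$ lying in $I$. Use exactness of the sequence for $S_x$ at $P_x^+$, namely that $\ker d_1=\ima d_2$ there, to express $\delta\alpha\beta$ through the minimal generators of $\Omega^2(S_x)$, which are indexed by arrows ending at $x$, in particular by $\beta$. Minimality of the resolution then forces the generator indexed by $\beta$ to contain $\delta\alpha$ as a summand, giving $\delta\alpha\prec I$. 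The symmetric computation $\rho_\delta\delta$, a cycle at $i$, handled via the resolution of $S_j$, yields $\beta\delta\prec I$. In the double-arrow case, the only extra care is that the same $\delta$ works for both conclusions; this holds because both arguments start from the single relation $\rho_\delta$ chosen in Step 1.
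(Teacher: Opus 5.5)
The paper gives no proof of this lemma; it is quoted from \cite[Lemma 4.3]{EHS1}. I therefore judge your proposal on its own merits. Step~1 is fine, but Step~2 has a genuine gap.

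Your ``direct'' argument does not work. The product $\delta\rho_\delta$ lies in $R_QI$, so the element of $P_x^+$ it defines has $\delta$-component $\rho_\delta+I=0$. It is the zero element of $\ker d_1$ for $S_x$, and exactness there gives no information. Moreover, $\delta\alpha\beta$ has length $3$, whereas $\delta\alpha\prec I$ is a statement about quadratic parts of elements of $I$, and these are unaffected by $R_QI+IR_Q\subseteq R_Q^3$. The same objection applies to $\rho_\delta\delta$. The ``cyclically symmetric, potential-like datum'' is asserted, not derived, and it is more than the lemma needs. So neither conclusion is actually established. In the double-arrow case, the claim that one $\delta$ serves for both conclusions ``because both arguments start from $\rho_\delta$'' is unsupported.

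What is missing is elementary.

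For $\beta\delta\prec I$ you only need $d_2d_3=0$ for $S_i$. The row of $\alpha$ gives $\sum_{\varepsilon\in i^-}u_{\alpha,\varepsilon}\varepsilon=0$ in $\La$. Since the $\varepsilon$ are distinct arrows, the quadratic part of this element of $I$ contains $\beta\delta$ with the same nonzero coefficient with which $\alpha\beta$ occurs in $\rho_\delta$. No potential and no $S_j$ are needed.

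For $\delta\alpha\prec I$, apply your $\Hom_\La(-,\La)$ remark at the vertex $x$. Because $\La$ is self-injective, it turns the sequence of $S_x$ into the minimal resolution of the left simple at $x$, with transposed matrices. Write $d_2=(v_{a,b})_{a\in x^+,\,b\in x^-}$ for $S_x$. Then the relations \emph{ending} at $x$ are spanned, modulo $R_QI+IR_Q$, by the rows $\sum_{b\in x^-}v_{a,b}\,b$. Since $\alpha\beta\prec I$ runs from $i$ to $x$, it occurs in the row of some arrow $\delta'\colon x\to i$; that is, $v_{\delta',\beta}$ has nonzero $\alpha$-component. The $\beta$-column of $d_1d_2=0$, namely $\sum_{a\in x^+}a\,v_{a,\beta}=0$, then gives $\delta'\alpha\prec I$.

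When $\gamma$ is the unique arrow $x\to i$, $\delta=\delta'=\gamma$ and you are done. With double arrows, $\delta$ and $\delta'$ are selected by two different matrices. The coefficients of $\beta\gamma,\beta\bar\gamma$ in the $\alpha$-row relation of $S_i$ form a nonzero pair $(c,\bar c)$. The coefficients of $\gamma\alpha,\bar\gamma\alpha$ in the $\beta$-column relation of $S_x$ form a nonzero pair $(e,\bar e)$. Nothing in your argument makes these pairs proportional. You must either re-choose the pair of arrows $x\to i$ so that one of them has nonzero coefficient in both relations (possible since $K$ is infinite), or actually prove the compatibility you attributed to a potential.
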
 \smallskip 

Note that, though some of the above results may be formulated in the original paper \cite{EHS1} in 
a bit different way (for biserial algebras), their proofs can be easily rewritten for arbitrary GQT algebra. \medskip 

In biregular case, we know more about position of $1$-vertices in $Q$. Namely, we have the following 
result proved in a separate paper \cite{EHS2}. 

\begin{thm}\label{bireg1} If the Gabriel quiver $Q=Q_\La$ is biregular, then every 
$1$-regular vertex $i$ in $Q$ is a vertex $\bullet$ in a block of the form 
$$\xymatrix@R=.7ex{&&& & \bullet\ar[rd] & \\ 
V_1: \qquad \circ\ar@<+.4ex>[r]&\bullet\ar@<+.4ex>[l]&\mbox{ or }\qquad V_2:& \circ\ar[ru] && \circ\ar[ld]  \\ 
&&& &\ar[lu]\bullet&}$$
\end{thm}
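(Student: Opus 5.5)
Let $i$ be a $1$-regular vertex, with unique arrows $\alpha: j\to i$ and $\beta: i\to k$. The plan is to read off the whole local structure around $i$ from the $4$-periodic resolution of $S_i$. Since $P_i^-=P_j$ and $P_i^+=P_k$, that resolution is
$$0\to S_i\to P_i\stackrel{\alpha}{\to}P_j\stackrel{d_2}{\to}P_k\stackrel{\beta}{\to}P_i\to S_i\to 0 .$$

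\emph{Preliminary consequences.}
\begin{enumerate}
\item Comparing dimension vectors gives $p_j=p_k$.
\item Lemma \ref{lem:3.2}, applied to $\beta$ (with $i^+=\{\beta\}$), gives $k^-\neq\{\beta\}$. Applied to $\alpha$ (with $i^-=\{\alpha\}$), it gives $j^+\neq\{\alpha\}$. By biregularity, both $j$ and $k$ are therefore $2$-regular vertices.
\item Since the top of $\Omega^2(S_i)$ is $S_j$, the map $d_2$ is left multiplication by a single element $\gamma\in e_kJe_j$. Exactness then gives the relations $\beta\gamma=0$ and $\gamma\alpha=0$. It also gives $\ker(\gamma\cdot -)=\alpha\La$ and $\ker(\beta\cdot -)=\gamma\La$.
\end{enumerate}
Note that $\gamma\neq 0$: otherwise $\Omega^2(S_i)=0$, which is impossible in infinite type (Remark \ref{der_inf}).

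\emph{Case split according to $\gamma$.} We write $\gamma$ modulo $J^2$ and distinguish three cases.

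\textbf{Case $k=j$.} Then $\alpha,\beta$ form a $2$-cycle $i\rightleftarrows j$. Since $j$ is $2$-regular, it has exactly one further incoming and one further outgoing arrow. This is exactly block $V_1$ with $j$ as the vertex $\circ$. It remains to check that $j$ has no extra structure forcing it into the block (e.g. no loop joined in a way that breaks the glueing). I would settle this using the relation $\beta\gamma=0$, where $\gamma\in e_jJe_j$, together with Lemma \ref{lem:3.1}, which says $\hat p_i\neq p_i$.

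\textbf{Case $k\neq j$ and $\gamma$ has a nonzero arrow component $k\to j$.} Then $j\to i\to k\to j$ is a triangle. Lemma \ref{lem:3.3}, applied to $\gamma\alpha\prec I$, and Lemma \ref{lem:3.4} propagate the relations around the triangle, so that $\alpha\beta$, $\beta\gamma$ and $\gamma\alpha$ all vanish. I would show this contradicts $\ker(\gamma\cdot-)=\alpha\La$. Concretely, $P_i$ would be uniserial of length $2$, so $p_i$ is small. This forces $\hat p_i=p_j=p_k$ to be determined by $p_i$ in a way excluded by Lemma \ref{lem:3.1}, or otherwise makes $\La$ of finite type. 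The intended conclusion is that this case cannot occur.

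\textbf{Case $k\neq j$ and $\gamma\in J^2$.} Then $\gamma$ is a combination of paths $k\to\cdots\to j$ of length at least $2$. The goal is to show that $\gamma$ can be chosen to be a single path $\delta\varepsilon$ with $k\stackrel{\delta}{\to}i'\stackrel{\varepsilon}{\to}j$, where $i'$ is again a $1$-vertex. Together with $\alpha,\beta$ this gives exactly block $V_2$, with $\circ$-vertices $j,k$ and $\bullet$-vertices $i,i'$.

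To get this, I would use that both $k$ and $j$ are $2$-regular. Let $\beta,\beta'$ be the two arrows into $k$ and $\alpha,\alpha'$ the two arrows out of $j$. The condition $\ker(\beta\cdot-)=\gamma\La$ means that $\gamma$ generates the annihilator of $\beta$ in $P_k$. Using the resolutions of $S_k$ and $S_j$ (again $4$-periodic, with $d_1$ and $d_3$ given by arrows), I would locate the generators of $\Omega^2(S_k)$ and $\Omega^2(S_j)$. Matching these against the syzygy computation for $S_i$ should identify a vertex $i'$ through which every summand of $\gamma$ passes.

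To see that $i'$ is $1$-regular, I would use that the equality $p_j=p_k$ must also hold symmetrically for $i'$. Then Lemma \ref{lem:3.2} together with the tameness restrictions (no subquiver $K_2^{\pm}$, no subquiver of type $K_2^*$) forbids a second arrow at $i'$.

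\emph{Main obstacle.} I expect the last case to be the hardest: showing that $\gamma$ passes through a single intermediate vertex, and that this vertex is again $1$-regular. Here the biregular hypothesis must be used essentially, and one has to control mixed combinations of paths of different lengths. My first attempt would be an induction on the length of the shortest path summand of $\gamma$, using the relation $\beta\gamma=0$ and the symmetric form on $\La$ (so that $\soc P_k$ is reached by a unique cycle through $k$) to rule out summands of length at least $3$.
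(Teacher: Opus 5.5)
The paper gives no proof of Theorem~\ref{bireg1}; it is quoted from \cite{EHS2}, so your proposal has to stand on its own. Your preliminary steps are correct: the resolution of $S_i$, $p_j=p_k$, the $2$-regularity of $j$ and $k$ via Lemma~\ref{lem:3.2}, and $d_2=\gamma\cdot-$ with $\beta\gamma=0=\gamma\alpha$. The case $k=j$ also follows at once from the definition of a block, and nothing further needs checking there. The two remaining cases carry all the content, and neither is proved.

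\textbf{Case where $\gamma$ has an arrow component $\gamma_0\colon k\to j$.} Lemma~\ref{lem:3.4} only gives $\alpha\beta,\beta\gamma_0,\gamma_0\alpha\prec I$; it does not give vanishing. If you absorb the $J^2$-part of $\gamma$ into the arrow, you do get $\beta\gamma_0=\gamma_0\alpha=0$. But $\alpha\beta$ can never vanish. Since $i^-=\{\alpha\}$ we have $J\beta=\La\alpha\beta$, so $\alpha\beta=0$ would put $\beta$ in the left socle of $\La$. For a symmetric algebra this equals the right socle, so $\beta\in\soc(e_i\La)\subseteq e_i\La e_i$, which is impossible since $k\neq i$. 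The same socle argument shows that $P_i$ can never be uniserial of length $2$. In fact $\rad P_i=\beta\La\cong e_k\La/\gamma\La$ has no reason to be small.

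Lemma~\ref{lem:3.1} does not help either. Let $\alpha'\in j^+$, $\lambda\in j^-$, $\beta'\in k^-$, $\kappa\in k^+$ be the second arrows. The available identities $p_j=p_k$, $p_i+p_{t(\alpha')}=p_k+p_{s(\lambda)}$ and $p_i+p_{s(\beta')}=p_j+p_{t(\kappa)}$ do not contradict $p_j\neq p_i$. So you have no argument excluding a triangle $j\to i\to k\to j$ whose arrow $k\to j$ generates $\Omega^2(S_i)$. This needs a genuine structural argument. Compare the first lemma of Section~\ref{subs:6.2}: there exactly this situation with $j=k$ (the loop generating $\Omega^2(S_2)$) is shown to force a special biserial algebra without bands, hence finite type, which for $n\geqslant 3$ is excluded by Remark~\ref{der_inf}.

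\textbf{Case $\gamma\in J^2$.} This case contains the whole $V_2$ situation, but what you give is only a programme. You would need to show three things, and none is argued:
\begin{enumerate}
\item $\gamma\notin J^3$;
\item the length-$2$ part of $\gamma$ runs through a single vertex $i'$;
\item $i'$ is a $1$-vertex.
\end{enumerate}
Your argument for (3) is circular. The equality $p_{i'}^-=p_{i'}^+$ yields $p_k=p_j$ only once you already know $i'^-=\{\delta\}$ and $i'^+=\{\varepsilon\}$. For a $2$-regular $i'$ it reads $p_k+p_x=p_j+p_y$, which is compatible with $p_j=p_k$. The restrictions $K_2^{\pm}$ and $K_2^*$ concern double arrows, so they say nothing about a $2$-regular $i'$ with distinct neighbours. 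Symmetry also does not give a unique cycle through $k$ reaching $\soc P_k$.

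Your target is also too strong. In a weighted surface algebra with a block $V_2$, the generator is $\delta\varepsilon$ minus a longer path, so the realistic goal is $\gamma\equiv\delta\varepsilon$ modulo $J^3$, not $\gamma=\delta\varepsilon$.

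What $\gamma\in J^2$ does give you for free is the following:
\begin{itemize}
\item $\beta\kappa\nprec I$ for both arrows $\kappa\in k^+$;
\item dually, using $\{y\in\La e_j : y\alpha=0\}=\La\gamma$, also $\lambda\alpha\nprec I$ for both arrows $\lambda\in j^-$.
\end{itemize}
The missing ingredient is a tameness argument in the style of Propositions~\ref{prop:typeN} and~\ref{prop:biser}. Starting from this configuration, you must build wild subcategories in covering unless the neighbourhood of $i$ is a block $V_2$, controlling longer relations with Lemma~\ref{lem:3.3} and \cite[Proposition 4.5]{EHS1}. As it stands, tameness enters your argument only through the double-arrow restrictions, whereas in all the analogous results of Section~\ref{subs:3.3} it is the decisive ingredient.
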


\subsection{Weighted surface algebras and generalizations}\label{subs:3.2} Now, let us briefly recall the construction of weighted 
surface algebras, known class of examples of TSP4 (or GQT) algebras. By a {\it triangulation 
quiver}, we mean a ($2$-regular) quiver $Q$ which is a glueing of finite number of blocks of types I-III 
(see the Introduction); for an equivalent definition we refer to \cite[Definition 4.2]{WSA}. Then the set of 
arrows of $Q$ admits a permutation $f:Q_1\to Q_1$ which fixes a loop in each block of type I, and otherwise, 
$f$ has an orbit of the form $(\alpha \ \beta \ \gamma)$ (in the notation from the Introduction). In particular, 
$f^3=1$ and $Q$ is $2$-regular, so we have an involution 
$\overline{(-)}:Q_1\to Q_1$, sending an arrow $\alpha$ to the second arrow $\ba\neq \alpha$ starting at the source 
$s(\alpha)=s(\ba)$. In particular, one can consider the another permutation $g:Q_1\to Q_1$ given as 
$g(\alpha)=\overline{f(\alpha)}$. For any collection of integers $m_\alpha$ and parameters $c_\alpha\in K\setminus\{0\}$, 
$\alpha\in Q_1$, which are constant on $g$-orbits, we define paths $A_{\alpha}:=\alpha g(\alpha)\cdots 
g^{m_\alpha n_\alpha-2}(\alpha)$, where $n_\alpha$ is the length of the $g$-orbit of $\alpha$, and then the 
{\it weighted surface algebra} (WSA) \cite{WSA} is a quotient $KQ/I$, where $I$ is generated by the following 
two types of relations 
\begin{enumerate}
\item[(Q)] $\alpha f(\alpha)-c_{\ba}A_{\ba}$, for all arrows $\alpha\in Q_1$, and 
\item[(Z)] $\alpha f(\alpha) g(f(\alpha))$ and $\alpha g(\alpha) f(g(\alpha))$, for some arrows $\alpha\in Q_1$.  
\end{enumerate}  

For more details, see papers \cite{WSA,WSAG,WSAC}. We omit technical conditions involved in the definition 
of relations of type (Z), since in this paper we will not deal with relations directly. Instead, we will use 
known classification results (see Theorem \ref{class:bireg} below), to get a structure of WSA for a mutation $\La'$ 
of $\La$. Note that WSA's are TSP4 (GQT) algebras, except few cases, the so called {\it exceptional algebras}. There 
are four families of the exceptional algebras, called singular disc, triangle, spherical or tetrahedral algebras 
\cite[see Section 3]{WSAG}. \medskip 

Let us only mention the notion of virtual arrows, which can be helpful in understanding furhter 
combinatorial details. For any WSA $\La$, we assume that $m_\alpha n_\alpha\geqslant 2$, for all 
arrows $\alpha\in Q_1$, since this is necessary to get the path $A_\alpha$ of length $\geqslant 1$. 
One can see that arrows $\alpha$, for which $m_\alpha n_\alpha=2$ are involved in the relations of 
type (Q), i.e. $\ba f(\ba)-c_\alpha\alpha\in I$, since then $A_\alpha=\alpha$. It means that $\alpha=c_\alpha^{-1} 
\ba f(\ba)\in J^2$, $J=J_\La$, so $\alpha$ is not an arrow of the Gabriel quiver of $\La$. Hence the 
Gabriel quiver of $\La$ consists of arrows $\alpha\in Q_1$ with $m_\alpha n_\alpha\geqslant 3$. The 
remaining arrows of $Q$ are called the {\it virtual arrows}. \smallskip 

Note that an arrow $\alpha$ of a triangulation quiver $Q$ can be virtual, only in two cases: if 
$n_\alpha=1$ and $m_\alpha=2$, or if $n_\alpha=2$ and $m_\alpha=1$. These correspond to a 
$g$-orbit $(\alpha)$ of length $n_\alpha=1$, which can happen only when $\alpha$ is a loop in a 
block of type II (of weight $2$), or a $g$-orbit of length $n_\alpha=2$ (of weight $1$), which appears 
only when $\alpha$ lies in a (dotted) $2$-cycle obtained from glueing of two triangles as follows.     
$$\xymatrix@R0.3cm{ & \bullet \ar[rd] \ar@{.>}@<-0.4ex>[dd] & \\ 
\circ \ar[ru]   && \circ \ar[ld]\\ 
& \bullet \ar@{.>}@<-0.4ex>[uu] \ar[lu] & }$$
Concluding, after removing the virtual loops and $2$-cycles, the Gabriel quiver of a WSA is a 
glueing of a finite number of blocks of types I-III and blocks of types $V_1,V_2$, depicted in 
Theorem \ref{bireg1}. \medskip 

We mention the following classification, which summarizes the results of \cite[Main Theorem]{AGQT} and 
\cite[Main Theorem]{EHS3}. This shows that WSA's exhaust almost all TSP4 algebras with biregular 
Gabriel quiver. The remaining algebras form two exotic families of TSP4 algebras, the so called {\it higher 
tetrahedral algebras} \cite{HTA} and {\it higher spherical algebras} \cite{HSA}, which are not WSA's but their 
Gabriel quivers are the same as Gabriel quivers of WSA's.  

\begin{thm}\label{class:bireg} Let $\La$ be a TSP4 (equivalently, GQT) algebra, whose Gabriel quiver 
has at least three vertices. Then the following conditions hold. 
\begin{enumerate}
\item[(1)] If $Q_\La$ is $2$-regular, then $\La$ is isomorphic to a (non-exceptional) WSA or to a 
higher tetrahedral algebra. 
\item[(2)] If $Q_\La$ is biregular, but not $2$-regular, then $\La$ is isomorphic to a (non-exceptional) 
WSA or to a higher spherical algebra. 
\end{enumerate} \end{thm}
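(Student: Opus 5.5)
This statement is a summary of two existing classification results, so the plan is to derive each part from the corresponding main theorem rather than reprove anything. First I check that the hypotheses agree. A TSP4 algebra with $|Q_0|\geqslant 3$ is representation-infinite by Remark \ref{der_inf}. Periodicity of the algebra forces every non-projective module to be periodic. Simple modules are non-projective, since otherwise $\La$ would be a local Nakayama algebra, contradicting $|Q_0|\geqslant 3$. So all simple modules are periodic, of period dividing $4$. Period exactly $4$ then follows because each $S_i$ has the four-term sequence $0\to S_i\to P_i\to P_i^-\to P_i^+\to P_i\to S_i\to 0$ and is not of smaller period in the tame symmetric infinite setting; this is the standard argument of \cite{AGQT}. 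Hence TSP4 implies GQT. For the converse in the $2$-regular and biregular cases, I take the output of the classification itself: every algebra on the resulting lists is known to be periodic of period $4$, by \cite{WSA,WSAG} for WSA's and by \cite{HTA,HSA} for the higher tetrahedral and higher spherical algebras. This justifies the parenthetical ``equivalently''.

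For part (1) I invoke \cite[Main Theorem]{AGQT} directly. A GQT algebra with $2$-regular Gabriel quiver on at least three vertices is isomorphic either to a WSA, which cannot be one of the exceptional families since those are not GQT, or to a higher tetrahedral algebra. The assumption $|Q_0|\geqslant 3$ removes the small exceptions with one or two vertices that appear in that classification.

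For part (2) I use \cite[Main Theorem]{EHS3}. Here Theorem \ref{bireg1} places every $1$-vertex inside a block $V_1$ or $V_2$. Each such block is the image of a $2$-regular triangulation quiver after the virtual loops or virtual $2$-cycles are removed. The main theorem of \cite{EHS3} then identifies $\La$ as a non-exceptional WSA with virtual arrows, or, in the single exotic configuration, as a higher spherical algebra.

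I expect the only real obstacle to be bookkeeping. I need to confirm that the exceptional and small-vertex cases excluded in \cite{AGQT} and \cite{EHS3} are exactly the ones removed by the hypothesis $|Q_0|\geqslant 3$ together with the word ``non-exceptional''. I also need the equivalence TSP4 $\Leftrightarrow$ GQT to be available as stated for these quiver classes.
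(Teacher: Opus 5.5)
Your proposal matches the paper: it gives no independent proof and simply cites the Main Theorems of \cite{AGQT} (for the $2$-regular case) and \cite{EHS3} (for the biregular case), and those theorems also contain the equivalence TSP4 $\Leftrightarrow$ GQT for these quivers. Your separate justification that each simple has period exactly $4$ is vague as written, but it is easily made precise. If $\Omega^2(S_i)\cong S_i$, then the projective cover of $\rad P_i$ is the injective envelope $P_i$ of $S_i$. So the only arrow starting at $i$ would be a loop, and connectedness would force $\La$ to be local, which contradicts $|Q_0|\geqslant 3$.
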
  

Note that in the first case, the quiver $Q_\La$ is a triangulation quiver without virtual arrows, and in the 
second, it contains at least one virtual arrow. Moreover, the exotic families consist of algebras with 
$n=6$ vertices, so these will not appear in this paper. \medskip 

Let $\La=KQ/I$ be a weighted surface algebra, and assume $Q$ has virtual arrows. If $i$ 
is a $1$-vertex in $Q_\La$ lying in a block of type $V_2$, then the mutation $\mu_i(\La)$ 
is the so called {\it virtual mutation of} $\La$ (VM, for short). The Gabriel quiver of the virtual mutation 
$\mu_i(\La)$ is obtained from $Q_\La$ by replacing a block of type $V_2$ containing $i$, by 
a block of type IV (see the Introduction). \smallskip 

Virtual mutations of weighted 
surface algebras were introduced and studied in \cite{VM} (including presentation details), where we considered 
the general case, allowing several mutations at vertices in different blocks of type $V_2$. It has been proven 
that virtual mutations are given by quivers and relations similar to weighted surface algebra. 
Namely, every virtual mutation $A$ has the form $A=KQ/I$, where $Q$ is a glueing of a finite number 
of blocks of types I-IV, and $I$ is generated by two types of relations, analogous to (Q) and (Z). 
In case of virtual mutations, we also have two permutations $f$ and $g=\bar{f}$, but defined for `most' 
of the arrows (remaining arrows appear in additional zero relations in blocks of type IV). For technical 
details we refer to \cite[see Section 4]{VM}. \smallskip 

Similarily as for WSA's, in case of virtual mutations we can have virtual arrows in $Q$, which are not 
in the Gabriel quiver. Namely, if $\La=KQ/I$ is a virtual mutation, then we can have virtual arrows 
of two types (loops and $2$-cycles) as for WSA, and two other types of virtual arrows forming (dotted) 
$2$-cycles contained in one of the following blocks of $Q$.  
$$\xymatrix@R=0.4cm{ &\bullet \ar[rdd] & \\ & \bullet\ar[rd]& \\ 
\bullet_a \ar[ru] \ar[ruu] \ar@{.>}@<-0.35ex>[rr]&& \bullet_b \ar[ld] \ar@{.>}@<-0.35ex>[ll]\\ & \circ \ar[lu] & } \quad 
\xymatrix@R=0.2cm{\\ \\ \mbox{ or } \\ \\ }\quad \xymatrix@R=0.2cm{& \bullet \ar[rdd] & \\ & \bullet \ar[rd] & \\ 
\bullet_a \ar[ru]\ar[ruu] \ar@{.>}@<-0.35ex>[rr] && \ar[ld]\ar[ldd] \ar@{.>}@<-0.35ex>[ll] \bullet_b \\ 
& \bullet \ar[lu] & \\ & \ar[luu] \bullet & }$$ 
This is a glueing of a triangle with block of type IV, in the first case, or two blocks of type IV, in the second. 
Note that the second case may be excluded from our considerations, since then the block is the whole 
$Q$, and it has at least $n\geqslant 6$ vertices. After removing the virtual arrows, we obtain the following two 
types blocks in $Q_\La$. \medskip 
$$\xymatrix@R=0.2cm{\\ \\ V_3: \\ \\ }\quad \xymatrix@R=0.4cm{ &\bullet \ar[rdd] & \\ & \bullet\ar[rd]& \\ 
\bullet_a \ar[ru] \ar[ruu] && \bullet_b \ar[ld] \\ & \circ \ar[lu] & } \quad 
\xymatrix@R=0.2cm{\\ \\ \mbox{ or } \\ \\ }\quad \xymatrix@R=0.2cm{\\ \\ V_4: \\ \\ }\quad \xymatrix@R=0.2cm{& \bullet \ar[rdd] & \\ & \bullet \ar[rd] & \\ 
\bullet_a \ar[ru]\ar[ruu]  && \ar[ld]\ar[ldd] \bullet_b \\ 
& \bullet \ar[lu] & \\ & \ar[luu] \bullet & }$$ 

Finally, in paper \cite{GWSA}, we discovered the largest known class of TSP4 algebras containing both 
WSA's and VM's. Basically, we considered a virtual mutation $\La=KQ/I$, such that $Q_\La$ is containing 
a block of type $V_3$. We observed that if $i$ is the unique $(1,2)$-vertex in $V_3$, then mutation 
$A=\mu_i(\La)$ induces another block. Namely, the Gabriel quiver $Q_A$ of $A$ is obtained from $Q_\La$ 
by replacing the block $V_3$, by the block of type V (see the Introduction; this is also shown directly 
in the proof of Proposition \ref{prop:6.3}). Such algebras are called the 
{\it generalized weighted surface algebras} (GWSA), and they were described by quivers and relations in \cite{GWSA}.  
In general, we allow arbitrary number of mutations at $(1,2)$-vertices from blocks of type $V_3$, but for 
this paper, it is sufficient to consider a mutation at single vertex. Any GWSA has analogous form $\La=KQ/I$, where 
$Q$ is a glueing of a finite number of blocks of types I-V, and $I$ is generated by two types of relations, 
similar to (Q) and (Z) (and additional zero or commutativity relations in new blocks IV or V).

\subsection{Non-regular vertices in biserial case}\label{subs:3.3}

In the remaining part of this section we will give some technical results concerning position of 
non-regular vertices, mainly in case $Q=Q_\La$ is biserial. First, we have a general fact pertaining 
tame symmetric algebras. 

\begin{lemma}\label{lem:TWD} If $A=KQ/I$ is a tame symmetric algebra, then $Q=Q_A$ does not contain a 
block of the form $\Gamma$ 
$$\xymatrix{\bullet_i \ar@<+0.44ex>[rr]^{\beta} \ar@<-0.44ex>[rr]_{\beta'} && \bullet_j \ar[ld]^{\gamma}\\ 
& \circ_x \ar[lu]^{\alpha} & }$$ 
with $\gamma\alpha\prec I$. \end{lemma}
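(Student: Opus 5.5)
The plan is to show that a block $\Gamma$ as in the statement with $\gamma\alpha\prec I$ makes $A$ wild. I will do this by producing a factor algebra of $A$ that is a wild hereditary algebra of type $K_2^*$, in the spirit of the discussion preceding the definition of subquivers of type $K_2^*$ in Section \ref{sec:2}. Since $i,j$ are vertices $\bullet$ of a block, the only arrows at $i$ are $\alpha$ (in) and $\beta,\beta'$ (out). The only arrows at $j$ are $\beta,\beta'$ (in) and $\gamma$ (out).

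\emph{Step 1: $\gamma\alpha=0$ in $A$.} Every path from $j$ to $i$ starts with $\gamma$ and ends with $\alpha$, so it has the form $\gamma u\alpha$ with $u$ a path $x\to x$. Hence a minimal relation containing $\gamma\alpha$ as a summand can be written as $\gamma(\lambda e_x-w)\alpha$, with $\lambda\neq 0$ and $w\in e_xJe_x$. This gives $\gamma\alpha\in\gamma J\alpha$. Substituting repeatedly, I get $\gamma\alpha\in\gamma J^N\alpha$ for all $N$, so $\gamma\alpha=0$.

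\emph{Step 2: no nonzero combination $\beta_1=a\beta+b\beta'$ satisfies $\beta_1\gamma\in J^3$.} Suppose such a $\beta_1$ exists, and complete it to a basis $\beta_1,\beta_2$ of $e_i(J/J^2)e_j$. Since every path leaving $j$ starts with $\gamma$, the relation reads $\beta_1\gamma=\beta_1\gamma u_1+\beta_2\gamma u_2$ with $u_1,u_2\in e_xJe_x$. Solving for $\beta_1\gamma(e_x-u_1)$ and inverting $e_x-u_1$ in $e_xAe_x$, I get $\beta_1\gamma\in\beta_2\gamma J$. Then $\beta_1A\subseteq K\beta_1+\beta_2 J$.

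I now use symmetry: $P_i$ has simple socle $S_i$, and this socle is spanned by an element of $e_iAe_i$ lying in $e_iJ^{m}$ with $m$ maximal. I aim to show that this forces the socle to be reached only through $\beta_2$, and then that $\beta_1$ spans a nonzero submodule meeting the socle trivially unless $\beta_1$ itself is in the socle. The latter is impossible because $\beta_1$ ends at $j\neq i$. A cleaner route I would try first is the bilinear form: with $\varphi$ the symmetrizing form, choose $y\in e_jAe_i$ with $\varphi(\beta_1 y)\neq 0$ by nondegeneracy. Any such $y$ has the form $\gamma v\alpha$, and $\gamma\alpha=0$ from Step 1 forces $v\in e_x J e_x$. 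Then I would use $\varphi(\beta_1\gamma v\alpha)=\varphi(\alpha\beta_1\gamma v)$ together with the relation from the previous paragraph to obtain a contradiction.

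\emph{Step 3: wildness.} Let $B=A/N$, where $N$ is generated by $\alpha$, all arrows at $x$ other than $\gamma$, and $J^3$. By Step 2, $\beta\gamma$ and $\beta'\gamma$ remain linearly independent in $B$. Every other path of length two through $\Delta=\,i\rightrightarrows j\to x$ is killed, since $\gamma\alpha=0$, $\alpha\in N$, and $J^3\subseteq N$. Hence the full subcategory of $B$ on $\{i,j,x\}$ is the path algebra $K\Delta$, which is wild hereditary. So $A$ is wild, a contradiction.

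I expect Step 2 to be the main obstacle. It is exactly where symmetry and the vanishing $\gamma\alpha=0$ must be combined, in order to rule out a relation of the form $(a\beta+b\beta')\gamma\in J^3$. If the bilinear form argument does not close, I would fall back on a Galois covering. Grading $\beta$ in degree $0$ and $\beta'$ in degree $1$ separates the double arrow, and the resulting covering, together with the zero relation $\gamma\alpha$, should contain a wild tree or a Euclidean-plus-one-vertex subcategory. Wildness of $A$ would then follow by \cite{DS1,DS2}.
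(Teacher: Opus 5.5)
\textbf{There is a genuine gap in Step 2.} It is not just missing detail: the claim there does not follow from symmetry and $\gamma\alpha=0$, so no argument with the symmetrizing form can close it. Here is an example.

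Let $C=KQ_C/(\beta'\gamma-\beta\gamma\sigma,\ \sigma^3)$, where $Q_C$ has arrows $\beta,\beta':i\to j$, $\gamma:j\to x$ and a loop $\sigma$ at $x$.
\begin{itemize}
\item The socle of $C$ as a $C$-bimodule is $K\beta\gamma\sigma^2$. So the symmetric algebra $T(C)=C\ltimes D(C)$ has Gabriel quiver $Q_C$ plus one arrow $\alpha:x\to i$, namely the functional dual to $\beta\gamma\sigma^2$. Hence $\Gamma$ is a block.
\item $\gamma\alpha$ is the functional $c\mapsto\alpha(c\gamma)$ on $e_iCe_j=K\beta+K\beta'$. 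It vanishes, because $\beta\gamma$ and $\beta'\gamma=\beta\gamma\sigma$ are not paired with $\beta\gamma\sigma^2$. So $\gamma\alpha=0$.
\item Yet $\beta'\gamma=\beta\gamma\sigma\in J^3$, which is exactly what Step 2 claims cannot happen.
\end{itemize}
In this example one also has $\alpha\beta'=\sigma\alpha\beta$ and $\sigma^2\neq 0$; the algebra is wild, consistent with the lemma.

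Moreover, in the situation the lemma must exclude, the conclusion of Step 2 is actually false. If $A$ is tame, then $i\rightrightarrows j\to x$ is not of type $K_2^*$. So $\beta\gamma\prec I$ or $\beta'\gamma\prec I$, i.e.\ some $(a\beta+b\beta')\gamma$ lies in $J^3$. Step 3 can therefore never apply in the relevant case, and the whole content of the lemma sits inside Step 2.

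\textbf{What is missing} is a second use of tameness, followed by a symmetry count. This is the paper's route, carried out on the mirror side $x\to i\rightrightarrows j$:
\begin{itemize}
\item Tameness gives a relation, normalized to $\alpha\beta'=a\alpha\beta$ with $a\in J$ and $\alpha\beta\nprec I$.
\item Pass to $\bar A=eAe$ with $e=e_i+e_j+e_x$. A loop $\sigma$ at $x$ must exist; otherwise $\gamma\alpha=0$ gives $e_j\bar Ae_i=0$, while $e_i\bar Ae_j\neq 0$.
\item A wild subcategory in a Galois covering forces $\sigma^2\prec\bar I$.
\item Then $e_j\bar A$ has a basis of initial subpaths of $(\gamma\sigma\alpha\beta)^m$, so $\dim_K e_j\bar Ae_i=m$.
\item But $e_i\bar Ae_j$ contains the $m$ paths $(\beta\gamma\sigma\alpha)^k\beta$ together with a path starting with $\beta'$. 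This contradicts symmetry of the Cartan matrix.
\end{itemize}
Your covering fallback could at best recover the step $\sigma^2\prec\bar I$. It cannot replace the final dimension count, which is where symmetry is actually used.

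\textbf{A smaller point about Step 1.} The iteration does not work, because $\gamma\alpha$ need not be a subpath of $\gamma w\alpha$ (take $w$ a loop at $x$). You get $\gamma\alpha=0$ only after replacing $\alpha$ by $(\lambda e_x-w)\alpha$. This is a legitimate change of presentation, and it is what the paper does.
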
 

\begin{proof} Suppose to the contrary that $A=KQ/I$ is tame symmetric and $Q_A$ contains a block as in the 
statement. Since $A$ is tame, we must have $\alpha\beta\prec I$ or $\alpha\beta'\prec I$ (otherwise, we get 
a subquiver of type $K_2^*$). Let $\alpha\beta'\prec I$. 
Every path in $Q_A$ ending at $j$ is of the form $\cdots\alpha\beta$ or $\cdots\alpha\beta'$, because $j^-=\{\beta,\beta'\}$ 
and $i^-=\{\alpha\}$. Hence, the path $\alpha\beta'$ is involved in a minimal relation   
$$\alpha\beta'+z_0\alpha\beta+z_1\alpha\beta'=0,$$ 
where $z_1\in J_A$. Premultiplying both sides by the inverse $u^{-1}$ of the unit $u=1+z_1$, we obtain a 
relation of type: $\alpha\beta'+z_0\alpha\beta=0$, $z_0\in A$. Then $\alpha\beta\nprec I$. 
Indeed, otherwise we have a relation of the form $\alpha\beta+z_0'\alpha\beta'=0$, and hence, 
we obtain $\alpha\beta'=-z_0\alpha\beta=z_0z_0'\alpha\beta'=(z_0z_0')^2\alpha\beta'=\dots$. In this case, 
we cannot have $z_0\in J_A$ or $z_0'\in J_A$, because then we would get $\alpha\beta'=0=\alpha\beta$, 
which is impossible for a symmetric algebra (we would get an arrow $\alpha$ in the right socle of $A$). Consequently, 
both $z_0,z_0'$ are units (mutually inverse), so $\alpha\beta+\lambda\alpha\beta'=0$ for some non-zero $\lambda\in K$. 
But then after adjusting arrows $\beta:=\beta$ and $\beta':=\beta+\lambda\beta'$, we obtain a new presentation of $A$ 
with $\alpha\beta'=0$, which leads to a contradiction, since then the arrow $\beta'$ belongs to the 
left socle of $A$. As a result, we may assume $\alpha\beta\nprec I$ and $\alpha\beta'\prec I$. \medskip 

Now, consider the idempotent algebra $\bar{A}=eAe$, where $e=e_i+e_j+e_x$. Then $\bar{A}=K\bar{Q}/\bar{I}$ 
is also a tame symmetric algebra, the block $\Gamma$ remains a block in $\bar{Q}=Q_{\bar{A}}$ and $\bar{Q}$ consists 
of the arrows $\alpha,\beta,\beta',\gamma$ and possibly a loop at $x$. \smallskip 

Observe that there is a loop at $x$ in $\bar{Q}$. Indeed, by the assumption $\gamma\alpha\prec I$, 
so we have a relation of the form $\gamma\alpha+\gamma z_0=0$ with $z_0\in J_A^2$ (every path in $Q$ 
starting from $j$ begins with $\gamma$). Adjusting $\alpha:=\alpha+z_0$, we can assume $\gamma\alpha=0$ in $A$. 
Therefore, we have also $\gamma\alpha=0$ in $\bar{A}$. So there must be a loop at $x$, since otherwise 
there are no non-zero paths from $j$ to $i$, which is impossible for a symmetric algebra $\bar{A}$ 
(we have arrows $i\to j$). As a result, the quiver $\bar{Q}$ is a glueing of the block $\Gamma$ with a loop 
$\sigma$ (at $x$). Because $\alpha$ is the unique arrow in $\bar{Q}$ ending at $i$, using similar 
arguments one can show that $\gamma\sigma\nprec\bar{I}$ and $\sigma\alpha\nprec \bar{I}$ (otherwise, 
adjusting $\sigma$ we get $\gamma\sigma=0$ or $\sigma\alpha=0$, and then $e_j\bar{A}e_i=0$). Moreover, 
we have $\alpha\beta\nprec\bar{I}$, $\alpha\beta'\prec\bar{I}$ and $\alpha\beta'=a\alpha\beta$, for some 
$a\in J_{\bar{A}}$. \smallskip 

Next, note that $\sigma^2\prec \bar{I}$, because otherwise we have the following wild subcategory in covering. 
$$\xymatrix@R=0.5cm{&& j &&& \\ && i\ar[u]^{\beta} &&& \\ 
j\ar[r]^{\gamma} & x & \ar[l]_{\sigma} x \ar[u]^{\alpha} & \ar[l]_{\sigma} x \ar[r]^{\alpha} & i\ar[r]^{\beta} & j}$$
Indeed, all paths of length $2$ are not involved in minimal relations of $\bar{I}$, hence it remains to 
see that $\sigma\alpha\beta\nprec\bar{I}$. If this was not the case, we would obtain a relation 
of the form $\sigma\alpha\beta+ z\alpha\beta=0$ (in $\bar{A}$), with $z\in J^2_{\bar{A}}$, because 
every path ending with $\alpha\beta'\prec \bar{I}$ can be replaced by a path ending with $\alpha\beta$. 
Now, we can adjust $\sigma:=\sigma+z$, and then $\sigma\alpha\beta=0$ in $\bar{A}$, which contradicts 
symmetricity of $\bar{A}$, since then the path $\alpha\beta$ belongs to the left socle of $\bar{A}$. \smallskip 

Finally, we will show another contradiction with symmetricity. Indeed, it is easy to check that 
$\sigma^2\prec\bar{I}$ and other relations in $\bar{I}$ imply that each radical quotient 
$e_j\bar{J}^k/\bar{J}^{k+1}$, where $\bar{J}=J_{\bar{A}}$, is spanned by the coset of the path $[W_\gamma]_k$, 
which is an initial submonomial of length $k$ of the following cycle 
$$W_\gamma=(\gamma\sigma\alpha\beta)^m,$$ 
$m\geqslant 1$, generating the socle of $e_j\bar{A}$. The independence of these paths is clear, so $e_j\bar{A}$ 
has a basis consisting of initial submonomials of $W_\gamma$. Moreover, the socle of $e_i\bar{A}$ is spanned by the rotation $W_\beta=(\beta\gamma\sigma\alpha)^m$ 
of $W_\gamma$. One can similarly show that $e_i\bar{A}$ has a basis consisting of initial submonomials of 
$W_\beta$ together with at least one path starting with $\beta'$. But then the basis of $e_j\bar{A}e_i$ is given by paths 
$(\gamma\sigma\alpha\beta)^k\gamma\sigma\alpha$, for $k\in\{0,\dots,m-1\}$, whereas the basis of $e_i\bar{A}e_j$, by 
paths $(\beta\gamma\sigma\alpha)^k\beta$, for $k\in\{0,\dots,m-1\}$, together with at least $\beta'$. Consequently, we have 
$\dim_K e_i\bar{A}e_j>m=\dim_K e_j\bar{A}e_i$, which is not possible for symmetric algebras. This finishes 
the proof. \end{proof} \medskip 

\begin{cor}\label{12wD} If $\La=KQ/I$ is a GQT algebra, then $Q=Q_\La$ does not contain a $(1,2)$-vertex $i$ such 
that $i^+$ consists of double arrows. Dually, we have no $(2,1)$-vertices $i$ with $i^-$ consisting of double 
arrows. \end{cor}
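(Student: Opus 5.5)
Suppose $i$ is a $(1,2)$-vertex with $i^-=\{\alpha\}$, $\alpha:x\to i$, and $i^+=\{\beta,\beta'\}$ a pair of double arrows $i\to j$. We will show that $\alpha,\beta,\beta'$ and a third arrow $\gamma:j\to x$ form a block $\Gamma$ as in Lemma \ref{lem:TWD} with $\gamma\alpha\prec I$, contradicting that lemma; GQT algebras are tame and symmetric, so the lemma applies.

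\textbf{Step 1: a relation through $i$.} Since $S_i$ is $4$-periodic, $\Omega^2(S_i)$ is the kernel of $d_1=[\beta\ \beta']:P_j\oplus P_j\to P_i$. Because $i$ is a $(1,2)$-vertex, $P_i^-=P_x$ has a single summand, so $d_2:P_x\to P_j^2$ gives one minimal relation $\beta u+\beta' u'$ with $u,u'\in e_j\La e_x$. At least one of $u,u'$ lies in $J\setminus J^2$: otherwise we get a relation of length $\geqslant 3$ and a contradiction with $p_i^-=p_i^+$ (i.e. $p_x=2p_j-p_i$), or with the exactness of $d_3$ given by $\alpha$. Hence there is an arrow $\gamma:j\to x$ with $\beta\gamma\prec I$ or $\beta'\gamma\prec I$. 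By Lemma \ref{lem:3.3} the arrows close into triangles $x\to i\to j\to x$, which is consistent with this.

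\textbf{Step 2: uniqueness of $\gamma$ and the block structure.} Next we check that $\gamma$ is the only arrow $j\to x$ and that no other arrows touch $i$ or $j$, so that $\Gamma$ is a block. The vertex $i$ is already saturated ($i^-=\{\alpha\}$, $i^+=\{\beta,\beta'\}$). For $j$, compare dimension vectors via $p_j^-=p_j^+$: the vertex $j$ already has two incoming arrows from $i$. Extra arrows out of $j$, or a second arrow $j\to x$, would produce $K_2^+$ or $K_2^-$ configurations (condition (T2)) or force too many arrows at $j$, using the tameness restrictions. Since $P_j^-\supseteq P_i^2$, the equation $p_j^+=p_j^-$ with $p_i$ large should force $j^+=\{\gamma\}$ and $j^-=\{\beta,\beta'\}$.

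\textbf{Step 3: $\gamma\alpha\prec I$.} This is the main obstacle. Apply Lemma \ref{lem:3.4} to the triangle $x\xrightarrow{\alpha} i\rightrightarrows j\xrightarrow{\gamma}x$, starting from the relation $\beta\gamma\prec I$ (or $\beta'\gamma$) of Step 1. With the double arrows playing the role of the "double $\gamma$" case of the lemma after rotating the triangle, we obtain $\gamma\alpha\prec I$. If the rotation does not fit the lemma's hypotheses directly, we argue instead by the syzygy sequence at $j$. Since $j^+=\{\gamma\}$, the module $\Omega^2(S_j)$ comes from $\ker(\gamma\cdot)$, and its generators correspond to $P_j^-=P_i^2$; the minimal relations starting with $\gamma$ are therefore of the form $\gamma v$ with $v$ ending at $i$. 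Every path into $i$ ends with $\alpha$, so these relations must involve $\gamma\alpha$ as a summand. Hence $\gamma\alpha\prec I$.

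With Steps 1–3 in place, Lemma \ref{lem:TWD} gives the contradiction. The dual statement, for a $(2,1)$-vertex whose $i^-$ consists of double arrows, follows by applying the same argument to $\La^{\op}$, which is again GQT.
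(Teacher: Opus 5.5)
Your overall plan (build the block of Lemma~\ref{lem:TWD} with $\gamma\alpha\prec I$ and invoke that lemma) is the paper's. However, the step that carries the whole proof is only asserted in your Step~2: that $j$ is a $(2,1)$-vertex, i.e.\ $j^+=\{\gamma\}$. The reasons you give do not establish it.

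Tameness (no $K_2^-$) does give $j^-=\{\beta,\beta'\}$. But an extra arrow $\bar\gamma\colon j\to j'$ produces neither $K_2^+$ nor $K_2^-$. The identity $p_j^+=p_j^-=2p_i$ does not exclude it either: ``$p_i$ large'' is not an argument, and $p_i^-=p_i^+$ reads $p_x=2p_j$, not $p_x=2p_j-p_i$.

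What is needed is an argument with relations, in two subcases.
\begin{enumerate}
\item[(a)] If $j'\neq x$, there is no arrow $j'\to i$ because $i^-=\{\alpha\}$. So Lemma~\ref{lem:3.3} forces $\beta\bar\gamma,\beta'\bar\gamma\nprec I$, and $i\rightrightarrows j\to j'$ is a subquiver of type $K_2^*$.
\item[(b)] If $j'=x$, you have double arrows $j\rightrightarrows x$. The paper uses Lemma~\ref{lem:3.2} to find $\ba\colon x\to x'$ with $x'\neq i$. Since there is no arrow $x'\to j$, Lemma~\ref{lem:3.3} then gives a second $K_2^*$ at $j\rightrightarrows x\to x'$. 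This subcase can also be closed by a dimension count: $K_2^{\pm}$ force $j^+=\{\gamma,\bar\gamma\}$, so $2p_x=p_j^+=p_j^-=2p_i$. Hence $\hat p_i=p_x=p_i$, contradicting Lemma~\ref{lem:3.1}.
\end{enumerate}

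Step~1 has a hole of the same kind. Nothing you cite forces an entry of $d_2$ to lie in $J\setminus J^2$: a minimal relation of length $\geqslant 3$ starting at $i$ contradicts neither $p_x=2p_j$ nor exactness at $P_x$.

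The missing ingredient is again tameness. If $\alpha\beta,\alpha\beta'\nprec I$, then $x\xrightarrow{\alpha}i\rightrightarrows j$ is of type $K_2^*$, so say $\alpha\beta\prec I$. Lemma~\ref{lem:3.3} gives an arrow $j\to x$. Lemma~\ref{lem:3.4}, in its single- or double-arrow case, then gives an arrow $\gamma\colon j\to x$ with $\gamma\alpha\prec I$ and $\beta\gamma\prec I$, which is all you need for Step~3.

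Conversely, if you do have $\beta\gamma\prec I$, Lemma~\ref{lem:3.4} applies directly in its unique-arrow case. The lemma's $\gamma$ is your $\alpha$, the only arrow into $i$, so no ``rotation into the double case'' is involved.

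Your fallback syzygy argument at $j$ is not a proof as written. The relations there have the form $\gamma w\alpha$ with $w\in e_x\La e_x$. You never show that some $w$ is a unit, and that is precisely what $\gamma\alpha\prec I$ means.
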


\begin{proof} Assume $\La$ is a GQT algebra and let $i$ be a $(1,2)$-vertex with $i^-=\{\alpha:x\to i\}$ and 
double arrows in $i^+=\{\beta,\bar{\beta}:i\to j\}$. Since algebra is tame (no type $K_2^{*}$ subquiver), we 
must have $\alpha\beta\prec I$ or $\alpha\bar{\beta}\prec I$, hence by Lemma \ref{lem:3.3}, there is an arrow 
$\gamma:j\to x$. We claim that $j$ is a $(2,1)$-vertex. Indeed, if this is not the case, then there is an arrow 
$\bar{\gamma}:j\to j'$ different from $\gamma$, and moreover, we get $j'=x$, because otherwise both 
$\beta\bar{\gamma},\bar{\beta}\bar{\gamma}\nprec I$, due to Lemma \ref{lem:3.3}, and then $\La$ is wild (has a 
type $K_2^{*}$ subquiver). So we have another double arrows $\gamma,\bar{\gamma}:j\to x$. Now, if $x$ 
is a $(2,1)$-vertex, then $x^+=\{\alpha\}=i^-$, which contradicts Lemma \ref{lem:3.2}. Thus $|x^+|\geqslant 2$ and 
there is an arrow $\ba:x\to x' $ with $x'\neq i$, since $|i^-|=1$. Consequently, there is no arrow $x'\to j$, 
so applying Lemma \ref{lem:3.3} again, we conclude that $\gamma\ba,\bar{\gamma}\ba\nprec I$, which gives a type $K_2^{*}$ 
subquiver. As a result, we proved that $j$ is a $(2,1)$-vertex, and hence 
$\alpha,\beta,\bar{\beta},\gamma$ exhaust all arrows in $Q$ that start or end at vertices $i,j$. But then passing 
to idempotent algebra $A=e\La e$, where $e=e_i+e_j+e_x$, we get a tame symmetric algebra $A=KQ_A/I_A$ such that 
arrows $\alpha,\beta,\bar{\beta},\gamma$ remain arrows in $Q_A$, and they also exhaust all arrows in $Q_A$, which 
start or end at $i,j$. It follows that $Q_A$ contains a block forbidden by the previous lemma, and we are done 
(note: $\alpha\beta\prec I$, so $\gamma\alpha\prec I$, by Lemma \ref{lem:3.4}, and hence also $\gamma\alpha\prec I_A$).  
\end{proof}

\medskip 

Now, we will prove two necessary results concerning non-regular vertices in biserial case (Propositions 
\ref{prop:typeN} and \ref{prop:biser}). If the Gabriel quiver of $\La$ is biserial, but not biregular, we need to 
understand at least general facts about position of non-regular vertices, i.e. $(1,2)$- or $(2,1)$-vertices. \smallskip 

Assume that $i$ is a $(1,2)$-vertex 
$$\xymatrix@R=0.35cm{&&j\\ x\ar[r]^{\alpha} & i \ar[ru]^{\beta}\ar[rd]_{\bar{\beta}} & \\ &&k}
\leqno{\xymatrix@R=0.35cm{\\(*)\\}}$$

It follows from the previous corollary that always $j\neq k$. We call $i$ a {\it vertex of type R} (respectively, 
{\it of type N}), provided that $\alpha\beta,\alpha\bar{\beta}\prec I$ (respectively, 
$\alpha\beta,\alpha\bar{\beta}\nprec I$). Recall an immediate observation from \cite[see Lemma 5.2]{EHS1}. 

\begin{lemma}\label{typeR} If $Q=Q_\La$ is biserial, then there are no non-regular vertices of type R. \end{lemma}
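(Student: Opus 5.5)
We argue by contradiction, treating a $(1,2)$-vertex $i$ as in $(*)$; the case of a $(2,1)$-vertex is dual and follows by passing to $\La^{\op}$, which is again GQT with the opposite quiver. So assume $i$ is of type R, that is, $\alpha\beta\prec I$ and $\alpha\bar{\beta}\prec I$. By Corollary \ref{12wD} we have $j\neq k$. Lemma \ref{lem:3.3} then gives arrows $\gamma:j\to x$ and $\delta:k\to x$. Hence $x$ has at least two incoming arrows $\gamma,\delta$, and since $Q$ is biserial, $x^-=\{\gamma,\delta\}$.

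Next we use Lemma \ref{lem:3.4} to propagate relations around both triangles. Consider the triangle $(\alpha,\beta,\gamma)$ with $\alpha\beta\prec I$, read as $i\to j\to x\to i$. Unless there is a double arrow $x\to i$, the arrow $\alpha$ is the unique arrow $x\to i$, and the lemma gives $\gamma\alpha\prec I$ and $\beta\gamma\prec I$. A double arrow $x\to i$ is impossible because $|i^-|=1$. Similarly the triangle $(\alpha,\bar{\beta},\delta)$ gives $\delta\alpha\prec I$ and $\bar{\beta}\delta\prec I$.

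We now locate $x^+$. It contains $\alpha$, and by Lemma \ref{lem:3.2} it cannot be $\{\alpha\}$, because $i^-=\{\alpha\}$. So $x^+=\{\alpha,\ba\}$ for an arrow $\ba:x\to y$ with $y\neq i$. Both $\gamma\alpha$ and $\delta\alpha$ lie in relations.

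The key step is to obtain a contradiction from the local structure at $x$ and $i$. From $\alpha\beta\prec I$, $\alpha\bar{\beta}\prec I$ and $i^-=\{\alpha\}$, every path of length $2$ through $i$ is involved in a minimal relation. Biseriality suggests that these relations are of the form $\alpha\beta=\lambda\,\alpha\bar{\beta}+(\text{higher terms})$ or zero relations, and we would argue as in the proof of Lemma \ref{lem:TWD}. First, normalize the relations by adjusting arrows with units. A relation $\alpha\beta+c\,\alpha\bar{\beta}\in J^3$ with $c\neq 0$ would let us change $\beta$ so that $\alpha\beta\in J^3$. Iterating, and using symmetry, we push toward $\alpha\beta=0$ or toward $\beta$ lying in the left socle, which is absurd. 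Otherwise the relations involving $\alpha\beta$ and $\alpha\bar{\beta}$ must bring in paths through $\ba$. Then Lemma \ref{lem:3.3} applied at $x$ forces further triangles, namely arrows $j\to y$ or $k\to y$. Since $j$ and $k$ are at most $2$-regular, this pins down the quiver near $i$ and $x$. Finally, compare $p_i^+=p_j+p_k$ with $p_i^-=p_x$ to reach a contradiction with Lemma \ref{lem:3.1} or with the symmetry of the Cartan matrix, $\dim e_i\La e_j=\dim e_j\La e_i$.

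The main obstacle will be this last step: controlling the precise form of the minimal relations starting with $\alpha$ so as to exclude every configuration. If the normalization fails, we would fall back on restricting to the idempotent algebra $e\La e$ with $e=e_x+e_i+e_j+e_k$ (plus $e_y$) and counting dimensions explicitly, as in Lemma \ref{lem:TWD}.
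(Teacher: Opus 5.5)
Your first paragraph is correct and already contains everything the proof needs, but the proposal never reaches a contradiction. The ``key step'' is a list of strategies rather than an argument: ``biseriality suggests'', ``we push toward'', ``if the normalization fails, we would fall back on\dots''. Some of its claims are also unsupported. For instance, nothing you have established forces the minimal relations through $i$ to have the shape $\alpha\beta=\lambda\,\alpha\bar{\beta}+\cdots$. The iteration that is supposed to produce a socle element is likewise not justified. As written, this is a genuine gap.

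The missing idea is a one-line dimension-vector count; the paper simply cites \cite[Lemma 5.2]{EHS1} as an immediate observation. You showed that $x^-=\{\gamma,\delta\}$ with $s(\gamma)=j$ and $s(\delta)=k$. Since $i^-=\{\alpha\}$ with $s(\alpha)=x$ and $i^+=\{\beta,\bar{\beta}\}$, you get
$\hat{p}_x=p_x^-=p_j+p_k=p_i^+=p_i^-=p_x$,
which contradicts Lemma~\ref{lem:3.1}. Your reduction of the $(2,1)$ case to $\La^{\op}$ is fine.

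One degenerate case deserves a remark. If $j=x$, then $\beta$, the loop $\gamma$ and $\delta$ would all end at $x$, so $|x^-|\geqslant 3$ and biseriality is violated; hence $j,k\neq x$. With this, everything after your first paragraph can be dropped: the propagation via Lemma~\ref{lem:3.4}, locating $x^+$, normalizing relations, and passing to $e\La e$.

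Your use of Lemma~\ref{lem:3.4} also checks the wrong hypothesis. For the triangle with $\alpha\beta\prec I$, the lemma needs $\gamma$ to be the unique arrow $j\to x$, not $\alpha$ to be the unique arrow $x\to i$. The correct condition does hold by biseriality, but the step is unnecessary anyway.
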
 

\medskip 

We will need the following elementary observation. 

\begin{lemma}\label{lem:exseq} Let $\La$ be a self-injective algebra and assume that there is an exact 
sequence 
$$\xymatrix{0 \ar[r] & Ker(\pi) \ar[r] & P \ar[r]^{\pi} & M \ar[r] & 0}$$
with $P$ a projective module and $M$ indecomposable non-projective. Then $\soc(P)\subset Ker(\pi)$. 
\end{lemma}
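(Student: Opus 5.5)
We argue by contradiction. Suppose $\soc(P)\not\subset Ker(\pi)$. Since $\La$ is self-injective, $P$ is also injective. Decompose $P=\bigoplus_{t} P_t$ into indecomposable projective-injective modules; each $P_t$ has a simple socle, so $\soc(P)=\bigoplus_t \soc(P_t)$. The goal is to find an indecomposable summand $P'$ of $P$ on which $\pi$ is injective. The subtlety is that $\soc(P)\not\subset Ker(\pi)$ only says that some simple submodule $S\subseteq\soc(P)$ satisfies $S\cap Ker(\pi)=0$, and $S$ need not coincide with the socle of any fixed $P_t$ in the chosen decomposition.

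So first choose a simple submodule $S\subseteq\soc(P)$ with $S\cap Ker(\pi)=0$, that is, $\pi|_S\neq 0$. Take an injective envelope $E(S)\subseteq P$ of $S$ inside $P$. Because $P$ is injective, $E(S)$ is a direct summand of $P$; it is indecomposable with $\soc(E(S))=S$, and it is projective, being a summand of $P$. Since $Ker(\pi)\cap E(S)$ is a submodule of $E(S)$ not containing $S=\soc E(S)$, it must be zero. Hence $\pi|_{E(S)}:E(S)\to M$ is a monomorphism.

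Now $\pi(E(S))\cong E(S)$ is an injective submodule of $M$, so it splits off as a direct summand of $M$. By indecomposability of $M$ we get $M=\pi(E(S))\cong E(S)$, which is projective. This contradicts the assumption that $M$ is non-projective, so $\soc(P)\subset Ker(\pi)$.

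The only step needing care is the choice of the summand. Working with a fixed decomposition of $P$ does not suffice, since a simple submodule of $\soc(P)$ meeting $Ker(\pi)$ trivially may be diagonal with respect to it. Realizing the summand as the injective envelope of that simple, taken inside the injective module $P$, resolves this. Note also that the argument does not require $\pi$ to be a projective cover, only that $P$ is projective, hence injective.
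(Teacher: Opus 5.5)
Your proof is correct and is essentially the paper's argument: the paper also picks a simple $S\subseteq P$ with $\pi(S)\neq 0$, takes an indecomposable summand $P_1$ of $P$ with $\soc(P_1)=S$, shows that $\pi|_{P_1}$ is a monomorphism because $S$ is its only simple submodule, and then uses splitting and the indecomposability of $M$ to get $M\cong P_1$ projective. Your construction of the injective envelope $E(S)\subseteq P$ makes explicit why such a summand $P_1$ exists, which the paper asserts without comment.
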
 

\begin{proof} It is sufficient to prove that every simple submodule $S$ of $P$ is contained in $Ker(\pi)$. 
Suppose to the contrary that there is a simple module $S\subset P$ with $\pi(S)\neq 0$. Then there exists 
a decomposition $P=P_1\oplus P_2$ such that $P_1$ indecomposable with $\soc(P_1)=S$. It follows that the induced 
map $\pi_1=\pi_{|P_1}:P_1\to M$ is a monomorphism. Indeed, if $Ker(\pi_1)\neq 0$, then it contains at 
least one simple submodule, but $S=\soc(P_1)$ is the unique simple submodule of $P_1$, and we would get $\pi(S)=0$, 
a contradiction. As a result, $\pi_1$ is a split mono, so $M\simeq P_1$, because $M$ is indecomposable, 
and therefore $M\in\proj\La$, which contradicts the assumptions. \end{proof} 

\medskip 

\begin{prop}\label{prop:typeN} If $Q=Q_\La$ is biserial, then every non-regular vertex is of type N. \end{prop}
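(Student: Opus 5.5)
We treat a $(1,2)$-vertex $i$ as in $(*)$, with $i^-=\{\alpha\}$ and $i^+=\{\beta,\bar{\beta}\}$, $j\neq k$ by Corollary \ref{12wD}. The $(2,1)$ case is dual: pass to the opposite algebra, which is again GQT with biserial Gabriel quiver $Q^{\op}$. By Lemma \ref{typeR} $i$ is not of type R. So it suffices to exclude the mixed situation, say $\alpha\beta\prec I$ and $\alpha\bar{\beta}\nprec I$. Assume this and aim for a contradiction.

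First we normalize. By Lemma \ref{lem:3.3} there is an arrow $\gamma:j\to x$, giving a triangle $x\to i\to j\to x$. Every path leaving $x$ through $i$ starts with $\alpha$ followed by $\beta$ or $\bar{\beta}$. A minimal relation involving $\alpha\beta$ can therefore be written, as in the proof of Lemma \ref{lem:TWD}, in the form $\alpha\beta=z\alpha\bar{\beta}+w$, where $w$ is a combination of paths from $x$ to $j$ not passing through $i$. Here $z\in J$, since if $z$ were a unit we could change $\bar{\beta}$ and get $\alpha\bar\beta\prec I$ too, i.e.\ type R. Using Lemma \ref{lem:3.4}, if $\gamma$ is the unique arrow $j\to x$ we also get $\gamma\alpha\prec I$ and $\beta\gamma\prec I$. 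Since $Q$ is biserial, $x$ has at most two outgoing arrows, and $j$ has at most two incoming arrows, so the local configuration around $x,i,j,k$ has only a few shapes. We enumerate them: whether $x$ is $(2,2)$, $(1,2)$ or $(2,1)$; whether $j^-$ contains a second arrow; and whether there are double arrows $j\to x$.

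The main step uses the periodic resolution $0\to S_i\to P_i\to P_x\to P_j\oplus P_k\to P_i\to S_i\to0$. The key point is $\Omega^2(S_i)$, the kernel of $(\beta,\bar{\beta})\colon P_j\oplus P_k\to P_i$. It must be isomorphic to $\Omega^{-2}(S_i)$, which is cyclic, being the image of $P_x$ generated by $\alpha$. Concretely, $\Omega^2(S_i)=\alpha\La$ inside $P_j\oplus P_k$, via $a\mapsto(\beta a,\bar\beta a)$ read appropriately. The relation $\alpha\beta\prec I$ with $\alpha\bar{\beta}\nprec I$ says the generator of $\Omega^2(S_i)$ has a nonzero component only through $\bar{\beta}$ modulo $J$-terms. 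Then $\Omega^2(S_i)$ has top $S_x$, and its projection to $P_k$ is given by an element of $J$ (up to the $w$-part), while its projection to $P_j$ is of the form $\gamma\cdots$. We apply Lemma \ref{lem:exseq} to $P_x\to\Omega^{-2}$, or equivalently to $P_j\oplus P_k\to\Omega^{1}(S_i)$. It forces $\soc(P_k)$ to lie in the kernel, i.e.\ in $\Omega^2(S_i)$. That element has to come from $\alpha\La$ with zero $P_j$-component, and this should force $\alpha\bar\beta$ to lie in $J^3$-terms, hence $\alpha\bar\beta\prec I$, contradicting our assumption. In the cases where this does not immediately close, we compare dimension vectors $p_i^+=p_i^-$, i.e.\ $p_j+p_k=p_x$, with the symmetric Cartan condition. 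We also use Lemma \ref{lem:3.1} to exclude $p_x=p_i$-type degeneracies.

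The hardest step is the socle argument. We must verify that the element of $\Omega^2(S_i)$ projecting onto $\soc(P_k)$ actually has zero component in $P_j$, which needs control of $w$. We would first pass to the idempotent algebra $e\La e$ with $e=e_x+e_i+e_j+e_k$, which is still tame symmetric, to kill paths outside the configuration. Then we would use wildness via subcategories in covering, as in Lemma \ref{lem:TWD}, to rule out long paths through $\bar{\beta}$ that would make $w$ nontrivial.
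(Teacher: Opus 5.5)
Your reduction to the mixed case $\alpha\beta\prec I$, $\alpha\bar{\beta}\nprec I$ matches the paper. So does your choice of Lemma \ref{lem:exseq}, applied to $\pi=[\beta\ \bar{\beta}]\colon P_j\oplus P_k\to\rad P_i$, as the source of the contradiction. But there is a genuine gap: the contradiction only appears once you know that $\Omega^2(S_i)=\ker\pi$ lies inside $P_j\oplus 0$, and your proposal never establishes this. Instead you claim that $\soc(P_k)\subseteq\Omega^2(S_i)$ ``should force'' $\alpha\bar{\beta}\prec I$, and nothing supports that.

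Concretely, $\Omega^2(S_i)$ is the image of $d_2=\vec{\gamma \\ \delta}\colon P_x\to P_j\oplus P_k$, so $\Omega^2(S_i)\cong P_x/\alpha\La$. It is not $\alpha\La$, which is $\Omega^3(S_i)\cong\Omega^{-1}(S_i)$. The inclusion $\soc(P_k)\subseteq\Omega^2(S_i)$ only says that $\gamma u=0$ and $0\neq\delta u\in\soc(P_k)$ for some $u\in e_x\La e_k$. Nothing turns this into a relation starting with $\alpha\bar\beta$, and without independent control of $\delta$ there is no contradiction. Also, Lemma \ref{lem:exseq} applied to $P_x\to\Omega^{-2}(S_i)$ is vacuous, since the nonzero submodule $\alpha\La$ automatically contains the simple socle of $P_x$. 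So that version is not equivalent to the one you need.

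The missing idea is to kill the $P_k$-component $\delta$ of $d_2$, in two steps.
\begin{enumerate}
\item[(1)] $\delta\in J^2$. Otherwise $\delta$ is an arrow $k\to x$ with $\bar\beta\delta\prec I$, coming from $\beta\gamma+\bar\beta\delta=0$. Lemma \ref{lem:3.4} in the triangle $(\alpha\ \bar\beta\ \delta)$ would then give $\alpha\bar\beta\prec I$.
\item[(2)] $\bar\beta\delta\in\beta\La$; this is the crucial step. Writing $\bar\beta\delta=\beta z$ gives $\beta(\gamma+z)=0$, so you may adjust $\gamma$ to get $\beta\gamma=0$. Then \cite[Lemma 3.2(ii)]{EHS1} lets you take $d_2=\vec{\gamma \\ 0}$, so $\ker\pi=\gamma\La\oplus 0$ misses $\soc(P_k)$.
\end{enumerate}

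The paper proves (2) by passing to the three-vertex algebra $A=e\La e$ with $e=e_x+e_i+e_j$, in which $\gamma\alpha=0$. It then shows that $\beta$ is the unique arrow of $Q_A$ starting at $i$, using:
\begin{itemize}
\item \cite[Lemma IV.2.4]{E90};
\item Lemma \ref{lem:TWD};
\item the fact that a symmetric algebra has no arrow $\sigma\colon a\to b$ with $a^+=\{\sigma\}=b^-$ and $\sigma\delta'\prec I$;
\item the observation that $e_jAe_i=0$ in the remaining case, which contradicts symmetry.
\end{itemize}
This step fails with your idempotent $e_x+e_i+e_j+e_k$: keeping $k$ leaves $\bar\beta$ as an arrow, so the needed uniqueness of $\beta$ is not available. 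What must be controlled are the paths $i\to x$ beginning with $\bar\beta$, not your $w$ (paths $x\to j$ avoiding $i$). Finally, your relation $\alpha\beta=z\alpha\bar\beta+w$ does not typecheck, since $z\alpha\bar\beta$ ends at $k\neq j$.
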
 

\begin{proof} Fix a non-regular vertex, say $i$ is a $(1,2)$-vertex with arrows denoted as in $(*)$ above. We know that 
$j\neq k$ and $i$, by Lemma \ref{12wD}, and $i$ not of type R, due to Lemma \ref{typeR}. This means that we 
have at most one of $\alpha\beta,\alpha\bar{\beta}$ involved in a minimal relation of $I$. We shall prove that there is 
no such a path, i.e $i$ is of type N. \medskip 

Suppose to the contrary that $i$ is not of type N, say $\alpha\beta\prec I$ but $\alpha\bar{\beta}\nprec I$. 
We will show that this leads to a contradiction in two steps. We have $\alpha\beta\prec I$, hence by Lemma \ref{lem:3.3}, 
there is a triangle $(\alpha \ \beta \ \gamma)$ in $Q=Q_\La$, and we can take $\gamma$ such that $\beta\gamma\prec I$ and 
$\gamma\alpha\prec I$ (see Lemma \ref{lem:3.4}). \smallskip 

First, note that any symmetric algebra $A$ satisfy the following property. \begin{enumerate}
\item[(*)] {\it There is no arrow $\sigma:a\to b$ in $Q_A$ with $a^+=\{\sigma\}=b^-$ and $\sigma\delta\prec I_A$, 
for some arrow $\delta\in b^+$.} \end{enumerate}
Indeed, if $\sigma$ is the unique arrow starting from $a$, then $\sigma\delta\prec I$ forces $\sigma\delta=0$, 
after adjusting arrow $\delta:b\to c$, so we get an arrow $\delta$ in the left socle of $A$, since $\sigma$ is also 
the unique arrow in $Q_A$ ending at $b$, which is impossible for symmetric $A$. \medskip 

Consider now the idempotent algebra $A=e\La e$, where $e=e_x+e_i+e_j$. If $A=KQ_A/I_A$, then clearly $Q_A$ contains 
arrows $\alpha:x\to i$, $\beta:i\to j$ and $\gamma:j\to x$, and moreover, we have $\gamma\alpha=0$ in $A$, 
because $i^-=\{\alpha\}$ in $Q_A$ (possibly adjusting $\gamma$; see also Lemma \ref{lem:TWD}). As a result, we also have 
$\alpha\beta\prec I_A$ and $\beta\gamma\prec I_A$. \medskip  

Now, applying \cite[Lemma IV.2.4]{E90}, we deduce that exactly one of the following conditions hold. 
\begin{enumerate}
\item[A)] $\beta$ is the unique arrow in $Q_A$ starting from $i$, and then $Q_A$ must be one of the 
following quivers 
$$\xymatrix@R=.5cm{ i \ar[rr]&& j\ar@<-.7ex>[ld]\ar[ld] \\ 
& \ar[lu] x \ar@<-.7ex>[ru] &} \quad 
\xymatrix@R=.5cm{ i \ar[rr]&& j\ar@<-.4ex>[ld]\ar@<.4ex>[ld] \\ 
& \ar[lu] x &} \quad 
\xymatrix@R=.5cm{ i \ar[rr]&& j\ar[ld] \ar@(ru,rd)@{.>}[] \\ 
& \ar[lu] x \ar@(ru,rd)@{.>}[] &}$$ 

\item[B)] There exists an arrow $\beta':i\to j$ in $Q_A$, different from $\beta$, and then $Q_A$ is one 
of the following quivers 
$$\xymatrix@R=.5cm{i \ar@<+0.44ex>[rr] \ar@<-0.44ex>[rr] && j \ar[ld]\\ 
& x \ar[lu] & } \quad 
\xymatrix@R=.5cm{i \ar@<+0.44ex>[rr] \ar@<-0.44ex>[rr] && j \ar@<-.4ex>[ld]\ar@<.4ex>[ld]\\ 
& x \ar[lu] & } \quad 
\xymatrix@R=.5cm{i \ar@<+0.44ex>[rr] \ar@<-0.44ex>[rr] && j \ar[ld]\\ 
& x \ar[lu] \ar@(dl,dr)[] & } \vspace*{.5cm}$$ 

\item[C)] There is an arrow $\beta':i\to x$ in $Q_A$, and then it is one of the quivers 
$$\xymatrix@R=.5cm{ i \ar[rr]\ar@<.4ex>[rd] && j\ar[ld] \\ 
& \ar@<.4ex>[lu] x &} \quad 
\xymatrix@R=.5cm{ i \ar[rr]\ar@<.4ex>[rd] && j\ar@<-.4ex>[ld] \\ 
& \ar@<.4ex>[lu] x \ar@<-.4ex>[ru] &} \quad 
\xymatrix@R=.5cm{ i \ar[rr]\ar@<.4ex>[rd] && j\ar[ld] \ar@(ru,rd)[]\\ 
& \ar@<.4ex>[lu] x &}$$
\end{enumerate}

The second quiver in A) and the first and third quiver in B) can be immediately excluded, by Lemma \ref{lem:TWD}. 
The second quiver in B) is not allowed, due to the property $(*)$, since then $\alpha\beta\prec I$, for an arrow 
$\alpha:x\to i$ with $\{\alpha\}=x^+=i^-$. Moreover, one can also show that $Q_A$ cannot be any of the quivers from 
C). Indeed, if this 
was the case, then any path in $Q_A$ from $j$ to $i$ can be written as $u\gamma\alpha$ (in $KQ_A$), for some path 
$u\in KQ_A$. But $\gamma\alpha=0$ in $A$, hence $e_jAe_i=0$, which is impossible for a symmetric algebra, because we 
have an arrow $\beta:i\to j$. \medskip 

Summing up, we have proven that $Q_A$ is the first or the third quiver in A). In both cases, $\beta$ is the unique 
arrow in $Q_A$ starting at $i$.  

\medskip 

Next, $i$ is a $(1,2)$-vertex in $Q$, hence the exact sequence for $S_i$ in $\mod\La$ has the form: 
$$\xymatrix{0 \ar[r] & S_i \ar[r] & P_i \ar[r]^{\alpha} & P_x\ar[r]^(.4){d_2} & P_j\oplus P_k 
\ar[r]^(.6){[\beta \ \bar{\beta}]} & P_i\ar[r] & S_i \ar[r] & 0}.$$ 
Since $\gamma\alpha=0$ in $\La$, we can take $\gamma$ as the first row of $d_2=\vec{\gamma \\ \delta}$ 
(see \cite[Lemma 3.2(i)]{EHS1}). Moreover, we have $\delta\in J^2$, since otherwise $\delta$ is an arrow $k\to x$, 
and then $[\beta \ \bar{\beta}]\cdot d_2=0$, forces $\bar{\beta}\delta\prec I$, so we have a triangle 
$(\alpha \ \bar{\beta} \ \delta)$ with $\bar{\beta}\delta\prec I$ but $\alpha\bar{\beta}\nprec I$, 
a contradiction with Lemma \ref{lem:3.4}. \medskip 

Now, recall that $\beta$ is the unique arrow starting from $i$ in the quiver $Q_A$ of the 
idempotent algebra $A$. Consequently, every path in $e_i\La e_x$ belongs 
to $\beta\La$ (modulo $I_A$). For example, we can write $\bar{\beta}\delta=\beta z$ in $A$, for some 
$z\in J$, and then we have $\beta\gamma+\bar{\beta}\delta=\beta(\gamma+z)=0$ also in $\La$. Hence, we get 
$\beta\gamma=0$, after adjusting $\gamma:=\gamma+z$, if $z\in J^2$; otherwise, $z\in K\gamma$). \smallskip 

Finally, note that $\beta\gamma=0$ implies $[\beta \ \bar{\beta}]\cdot\vec{\gamma \\ 0}=0$, and 
the column $\vec{\gamma \\ 0}$ is not in $J^2$, hence using \cite[Lemma 3.2(ii)]{EHS1}, we conclude 
that $d_2$ can be taken as $d_2=\vec{\gamma \\ 0}$. In particular, there is an exact sequence in 
$\mod\La$ of the form 
$$\xymatrix{0\ar[r] & K\ar[r] & P_j\oplus P_k\ar[r]^{\pi} & M\ar[r] & 0}$$ 
where $\pi=d_1=[\beta \ \bar{\beta}]$, $K=Ker(\pi)=Im(\vec{\gamma \\ 0})=\gamma\La\oplus 0$ and 
$M=Im(d_1)=\rad P_i$ is indecomposable non-projective. But $K\subset P_j$ does not contain the socle 
$\soc(P_j\oplus P_k)=S_j\oplus S_k$, hence we get a contradiction with Lemma \ref{lem:exseq}. \end{proof} 

\medskip 

\begin{lemma}\label{precnonreg} If $Q=Q_\La$ is biserial, then every successor of a $(1,2)$-vertex 
is not a $(1,2)$-vertex. Dually, predecessors of $(2,1)$-vertices are not $(2,1)$-vertices. \end{lemma}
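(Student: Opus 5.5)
We prove the first assertion; the dual one follows by passing to the opposite algebra $\La^{\op}$, which is again GQT with opposite Gabriel quiver. Let $i$ be a $(1,2)$-vertex with arrows labelled as in $(*)$, so $i^-=\{\alpha:x\to i\}$ and $i^+=\{\beta:i\to j,\ \bar{\beta}:i\to k\}$ with $j\neq k$ by Corollary \ref{12wD}. By Proposition \ref{prop:typeN}, $i$ is of type N, that is, $\alpha\beta,\alpha\bar{\beta}\nprec I$. Suppose, for a contradiction, that the successor $j$ is also a $(1,2)$-vertex. Then $j^-=\{\beta\}$ and $j^+=\{\gamma:j\to y,\ \bar{\gamma}:j\to z\}$ with $y\neq z$, and again by Proposition \ref{prop:typeN}, $\beta\gamma,\beta\bar{\gamma}\nprec I$.

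The plan is to derive a contradiction with symmetry or periodicity from the fact that $P_j$ is "free" at the top of $P_i$. The cleanest route is through the exact sequence for $S_j$:
$$0\to S_j\to P_j\stackrel{\beta}{\to}P_i\stackrel{d_2}{\to}P_y\oplus P_z\stackrel{[\gamma\ \bar{\gamma}]}{\to}P_j\to S_j\to 0,$$
where $P_j^-=P_i$ because $j^-=\{\beta\}$. Here $\Omega^2(S_j)=\ker[\gamma\ \bar{\gamma}]$ is the image of $P_i$, so $d_2=\vec{d' \\ d''}$ with $d'\in e_y\La e_i$, $d''\in e_z\La e_i$. Moreover $\ker d_2=\Omega^3(S_j)=\beta\La$. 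I expect that $d_2$ can be chosen with entries in $J$ whose leading terms are arrows or paths ending in $\alpha$, since $i^-=\{\alpha\}$ forces every path ending at $i$ to end with $\alpha$. Thus $d_2=\vec{u\alpha \\ v\alpha}$, and $\ker d_2\supseteq \ker(\alpha\cdot)$. This gives
$$\beta\La=\ker d_2\supseteq\{a\in P_i : \alpha a=0\}.$$
On the other hand, $\bar{\beta}$ must be controlled. Since $\alpha\bar{\beta}\nprec I$ and $\alpha\beta\nprec I$, I would use the dimension equality $p_j^-=p_j^+$, that is $p_i=p_y+p_z$, together with $p_i^-=p_i^+$, that is $p_x=p_j+p_k$. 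Combining these with Lemma \ref{lem:3.1} and symmetry of the Cartan matrix should yield a numerical contradiction, for instance by comparing the $(i,i)$ entries.

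Alternatively, and this is what I would try first, I would mimic the proof of Proposition \ref{prop:typeN}. Pass to the idempotent algebra $A=e\La e$ with $e=e_x+e_i+e_j$, or include $k$ as needed. In $Q_A$, the vertex $j$ has the unique incoming arrow $\beta$ and $i$ has the unique incoming arrow $\alpha$. Then every nonzero path ending at $j$ passes through $\alpha\beta$, so the socle element of $e_j\La$, which is a cycle ending with $\beta$ by symmetry, factors as $w\alpha\beta$. Using that $\beta\gamma,\beta\bar\gamma\nprec I$ (type N at $j$), the map $[\gamma\ \bar{\gamma}]$ and the kernel computation would show $\Omega^2(S_j)\cong\Omega^{-1}(S_i)$-type coincidences. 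The aim is to produce a short exact sequence $0\to \mathrm{Ker}\to P_y\oplus P_z\to \rad P_j\to 0$ whose kernel misses part of the socle, contradicting Lemma \ref{lem:exseq}.

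The main obstacle is pinning down $d_2$ precisely, namely showing that its entries can be normalized so that the kernel has an explicit form, as was done with \cite[Lemma 3.2]{EHS1} in the proof of Proposition \ref{prop:typeN}. Here there is no triangle (type N), so Lemmas \ref{lem:3.3} and \ref{lem:3.4} give no arrows back. I would therefore rely on wildness to limit the neighbourhood instead: the chain $x\to i\to\{j,k\}$, $j\to\{y,z\}$ with all length-two paths free of minimal relations yields a wild tree-shaped subcategory in covering. This is a star with a long arm, an extended Dynkin-exceeding tree once one also uses the second predecessor of $x$ or the successors of $k$. I expect this wildness argument to be the decisive and most delicate step.
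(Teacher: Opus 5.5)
\textbf{There is a genuine gap.} It lies in the wildness step you single out as decisive. A wild tree in the covering gives a contradiction only if the corresponding full subcategory is hereditary, i.e.\ no path of the tree is involved in a minimal relation of $I$.

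Type N at $i$ and at $j$ controls only the length-two paths $\alpha\beta,\alpha\bar\beta,\beta\gamma,\beta\bar\gamma$. It says nothing about the length-three paths $\alpha\beta\gamma$ and $\alpha\beta\bar\gamma$. These pass through both $(1,2)$-vertices and lie in every tree of the kind you describe: the $\wt{\bD}_5$ on $x,i,j,k,y,z$, made wild by adding $\ba:x\to u$ from Lemma \ref{lem:3.2}. So that tree only disposes of the sub-case $\alpha\beta\gamma,\alpha\beta\bar\gamma\nprec I$, which is just the first step of the paper's argument. The extensions you suggest, at a predecessor of $x$ or at successors of $k$, create further composite paths whose freedom from relations is exactly what would need proof.

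You remark that type N gives no triangles via Lemmas \ref{lem:3.3}--\ref{lem:3.4}. The missing ingredient is the length-three analogue \cite[Proposition 4.5]{EHS1}, which does produce arrows back into $x$. Two cases then remain.

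\emph{Both paths in relations.} If $\alpha\beta\gamma,\alpha\beta\bar\gamma\prec I$, you get arrows $y\to x$ and $z\to x$. Then $p_i+p_u=p_x^+=p_x^-=p_y+p_z=p_j^+=p_j^-=p_i$, which forces $p_u=0$.

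\emph{Exactly one path in a relation.} Say $\alpha\beta\gamma\prec I$ and $\alpha\beta\bar\gamma\nprec I$. Then there is an arrow $y\to x$, and any tree containing $x\to i\to j\to y$ carries a relation. You therefore need a different wild subcategory that avoids $x$. The paper builds it as follows.
\begin{enumerate}
\item $|y^-|=2$: otherwise $p_j=p_y^-=p_y^+$, while $p_y^+$ dominates $p_x=p_j+p_k$.
\item Either $|k^-|=2$, or $\bar\beta\delta\nprec I$ for all $\delta\in k^+$ (Lemmas \ref{lem:3.1}, \ref{lem:3.3}).
\item Similarly, either $|z^-|=2$, or there is $\ve\in z^+$ with $\bar\gamma\ve\nprec I$ and $\beta\bar\gamma\ve\nprec I$ (again using \cite[Proposition 4.5]{EHS1}).
\item Finally, take the tree centred at $j$ with arms $j-i-k-k'$, $j-y-\circ$ and $j-z-z'$. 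It properly contains $\widetilde{\mathbb{E}}_6$.
\end{enumerate}

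Your two preliminary routes do not supply this case analysis either. The inclusion $\beta\La\supseteq\{a\in P_i:\alpha a=0\}$ is just $\soc P_i\subseteq\beta\La$, because the sequence for $S_i$ identifies the kernel of left multiplication by $\alpha$ on $P_i$ with $S_i$; it holds automatically. The identities $p_i=p_y+p_z$ and $p_x=p_j+p_k$ do not by themselves contradict Lemma \ref{lem:3.1} or symmetry of the Cartan matrix. A numerical contradiction appears only once length-three relations have produced arrows back into $x$, as in the first case above.
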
 

\begin{proof} By Proposition \ref{prop:typeN}, we may assume that each non-regular vertex $i$ is 
of type N (without double arrows starting or ending from $i$). Let $i$ be a fixed $(1,2)$-vertex 
with $i^+=\{\beta,\bar{\beta}\}$ and two successors $j=t(\beta)$, $k=t(\bar{\beta})$, and $i^-=\{\alpha\}$ 
with one predecessor $x=s(\alpha)$. Using Lemma \ref{lem:3.2} ($\La$ is of infinite type), we deduce 
that $|x^+|=2$, so there is an arrow $\ba:x\to u$, $\ba\neq\alpha$. \medskip 

Suppose to the contrary that one of $j,k$ is again a $(1,2)$-vertex. Without loss of generality, suppose 
$j$ is a $(1,2)$-vertex with $j^+=\{\gamma,\bar{\gamma}\}$ and successors $a=t(\gamma)$ and $b=t(\bar{\gamma})$. 
Note that $j^-=\{\beta\}$ and $i=s(\beta)$ is the unique predecessor of $j$. \smallskip 

1) First, observe that exactly one of the paths $\alpha\beta\gamma,\alpha\beta\bar{\gamma}$ is involved 
in a minimal relation of $I$. Indeed, suppose both $\alpha\beta\gamma,\alpha\beta\bar{\gamma}\nprec I$. 
Then we have the following wild subcategory in covering 
$$\xymatrix@R=0.4cm{&& k & b & \\ 
u & \ar[l]_{\ba} x \ar[r]^{\alpha} & i \ar[r]^{\beta} \ar[u]^{\bar{\beta}} 
& j \ar[r]^{\gamma} \ar[u]^{\bar{\gamma}} & a }$$ \smallskip 

On the other hand, if both $\alpha\beta\gamma,\alpha\beta\bar{\gamma}\prec I$, then applying 
\cite[Proposition 4.5]{EHS1} we conclude that there are arrows $\sigma:a\to x$ and $\sigma':b\to x$ 
(and $x^-=\{\sigma,\sigma'\}$). In this case, we have 
$$p_i+p_u=p_x^+=p_x^-=p_a+p_b=p_j^+=p_j^-=p_i,$$ 
so $p_u=0$, a contradiction. This proves 1). We may assume that $\alpha\beta\gamma\prec I$ but 
$\alpha\beta\bar{\gamma}\nprec I$. In particular, it follows that there is an arrow $\sigma:a\to x$. \medskip 

2) Next, we claim that $|a^-|=2$. Assume that this is not the case. Then either $a$ is a $1$-vertex 
with $a^+=\{\sigma\}$ or $a$ is a $(1,2)$-vertex with $a^+=\{\sigma,\bar{\sigma}\}$. In the first case, 
we have $p_j=p_a^-=p_a^+=p_x$, which is impossible, since $p_x=p_i^-=p_i^+=p_j+p_k$. In the second, 
we get $p_j=p_a^-=p_a^+=p_x+p_{t(\bar{\sigma})}=p_j+p_k+p_{t(\bar{\sigma})}$, a contradiction again. \medskip 

3) Now, we will prove that $|k^-|=2$, or $|k^-|=1$ and then for any $\delta\in k^+$, we have $\bar{\beta}\delta\nprec I$. 
Let $|k^-|=1$. If $|k^+|=1$, i.e. $k$ is a $1$-vertex, then the unique $\delta:k\to k'$ must satisfy 
$\bar{\beta}\delta\nprec I$, because otherwise, there is an arrow $k'\to s(\bar{\beta})=i$, by Lemma \ref{lem:3.3}, 
but then $k'=x$, so $p_i^-=p_x=p_{k'}=p_k^+=p_k^-=p_i$, and we obtain a contradiction with Lemma \ref{lem:3.1}. 
If $|k^+|=2$, then $k$ is a $(1,2)$-vertex, hence of type N, and the claim follows. \medskip 

4) In the last step, we show that $|b^-|=2$, or $|b^-|=1$ and then there is $\ve\in b^+$ such that 
$\bar{\gamma}\ve\nprec I$ and $\beta\bar{\gamma}\ve\nprec I$. Let $|b^-|=1$. If $|b^+|=2$, then $b$ is 
a $(1,2)$-vertex of type N, so $\bar{\gamma}\ve\nprec I$, for any $\ve\in b^+$, and the second part 
follows from 1) applied to vertex $j$. Suppose now $|b^+|=1$. 
Then $b$ is a $1$-vertex and the unique arrow $\ve:b\to b'$ satisfies $\bar{\gamma}\ve\nprec I$, since 
otherwise, we have an arrow $b'\to s(\bar{\gamma})=j$, due to Lemma \ref{lem:3.3}, but then $b'=i$ 
(unique predecessor of $j$), so $b=x$ is a $2$-vertex, which is an absurd. Further, we have 
$\beta\bar{\gamma}\ve\nprec I$. Indeed, if this is not the case, then by \cite[Proposition 4.5]{EHS1}, we get an 
arrow $b'\to s(\beta)=i$, hence $b'=x$, and we obtain $p_x=p_{b'}=p_b^+=p_b^-=p_j$, a contradiction as in 2). \medskip 

Finally, summing up the above conditions 2)-4), we conclude that there exists a wild subcategory of 
the following form 
$$\xymatrix@R=0.4cm{&&& \circ\ar[d] && \\ &&& a && \\ 
k'\ar@{-}[r]^{\delta}&k& \ar[l]_{\bar{\beta}}i\ar[r]^{\beta} & j\ar[u]^{\gamma}\ar[r]^{\bar{\gamma}} & b 
&\ar@{-}[l]_{\ve} b' }$$ 
Note that $\delta$ is either an arrow $\delta:k'\to k$, $\delta\neq\bar{\beta}$, if $|k^-|=2$, or it is any 
$\delta:k\to k'$, otherwise, so we have $\bar{\beta}\delta\nprec I$, due to part 3). The arrow $\ve:b'\to b$ 
or $\ve:b\to b'$ is defined in a similar way, but using 4), and in this case $\bar{\gamma}\ve,\beta\bar{\gamma}\ve\nprec I$, 
if the paths exist. The second arrow ending at $a$ different from $\gamma$ exists by 2). \medskip 

This completes the proof in case of $(1,2)$-vertices. The proof for $(2,1)$-vertices follows from dual 
arguments. \end{proof}

Finally, we prove the following proposition on neighbours of non-regular vertices. 

\begin{prop}\label{prop:biser} Assume $Q=Q_\La$ is biserial and $i$ is a $(1,2)$-vertex with 
$i^+=\{\beta,\bar{\beta}\}$ and $j=t(\beta)$, $k=t(\bar{\beta})$. Then at least one of $j,k$ is a $1$-vertex. 
Dually, at least one of the predecessors of a $(2,1)$-vertex is a $1$-vertex. \end{prop}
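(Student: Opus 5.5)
We argue by contradiction, in the spirit of the proof of Lemma \ref{precnonreg}. Fix the $(1,2)$-vertex $i$ with $i^-=\{\alpha:x\to i\}$, $i^+=\{\beta,\bar{\beta}\}$, $j=t(\beta)$, $k=t(\bar{\beta})$. By Proposition \ref{prop:typeN}, $i$ is of type N, so $\alpha\beta,\alpha\bar{\beta}\nprec I$. By Corollary \ref{12wD}, $j\neq k$. By Lemma \ref{lem:3.2}, $|x^+|=2$, say $x^+=\{\alpha,\ba:x\to u\}$.

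Suppose neither $j$ nor $k$ is a $1$-vertex. By Lemma \ref{precnonreg}, neither is a $(1,2)$-vertex. Since $Q$ is biserial, each of $j,k$ is therefore a $2$-vertex or a $(2,1)$-vertex; in particular $|j^-|=|k^-|=2$. Let $\beta':j'\to j$ and $\bar{\beta}':k'\to k$ be the second arrows ending at $j$ and $k$.

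The aim is to build a wild subcategory in covering, namely a tree of Euclidean-exceeding type centred at $i$. It uses the paths $x\to i\to j$ and $x\to i\to k$ together with the extra arrows $\ba$, $\beta'$, $\bar{\beta}'$. In the covering these give the star
$$\xymatrix@R=0.4cm{u & \ar[l]_{\ba} x \ar[r]^{\alpha} & i \ar[r]^{\beta}\ar[d]^{\bar{\beta}} & j & \ar[l]_{\beta'} j' \\ && k & \ar[l]^{\bar{\beta}'} k' &}$$
This quiver has underlying graph $\tilde{E}_7$-like or larger. If it is not wild on its own, we extend it by one more arrow, for instance an arrow out of $j$ or out of $k$, to obtain a wild tree such as $\tilde{D}_n$ plus a branch.

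For this subcategory to be genuine we need to check the following.
\begin{enumerate}
\item[(a)] The length-two paths $\alpha\beta$ and $\alpha\bar{\beta}$ are not involved in relations. This holds by type N.
\item[(b)] There are no relations among longer paths through $i$ inside the subcategory. This is handled via \cite[Proposition 4.5]{EHS1}, which produces arrows back to $x$, together with dimension-vector arguments as in steps 1)--4) of Lemma \ref{precnonreg}.
\item[(c)] The new vertices are distinct in the covering.
\end{enumerate}

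A cleaner alternative to this wildness argument is a direct numerical contradiction, which I would try first. We have $p_x=p_i^-=p_i^+=p_j+p_k$. Since $|j^-|=|k^-|=2$, we get $p_j^-=p_i+p_{j'}$ and $p_k^-=p_i+p_{k'}$, so $p_j^+=p_i+p_{j'}$ and similarly for $k$. If, for example, $j$ is a $(2,1)$-vertex with $j^+=\{\gamma:j\to a\}$, then $p_a=p_i+p_{j'}$. I would then try to locate the arrows $\beta'$, $\bar{\beta}'$ using Lemma \ref{lem:3.3}, applied to whichever of $\beta'\gamma$ or $\bar{\beta}'\delta$ lies in a minimal relation; tameness (no $K_2^*$) forces some such relation. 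The resulting triangles pin down $j'$, $k'$ and $a$ relative to $x$ and $u$, and comparing vectors should give $p_u=0$ or $\hat{p}=p$, contradicting Lemma \ref{lem:3.1}.

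The main obstacle is the case analysis when $j$ and $k$ are both $2$-vertices. In that case the triangles may close up through $x$ or $u$, and both the numerical identities and the wild covering must be checked in each configuration: whether $j'$, $k'$ coincide with $x$, $u$ or each other. I expect the bulk of the work to be distinguishing these coincidences.

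The dual statement for $(2,1)$-vertices follows by applying the argument to $\La^{\op}$, which is again GQT with opposite, still biserial, quiver.
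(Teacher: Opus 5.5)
Your proposal has a genuine gap: the wildness route is only announced, and the numerical route you would try first cannot succeed.

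The star you draw is not ``$\tilde{E}_7$-like or larger''. It has centre $i$ and three arms $x,u$; $j,j'$; $k,k'$ of length two, so it is exactly $\tilde{E}_6$, whose path algebra is tame. Everything therefore rests on the extension. The real difficulty there is not coincidences of $j',k'$ with $x,u$. It is controlling relations on the new paths, and that is what dictates where to extend.

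The paper first extends at the second predecessor $v$ of $j$:
\begin{itemize}
\item If $|v^+|=2$, it uses $\overline{\beta'}$.
\item If $j$ is a $(2,1)$-vertex, then $v$ is not $(2,1)$ by Lemma \ref{precnonreg}. If $v$ is a $1$-vertex, its incoming arrow $\sigma$ satisfies $\sigma\beta'\nprec I$. Otherwise Lemma \ref{lem:3.3} closes a triangle through the unique arrow out of $j$, and Lemma \ref{lem:3.4} contradicts $j$ being of type N.
\end{itemize}
Either way this gives a wild tree with arms of lengths $2,3,2$ at $i$.

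The remaining case is $j,k$ both $2$-regular with $|v^+|=|w^+|=1$, where extending at $v$ may be blocked by a relation. Here one extends by an arrow $\gamma$ out of $j$, which needs three things:
\begin{itemize}
\item $\beta\gamma,\beta\bar{\gamma}\nprec I$, from Lemmas \ref{lem:3.3}, \ref{lem:3.4} and type N of $i$;
\item the tame $\tilde{D}_6$ tree on $a,b,j,i,k,c,d$, which forces $|z^-|=1$ for the four targets and hence their distinctness, so that some target is not a predecessor of $x$;
\item \cite[Proposition 4.5]{EHS1} applied to that target, giving $\alpha\beta\gamma\nprec I$.
\end{itemize}
A possible relation on $\beta'\gamma$ is then absorbed by the list of wild one-relation algebras \cite{Rin}. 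None of these steps appears in your (a)--(c).

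The numerical alternative has an unjustified step, and it cannot work in any case. Absence of $K_2^*$ only constrains double arrows, which need not exist here. So it does not force $\beta'\gamma$ or $\bar{\beta}'\delta$ into a minimal relation.

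More decisively, dimension vectors cannot detect this configuration. Take $\bbQ_8$ from Example \ref{exm:4.3} with loops at its $1$-vertices $4$ and $5$. This is precisely a quiver the paper excludes by this Proposition, since vertex $3$ is a $(1,2)$-vertex whose successors $1,2$ are not $1$-regular. The identities $p_s^-=p_s^+$ give $p_1=p_2=p_3=w$ and $p_4=p_5=2w$. This is realised by the symmetric matrix $C=ww^T$ with $w=(1,1,1,2,2)^T$, and $\hat{p}_s\neq p_s$ holds at every vertex. So no contradiction of the form $p_u=0$ or $\hat{p}=p$ is available, and a wildness argument with the case split above is unavoidable.
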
 

\begin{proof} Assume to the contrary that both $j,k$ are not $1$-regular. Since $j,k$ cannot be $(1,2)$-vertices, 
due to the previous proposition, we obtain that $|j^-|=|k^-|=2$. Let $\beta':v\to j$ and $\beta'':w\to k$ 
be the arrows in $Q$ such that $j^-=\{\beta,\beta'\}$ and $k^-=\{\bar{\beta},\beta''\}$. Moreover, 
we have $i^-=\{\alpha\}$, so using Lemma \ref{lem:3.2}, we conclude that $x^+=\{\alpha,\ba\}$, for 
some arrow $\ba:x\to u$. \smallskip 

1) First, we claim that $|j^+|=|k^+|=2$, i.e. both $j,k$ are $2$-regular. Suppose that this is not the case, 
say $|j^+|=1$, $j^+=\{\gamma:j\to a\}$. Then $j$ is a $(2,1)$-vertex, so by Proposition \ref{precnonreg}, 
its predecessor $v=s(\beta')$ is not a $(2,1)$-vertex. If $v$ is a $1$-vertex, then the unique arrow 
$\sigma:v'\to v$ satisfies $\sigma\beta'\nprec I$, because otherwise (Lemma \ref{lem:3.3}), we get an arrow 
$j\to v'$, so $v'=a$, and we have a triangle $(\beta' \ \gamma \ \sigma)$ with $\sigma\beta'\prec I$ but 
$\beta'\gamma\nprec I$ ($j$ is a non-regular vertex, hence of type N), a contradiction with Lemma \ref{lem:3.4}. 
If $v$ is not a $1$-vertex, then it is a $2$- or $(1,2)$-vertex, so $|v^+|=2$ and then we put 
$\sigma:=\overline{\beta'}:v\to v'$. 
In both cases, we obtain the following wild subquiver 
$$\xymatrix@R=0.5cm{&&u&&& \\ && \ar[u]^{\ba}x\ar[d]_{\alpha} &&& \\ 
w\ar[r]^{\beta''} &k& \ar[l]_{\bar{\beta}}i\ar[r]^{\beta} &j& \ar[l]_{\beta'}v\ar@{-}[r]^{\sigma} &v'}$$ 

Therefore, it has been proven that vertex $j$ is $2$-regular. The same arguments work for $k$. Moreover, we can 
assume that $|v^+|=|w^+|=1$. Let $\gamma:j\to a,\bar{\gamma}:j\to b$ and $\delta:k\to c,\bar{\delta}:k\to d$ be 
the arrows starting from $j$ and $k$, respectively. \medskip 

2) We have both $\beta\gamma,\beta\bar{\gamma}\nprec I$ (and dually 
$\bar{\beta}\delta,\bar{\beta}\bar{\delta}\nprec I$). Indeed, otherwise by Lemma \ref{lem:3.3} we get an arrow 
$a\to i$ or $b\to i$, so $x=a$ or $b$, because $i$ has the unique predecessor $x$. But then we obtain a 
triangle $(\alpha \ \beta \ \rho)$, where $\rho=\gamma$ or $\bar{\gamma}$, such that $\alpha\beta\nprec I$ 
($i$ is of type N) but $\beta\rho\prec I$, which contradicts Lemma \ref{lem:3.4}, and we are done. In other 
words, both paths starting from $\beta$ are not involved in minimal relations of $I$. \medskip 

3) Next, observe that any vertex $z\in\{a,b,c,d\}$ satisfies $|z^-|=1$. In fact, by 2), it is easy to see  
that there is a (tame) hereditary subcategory of the form  
$$\xymatrix@R=0.5cm{&b&&d& \\ 
a & \ar[l]_{\gamma}j\ar[u]^{\bar{\gamma}} & \ar[l]_{\beta}i\ar[r]^{\bar{\beta}} & 
\ar[r]^{\delta}k\ar[u]^{\bar{\delta}} & c}$$ 
Attaching an arrow to any of the leaves $a,b,c,d$ of the above tree, we get a wild subcategory, so the 
claim follows. \medskip 

4) Eventually, observe that vertices $a,b,c,d$ must be pairwise different, because otherwise, there is 
at least one $z\in\{a,b,c,d\}$ with $|z^-|=2$, which is not possible, by 3). We know that $x$ has 
exactly two predecessors, so there is at least one $z\in\{a,b,c,d\}$, which is not a 
predecessor of $x$. Without loss of generality, assume $z=a$ is not a predecessor of $x$. Then using \cite[Proposition 4.5]{EHS1}, we 
deduce that $\alpha\beta\gamma\nprec I$, and therefore, we get a wild subcategory of the form 
$$\xymatrix@R=0.5cm{&&w\ar[d]_{\beta''}&& \\ &&k& v\ar[d]_{\beta'} & \\ 
u& \ar[l]_{\ba}x\ar[r]^{\alpha} & i\ar[u]^{\bar{\beta}}\ar[r]^{\beta} & j\ar[r]^{\gamma} & a}$$
Note that this is isomorphic to a wild hereditary algebra or to a wild one-relation algebra 
\cite[see Theorem 2 in 1.5]{Rin}, depending on $\beta'\gamma\nprec I$ or $\beta'\gamma\prec I$. Similar wild 
algebras can be constructed in the remaining cases $z=b,c$ or $d$. This completes the proof. \end{proof}

\section{Periodicity shadows}\label{sec:4} 

This section gives a quick recap of the notion of periodicity shadow \cite{PS1,PS2}. We will first discuss 
the definition and the so-called Reconstruction Theorem, and then give examples for $n=3,4,5$. We finish with 
some helpful lemmas concerning the position of loops in the Gabriel quivers of GQT algebras. \medskip 

The notion of peridicity shadow was motivated by a few conditions, which have to 
be satisfied by shadows $\bS_\La$ of TSP4 (in general, GQT) algebras $\La$. Following \cite[see Definition 4.2]{PS1}, 
we say that a skew-symmetric matrix $A\in\bM_n(\bZ)$ is a {\it periodicity shadow} if $A$ satisfies the following 
conditions (see also \cite[Theorem 2.2]{PS1}): 
\begin{enumerate}
\item[(PS1)] $A$ is singular, 
\item[(PS2)] $A$ does not admit a non-zero row containing only integers of the same sign,  
\item[(PS3)] there exists a symmetric matrix $C\in\bM_n(\bN)$ with non-zero columns such that $AC=0$. 
\end{enumerate} 
Such a matrix $A$ is called {\it tame}, provided that: 
\begin{enumerate}
\item[(T1)] $A$ has entries $|a_{ij}|\leqslant 2$, 
\item[(T2)] no row (equivalently, column) contains $a_{ij}=2,a_{ik}\geqslant 1$ or  $a_{ij}=-2,a_{ik}\leqslant -1$, 
\item[(T3)] each row (column) contains at most four positive and at most four negative entries. 
\end{enumerate} 

We will often identify a skew-symmetric matrix $A$ with the unique $2$-acyclic quiver $\bfQ_A$, whose 
adjacency matrix is $\Adj_{\bfQ_A}=A$. \smallskip 

The tame periodicity shadows were the main object of interest in the first paper \cite{PS1}, whereas 
in the second part \cite{PS2}, we presented an algorithm allowing us to generate the set 
$\bS(n)$ of all (basic) tame periodicity shadows of size $n$, and we provided successful computations 
for $n\leqslant 6$. By basic, we mean that each tame periodicity shadow is obtained from a shadow $A\in\bS(n)$ 
by permutations of rows and columns, or taking $-A$. In other words, we take a set $\bS(n)$ of tame 
periodicity shadows, which consists of all representatives of orbits under the action of permutation or taking 
negative matrix. At the level of associated quivers, this means permutation of vertices or taking the opposite 
quiver. Actually, we are interested only in the so-called {\it essential shadows} \cite[see Section 2]{PS2}, 
which are shadows $A\in\bS(n)$ satisfying two additional conditions: 
\begin{enumerate}
\item[(PS4)] each row of $A$ does not contain both $2$ and $-2$ (except $A$ is the Markov shadow 
$A=\bS_1\in\bS(3)$ from Example \ref{exm:4.1}), 
\item[(PS5)] for any $i,j,$ and $k$ such that $a_{ij}=2$ and $a_{jk}=1$, we have $a_{ki}>0$; for any $a_{ij}=-2$ and 
$a_{jk}=-1$, we have $a_{ki}<0$. 
\end{enumerate} 

Note that for any GQT algebra $\La$ its shadow $\bS_\La$ is essential \cite[see Section 2]{PS2}. As a result, computing 
the set $\bE(n)$ of all essential shadows (called just shadows, or TSP4-shadows), gives us access to all possible shadows 
$\bS_\La$ of GQT algebras. In terms of quivers, we conclude that for any GQT algebra $\La=KQ/I$, $Q=Q_\La$, the associated 
quiver $\bfQ_{\bS_\La}$, 
which is exactly the quiver $Q^\times$ obtained from $Q$ by deleting all $2$-cycles and loops, is one of 
the quivers $\bfQ_A$ identified with essential shadows $A\in\bE(n)$ (up to permutation or taking the opposite 
quiver). \medskip 

We can say more. Namely, one of major results of the paper \cite[see Section 5, especially Theorem 5.6]{PS1} proves 
that the loop-free part $Q^\circ$ of $Q$ (obtained from $Q$ by deleting loops) has the form of a disjoint union 
$$Q^\circ=Q^\times \sqcup E,$$ 
where $E$ is a disjoint union of $2$-cycles (in \cite{PS2}, we used a different notation for $E$, showing its 
underlying graph, whose edges encode the $2$-cycles in $E$). In other words, any Gabriel quiver $Q=Q_\La$ of 
a GQT algebra $\La$ is, up to permutation or taking the opposite quiver, obtained from its shadow $Q^\times$ 
in $\bE(n)$, by adding a finite number of disjoint $2$-cycles, and a finite number of loops. Moreover, the position 
of $2$-cycles is severly restricted by the following theorem, which summarizes the results of 
\cite[see the Main Theorem]{PS1}, called sometimes The Reconstruction Theorem.  

\begin{thm}\label{thm:recon} Let $n\geqslant 1$ be a natural number. Then there is a finite set $\bE(n)\subset\bS(n)$ 
of TSP4-shadows $n\times n$ such that every GQT algebra $\Lambda$ with Gabriel quiver $Q=Q_\La$ having $n$ 
vertices satisfies the following conditions. 
\begin{enumerate} 
\item[(a)] The subquiver $Q^\times$ obtained by removing all loops and $2$-cycles has $\Adj_{Q^\times}\in\bE(n)$ 
(up to relabelling of vertices or taking the opposite quiver).  
\item[(b)] If $Q^\times$ is non-empty, then the $2$-cycles $\xymatrix{a_i\ar@<-0.4ex>[r] & b_i \ar@<-0.4ex>[l]}$ in $E$ 
are pairwise disjoint, i.e. $\{a_i,b_i\}$ and $\{a_j,b_j\}$ are disjoint, for $i\neq j$. Moreover, any of the $2$-cycles 
$a\leftrightarrows b$ is contained in one of the following blocks in $Q$. \end{enumerate} 
$$\xymatrix@R=0.3cm{\\ \\ \circ_a \ar@<-0.45ex>[r] & \bullet_b \ar@<-0.45ex>[l] \ar@(rd,ru)@{..>}[]& }
\xymatrix@R0.3cm{\\ & \bullet_a \ar[rd] \ar@<-0.1cm>[dd] & \\ 
\circ \ar[ru]   && \circ \ar[ld]\\ 
& \bullet_b \ar@<-0.1cm>[uu] \ar[lu] & } \quad 
\xymatrix@R=0.3cm{ &\bullet \ar[rdd] & \\ & \bullet\ar[rd]& \\ 
\bullet_a \ar[ru] \ar[ruu] \ar@<-0.35ex>[rr]&& \bullet_b \ar[ld] \ar@<-0.35ex>[ll]\\ & \circ \ar[lu] & } \quad 
\xymatrix@R=0.2cm{& \bullet \ar[rdd] & \\ & \bullet \ar[rd] & \\ 
\bullet_a \ar[ru]\ar[ruu] \ar@<-0.35ex>[rr] && \ar[ld]\ar[ldd] \ar@<-0.35ex>[ll] \bullet_b \\ 
& \bullet \ar[lu] & \\ & \ar[luu] \bullet & }$$ \end{thm}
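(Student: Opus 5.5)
The plan is to prove the two parts separately. Part (a) is essentially formal once the periodicity sequences are in place. Part (b) needs a local combinatorial analysis around each $2$-cycle.

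\textbf{Part (a).} I would fix a GQT algebra $\La=KQ/I$ with Cartan matrix $C$; since $\La$ is symmetric, $C$ is symmetric, with entries in $\bN$ and non-zero columns. For each $i\in Q_0$, Section \ref{sec:2} gives the $4$-periodic resolution of $S_i$ and hence $p_i^-=p_i^+$. Writing this row by row gives $\Adj_Q C=0$. A loop at $i$ contributes equally to $p_i^+$ and $p_i^-$. A $2$-cycle $i\rightleftarrows j$ contributes $p_j$ to both sides. So both cancel, and $\Adj_{Q^\times}C=0$, which is (PS3). Condition (PS1) is immediate, since $C\neq 0$ and $AC=0$ force $A$ to be singular. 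For (PS2), suppose a non-zero row $i$ of $A=\Adj_{Q^\times}$ had all entries of the same sign, say non-negative. Then $\sum_j a_{ij}p_j=0$ with non-negative coefficients, not all zero, and non-zero vectors $p_j\in\bN^n$, which is impossible. The tameness conditions (T1)--(T3) are exactly the restrictions on numbers of arrows and on subquivers $K_2^\pm$ recorded in Section \ref{sec:2}. The essential conditions come from earlier lemmas:
\begin{enumerate}
\item[(PS5)] A double arrow $i\to j$ followed by an arrow $j\to k$ gives a subquiver of type $K_2^*$ unless one composite lies in $I$. In that case Lemma \ref{lem:3.3} produces an arrow $k\to i$, so $a_{ki}>0$.
\item[(PS4)] This follows from a similar analysis of a vertex with both an incoming and an outgoing double arrow. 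Such a vertex forces the Markov quiver.
\end{enumerate}
Finally, the set $\bE(n)$ is finite because all entries are bounded by $2$ in absolute value.

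\textbf{Part (b).} Here I would fix a $2$-cycle $\alpha:a\to b$, $\beta:b\to a$ in $Q$ and study the arrows at $a$ and $b$. The tools are Lemmas \ref{lem:3.2}--\ref{lem:3.4}, the exclusion of subquivers of type $K_2^*$ and of wild subcategories in covering, and the dimension-vector identities $p_a^+=p_a^-$ and $p_b^+=p_b^-$.

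The first step is to show that each of $a,b$ has very few neighbours outside the $2$-cycle. I would compare $p_a^+=p_a^-$: the common summand $p_b$ cancels, so the remaining in- and out-neighbours of $a$ must balance against $C$. Combining this with Lemma \ref{lem:3.1} should leave only a handful of local configurations. In each of these I would decide whether $\alpha\beta\prec I$ or $\beta\alpha\prec I$, and then apply Lemma \ref{lem:3.3} to force triangles through $a$ and $b$. This should recover the four blocks listed in the statement: the $2$-cycle with a loop, two triangles glued along the $2$-cycle, and the two configurations involving the blocks of type IV.

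Disjointness is the second step. If two $2$-cycles shared a vertex $a$, then $a$ would already have in- and out-degree at least $2$ coming from the $2$-cycles alone. The balance $p_a^+=p_a^-$ together with the block shapes just derived would then produce a $K_2^*$-type or wild configuration in covering, giving a contradiction.

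\textbf{Main obstacle.} I expect the case analysis in part (b) to be the hardest step. It requires ruling out every way the other arrows at $a$ and $b$ can attach, and the tameness arguments need explicit wild subcategories in coverings, in the style of the proofs of Lemma \ref{precnonreg} and Proposition \ref{prop:biser}. I would first settle the case where $a$ and $b$ have no further neighbours, giving the loop block. Then I would treat the case of one extra triangle, using Lemma \ref{lem:3.4} to propagate relations, and only afterwards the type IV configurations.
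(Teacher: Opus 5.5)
Your part (a) is essentially correct and is the standard derivation. The identity $p_i^+=p_i^-$ gives $\Adj_{Q^\times}C=0$, hence (PS1)--(PS3). Conditions (T1)--(T3) and (PS5) come from excluding $K_2^\pm$ and $K_2^*$ together with Lemma \ref{lem:3.3}; for (PS5) you should add that an arrow $i\to k$ would create a $K_2^+$. (PS4) is only asserted, but the same tools fill it in. If $j$ has a double arrow in from $i$ and out to $k$, excluding $K_2^*$ and applying Lemma \ref{lem:3.3} forces every arrow ending at $i$ to start at $k$. Lemma \ref{lem:3.1} (with $p_i=p_k$) then forces two such arrows, and connectedness leaves only the Markov quiver. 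For comparison, the paper does not reprove this theorem: (a) is the essentialness result of \cite{PS2}, and (b) is the Main Theorem of \cite{PS1}, a long case analysis summarized in the rules (R1)--(R4).

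The genuine gap is part (b), which is the actual content of the theorem and which your proposal does not prove. It names tools and predicts that a local analysis ``should'' yield the four blocks, but gives no argument.

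\textbf{The degree bounds.} The identity $p_a^+=p_a^-$ with $p_b$ cancelled is just row $a$ of $\Adj_{Q^\times}C=0$, already used in (a). It gives no bound on the further neighbours of $a$ and $b$. The bounds in (R1) have to come from tameness. If there is no loop at $a$, Lemma \ref{lem:3.3} shows that the path $a\to b\to a$ is not involved in a minimal relation, and similarly at $b$. So the $2$-cycle unrolls in a Galois covering into a long line carrying the remaining arrows at $a$ and $b$, and you must control relations of length $\geqslant 3$ to produce wild subcategories. The pairing rules (R2)--(R4) need separate arguments that are absent: the partner's type is forced, the triangles or type IV blocks are forced, and in (R2) even $|Q_0|=6$. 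Note also that in the first block only $b$ is restricted and $a$ is an arbitrary outlet, so that block is not the case ``$a$ and $b$ have no further neighbours''.

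\textbf{The unused hypothesis.} More seriously, your sketch never uses $Q^\times\neq\emptyset$, and this hypothesis cannot be dropped. The quivers of Corollary \ref{coro1} occur for GQT algebras (e.g.\ algebras of quaternion type with three simple modules), and in both of them two $2$-cycles share a vertex. In the triangle with three $2$-cycles, no $2$-cycle lies in any of the four blocks. In $\circ\leftrightarrows\circ\leftrightarrows\circ$, each $2$-cycle does lie in a block of the first type, and the middle vertex has in- and out-degree $2$ with $p^+=p^-$, yet the algebra is tame. So your disjointness step (degree count, balance, block shapes, then a wild configuration) cannot work from data at $a$ and $b$ alone, and neither can your local derivation of the block shapes. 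What is missing is a global argument using connectedness of $Q$ and $Q^\times\neq\emptyset$. For instance, follow chains of $2$-cycles through vertices isolated in $Q^\times$ until reaching a vertex that carries arrows of $Q^\times$, and only then exclude the resulting configurations by explicit wild coverings.
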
 

The above theorem was proved before we knew that any shadow of a GQT algebra is essential, and the original 
statement in \cite{PS1} was a bit weaker. This gives the general strategy for computing 
all Gabriel quivers of GQT algebras $\La$: 
\begin{itemize}
\item first, compute the set $\bE(n)$, 
\item then for any $A\in\bE(n)$, compute all quivers obtained from $\bfQ_A$, by adding a disjoint union of $2$-cycles 
according to the rules given in Theorem \ref{thm:recon}(b), 
\item and finally, consider all possible quivers obtained from the ones in previous step by adding loops.  
\end{itemize} 

The position of loops is partially restricted, as we will see later in Lemma \ref{lem:loops} (and Corollaries \ref{rmkloops}, \ref{nregloops}), but this is enough to get relatively small number of cases. In consequence, we obtain a set containing 
all Gabriel quivers of GQT algebras with $n$ vertices. We will see that already in cases $n=3,4,5$, the set can contain 
quivers, which are not Gabriel quivers of GQT algebras, and those will be excluded, obtaining a full description 
in Propositions \ref{prop:Ga3}, \ref{prop:Ga4} and \ref{prop:Ga5}, for $n=3,4$ or $5$, respectively. \smallskip 

In the original statement of the Reconstruction Theorem, it was not mentioned that we consider non-empty shadows, 
because it was clear from the proof. Actually, we can have zero shadow only for $n\leqslant 3$ (see Corollary \ref{coro1} 
further), or equivalently, if $n\geqslant 4$, then all shadows are assumed non-zero (i.e. there is no GQT algebra with 
zero shadow). It shall not lead to confusions, since the zero shadow is considered separately for $n\leqslant 3$ without 
using The Reconstruction Theorem. Note that the quiver identified with the zero shadow is the empty quiver with $n$-vertices.  

The proof of The Reconstruction Theorem is a series of lemmas covering all possibilities, case by case. Though it is 
technical, we would like to highlight its most important ingredients, which might be helpful in following 
analysis in the next section. \smallskip 

Let $i,j$ be vertices in $Q^\times$ which are connected by a $2$-cycle in $E$. Then the following rules apply 
\cite[see Lemmas 5.7-5.10]{PS1}. 
\begin{enumerate}
\item[(R1)] Vertices $i,j$ are at most $2$-regular in $Q^\times$ (and none is a source or target of double arrows).  
\item[(R2)] If one of $i,j$ is $2$-regular in $Q^\times$, then the second one is either isolated or also 
$2$-regular. In these cases, quiver $Q$ has $6$ vertices and it is, respectively, of the form 
$$\xymatrix@R=0.55cm{ \bullet \ar@/_30pt/[rdd] && \ar[ll] \bullet \ar[lld] \\ 
\bullet \ar[rd] && \bullet \ar[ll] \ar[llu] \\ 
& _{i}\bullet \ar@<0.4ex>[d] \ar[ru] \ar@/_30pt/[ruu] & \\ & _{j}\bullet\ar@<0.4ex>[u] \ar@{.>}@(dl,dr)[]& } 
\qquad\xymatrix@R=0.4cm{\\ \\ \mbox{ or } \\ \\ }\qquad \xymatrix@R=0.4cm{& \bullet \ar[rdd] & \\ & \bullet \ar[rd] & \\ 
{}_{i}\bullet \ar[ru]\ar[ruu] \ar@<-0.35ex>[rr] && \ar[ld]\ar[ldd] \ar@<-0.35ex>[ll] \bullet_j \\ 
& \bullet \ar[lu] & \\ & \ar[luu] \bullet & }$$ 
\item[(R3)] If one of $i,j$ is a $(1,2)$-vertex in $Q^\times$, then the second one is either isolated, or 
a $(2,1)$-vertex, and then there is a block in $Q$ of the form  
$$\xymatrix@R=0.4cm{ &\bullet \ar[rdd] & \\ & \bullet\ar[rd]& \\ 
_{i}\bullet \ar[ru] \ar[ruu] \ar@<-0.35ex>[rr]&& \bullet_{j} \ar[ld] \ar@<-0.35ex>[ll]\\ & \circ \ar[lu] & }$$ 
\item[(R4)] If one of $i,j$ is $1$-regular in $Q^\times$, then the second is either isolated or also $1$-regular, and then $i,j$ 
are contained in the following block of $Q$ (see \cite[Lemma 5.10]{PS1}): 
$$\xymatrix@R0.4cm{ & _{i}\bullet \ar[rd] \ar@<-0.1cm>[dd] & \\ 
\circ \ar[ru]   && \circ \ar[ld]\\ 
& \bullet_{j} \ar@<-0.1cm>[uu] \ar[lu] & }$$   
\end{enumerate}

In particular, it follows from (R2) that when $n\leqslant 5$, we have no $2$-vertex of $Q^\times$ connected with other 
$2$-vertex by a $2$-cycle in $E$. Therefore, all $2$-cycles in $E$ between non-isolated vertices are either connecting 
a pair of non-regular vertices in a block presented in (R3) or a pair of $1$-vertices as in (R4). \medskip 

\begin{rem}\label{noedge}\normalfont We cannot have a $2$-cycle $i\leftrightarrows j$ in $E$, if $i$ and $j$ 
are connected by an arrow in $Q$. Indeed, if this is the case, then $Q$ has a subquiver of the form 
$K_2^*$, say $\xymatrix@C=0.4cm{a \ar[r]^{\beta}& b \ar@<+0.4ex>[r]^{\ba}\ar@<-0.4ex>[r]_{\alpha}&c}$, with 
$a=c$ and $\{b,c\}=\{i,j\}$, and moreover, we have $\beta\alpha,\beta\ba\nprec I$, due to Lemma \ref{lem:3.3}, 
since there are no loops at vertices $i,j$, by rule (R1). \end{rem}

In the next part, we show some examples of TSP4-shadows in the smallest possible cases (used later in the 
classification of Gabriel quivers of GQT algebras). More precisely, the next three examples are devoted 
to describe a particular set $\bE(n)$ of essential shadows, for $n=3,4$ and $5$; in cases $n=3,4$, we 
additionally give the set $\bS(n)$ of all (basic) tame periodicity shadows, which in case $n=5$ is omitted, 
due to its size (besides it is not needed in the paper). This part is based on computations provided in the paper \cite{PS2}. 

\begin{expl}\label{exm:4.1} \normalfont The first interesting non-trivial case is $n=3$. Following \cite[Section 3]{PS2} 
we have $5$ (basic) tame periodicity shadows $\bS_1,\dots,\bS_5$ identified with the corresponding quivers 
$\bbQ_1,\dots,\bbQ_5$, $\bbQ_i=\bfQ_{\bS_i}$, given as follows. 
$$\xymatrix@C=0.5cm{ &1 \ar[rd]\ar@<+0.13cm>[rd] & \\ 
2 \ar[ru]\ar@<+0.13cm>[ru] && 3 \ar[ll]\ar@<+0.13cm>[ll]} \ \ 
\xymatrix@C=0.5cm{ &1 \ar[rd] & \\ 2 \ar[ru] && 3 \ar[ll]} \ \ 
\xymatrix@C=0.5cm{ &1 \ar[rd] & \\ 
2 \ar[ru] && 3 \ar[ll]\ar@<+0.13cm>[ll]} \ \ 
\xymatrix@C=0.5cm{ &1 \ar[rd]\ar@<+0.13cm>[rd] & \\ 
2 \ar[ru] && 3 \ar[ll]\ar@<+0.13cm>[ll]} \ \ 
\xymatrix@C=0.5cm{ & 1  & \\ 
2 && 3}$$ 
In this case, the shadow $\bS_4$ is not essential, since it does not satisfy condition (PS4). Note that $\bS_1$ 
is the Markov shadow, which does not satisfy the first part of (PS4), because it has $2$ and $-2$ in a 
row/column, or equivalently, the quiver $\bbQ_1$ has a consecutive double arrows. Except this one case,  
it is not allowed for an essential shadow. Hence we have four 
TSP4-shadows: $3$ non-trivial $\bbQ_1,\bbQ_2,\bbQ_3$ and the empty one $\bbQ_5$ (equivalently, $\bS_5=0$). 
Algebras with zero shadow can appear only for $n\leqslant 3$, and for $n=3$ their Gabriel quivers are known, 
due to the following result \cite[see Corollary 5.3]{PS1}. 
\end{expl} 

\begin{cor}\label{coro1} Let $\La$ be a GQT algebra with zero shadow $\bS=\bS_\La=0$. Then $n\leqslant 3$. 
In case $n=3$, the loop free part $Q^\circ$ of $Q$ is one of the following two quivers 
$$\xymatrix@R=0.4cm@C=0.5cm{ &\circ \ar[rd]\ar@<+0.13cm>[ld] & \\ \circ \ar[ru]\ar@<-0.13cm>[rr] && \circ \ar[ll]\ar@<+0.13cm>[lu]} \ 
\qquad\xymatrix@R=0.5cm{ \\ \mbox{ or } \\} 
\xymatrix@R=0.2cm{&&&&\\ & \circ \ar@<-0.13cm>[r] & \circ \ar[l] \ar@<-0.13cm>[r] & \circ \ar[l] & \\ }$$ 
Moreover, there are no loops in the first case, and at most one loop at each of the $1$-vertices of $Q^\circ$, 
in the second. \end{cor}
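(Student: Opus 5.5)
The plan starts from the identity $p_i^-=p_i^+$, which holds at every vertex because every simple module is $4$-periodic. Zero shadow means $Q^\times$ is empty, so $Q^\circ$ is a disjoint union $E$ of $2$-cycles and double $2$-cycles; tameness allows at most two arrows in each direction. Hence for each vertex $i$ the multisets of targets of $i^+$ and sources of $i^-$ agree, counted with multiplicity. Write $m_{ij}=m_{ji}\in\{0,1,2\}$ for the number of arrows $i\to j$.

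First I would bound $n$. Since $Q$ is connected and symmetric in this sense, I would apply Lemma \ref{lem:3.2}: if $i$ is a $1$-vertex with unique neighbour $j$ and $m_{ij}=1$, then $j$ must have a further out-arrow. I would combine this with the tameness conditions (T2)/(T3) and the $K_2^*$ obstruction. Double arrows $i\rightrightarrows j$ together with any further arrow at $j$ produce a $K_2^+$ or $K_2^-$ subquiver, or a $K_2^*$ subquiver by Lemma \ref{lem:3.3}, because a triangle cannot exist in a quiver with no $3$-cycles except in case (a) below. This forces every vertex meeting a double arrow to have no other neighbours, so such a component has $n\le 2$.

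So for $n\ge 3$ all multiplicities $m_{ij}$ are at most $1$, and $Q^\circ$ is the double of a simple graph $G$. Next I would show that $G$ has maximum degree at most $2$. A vertex of degree at least $3$ gives, via Lemma \ref{lem:3.3} and the absence of triangles in $G$ (a cycle of length $3$ in $G$ is allowed and is treated separately), a star-shaped subcategory in covering, which is wild since $\tilde D_4$ plus an extra arm is wild. So $G$ is a path or a cycle. For a path of length $\ge 3$ or a cycle of length $\ge 4$, I would use Lemma \ref{lem:3.1} together with the Cartan equations $p_i=\hat p_i$ along the path. Comparing with the linear equations $\hat p_i=\sum_{j\sim i}p_j$ forces a dimension vector to vanish or contradicts Lemma \ref{lem:3.1}. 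For example, at an end vertex $1$ of a path we get $p_1^-=p_2$, so $\hat p_1=p_2$; iterating gives relations incompatible with positivity once $n\ge 4$. Alternatively, the covering of a long path yields wild tree configurations.

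This leaves two cases for $n=3$: the triangle $G=K_3$, giving the first quiver, and the path of length $2$, giving the second. For loops, a loop at a $2$-regular vertex of $G$ would push its in/out degree to $3$ with mixed neighbours. I would exclude this with Lemma \ref{lem:3.4}-style triangle propagation, or by exhibiting a wild subcategory in covering at the middle vertex or at the triangle vertices. A loop at a $1$-vertex of the path is permitted, and at most one loop is possible by tameness. I expect the main obstacle to be the bound $n\le 3$, that is, excluding long paths and cycles; I would try the Cartan-vector contradiction first and fall back on wild coverings.
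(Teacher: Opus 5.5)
The paper does not prove this statement; it quotes it from \cite[Corollary 5.3]{PS1}, so your argument has to stand on its own. Your treatment of double $2$-cycles is fine: a further arrow produces $K_2^{\pm}$. The bound $n\leqslant 3$, however, is not established.

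\textbf{The dimension-vector step is vacuous.} With zero shadow, the targets of $i^+$ and the sources of $i^-$ form the same multiset, so $p_i^+=p_i^-$ holds identically and gives no equations. What you call the Cartan equations $p_i=\hat p_i$ is exactly what Lemma~\ref{lem:3.1} forbids. That lemma only yields the inequations $p_i\neq\sum_{j\sim i}p_j$ at loopless vertices (at an end of a path, just $p_1\neq p_2$), and nothing can be iterated from them. At this level a path on four vertices is indistinguishable from one on three.

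\textbf{The wild-covering fallback is not carried out and needs case work.} Lemma~\ref{lem:3.3} applied to $j\to i\to j$ only forces a loop at $j$, and loops are allowed at end vertices. The easy cases do work: cycles of length $\geqslant 4$, paths on $\geqslant 5$ vertices, and a $4$-vertex path with a loop-free end. There Lemma~\ref{lem:loops} excludes loops at inner vertices, all paths of length $2$ through a suitable inner vertex are free, and you get $\tilde D_4$ plus an arm. For the $4$-vertex path with loops at both ends, however, $1\to 2\to 1$ and $4\to 3\to 4$ may lie in minimal relations. You then need either a longer zigzag in the cover running through the loops, or the observation that $Q$ is $2$-regular but not a triangulation quiver (the $2$-cycle between the inner vertices lies in no triangle), so \cite[Main Theorem]{AGQT} applies.

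\textbf{The degree bound leaves cases open.} It only treats vertices with triangle-free neighbourhoods and ignores loops at the neighbours. For $n\geqslant 4$, graphs $G$ containing a triangle (e.g.\ $K_4$, or a triangle with a pendant edge) are never handled. They are easy to exclude: the separated quiver of $\La/J^2$ contains the bipartite double of $G$, hence a $6$-cycle with a pendant edge, so $\La$ is wild. But this has to be said.

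\textbf{The loop analysis for $n=3$ has a missing case.}
\begin{itemize}
\item On the doubled triangle, Lemma~\ref{lem:loops} does exclude loops, since every successor has two incoming arrows.
\item On the path, a loop at the middle vertex with both ends loop-free $1$-vertices is not ruled out by Lemma~\ref{lem:loops} or Corollary~\ref{rmkloops}. The path $2\to1\to2$ may lie in a relation precisely because of that loop. The $\tilde D_4$ at the middle vertex cannot be extended through the $1$-vertices, and the compositions with the loop may themselves be relations.
\item ``Lemma~\ref{lem:3.4}-style propagation'' does not by itself supply an argument for this case.
\item Remark~\ref{loops:rk0} bounds degrees in $Q^\circ$, not in $Q$. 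So a loop raising a degree to $3$ in $Q$ is not automatically forbidden.
\end{itemize}
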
  

\begin{expl}\label{exm:4.2} \normalfont Let now $n=4$. Then it follows from \cite[Section 3]{PS2} that we have $12$ (basic) 
tame periodicity shadows in $\bS(4)=\{\bS_1,\dots,\bS_{12}\}$ with the corresponding quivers $\bbQ_1,\dots,\bbQ_{12}$, 
$\bbQ_i=\bfQ_{\bS_i}$, given as follows.  
$$\xymatrix@C=0.5cm@R=0.5cm{1 \ar[r] & 4 \ar[ld] \\ 
2 \ar[u]\ar@<+0.1cm>[u]& 3 } \ 
\xymatrix@C=0.5cm@R=0.5cm{1 \ar[r] & 4 \ar[d] \\ 
2 \ar[u] & 3 \ar[l] } 
\xymatrix@C=0.5cm@R=0.5cm{1 \ar[r] & 4 \ar[ld] \\ 
2 \ar[u] & 3 } \ 
\xymatrix@C=0.5cm@R=0.5cm{1 \ar[r]\ar[rd] & 4 \ar[ld] \\ 
2 \ar[u]\ar@<+0.1cm>[u] & 3 \ar[l] } \ 
\xymatrix@C=0.5cm@R=0.5cm{1 \ar[r] & 4 \ar[d] \\ 
2 \ar[u] & 3 \ar[l]\ar[lu] } \ 
\xymatrix@C=0.5cm@R=0.5cm{1 \ar[r] & 4 \ar[d]\ar[ld] \\ 
2 \ar[u] & 3 \ar[lu] }$$ 
$$\xymatrix@C=0.5cm@R=0.5cm{1 & 4 \\ 
2 & 3}
\xymatrix@C=0.5cm@R=0.5cm{\circ \ar[r] & \circ \ar[d] \\ 
\circ \ar[u]\ar@<+0.1cm>[u]& \circ \ar[l]\ar@<+0.1cm>[l]} \ 
\xymatrix@C=0.5cm@R=0.5cm{\circ \ar[r] & \circ \ar[ld]\ar@<+0.1cm>[ld] \\ 
\circ \ar[u]\ar@<+0.1cm>[u]& \circ } \ \xymatrix@C=0.5cm@R=0.5cm{\circ \ar[r]\ar@<+0.1cm>[r] & \circ \ar[d]\ar@<+0.1cm>[d] \\ 
\circ \ar[u]\ar@<+0.1cm>[u]& \circ \ar[l]\ar@<+0.1cm>[l]} \ 
\xymatrix@C=0.5cm@R=0.5cm{\circ \ar[r]\ar@<+0.1cm>[r] & \circ \ar[ld]\ar@<+0.1cm>[ld] \\ 
\circ \ar[u]\ar@<+0.1cm>[u]& \circ} \ 
\xymatrix@C=0.5cm@R=0.5cm{\circ \ar[r]\ar@<+0.1cm>[r] & \circ \ar[d]\ar[ld] \\ 
\circ \ar[u]\ar@<+0.1cm>[u]& \circ \ar[l]}$$
It is easy to see that shadows $\bS_i$, for $i\geqslant 8$ do not satisfy (PS4). Additionally, the shadows 
$\bS_8$ and $\bS_{10}$ do not satisfy (PS5). Thus we get $7$ essential shadows in $\bE(4)$, which correspond to seven 
essential quivers $\bbQ_1,\dots,\bbQ_{7}$ (including the empty one $\bbQ_7=\emptyset$). 
\end{expl} 

\begin{expl}\label{exm:4.3} \normalfont Finally, according to the list of all essential shadows of size $n=5$ 
presented in \cite[Section 4]{PS2}, we have the following quivers associated to the $26$ essential shadows 
in $\bE(5)=\{\bS_1,\dots,\bS_{26}\}$. \smallskip 

\begin{tabular}{cccc}
\(\bbQ_1: \xymatrix@R=0.1cm@C=0.075cm{& & 1 \ar[ddll]  & & \\
&&&&\\
5 \ar[rrrr] \ar[ddr]  & & & & 2 \ar@<0.35ex>[uull] \ar@<-0.35ex>[uull]\\
&&&&\\
& 4 \ar@<0.35ex>[rr] \ar@<-0.35ex>[rr] & & 3 \ar[uulll] &}\) 

&

\(\bbQ_2: \xymatrix@R=0.1cm@C=0.075cm{& & 1 \ar[ddll]  & & \\
&&&&\\
5 \ar[rrrr] \ar[ddr]  & & & & 2 \ar@<0.35ex>[uull] \ar@<-0.35ex>[uull]\\
&&&&\\
& 4 \ar[rr] & & 3 \ar[uulll] &}\)

&

\(\bbQ_3: \xymatrix@R=0.1cm@C=0.075cm{& & 1 \ar[ddll]  & & \\
&&&&\\
5 \ar[rrrr]   & & & & 2 \ar@<0.35ex>[uull] \ar@<-0.35ex>[uull]\\
&&&&\\
& 4  & & 3  &}\)

&

\(\bbQ_4: \xymatrix@R=0.1cm@C=0.075cm{& & 1 \ar[ddll] \ar[ddddl] & & \\
&&&&\\
5 \ar[rrrr] \ar[ddr]  & & & & 2 \ar@<0.35ex>[uull] \ar@<-0.35ex>[uull]\\
&&&&\\
& 4 \ar[uurrr] \ar[rr]  & & 3 \ar[uulll] &}\)

\\[70pt]

\(\bbQ_5: \xymatrix@R=0.1cm@C=0.075cm{& & 1 \ar[ddll] \ar[ddddl] \ar[ddddr] & & \\
&&&&\\
5 \ar[rrrr] \ar[ddr]  & & & & 2 \ar@<0.35ex>[uull] \ar@<-0.35ex>[uull]\\
&&&&\\
& 4 \ar[uurrr] \ar[rr]  & & 3 \ar[uur] \ar[uulll] &}\)

&

\(\bbQ_6: \xymatrix@R=0.1cm@C=0.075cm{& & 1 \ar[ddll]  & & \\
&&&&\\
5 \ar[rrrr] \ar[ddrrr] \ar[ddr]  & & & & 2 \ar[uull] \ar[ddlll]\\
&&&&\\
& 4 \ar[uuuur] \ar[rr]  & & 3 \ar[uuuul] \ar[uur] &}\)

&

\(\bbQ_7: \xymatrix@R=0.1cm@C=0.075cm{& & 1 \ar[ddll]  & & \\
&&&&\\
5 \ar[rrrr] \ar[ddrrr]  & & & & 2 \ar[uull] \ar[ddlll]\\
&&&&\\
& 4 \ar[rr]  & & 3 \ar[uuuul] \ar[uur] &}\)

&

\(\bbQ_8: \xymatrix@R=0.1cm@C=0.075cm{& & 1 \ar[ddll]  & & \\
&&&&\\
5 \ar[rrrr]   & & & & 2 \ar[uull] \ar[ddlll]\\
&&&&\\
& 4 \ar[rr]  & & 3 \ar[uuuul] \ar[uur] &}\)

\\[70pt]

\(\bbQ_9: \xymatrix@R=0.1cm@C=0.075cm{& & 1 \ar[ddll]  & & \\
&&&&\\
5 \ar[ddrrr]  & & & & 2 \ar[uull] \\
&&&&\\
& 4 \ar[uurrr]  & & 3 \ar[uuuul] \ar[ll] &}\)

&

\(\bbQ_{10}: \xymatrix@R=0.1cm@C=0.075cm{& & 1 \ar[ddddl] \ar[ddll]  & & \\
&&&&\\
5 \ar[ddrrr] \ar[ddr]  & & & & 2 \ar[uull] \ar[llll]\\
&&&&\\
& 4 \ar[uurrr] \ar[rr] & & 3 \ar[uuuul] \ar[uur] &}\)

&

\(\bbQ_{11}: \xymatrix@R=0.1cm@C=0.075cm{& & 1 \ar[ddddl] \ar[ddll] & & \\
&&&&\\
5 \ar[ddrrr]  & & & & 2 \ar[uull]\\
&&&&\\
& 4 \ar[uurrr]  & & 3 \ar[uuuul] &}\)

&

\(\bbQ_{12}: \xymatrix@R=0.1cm@C=0.075cm{& & 1 \ar[ddll]  & & \\
&&&&\\
5 \ar[ddr]   & & & & 2 \ar[uull] \\
&&&&\\
& 4 \ar[rr]  & & 3  \ar[uur] &}\)

\\[70pt]
\end{tabular} 

\begin{tabular}{cccc}

\(\bbQ_{13}: \xymatrix@R=0.1cm@C=0.075cm{& & 1 \ar[ddll]  & & \\
&&&&\\
5 \ar[ddrrr]  & & & & 2 \ar[uull] \\
&&&&\\
& 4   & & 3 \ar[uur] &}\)

&

\(\bbQ_{14}: \xymatrix@R=0.1cm@C=0.075cm{& & 1 \ar[ddll]  & & \\
&&&&\\
5 \ar[rrrr] & & & & 2 \ar[uull] \\
&&&&\\
& 4  & & 3 &}\)

&

\(\bbQ_{15}: \xymatrix@R=0.1cm@C=0.075cm{& & 1  & & \\
&&&&\\
5   & & & & 2 \\
&&&&\\
& 4   & & 3  &}\)

&

\(\bbQ_{16}: \xymatrix@R=0.1cm@C=0.075cm{& & 1 \ar[ddddl] \ar[ddll]  & & \\
&&&&\\
5 \ar[rrrr]   & & & & 2 \ar@<0.35ex>[uull] \ar@<-0.35ex>[uull] \\
&&&&\\
& 4 \ar[uurrr]  & & 3  &}\)

\\[70pt]

\(\bbQ_{17}: \xymatrix@R=0.1cm@C=0.075cm{& & 1 \ar[ddddr] \ar[ddddl] \ar[ddll]  & & \\
&&&&\\
5 \ar[rrrr]  & & & & 2 \ar@<0.35ex>[uull] \ar@<-0.35ex>[uull] \\
&&&&\\
& 4 \ar[uurrr]  & & 3 \ar[uur] &}\)

&

\(\bbQ_{18}: \xymatrix@R=0.1cm@C=0.075cm{& & 1 \ar[ddll]  & & \\
&&&&\\
5 \ar[ddrrr] \ar[ddr]  & & & & 2 \ar[uull] \\
&&&&\\
& 4 \ar[uuuur] \ar[uurrr] & & 3 \ar[uuuul] \ar[uur] &}\)

&

\(\bbQ_{19}: \xymatrix@R=0.1cm@C=0.075cm{& & 1 \ar[ddll] & & \\
&&&&\\
5 \ar[ddrrr]  & & & & 2 \ar[uull]\\
&&&&\\
& 4 \ar[uuuur]  & & 3 \ar[uuuul] \ar[uur] \ar[ll] &}\)

&

\(\bbQ_{20}: \xymatrix@R=0.1cm@C=0.075cm{& & 1 \ar[ddll]  & & \\
&&&&\\
5 \ar[rrrr] \ar[ddrrr] \ar[ddr]   & & & & 2 \ar[uull] \\
&&&&\\
& 4 \ar[uuuur]  & & 3  \ar[uuuul] &}\)

\\[70pt]

\(\bbQ_{21}: \xymatrix@R=0.1cm@C=0.075cm{& & 1 \ar[ddll]  & & \\
&&&&\\
5 \ar[ddrrr] \ar[ddr] & & & & 2 \ar[uull] \\
&&&&\\
& 4 \ar[uurrr] \ar[rr]  & & 3 \ar[uuuul] \ar[uur] &}\)

&

\(\bbQ_{22}: \xymatrix@R=0.1cm@C=0.075cm{& & 1 \ar[ddll]  & & \\
&&&&\\
5 \ar[ddrrr]  & & & & 2 \ar[uull] \\
&&&&\\
& 4  & & 3 \ar[uuuul] \ar[uur] &}\)

&

\(\bbQ_{23}: \xymatrix@R=0.1cm@C=0.075cm{& & 1 \ar[ddll] & & \\
&&&&\\
5 \ar[ddr]  & & & & 2 \ar[uull]\\
&&&&\\
& 4 \ar[uurrr] \ar[rr] & & 3 \ar[uuuul] &}\)

&

\(\bbQ_{24}: \xymatrix@R=0.1cm@C=0.075cm{& & 1 \ar[ddll]  & & \\
&&&&\\
5 \ar[rrrr] \ar[ddrrr]  & & & & 2 \ar[uull] \\
&&&&\\
& 4  & & 3  \ar[uuuul] &}\)

\\[70pt] 

\end{tabular} 

\begin{tabular}{cccc}
\( \xymatrix{ & }\)
&  

\(\bbQ_{25}: \xymatrix@R=0.1cm@C=0.075cm{& & 1 \ar[ddll]  & & \\
&&&&\\
5 \ar[rrrr] \ar[ddrrr] & & & & 2 \ar[uull] \ar[ddlll] \\
&&&&\\
& 4 \ar[uul]  & & 3 \ar[uuuul] \ar[ll] &}\)

&

\(\bbQ_{26}: \xymatrix@R=0.1cm@C=0.075cm{& & 1 \ar[ddll] \ar[ddddl] & & \\
&&&&\\
5 \ar[ddrrr] & & & & 2 \ar[uull] \\
&&&&\\
& 4 \ar[rr] & & 3 \ar[uuuul] \ar[uur] &}\)

& 
\( \xymatrix{ & }\) \\ 
\end{tabular} \end{expl} 

Final part of this section is devoted to present some partial results concerning the position of loops in 
quivers $Q=Q_\La$, $Q^\circ=\bbQ\sqcup E$, coming from GQT algebras. We will mostly use tameness, but 
periodicity is also necessary.  

\begin{rem}\label{loops:rk0} \normalfont It is easy to check that a loop at vertex $i$ of $Q^\circ$ with $|i^+|\geqslant 3$ 
or $|i^-|\geqslant 3$ induces a wild subcategory of type $\wt{\wt{\bD}}_4$. It means that {\it there may be a loop 
at vertex $i$ of $Q^\circ$ only when $i$ is at most $2$-regular.} Similarly, as it was the case in rule (R1) for 
$2$-cycles. \end{rem} \medskip 

Next, we will analize when an at most $2$-vertex admits a loop. We begin with the $2$-regular 
case, which involves only tameness. 

\begin{lemma}\label{lem:loops} Let $i$ be a $2$-vertex of $Q^\circ$. If one of neighbours of $i$ 
is a $2$-vertex (in $Q$), then there is no loop at $i$. \end{lemma}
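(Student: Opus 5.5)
We argue by contradiction and show that $\La$ would then be wild. Suppose $i$ is a $2$-vertex of $Q^\circ$ carrying a loop $\sigma$, and let $j$ be a neighbour of $i$ which is a $2$-vertex in $Q$. By Remark~\ref{loops:rk0} the vertex $i$ is then $2$-regular in $Q^\circ$, and in $Q$ it has the loop $\sigma$ in addition to its two incoming and two outgoing non-loop arrows. By symmetry (passing to the opposite algebra if necessary), we may assume there is an arrow $\beta:i\to j$. We write $\alpha,\alpha'$ for the arrows ending at $i$ (other than $\sigma$), $\bar\beta$ for the second arrow starting at $i$, and $\gamma,\bar\gamma$ for the arrows starting at $j$. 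Since $j$ is a $2$-vertex in $Q$, there is one further arrow $\beta'$ ending at $j$ besides $\beta$.

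The strategy is to exhibit a wild subcategory in covering, built around the loop. In a Galois covering the loop $\sigma$ unrolls into an infinite path $\cdots\to i_{-1}\to i_0\to i_1\to\cdots$ of copies of $i$. Each copy $i_t$ receives copies of $\alpha,\alpha'$ and emits copies of $\beta,\bar\beta$. So the tree around $i$ already looks like a star with many arms, and a tree containing a vertex of valency $\ge 4$, or a $\wt{\bD}_4$ with an extra leg, is wild. The neighbour $j$ is used to lengthen one arm past $j$, through $\gamma$ or $\bar\gamma$, or back along $\beta'$.

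The main step is to check that enough of the relevant paths of length $2$ and $3$ are not $\prec I$, so that the chosen tree really is a full subcategory of the covering with no extra relations. We handle this with the same devices used in the proofs of Lemma~\ref{lem:TWD} and Proposition~\ref{prop:biser}.

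- Relations of the form $\sigma^2$, $\sigma\beta$, $\alpha\sigma$: if one of them is $\prec I$, we adjust arrows so that it becomes a zero relation. This either places an arrow in the socle, contradicting symmetry, or forces a triangle via Lemma~\ref{lem:3.3}. If $\sigma^2\prec I$, we instead use the cyclic shape $(\alpha\sigma\beta\cdots)$ as in Lemma~\ref{lem:TWD}, or choose the arm to avoid $\sigma^2$.
- Relations $\beta\gamma\prec I$: by Lemma~\ref{lem:3.3} these give an arrow $t(\gamma)\to i$, so $t(\gamma)\in\{s(\alpha),s(\alpha')\}$. Lemma~\ref{lem:3.4} then propagates the relation around the triangle. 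Since $j$ has two outgoing arrows, at least one of $\beta\gamma,\beta\bar\gamma$ can be chosen not $\prec I$. Otherwise both $\gamma$ and $\bar\gamma$ close triangles with the two arrows into $i$, and comparing $p_i^+=p_i^-$ with $p_j^\pm$ (as in step 1 of the proof of Lemma~\ref{precnonreg}) should give $p_{t(\bar\beta)}=0$, or a contradiction with Lemma~\ref{lem:3.1}.

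With these non-relations in hand, we take the tree consisting of $i_0$ with arms given by copies of $\alpha$, $\sigma$ on both sides, $\bar\beta$, and $\beta$ extended by $\gamma$ or $\beta'$ at $j$. This is a wild hereditary tree, of type $\wt{\wt{\bD}}_4$ or one containing $\wt{\bE}$-type extensions, so $\La$ is wild, a contradiction.

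I expect the main obstacle to be the bookkeeping of relations passing through $\sigma$. A zero-type relation such as $\sigma\beta\prec I$ with a non-unit coefficient has to be ruled out or circumvented. If that fails, I would use arms that avoid the relation, for instance going through $\beta'$ instead of $\beta$.
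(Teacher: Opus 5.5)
\textbf{Genuine gap: the relation bookkeeping.} This is not a technical detail. It is where your argument breaks. At $i_0$ your tree receives $\alpha$ and a copy of $\sigma$, and emits a copy of $\sigma$, $\bar\beta$ and $\beta$. For it to be a hereditary full subcategory of a covering, you need all of $\alpha\sigma,\sigma^2,\sigma\beta,\sigma\bar\beta,\alpha\beta,\alpha\bar\beta$ (and $\beta\gamma$ if you extend by $\gamma$) to be $\nprec I$.

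\begin{itemize}
\item You never address $\alpha\beta$ and $\alpha\bar\beta$, and these are often $\prec I$. By Lemma \ref{lem:3.3}, $\alpha\beta\prec I$ only needs an arrow $j\to s(\alpha)$, which is compatible with $j$ being a $2$-vertex.
\item Your devices for the loop give no contradiction. $\sigma\beta\prec I$ only yields an arrow $j\to i$. It cannot in general be normalised to $\sigma\beta=0$, because $j^-=\{\beta,\beta'\}$. Even $\sigma\beta=0$ would put nothing into a socle, since $i$ and $j$ carry further arrows.
\item The dimension count in your second bullet, borrowed from Lemma \ref{precnonreg}, relied on $j^-=\{\beta\}$. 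Here $p_j^-=p_i+p_{s(\beta')}$, and $p_{t(\bar\beta)}=0$ does not follow.
\end{itemize}

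There is also a design problem. The star at $i_0$ already has valency $5$, so it is wild without using $j$ at all. (Valency $4$ alone gives the tame $\wt{\bD}_4$, so your claim that valency $\geqslant 4$ suffices is also wrong.) Hence your hypothesis on $j$ could only enter through the bookkeeping. Now look at the vertex $e$ of $Q^{(17)}$: it is a $2$-vertex of $Q^\circ$ with a loop in a generalized weighted surface algebra. There $\omega\gamma,\omega\phi\prec I$ (proof of Proposition \ref{prop:6.3}), and these are exactly the $\alpha\beta$-type paths your tree must avoid.

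\textbf{Missing idea: alternating orientation.} Choose the tree in the covering so that every vertex is a sink or a source. Then the tree contains no path of length $2$, and no relation can interfere: it is a full subcategory of the covering of $\La/J^2$ isomorphic to the path algebra of the tree. Only tameness is needed. Use the loop just once:
\begin{itemize}
\item $i_0$ is a sink with arms $s(\alpha)\to i_0$, $s(\alpha')\to i_0$ and $i_1\xrightarrow{\sigma}i_0$;
\item $i_1$ is a source with $i_1\xrightarrow{\bar\beta}t(\bar\beta)$ and $i_1\xrightarrow{\beta}j$;
\item $j$ is a sink with the extra arm $s(\beta')\xrightarrow{\beta'}j$.
\end{itemize}
This tree properly contains the Euclidean tree $\wt{\bD}_5$, so it is wild. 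This is the paper's proof. It uses only $|j^-|\geqslant 2$ for some successor $j$ of $i$ (dually, $|p^+|\geqslant 2$ for some predecessor $p$), which you already had through $\beta'$.
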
 

\begin{proof} Assume $i$ is a $2$-vertex of $Q^\circ$, so that $Q^\circ$ has a 
subquiver of the form 
$$\xymatrix@R=0.25cm{ x \ar[rd]^{\gamma} && j \\ &i \ar[ru]^{\alpha} \ar[rd]^{\ba}& \\ y \ar[ru]^{\gamma^*} & & k}$$ 
Let one of the successors $s\in\{j,k\}$ of $i$ satisfy $|s^-|\geqslant 2$. If $\rho:i\to i$ is a loop, then 
we get the following wild subcategory in covering 
$$\xymatrix@R=0.4cm{ & y \ar[d] & \circ &  & \\ x \ar[r] & i & \ar[l]_{\rho} i \ar[u] \ar[r] & s & \ar[l] s'}$$ 
Similar argument works if one of the predecessors $p\in\{x,y\}$ has $|p^+|\geqslant 2$, and the claim follows. 
\end{proof} \smallskip 

We have the following immediate consequence of the proof. 

\begin{cor}\label{rmkloops} Let $i$ be a $2$-vertex of $Q^\circ$ satisfying one of the following conditions  
\begin{itemize}
\item $|j^-|$ or $|k^-|\geqslant 2$, or there is an arrow $\sigma\in j^+$ with $\alpha\sigma\nprec I$ 
(or $\sigma\in k^+$ with $\ba\sigma\nprec I$); 
\item $|x^+|$ or $|y^+|\geqslant 2$, or there is an arrow $\sigma\in x^-$ with $\sigma\gamma\nprec I$ 
(or $\sigma\in y^-$ with $\sigma\gamma^*\nprec I$). 
\end{itemize}  
Then there is no loop at $i$ in $Q$. \end{cor}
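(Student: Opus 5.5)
We argue by contraposition, reusing the covering argument from the proof of Lemma \ref{lem:loops}. Let $i$ be a $2$-vertex of $Q^\circ$, with arrows labelled as in that proof: $\gamma:x\to i$, $\gamma^*:y\to i$, $\alpha:i\to j$, $\ba:i\to k$. Suppose there is a loop $\rho$ at $i$ in $Q$. We will show that each listed condition produces a wild subcategory in covering. This contradicts tameness, by \cite{DS1,DS2} as recalled in Section \ref{sec:2}.

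\textbf{First condition, first half.} Suppose $|j^-|\geqslant 2$ or $|k^-|\geqslant 2$. This is exactly the situation treated in the proof of Lemma \ref{lem:loops}. There the successor $s$ had a second incoming arrow $s'\to s$. We reuse the same tree in the universal Galois covering of a suitable factor algebra: the arms $x\to i$, $y\to i$, $i\xrightarrow{\rho} i$, the arrow $i\to \circ$, and $i\to s\leftarrow s'$. This tree is a wild hereditary (non-Dynkin, non-Euclidean) quiver.

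\textbf{First condition, second half.} Suppose instead $\alpha\sigma\nprec I$ for some $\sigma\in j^+$, and similarly for $k$. We replace the incoming arm $s\leftarrow s'$ by the outgoing arm $i\xrightarrow{\alpha} j\xrightarrow{\sigma} t(\sigma)$. The underlying tree has the same shape, so it is still wild. The only new point is that this arm is a genuine path of length $2$ in the covering. This holds precisely because $\alpha\sigma$ is not involved in a minimal relation: we may pass to a factor algebra killing all other arrows and relations, and no relation then kills $\alpha\sigma$, or forces it to be a combination of other lifted paths, inside the chosen full subcategory of the cover.

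\textbf{Second condition.} The case of the predecessors $x,y$ is dual: either an extra outgoing arrow from $x$ or $y$, or a path $\sigma\gamma$ with $\sigma\gamma\nprec I$, gives the second leg of the tree. Apply the previous arguments to the opposite algebra, which is again tame symmetric.

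\textbf{Main obstacle.} The delicate point is checking that the relevant subcategory of the covering is really hereditary, i.e. that no relation of $I$ restricts to a relation among the lifted paths. The loop-relations are the danger here: for instance $\rho^2$ or $\rho\alpha$ could occur in relations involving $\alpha\sigma$. We handle this by choosing the lift so that the loop $\rho$ is used only once, as an edge between two distinct copies of $i$. Every path in the chosen tree of length $\geqslant 2$ is then either $\alpha\sigma$ or $\gamma\alpha$-type, and the latter is avoided because the tree branches at the copy of $i$. So only the hypothesis $\alpha\sigma\nprec I$ (resp. $\sigma\gamma\nprec I$) is needed, and the contradiction follows.
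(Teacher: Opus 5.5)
Your proposal is correct and is essentially the paper's argument. The paper states this corollary as an immediate consequence of the proof of Lemma \ref{lem:loops}: the same wild tree in the covering, with the arm $s\leftarrow s'$ replaced by the path $i\xrightarrow{\alpha}j\xrightarrow{\sigma}t(\sigma)$ when $\alpha\sigma\nprec I$ (this path survives in the radical-cube-zero monomial factor algebra precisely because of that hypothesis), and dually for the predecessors.

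One small correction to the wording in your last paragraph. The tree contains no other length-$2$ paths not because it ``branches'' at $i$, but because $\gamma,\gamma^*$ end at the target copy of $\rho$ while $\alpha,\ba$ start at its source copy.
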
 \smallskip 

For a $(1,2)$-vertex $i$, we can prove analogous property. Indeed, let $i$ be a $(1,2)$-vertex of $Q^\circ$, and 
suppose there is a loop $\rho$ at $s(\rho)=i$. It follows that $\rho\alpha,\rho\ba \nprec I$. Indeed, otherwise, 
by Lemma \ref{lem:3.3}, we would get an arrow $j\to i$ or $k\to i$, and then $j=x$ or $k=x$, which is impossible, 
because $p_i^-=p_i^+$ forces $p_x=p_j+p_k$. As a result, we have the following (tame) hereditary subcategory in covering  
$$\xymatrix@R=0.4cm{ & k & k &   \\ j & \ar[l]_{\alpha}\ar[u]^{\ba} i & \ar[l]_{\rho} i \ar[u]^{\ba} \ar[r]^{\alpha} & j }$$ 
It can be extended to a wild subcategory if the first condition in Corollary \ref{rmkloops} holds. Analogous 
arguments work with $(2,1)$-vertex. Consequently, we get the following. 

\begin{cor}\label{nregloops} (a) If $i$ is a $(1,2)$-vertex of $Q^\circ$ such that $|j^-|$ or $|k^-|\geqslant 2$, or there is an arrow 
$\sigma\in j^+$ with $\alpha\sigma\nprec I$ (or $\sigma\in k^+$ with $\ba\sigma\nprec I$), then there is no loop 
at $i$ in $Q$. \\ 
(b) If $i$ is a $(2,1)$-vertex of $Q^\circ$ such that $|x^+|$ or $|y^+|\geqslant 2$, or there is an arrow 
$\sigma\in x^-$ with $\sigma\gamma\nprec I$ (or $\sigma\in y^-$ with $\sigma\gamma^*\nprec I$), then there is no 
loop at $i$ in $Q$. \end{cor}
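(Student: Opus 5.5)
The plan is to carry out rigorously the argument sketched just before the statement, treating (a) in detail and obtaining (b) by duality (pass to the opposite algebra $\La^{\op}$, which is again GQT and whose Gabriel quiver is $Q^{\op}$; $(2,1)$-vertices become $(1,2)$-vertices).

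So let $i$ be a $(1,2)$-vertex of $Q^\circ$, with $i^-\cap Q^\circ_1=\{\gamma:x\to i\}$ and $i^+\cap Q^\circ_1=\{\alpha:i\to j,\ \ba:i\to k\}$, and suppose there is a loop $\rho$ at $i$. By Remark \ref{loops:rk0} this is the only loop, so in $Q$ we have $p_i^-=p_x+p_i$ and $p_i^+=p_j+p_k+p_i$. The equality $p_i^-=p_i^+$ therefore gives $p_x=p_j+p_k$.

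\emph{Step 1.} I will show that $\rho\alpha,\rho\ba\nprec I$. If, say, $\rho\alpha\prec I$, then Lemma \ref{lem:3.3} produces an arrow $j\to i$. Since $x$ is the only predecessor of $i$ in $Q^\circ$, this forces $j=x$. Then $p_x=p_j+p_k$ yields $p_k=0$, which is absurd. The case $\rho\ba\prec I$ is the same with $j$ and $k$ exchanged.

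\emph{Step 2.} Using Step 1, I will pass to the Galois covering of a suitable factor algebra, obtained by killing the arrows not drawn and the relevant paths. There I take the full subcategory with two copies $i,i'$ of $i$, where $\rho:i'\to i$, together with $\alpha,\ba$ out of $i$ (targets $j,k$) and $\alpha,\ba$ out of $i'$ (targets $j',k'$). Since every length-two path in it avoids $\prec I$, it is hereditary of Euclidean type $\widetilde{\bD}_5$. Any additional arrow attached at a leaf makes the tree non-Euclidean, hence wild, and then $\La$ is wild, a contradiction.

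\emph{Step 3.} I will exhibit such an extension under each hypothesis, always attaching at the leaves $j'$ or $k'$ hanging off $i'$. These leaves are reached by a single arrow, so only paths of length two through them must be controlled.
\begin{itemize}
\item If $|j^-|\geqslant 2$, attach the second arrow ending at $j$ at $j'$; no composable path through $\alpha$ arises, so the subcategory is hereditary.
\item If there is $\sigma\in j^+$ with $\alpha\sigma\nprec I$, attach $\sigma$ at $j'$. The only new path is $\alpha\sigma$, which is not involved in minimal relations, so the extension is again a wild tree algebra.
\item The cases involving $k$ are symmetric.
\end{itemize}

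The main obstacle is the correctness of the covering. I must check that the extended quiver really is a full subcategory of a Galois covering of a factor algebra of $\La$, with no hidden relations, in particular from paths through $\rho$. Choosing the attachment at the copy $i'$, which is the source of $\rho$, avoids longer paths such as $\rho\alpha\sigma$, so no analogue of \cite[Proposition 4.5]{EHS1} should be needed. Part (b) is then the dual statement, exactly as in Corollary \ref{rmkloops}.
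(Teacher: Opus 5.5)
Your proposal is correct and is essentially the paper's argument. Step 1 is exactly the paper's use of Lemma \ref{lem:3.3} together with $p_x=p_j+p_k$, and Steps 2--3 spell out the paper's $\widetilde{\bD}_5$ subcategory in covering and its wild extension, which the paper only asserts; attaching at the copy $i'$ that is the source of $\rho$ is the right choice, since it avoids length-three paths. One cosmetic point: the fact that there is at most one loop comes from the tameness remarks in Section \ref{sec:2}, not from Remark \ref{loops:rk0}, and it is not actually needed to get $p_x=p_j+p_k$.
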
 \medskip

\section{Gabriel quivers}\label{sec:6} 

In this section we will present a full classification (up to permutation, and taking $Q^{\op}$) of all 
possible Gabriel quivers $Q=Q_\Lambda$ of GQT algebras in case $n=|Q_0|\leqslant 5$. \medskip 

Fix an indecomposable GQT algebra $\La=KQ/I$, whose Gabriel quiver $Q=Q_\La$ has at most $5$ vertices. We split our 
considerations into three steps: in the first, we recall what is known for very small size (up to $3$), and in the 
next two, we deal with sizes $n=4$ and $n=5$, respectively. \medskip 

We only mention that classification of all self-injective algebras of finite representation type is known, 
due to results of Riedtmann \cite{Rd1,Rd2} (see also \cite{Rogg,Was}). Moreover, Dugas proved in \cite{Du} 
that all of them are periodic, so we will focus on infinite representation type (see also Remark \ref{der_inf}). 
For an elegant survey about self-injective algebras, we refer to \cite{SkoCont}. \medskip

\subsection{Quivers with at most 3 vertices}\label{subs:5.1}

First, note that the structure of local ($n=1$) tame symmetric algebras is well understood \cite{E90}. 
Their Gabriel quivers consist of one vertex and at most two loops (for relations we refer the reader to 
\cite[Theorem III.1]{E90}). \medskip 

Tame symmetric algebras with $n=2$ vertices were classified by Donovan \cite[Section 2]{Don} (see also 
\cite[VI.8 and VII.7]{E90}). According to \cite[Proposition 4.3]{PS1}, those which are GQT are given by one of the 
following four quivers. 
$\newline$
$$\xymatrix@C=0.4cm{\circ \ar@<+0.1cm>@/^10pt/[rr] \ar@/^8pt/[rr] & & \circ \ar@<+0.1cm>@/^10pt/[ll]\ar@/^8pt/[ll] } 
\xymatrix@C=0.4cm{& \circ \ar@/^8pt/[rr] && \ar@/^8pt/[ll] \circ & } 
 \xymatrix@C=0.4cm{&\ar@(lu,ld)[] \ar@/^8pt/[rr] \circ && \ar@/^8pt/[ll] \circ \ar@(ru,rd)[] & } 
\xymatrix@C=0.4cm{&\ar@/^8pt/[rr] \circ && \ar@/^8pt/[ll] \ar@(ru,rd)[] \circ & }$$ 
$\newline$ 
Now, let $n=3$. Following Section \ref{sec:4}, we deduce that the Gabriel quiver $Q$ of $\Lambda$ 
is given by the formula $Q^\circ = \bbQ\sqcup E$, where $\bbQ=Q^\times$ is one of the TSP4-shadows 
$\bbQ=\bfQ_{\bS}$, for $\bS\in\bE(n)$, and $E$ a disjoint union of $2$-cycles, if $\bS \neq 0$. \smallskip 

In this case, it follows from Example \ref{exm:4.1} that $\bbQ$ is one of the essential 
shadows $\bbQ\in\bE(3)=\{\bbQ_1,\bbQ_2,\bbQ_3,\bbQ_5\}$, where $\bbQ_5=\emptyset$ is the empty one. \smallskip 

If $\bbQ=\bbQ_5=\emptyset$, then it follows from Corollary \ref{coro1} that $Q^\circ=E$ is of one 
the following two quivers 
$$\xymatrix@C=0.5cm{ &\circ \ar[rd]\ar@<+0.13cm>[ld] & \\ \circ \ar[ru]\ar@<-0.13cm>[rr] && \circ \ar[ll]\ar@<+0.13cm>[lu]} \ 
\qquad\xymatrix@R=0.5cm{ \\ \mbox{ or } \\} 
\xymatrix@R=0.5cm{&&&& \\ 
& \circ \ar@<-0.13cm>[r] & \circ \ar[l] \ar@<-0.13cm>[r] & \circ \ar[l] & \\ 
&&&& \\}$$ 
where $Q=Q^\circ$ in the first case, and in the second, we have at most one loop in $Q$ at each of the 
$1$-vertices of $Q^\circ$. This describes the Gabriel quivers $Q$ with zero shadow. \medskip 

It remains to find all $Q$ with $\bbQ=\bbQ_i$, with $i\in\{1,2,3\}$. \smallskip 

If $\bbQ=\bbQ_1$, then $E$ is empty and $Q$ has no loops (otherwise, $\Lambda$ is wild). In this case, we 
have $Q=Q^\circ=\bbQ$, so $Q$ is the Markov quiver 
$$\xymatrix@C=0.5cm@R=0.35cm{ & 1 \ar[rd]\ar@<+0.13cm>[rd] & \\ 
2 \ar[ru]\ar@<+0.13cm>[ru] && 3 \ar[ll]\ar@<+0.13cm>[ll]}$$ 

Consider a triangle $\bbQ=\bbQ_2$. Then $E$ must be empty, because otherwise, there is a $2$-cycle in $Q$, 
not allowed by the rule (R4) (see also Remark \ref{noedge}). Hence 
$Q^\circ=\bbQ$ is a triangle, and $Q$ is obtained by adding at most $3$ loops. If there are three loops, we get a 
$2$-regular quiver $Q$. The remaining cases are excluded. Indeed, if there are at most two loops, then $Q$ admits at 
least one $1$-regular vertex $i\in Q_0=\{1,2,3\}$, and we have equal dimension vectors of all projectives: $p_1=p_2=p_3$, 
due to the identities $p_k^-=p_k^+$. But this is impossible, since then $p_i^+=p_i$, which contradicts Lemma 
\ref{lem:3.1}. \medskip 

Finally, consider the case $\bbQ=\bbQ_3$. Here as above $E$ must be empty (see Remark \ref{noedge}), so $Q^\circ=\bbQ$ 
and $Q$ admits at most one loop (at the unique $1$-regular vertex of $\bbQ$). But then $Q$ has a $(1,2)$-vertex forbidden 
in Corollary \ref{12wD}, hence the shadow $\bbQ_3$ is excluded. \smallskip

Summing up, we have proved the following proposition. 

\begin{prop}\label{prop:Ga3} If $\Lambda$ is a GQT algebra such that $Q=Q_\Lambda$ has $n=3$ vertices, then $Q$ has one of the 
following forms: 
$$\xymatrix@C=0.5cm{ &\circ \ar[rd]\ar@<+0.13cm>[ld] & \\ \circ \ar[ru]\ar@<-0.13cm>[rr] && \circ \ar[ll]\ar@<+0.13cm>[lu]} \ 
\xymatrix@R=0.5cm{&&& \\ 
& \ar@{.>}@(lu,ld)[]\circ \ar@<-0.13cm>[r] & \circ \ar[l] \ar@<-0.13cm>[r] & \circ \ar[l] \ar@{.>}@(ru,rd)[]  \\ 
&&& \\} \ \ \qquad 
\xymatrix@C=0.5cm{ &\circ \ar[rd]\ar@<+0.13cm>[rd] & \\ 
\circ \ar[ru]\ar@<+0.13cm>[ru] && \circ \ar[ll]\ar@<+0.13cm>[ll]} \ 
\xymatrix@C=0.5cm{& &\circ \ar[rd] \ar@(ul,ur)[]& &\\ & \circ\ar@(lu,ld)[] \ar[ru] && \circ \ar[ll]\ar@(ru,rd)[]&}$$ 
where dotted loops indicate that there may be at most two loops. \end{prop}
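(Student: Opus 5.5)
The proof consists in assembling the case analysis carried out just before the statement, so the plan is to organize it as a complete case distinction over the shadow $\bS_\La$. By Theorem \ref{thm:recon}(a) and the discussion in Section \ref{sec:4}, the loop-free part of $Q=Q_\La$ decomposes as $Q^\circ=Q^\times\sqcup E$. Here $Q^\times=\bbQ$ is, up to relabelling and taking the opposite quiver, one of the essential shadows in $\bE(3)=\{\bbQ_1,\bbQ_2,\bbQ_3,\bbQ_5\}$ from Example \ref{exm:4.1}, and $E$ is a disjoint union of $2$-cycles. I would then treat the four shadows separately. In each case I first determine $E$ and then determine the possible loops.

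For the zero shadow $\bbQ_5$, I would simply invoke Corollary \ref{coro1}. It gives directly the first quiver (no loops) or the linear quiver with two $2$-cycles, with at most one loop at each of the two $1$-vertices. These are the first two quivers in the list.

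For $\bbQ_1$, the Markov quiver, every vertex is already $2$-regular with double arrows. By (R1) no $2$-cycle can be attached, so $E=\emptyset$. A loop is excluded by Remark \ref{loops:rk0}, since the vertices have $|i^\pm|=4$ in the sense relevant for wildness; alternatively, a direct wild-covering argument excludes it. For $\bbQ_2$ and $\bbQ_3$, any $2$-cycle in $E$ would join two vertices that are already joined by an arrow, so Remark \ref{noedge} gives $E=\emptyset$. In the triangle case $\bbQ_2$, the equalities $p_k^-=p_k^+$ force $p_1=p_2=p_3$. If some vertex carried no loop, it would be $1$-regular with $p_i^+=p_i$, contradicting Lemma \ref{lem:3.1}. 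Hence all three loops are present, which gives the last quiver. In the case $\bbQ_3$, the quiver $Q$ has a vertex whose outgoing (or incoming) arrows are the double arrows, and it has at most one loop, located at the unique $1$-vertex. Either way there remains a $(1,2)$- or $(2,1)$-vertex with double arrows, which Corollary \ref{12wD} forbids, so this shadow does not occur.

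The main point requiring care is making sure the case split is exhaustive and that the loop placement is handled correctly. For $\bbQ_3$ one must check that the loop cannot sit at a vertex incident to the double arrows (Remark \ref{loops:rk0}, or a wild $K_2^*$-type argument). One must also check that adding the loop at the $1$-vertex does not destroy the forbidden configuration. Since the $1$-vertex is distinct from the endpoints of the double arrows, the configuration survives. Collecting the surviving cases gives exactly the four quivers listed.
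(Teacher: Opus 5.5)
Your proposal is correct and follows the paper's argument essentially step for step. It uses the same split over $\bE(3)=\{\bbQ_1,\bbQ_2,\bbQ_3,\bbQ_5\}$, Corollary~\ref{coro1} for the zero shadow, Remark~\ref{noedge} together with $p_1=p_2=p_3$ and Lemma~\ref{lem:3.1} for the triangle, and Corollary~\ref{12wD} to exclude $\bbQ_3$.

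The one slip is your appeal to Remark~\ref{loops:rk0}, which does not apply here. The Markov vertices, and the double-arrow vertices of $\bbQ_3$, have in- and out-degree at most $2$ in $Q^\circ$, not $4$. So you should rely on your fallback argument instead. A loop $\rho$ at a vertex emitting (resp.\ receiving) double arrows $\beta,\bar\beta$ has $\rho\beta,\rho\bar\beta\nprec I$ (resp.\ $\beta\rho,\bar\beta\rho\nprec I$) by Lemma~\ref{lem:3.3}, because there is no arrow back. This gives a $K_2^*$-type wild subcategory in covering; for $\bbQ_1$ you can alternatively cite Lemma~\ref{lem:loops}.
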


\subsection{Quivers with 4 vertices}\label{subsec:n=4}

Now, we will describe quivers $Q=Q_\Lambda$ with $n=4$ vertices. It follows from Example \ref{exm:4.2} that 
$Q^\circ=\bbQ\sqcup E$, where $\bbQ=Q^\times$ is one of the six essential shadows $\bbQ_1,\dots,\bbQ_6$ 
(the case of $\bbQ_{7}=\emptyset$ is omitted, due to Corollary \ref{coro1}). Let $e$ denote the number of 
(disjoint) $2$-cycles in $E$. \medskip 

First, as in case $\bbQ_3$ for $n=3$, we can easily exclude shadow $\bbQ=\bbQ_1$, due to Corollary \ref{12wD}. \medskip 

Next, let $\bbQ=\bbQ_2$. Then $E$ contains at most one $2$-cycle connecting a pair of (opposite) $1$-regular vertices 
of $\bbQ$. Indeed, if it was not the case, then $\bbQ$ is $2$-regular, and hence $Q=Q^\circ$, by Lemma 
\ref{lem:loops}. But then applying \cite[Main Theorem]{AGQT}, we deduce that $\La$ is a weighted surface algebra, 
which is impossible, since $Q$ is not a triangulation quiver (it is not a glueing of blocks I-III). Therefore, we have 
$e\leqslant 1$, and hence $Q^\circ$ has the following form (up to permutation of vertices)  
$$\xymatrix@R=0.2cm{& 4 \ar[rd]\ar@{.>}@<-.37ex>[dd]& \\ 
1 \ar[ru] && 3\ar[ld]\\ & 2\ar[lu] \ar@{.>}@<-.37ex>[uu] & }$$ 
Observe also that there must be two loops at vertices $1$ and $3$. Indeed, all vertices of $\bbQ$ are 
$1$-regular, so we have identities: $p_1=p_3$ and $p_2=p_4$. If $e=1$, i.e. there is a $2$-cycle $2 \leftrightarrows 4$, then we have no loops at vertices $2$ and $4$, 
due to Lemma \ref{lem:loops}, and $Q$ is a biregular quiver. 
In this case, there are two loops at $1$ and $3$, because otherwise, there exists a $1$-vertex in $Q$ not 
lying in a block presented in Theorem \ref{bireg1}. If $e=0$, that is $Q^\circ=\bbQ$, we must have exactly two loops 
at opposite vertices. Indeed, if this is not the case, then there is an arrow $\alpha:i\to j\in Q_1$ with 
$i^+=\{\alpha\}=j^-$, which contradicts Lemma \ref{lem:3.2}. \smallskip 

As a result, there are two Gabriel quivers $Q$ with $\bbQ=\bbQ_2$, and these are of the form 
$$\xymatrix@R=0.2cm{&\circ\ar[rd]\ar@{.>}@<-.37ex>[dd]& \\ 
\ar@(lu,ld)[]\circ\ar[ru] && \circ\ar[ld]\ar@(ru,rd)[]\\ & \circ\ar[lu] \ar@{.>}@<-.37ex>[uu] & }$$
with or without the dotted $2$-cycle, denoted by $Q^{(2)}$ and $Q^{(1)}$, respectively. \medskip 

Suppose now that $\bbQ=\bbQ_{3}$. Then $E$ has exactly one $2$-cycle connecting an isolated vertex with one of the 
$1$-regular vertices in a triangle. There is at least one, since $Q$ is connected, but no more, because $\bbQ$ 
has no pairs of $1$-vertices allowed by the rule (R4).  

Moreover, as in the case of a triangle $\bbQ_2$ for $n=3$, we can show that there are loops at both 
remaining vertices (using Lemma \ref{lem:3.1}). As a result, for $\bbQ=\bbQ_{3}$, the quiver $Q=Q_\Lambda$ has the form 
$$\xymatrix{& \circ\ar@(lu,ru)[] \ar[ld] && \\ 
\ar@(ul,dl)[] \circ \ar[rr] & & \circ\ar[lu] \ar@<-0.13cm>[r] & \circ \ar[l] \ar@{.>}@(ru,rd)[]}$$ 
The quiver without dotted loop is denoted by $Q^{(3)}$, and the quiver with dotted loop, by $Q^{(4)}$.

Now, consider $\bbQ=\bbQ_4$. As above, $\bbQ$ is a biregular quiver, and $Q$ has no loops at vertices 
$1$ and $2$, by the rule (R1). Moreover, there is no block of type $V_2$ in $\bbQ$, thus by the 
Reconstruction Theorem, we obtain that $E=\emptyset$. Hence $Q$ 
is also biregular, and it is obtained from $Q^\circ=\bbQ$, by adding loops at vertices $3$ or $4$. Actually, 
$Q$ is biregular, so we can use Theorem \ref{bireg1} to see (as for $\bbQ_2$) that there must be two loops 
in $Q$, since otherwise, the quiver $Q$ contains a forbidden $1$-vertex. Therefore, the unique 
$Q=Q^{(5)}$ with $\bbQ=\bbQ_4$ is of the form 
$$\xymatrix@R=0.3cm{&2\ar@<-.37ex>[dd]\ar@<.37ex>[dd]& \\ 
\ar@(lu,ld)[]3\ar[ru] && 4\ar[lu]\ar@(ru,rd)[]\\ & 1\ar[lu]\ar[ru]  & }  $$   

Further, we exclude the case $\bbQ=\bbQ_5$. Indeed, suppose to the contrary that there is a GQT algebra $\La=KQ/I$ 
with shadow $\bbQ=\bbQ_5$ of the form 
$$\xymatrix@R=0.6cm{& 4 \ar[rd]^{\beta} & \\ 1\ar[ru]^{\alpha} && 3\ar[ll]_{\tau} \ar[ld]^{\nu} \\ 
&2\ar[lu]^{\delta}& }$$ 
First, observe that $E=\emptyset$. Indeed, otherwise $E$ contains exactly one $2$-cycle between the two $1$-vertices 
of $\bbQ$, and there are no loops at vertices $2$ and $4$, due to Lemma \ref{lem:loops}. Then we get 
$p_1+p_2=p_4^-=p_4^+=p_3+p_2$, but $p_1^-=p_1^+$ gives $p_2+p_3=p_4$ (whenever there is a loop 
at $1$ or not), so we obtain $p_4^+=p_4$, which is impossible, due to Lemma \ref{lem:3.1}. This shows 
that $E$ is empty, so we have $Q^\circ=\bbQ$. \smallskip 

Observe that we cannot have a loop in $Q$ at vertices $1$ or $3$, by Corollary \ref{nregloops}. 
Hence $Q$ is a biserial quiver, and therefore, both non-regular vertices $1$ and $3$ in $Q$ are of type N 
(see Lemma \ref{prop:typeN}). It follows also that there must be a loop $\rho:4\to 4$ in $Q$, because otherwise 
$4^+=\{\beta\}=3^-$, and we get a contradiction with Lemma \ref{lem:3.2}. Consequently, we have the following wild 
subcategory in covering 
$$\xymatrix{&&2\ar[d]_{\delta} &&& 1& \\
2 & \ar[l]_{\nu} 3 \ar[r]^{\tau} & 1 \ar[r]^{\alpha} & 4 & \ar[l]_{\rho} 4 \ar[r]^{\beta} & 3 \ar[u]^{\tau}\ar[r]^{\nu} &2}$$ 
This shows that $\bbQ_5$ cannot be a shadow of a GQT algebra; see also Proposition \ref{blockQ5} at the end, which 
generalizes the above observation. \medskip 

Finally, we describe quivers $Q$ with $\bbQ=\bbQ_6$. As for $\bbQ_4$, we have no subquiver allowed by the rule (R4), 
so $E=\emptyset$ and $Q^\circ=\bbQ$ has the form 
$$\xymatrix@R=0.6cm{& 3 \ar[rd]^{\beta} & \\ 4\ar[ru]^{\alpha}\ar[rd]_{\nu} && 1\ar[ll]_{\tau}  \\ 
&2\ar[ru]_{\delta}& }$$ 
We claim that $Q=Q^{(6)}$ contains exactly two loops (at vertices $1$ and $4$). Indeed, there must be at least 
one loop at $1$ or $4$, since in case of no loops, the arrow $\tau:1\to 4$ satisfies $1^+=\{\tau\}=4^-$, 
which leads to a contradiction with Lemma \ref{lem:3.2}. If there is only one loop, 
say at $4$, then $p_4^-=p_4^+$ gives the following identity $p_1+p_4=p_2+p_3+p_4$, hence $p_1=p_2+p_3=p_1^-$, 
which contradicts Lemma \ref{lem:3.1}. Consequently, there are two loops $\rho,\sigma$ 
at vertices $1$ and $4$, respectively. Now, it suffices to see that we cannot have more loops.  
If this is not the case, say $\mu:3\to 3$, then $\La$ admits a wild hereditary subcategory  
$$\xymatrix{&& 2 &&& 2\ar[d]_{\delta} & \\ 
1 \ar[r]^{\tau} & 4 & \ar[l]_{\sigma} 4 \ar[u]^{\nu} \ar[r]^{\alpha} & 3 & \ar[l]_{\mu} 3 \ar[r]^{\beta} & 1 & \ar[l]_{\rho} 1}$$ 
Similar wild subcategory arises in case of loop at $2$. \medskip 

Concluding all the above cases, we obtain the following.  

\begin{prop}\label{prop:Ga4} If $\La$ is a GQT algebra $\La=KQ/I$ such that $Q=Q_\La$ has $n=4$ vertices, then $Q$ 
is one of the following quivers $Q^{(1)},\dots,Q^{(6)}$ \vspace*{0.5cm} 
$$\xymatrix@R=0.3cm@C=0.6cm{&\circ\ar[rd]\ar@{.>}@<-.37ex>[dd]& \\ 
\ar@(lu,ld)[]\circ\ar[ru] && \circ\ar[ld]\ar@(ru,rd)[]\\ & \circ\ar[lu] \ar@{.>}@<-.37ex>[uu] & } \qquad\quad 
\xymatrix@R=0.3cm@C=0.6cm{& \circ\ar@(lu,ru)[] \ar[ld] && \\ 
\ar@(ul,dl)[] \circ \ar[rr] & & \circ\ar[lu] \ar@<-0.13cm>[r] & \circ \ar[l] \ar@{.>}@(ru,rd)[]} \qquad\quad
\xymatrix@R=0.3cm@C=0.6cm{& \circ \ar@<-.37ex>[dd]\ar@<.37ex>[dd]& \\ 
\ar@(lu,ld)[]\circ\ar[ru] && \circ\ar[lu]\ar@(ru,rd)[]\\ & \circ\ar[lu]\ar[ru]  & } \qquad \quad
\xymatrix@R=0.3cm@C=0.6cm{& \circ \ar[rd] & \\ 
\ar@(lu,ld)[]\circ \ar[ru]\ar[rd] && \circ \ar[ll] \ar@(ru,rd)[]  \\ 
&\circ\ar[ru]& }$$
\end{prop}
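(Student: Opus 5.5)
The statement is essentially a summary of the case analysis carried out just before it, so the proof is a bookkeeping argument over the possible shadows. Let $\La=KQ/I$ be a GQT algebra with $|Q_0|=4$. By Corollary \ref{coro1}, the shadow $\bS_\La$ is non-zero. Theorem \ref{thm:recon}(a), together with Example \ref{exm:4.2}, then shows that $Q^\times$ is one of $\bbQ_1,\dots,\bbQ_6$, up to relabelling and passing to $Q^{\op}$. By Theorem \ref{thm:recon}(b), $Q^\circ=Q^\times\sqcup E$, where $E$ is a disjoint union of $2$-cycles placed according to rules (R1)--(R4) and Remark \ref{noedge}. Finally $Q$ is obtained from $Q^\circ$ by adding loops, which are restricted by Remark \ref{loops:rk0}, Lemma \ref{lem:loops} and Corollaries \ref{rmkloops}, \ref{nregloops}. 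I would go through the six shadows in turn.

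For each shadow I first determine $E$ and then decide which loops occur.
\begin{itemize}
\item $\bbQ_1$ is excluded directly by Corollary \ref{12wD}, since it has a $(1,2)$-vertex whose two outgoing arrows are double arrows.
\item $\bbQ_2$. The only admissible $2$-cycle joins two opposite $1$-vertices, so $e\leqslant 1$; two such $2$-cycles would make $Q$ a $2$-regular quiver which is not a triangulation quiver, contradicting Theorem \ref{class:bireg}. If $e=1$, there are no loops at the two vertices on the $2$-cycle (Lemma \ref{lem:loops}), and Theorem \ref{bireg1} forces loops at the other two. If $e=0$, Lemma \ref{lem:3.2} forces loops at exactly two opposite vertices. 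This yields $Q^{(1)}$ and $Q^{(2)}$.
\item $\bbQ_3$. Connectedness and (R4) give exactly one $2$-cycle, from the isolated vertex to a vertex of the triangle. The relations $p^-=p^+$ combined with Lemma \ref{lem:3.1} force loops at the remaining two triangle vertices. The isolated vertex may or may not carry a loop, giving $Q^{(3)}$ and $Q^{(4)}$.
\item $\bbQ_4$. Here $E=\emptyset$, since there is no $V_2$-configuration, and (R1) forbids loops at the endpoints of the double arrows. Theorem \ref{bireg1} then forces loops at both $1$-vertices, giving $Q^{(5)}$.
\item $\bbQ_5$. First I show $E=\emptyset$ by a dimension-vector contradiction with Lemma \ref{lem:3.1}. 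Corollary \ref{nregloops} rules out loops at the two non-regular vertices, so $Q$ is biserial and both these vertices are of type N by Proposition \ref{prop:typeN}. Lemma \ref{lem:3.2} forces a loop at vertex $4$, and the resulting quiver contains a wild subcategory in covering, so $\bbQ_5$ is excluded.
\item $\bbQ_6$. Again $E=\emptyset$. Lemma \ref{lem:3.2} forces at least one loop at vertex $1$ or $4$, and Lemma \ref{lem:3.1} forces loops at both. A loop at vertex $2$ or $3$ would produce a wild hereditary subcategory in covering, so $Q=Q^{(6)}$.
\end{itemize}

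Collecting the surviving quivers gives exactly $Q^{(1)},\dots,Q^{(6)}$, with the dotted $2$-cycle and dotted loop in the pictures encoding the optional pieces. The only non-routine exclusions are those of $\bbQ_5$ and of extra loops for $\bbQ_6$. Both require exhibiting an explicit wild subcategory in covering, and for $\bbQ_5$ this needs the type-N information from Proposition \ref{prop:typeN} to guarantee that the relevant paths of length two are not involved in minimal relations. I expect this to be the main obstacle; everything else reduces to the dimension-vector identities $p_i^-=p_i^+$ together with Lemmas \ref{lem:3.1} and \ref{lem:3.2}.
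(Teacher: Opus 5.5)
Your proposal is correct and follows the paper's own argument essentially step by step. It uses the same case analysis over the essential shadows $\bbQ_1,\dots,\bbQ_6$ of Example~\ref{exm:4.2}, controls $E$ by Theorem~\ref{thm:recon} and rules (R1)--(R4), and controls loops by Lemmas~\ref{lem:3.1}, \ref{lem:3.2}, \ref{lem:loops} and Theorem~\ref{bireg1}. It also excludes $\bbQ_5$, and extra loops for $\bbQ_6$, by the same explicit wild coverings.

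One small point is shared with the paper's text, in the case $\bbQ_2$. When $e=0$, Lemma~\ref{lem:3.2} only forces loops at some pair of opposite vertices; it does not rule out three or four loops. When $e=1$, Theorem~\ref{bireg1} does not rule out the loop-free quiver, because its two $1$-vertices then form a block of type $V_2$. All these leftover quivers are biregular and are not Gabriel quivers of weighted surface algebras, so Theorem~\ref{class:bireg} removes them.
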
 \medskip 

One can see that the arguments excluding the shadow $\bbQ_5$ can be extended to the following result. 

\begin{prop}\label{blockQ5} If $\La=KQ/I$ is a GQT algebra, $Q=Q_\La$, then its reduced Gabriel quiver $Q^\times$ 
does not admit a block $\Gamma$ of the form $$\xymatrix@R=0.6cm{& \bullet \ar[rd] & \\ \bullet \ar[ru]^{} && \ar[ld]^{}\bullet\ar[ll] \\ 
&\bullet\ar[lu]_{}& }$$ \end{prop}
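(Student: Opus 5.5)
We argue by contradiction, following the exclusion of the shadow $\bbQ_5$ for $n=4$ and localizing each step to the block. Label the block $\Gamma$ in $Q^\times$ as in the $\bbQ_5$ case: vertices $1,2,3,4$ with arrows $\alpha:1\to 4$, $\beta:4\to 3$, $\tau:3\to 1$, $\nu:3\to 2$, $\delta:2\to 1$. All vertices of $\Gamma$ are $\bullet$-vertices, so $Q^\times$ has no further arrows at them. Thus in $Q^\times$ vertex $1$ is a $(2,1)$-vertex, $3$ is a $(1,2)$-vertex, and $2,4$ are $1$-vertices. Since $Q^\times$ is connected, $\Gamma$ is then the whole of $Q^\times$ restricted to its non-isolated part. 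Any other vertex of $Q$ can therefore only be attached to $\Gamma$ through $2$-cycles in $E$ and loops.

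The first step is to determine the possible $2$-cycles of $E$ touching $\Gamma$, using the rules (R1)--(R4) and Remark \ref{noedge}.

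By (R3), a $2$-cycle at $3$ (respectively $1$) joins it either to an isolated vertex or to a $(2,1)$-vertex (respectively $(1,2)$-vertex) in a block of type $V_3$. The only candidate partner inside $Q^\times$ is $1$ (respectively $3$), and this is excluded by Remark \ref{noedge}, since $\tau:3\to 1$. I expect the shape of the $V_3$ block is incompatible with $\Gamma$ anyway.

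By (R4), a $2$-cycle at $2$ or $4$ joins them either to each other or to an isolated vertex. Note that $2,4$ are non-adjacent, but the $V_2$-type block required by (R4) does not match $\Gamma$.

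The cases where a $2$-cycle joins a vertex of $\Gamma$ to an isolated vertex $v$ must be handled using dimension vectors. Take $2\leftrightarrows v$. Then $v$ has $v^\pm$ consisting only of the arrow from/to $2$, plus possibly a loop. Computing $p_v^+=p_v^-$ and combining with $p_1^\pm$, $p_4^\pm$ should again force $p_4^+=p_4$ or a similar identity contradicting Lemma \ref{lem:3.1}, as in the $n=4$ computation. For vertex $4$ this reproduces exactly the argument given for $\bbQ_5$; the argument for $2$ is symmetric. For $3$ or $1$ with an isolated partner, Corollary \ref{12wD} does not apply directly. In that case I would use Corollary \ref{nregloops} together with Lemma \ref{lem:3.1}.

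The second step, once $E$ does not touch $\Gamma$ (or the touching cases have been killed), goes as follows.

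Corollary \ref{nregloops} forbids loops at $1$ and $3$. Here $|4^-|$ is $1$ while the other neighbours have degree $2$, and the same condition applies as in the $n=4$ text.

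So $1,3$ are non-regular vertices of a locally biserial region. Propositions \ref{prop:typeN} (type N) and Lemma \ref{lem:3.2} apply locally: their proofs only use arrows near the vertex, possibly after passing to an idempotent algebra $e\La e$.

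Lemma \ref{lem:3.2} applied to $\beta$ (with $4^+=\{\beta\}=3^-$ otherwise) forces a loop $\rho$ at $4$. Type N at $1$ and $3$ gives $\alpha$-, $\tau$-, $\delta$- and $\nu$-compositions not in $I$. Since $\rho$ is a loop not in a triangle, $\rho\beta$ and $\alpha\rho$ are not in $I$ by Lemma \ref{lem:3.3}.

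This produces the same wild tree-shaped subcategory in a covering as displayed in the $\bbQ_5$ argument, which is the required contradiction.

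The main obstacle is the first step: making the use of the local versions of Propositions \ref{prop:typeN} and \ref{prop:biser} rigorous when $Q$ is not globally biserial, and excluding all placements of $E$-cycles with outside vertices. I would handle the former by passing to $e\La e$ with $e$ the sum of idempotents of $\Gamma$ and its $E$-partners; tameness and symmetry pass to $e\La e$, which is enough for the wild-subcategory argument.
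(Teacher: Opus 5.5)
Your second step is the paper's argument. Your first step, which you rightly call the crux, fails as written in two places.

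(a) The $2$-cycle $2\leftrightarrows 4$ is \emph{not} excluded by (R4). The square $1\to 4\to 3\to 2\to 1$ is exactly the one required there, with $1,3$ as the $\circ$-vertices; the extra arrow $\tau$ between them is allowed. This is precisely the case the $\bbQ_5$ count handles, and it is how the paper kills it. Lemma~\ref{lem:loops} gives no loops at $2,4$, so $p_4^+=p_3+p_2$. Vertex $1$ gives $p_4=p_1^+=p_1^-=p_2+p_3$. Hence $p_4^+=p_4$, contradicting Lemma~\ref{lem:3.1}.

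(b) The count does not transfer to a $2$-cycle $4\leftrightarrows v$ with $v$ isolated in $Q^\times$. There $p_4^+=p_3+p_v$, and the identities at $1,3,4$ give only $p_1=p_3$ and $p_4=p_1+p_2=p_2+p_3$. So Lemma~\ref{lem:3.1} at $4$ would need $p_v=p_2$, which nothing forces. Similarly, $2\leftrightarrows v$ gives $\hat p_2=p_1+p_v$ with no contradiction. A $2$-cycle at $3$ (resp.\ $1$) makes it a $(2,3)$- (resp.\ $(3,2)$-)vertex of $Q^\circ$; there Corollary~\ref{nregloops} does not apply and Lemma~\ref{lem:3.1} yields nothing. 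So the configurations with $2$-cycles to outside vertices remain open. They are not marginal: the proposition is used to discard $\bbQ_{22}$, which is $\Gamma$ plus an isolated vertex that connectedness forces to be attached to $\Gamma$ by exactly such a $2$-cycle.

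For comparison, the paper kills $2\leftrightarrows 4$ by the count in (a). It then invokes the reconstruction rules (citing (R3)) to conclude that no $2$-cycle of $E$ meets $\Gamma$, i.e.\ $\Gamma$ is a block of $Q^\circ$. With no loops at $1,3$ and $Q$ connected, $Q$ is $\Gamma$ plus loops at $2,4$, hence globally biserial, and the $\bbQ_5$ argument runs verbatim. The paper's treatment of the outside $2$-cycles is brief, but it is not a dimension-vector argument. Closing it needs the reconstruction rules or wild subcategories in covering, as elsewhere in Section~\ref{sec:6}. For $4\leftrightarrows v$, say, one first gets no loop at $4$ from Corollary~\ref{rmkloops}, since $\beta\nu\nprec I$ by Lemma~\ref{lem:3.3}.

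Two smaller points. Your ``local'' use of Proposition~\ref{prop:typeN} through $e\La e$ is not legitimate: only tameness and symmetry pass to $e\La e$, while type~N rests on $4$-periodicity of the simples of $\La$. It is also unnecessary once the first step is done, because $Q$ is then globally biserial. And $Q^\times$ need not be connected; $\bbQ_{22}$ itself is not.
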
 

\begin{proof} Suppose to the contrary that $Q^\times$ contains a block from the statement. Denote by $1,3$ the 
non-regular vertices, and by $2,4$ the $1$-regular vertices of $\Gamma$. One can see from the rule (R4) 
that $Q$ does not admit a $2$-cycle $2\leftrightarrows 4$, since otherwise, we have no loops at vertices 
$2,4$, by Lemma \ref{lem:loops}, and as in the case $\bbQ_5$ for $n=4$, we conclude that $p_4^+=p_4$, which 
contradicts Lemma \ref{lem:3.1}. According to the rule (R3), we obtain that there is no $2$-cycle in $E$ connecting 
vertices from $\Gamma$, and hence $\Gamma$ is a block of $Q^\circ$. Now, observe that $Q$ has no loops at 
vertices $1,3$, due to Corollary \ref{nregloops}. Since $Q$ is connected, we conclude that $\Gamma=Q^\circ$ 
is the whole $Q$, except possible loops at $2$ or $4$, and we obtain a contradiction as in case $\bbQ_5$ for 
$n=4$. \end{proof} 

\subsection{Quivers with 5 vertices}\label{subsec:n=5}

Now assume that $n=5$. Following the list from Example \ref{exm:4.3}, we have to consider $26$ 
essential shadows, identified with quivers $\bbQ_1,\dots,\bbQ_{26}$. We will exclude $16$ of them, 
leaving $10$, from which we next reconstruct all possible Gabriel quivers of GQT algebras with $n=5$ 
vertices. \medskip 

\begin{lemma} If $\La=KQ/I$ is a GQT algebra with $Q=Q_\La$ having $n=5$ vertices, then its shadow 
$\bbQ=Q^\times$ is one of the ten shadows in the following set 
$$\bA=\{\bbQ_4, \bbQ_{11}, \bbQ_{13}, \bbQ_{14}, \bbQ_{16}, 
\bbQ_{17}, \bbQ_{23}, \bbQ_{24}, \bbQ_{25}, \bbQ_{26}\}\subset\bE(5).$$ \end{lemma}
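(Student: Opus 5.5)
By Theorem \ref{thm:recon}(a), together with Corollary \ref{coro1} (which rules out the zero shadow $\bbQ_{15}$ for $n=5$), $Q^\times$ is one of the nonzero essential shadows $\bbQ_1,\dots,\bbQ_{26}$ of Example \ref{exm:4.3}. So the task is to exclude the sixteen shadows outside $\bA$. For each of them I take the rigid reconstruction description $Q^\circ=\bbQ\sqcup E$. I first determine which sets $E$ of disjoint $2$-cycles are allowed by rules (R1)--(R4) and Remark \ref{noedge}. Then I ask which loops are allowed by Remark \ref{loops:rk0}, Lemma \ref{lem:loops} and Corollaries \ref{rmkloops}, \ref{nregloops}. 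This leaves a short list of candidate quivers $Q$ per shadow, and each candidate has to be killed by one of the following local obstructions.

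The exclusion tools, in the order I would apply them, are these.
\begin{enumerate}
\item[(i)] \emph{Block obstructions.} Corollary \ref{12wD} excludes a $(1,2)$-vertex whose two out-arrows are double arrows, and dually. This disposes of shadows such as $\bbQ_3$, where vertex $2$ sends a double arrow to $1$ and has a single incoming arrow, unless a $2$-cycle repairs the degree; the rules (R3), (R4) forbid that repair. Proposition \ref{blockQ5} kills every shadow containing the four-vertex block with two $1$-vertices and two non-regular vertices. I expect it to apply to some of $\bbQ_{12},\bbQ_{19},\bbQ_{20},\bbQ_{21},\bbQ_{22}$, after checking that the block really is a block.
\item[(ii)] \emph{Linear algebra on dimension vectors.} I impose $p_i^-=p_i^+$ at every vertex, with loops contributing $p_i$ on both sides so that they cancel. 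When the resulting equations force $p_i^+=p_i$ for some $i$, Lemma \ref{lem:3.1} gives a contradiction. When they force $p_u=0$ or a negative combination, the candidate is impossible.
\item[(iii)] \emph{Lemma \ref{lem:3.2}.} There is no arrow $\alpha$ with $s(\alpha)^+=\{\alpha\}=t(\alpha)^-$. In particular this forces loops at certain $1$-vertices, and those loops are then played against (ii) and against tameness.
\item[(iv)] \emph{Biserial structure results.} When $Q$ turns out to be biserial, I use Proposition \ref{prop:typeN}, Lemma \ref{precnonreg} and Proposition \ref{prop:biser}. They say that non-regular vertices are of type N, that a successor of a $(1,2)$-vertex is not a $(1,2)$-vertex, and that some successor of a $(1,2)$-vertex is a $1$-vertex. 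These exclude configurations such as two adjacent $(1,2)$-vertices, or a $(1,2)$-vertex both of whose successors are $2$-regular.
\item[(v)] \emph{Wild subcategories in covering.} These come from type-N vertices and loops, built exactly as in the exclusion of $\bbQ_5$ for $n=4$, and they handle whatever survives (i)--(iv).
\item[(vi)] \emph{Known classifications.} If $Q$ becomes $2$-regular or biregular, Theorem \ref{class:bireg} and Theorem \ref{bireg1} apply. A biregular $Q$ not glued from blocks I--III, $V_1$, $V_2$ is then impossible, as was the case for $\bbQ_2$ when $n=4$.
\end{enumerate}

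The proof then runs through the sixteen shadows $\bbQ_1,\bbQ_2,\bbQ_3,\bbQ_5,\bbQ_6,\bbQ_7,\bbQ_8,\bbQ_9,\bbQ_{10},\bbQ_{12},\bbQ_{15},\bbQ_{18},\bbQ_{19},\bbQ_{20},\bbQ_{21},\bbQ_{22}$ in turn. The shadows with double arrows ($\bbQ_1,\bbQ_2,\bbQ_3$) I expect to fall to Corollary \ref{12wD} and (PS5)-type $K_2^*$ arguments. For example, in $\bbQ_1$ and $\bbQ_2$, vertex $4$ or vertex $2$ has a double arrow adjacent to a single arrow in a way that yields a $K_2^*$ subquiver or the forbidden block of Lemma \ref{lem:TWD}. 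The $2$-regular-heavy shadows ($\bbQ_6,\bbQ_{10},\bbQ_{18}$) leave no room for $2$-cycles and admit no loops, by Lemma \ref{lem:loops}. So $Q=Q^\circ$, and either (ii) contradicts Lemma \ref{lem:3.1} or $Q$ is biregular and fails to be a glueing of the admissible blocks. For the sparse shadows, such as $\bbQ_{12}$ (a $5$-cycle) and $\bbQ_9,\bbQ_{22}$, I would emulate the treatment of the triangle for $n=3$. There all vertices are $1$-regular in $\bbQ$, so (ii) yields equal dimension vectors along the cycle, and with any allowed $E$ and loops this forces a vertex with $p_i^+=p_i$, or a vertex violating Lemma \ref{lem:3.2}.

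The main obstacle will be the handful of shadows, probably among $\bbQ_7,\bbQ_8,\bbQ_{19},\bbQ_{20},\bbQ_{21}$, where $Q$ has both non-regular vertices and several admissible placements of $2$-cycles and loops. There neither Proposition \ref{blockQ5} nor pure dimension counting applies, and one must actually exhibit a wild subcategory in covering. For those I would first pin down, via (iv), which paths through the non-regular vertices are not involved in minimal relations. I would then use Lemma \ref{lem:3.3} to rule out relations whose triangle does not exist, and finally assemble a tree of type $\wt{\bD}_n$ extended by one arrow, or a $\wt{\wt{\bD}}_4$, as in the proofs of Lemma \ref{precnonreg} and Proposition \ref{prop:biser}.
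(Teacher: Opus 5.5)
\textbf{Verdict: genuine gap.} Your strategy is the paper's own. You run through the sixteen shadows outside $\bA$, fix $E$ by (R1)--(R4), restrict loops, and kill each candidate with Lemmas~\ref{lem:3.1}, \ref{lem:3.2}, Corollary~\ref{12wD}, Propositions~\ref{prop:biser}, \ref{blockQ5}, Theorem~\ref{class:bireg}, or a wild covering. But the lemma \emph{is} that case analysis, and you do not carry it out. Where you do commit to a route, it fails:
\begin{itemize}
\item[$\bbQ_{12}$:] The balance equations force $p_1=\dots=p_5$, hence a loop at every vertex. After that, Lemmas~\ref{lem:3.1} and \ref{lem:3.2} are both satisfied. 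You need Theorem~\ref{class:bireg}(1) together with the fact that a $5$-cycle with loops is not a triangulation quiver.
\item[$\bbQ_9$:] It is not $1$-regular: $3$ is a $(1,2)$-vertex and $1$ a $(2,1)$-vertex. With no loops at $1,3$ (Corollary~\ref{nregloops}), Lemmas~\ref{lem:3.1} and \ref{lem:3.2} only force loops at $2,4,5$. This gives the consistent solution $p_1=\dots=p_4$, $p_5=2p_1$. The contradiction comes only from Proposition~\ref{prop:biser}: neither successor $1,4$ of $3$ is a $1$-vertex.
\item[$\bbQ_{18}$:] It has no $2$-regular vertex at all: $1$ is a $(3,1)$-vertex, $2$ a $(2,1)$-vertex, and $3,4,5$ are $(1,2)$-vertices. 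So neither Lemma~\ref{lem:loops} nor biregularity, the reasons you give, applies. What works is Lemma~\ref{lem:3.2} for the arrow $1\to 5$, after excluding a loop at $1$ by Remark~\ref{loops:rk0} and at $5$ by Corollary~\ref{nregloops} ($5\to3\to2\nprec I$ by Lemma~\ref{lem:3.3}).
\end{itemize}

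\textbf{The real gap: $\bbQ_5$ and $\bbQ_{19}$.} $\bbQ_5$ never appears in your analysis, yet it is exactly where every structural tool on your list is silent.
\begin{itemize}
\item Vertices $1,2$ carry the double arrows and are not at most $2$-regular; $3,4,5$ are $2$-regular with $2$-regular neighbours. So (R1), (R2), Remark~\ref{loops:rk0} and Lemma~\ref{lem:loops} give $E=\emptyset$, no loops, and $Q=\bbQ_5$.
\item The balance equations give $p_1=p_2$, $p_3=p_4=p_5$, $2p_1=3p_3$. These violate neither Lemma~\ref{lem:3.1} nor Lemma~\ref{lem:3.2}.
\item $Q$ is not biserial. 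Every arrow leaving $1$ closes a triangle with the double arrow, so Lemma~\ref{lem:3.3} yields no $K_2^*$.
\end{itemize}
Here you must actually write down a wild subcategory in covering. The paper builds it from the fact that $2$ receives arrows from the three $2$-regular vertices of the oriented triangle $5\to4\to3\to5$.

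$\bbQ_{19}$ is the same kind of case: it is consistent with a loop at $5$ or a $2$-cycle $5\leftrightarrows 2$ or $5\leftrightarrows 4$. The paper uses the covering formed by $2\to1$, $4\to1$, $3\to2$, $3\to4$ and $3\to1$ into a second copy of $1$, with no length-two relations by Lemma~\ref{lem:3.3}. You flag this case but supply nothing.

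\textbf{Misjudged difficulty.} $\bbQ_7$, $\bbQ_{20}$ and $\bbQ_{21}$, which you expect to need new wild subcategories, die at once. Loops at $1$ and $5$ are excluded by Corollary~\ref{nregloops} or Remark~\ref{loops:rk0}, and the arrow $1\to5$ then contradicts Lemma~\ref{lem:3.2}. $\bbQ_8$ falls to Proposition~\ref{prop:biser} exactly like $\bbQ_9$.

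As it stands, the proposal names the right toolbox but leaves the proof undone, and the two genuinely hard cases are unaddressed.
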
 

\begin{proof} Assume that $\La=KQ/I$ is a GQT algebra with shadow $\bbQ=Q^\times$ not in $\bA$. First, we can 
quickly exclude the empty shadow $\bbQ=\bbQ_{15}$, because $n\geqslant 4$ (see Corollary \ref{coro1}). It is also clear that 
$\bbQ$ cannot be $\bbQ_1,\bbQ_2,\bbQ_3$, since then $Q$ admits a forbidden $(1,2)$-vertex (see Corollary \ref{12wD}). 

We shall now present a case-by-case analysis of $Q$, excluding all other shadows from $\bE(5)$ not 
contained in $\bA$. \smallskip 

We claim first that algebras $\La=KQ/I$ with shadow $\bbQ=\bbQ_5,\bbQ_6, \bbQ_{18}$ or $\bbQ_{19}$ are wild. 
Indeed, these shadows contain a vertex $i$ which is a target of three arrows starting at different vertices 
$i_1,i_2,i_3$ ($i=2$ for $\bbQ_5$ and $i=1$ for the remaining cases). Moreover,  $i_1,i_2,i_3$ are $2$-regular in 
$\bbQ=\bbQ_5$ or $\bbQ_6$ and lay on a cycle $i_1 \to i_2 \to i_3 \to i_1$ of length $3$. In this case we have 
the following wild subcategory in covering
$$ \xymatrix@R=0.45cm@C=0.45cm{&& i_3 &&& \\ && \ar[u] i_2 \ar[d]&&& \\ 
i_2 & \ar[l] i_1 \ar[r] & i & \ar[l] i_3 \ar[r] & i_1 & \ar[l] k }$$ 
for some vertex $k \neq i_3$ in $\bbQ$. Cases $\bbQ=\bbQ_{18}$ or $\bbQ_{19}$ are analogous, we have a wild 
subcategory in covering of the form
$$\xymatrix@R=0.18cm@C=0.4cm{& i_3\ar[ld]& &\\ 
1 & & i_2\ar@{-}[ld] \ar@{-}[ul]\ar[r] & 1\\ & i_1\ar[lu] & & }$$
where $\xymatrix@C=0.25cm{i_1 \ar@{-}[r] & i_2 \ar@{-}[r] & i_3 }$ replaces $\xymatrix@C=0.4cm{4 \ar[r] & 2 & 3 \ar[l] }$ 
for $\bbQ_{18}$ and $\xymatrix@C=0.4cm{4  & 3 \ar[r] \ar[l] & 2 }$ for $\bbQ_{19}$. Note that in both cases, 
any path of length $2$ is not involved in a minimal relation of $I$, because of the shape of $Q^\circ$ and 
Lemma \ref{lem:3.3} (see also Remark \ref{noedge}). \smallskip 

Note here that for any $j\neq 19,22$, the quiver $Q$ with  $\bbQ=\bbQ_j\notin\bA$ satisfies $E=\emptyset$, since 
there is no admissible configuration of vertices satisfying the rules (R1)-(R4) in all these cases. In particular, 
it means that then $Q$ is obtained from $Q^\circ=\bbQ$ by attaching loops. \medskip 

Assume now that $\La=KQ/I$ has $\bbQ_7$ as a shadow. It follows from Corollary \ref{nregloops} that there are no 
loops at $1$ and $5$, hence an arrow $1\to 5$, call it $\alpha$, satisfies $1^+=\{\alpha\}=5^-$, which gives 
a contradiction with Lemma \ref{lem:3.2}. The same arguments work for $\bbQ=\bbQ_{20}$. In case $\bbQ_{21}$, 
we have no loops at $1$ and $5$, due to Remark \ref{loops:rk0}. \smallskip 

Next, consider algebras with shadow $\bbQ=\bbQ_8$ or $\bbQ_9$. We show that in this case $Q$ is biserial. 
Indeed, it follows from Corollary \ref{nregloops}, that in both cases there are no loops at vertices $1$ and $3$ 
(i.e. the unique non-regular vertices in $\bbQ$), and there is no loop  in $Q$ at $2$-regular vertex $2$ of 
$\bbQ_8$, by Lemma \ref{lem:loops}. Nevertheless, there is a loop in $Q$ at each $1$-regular vertex of $\bbQ_8$, and 
in case $\bbQ_9$, there is a loop at $5$ and at least one loop at $4$ or $2$. Otherwise, we would get an arrow 
forbidden in Lemma \ref{lem:3.2}. As a result, in both cases the quiver $Q$ is biserial, but some of the 
non-regular vertices does not satisfy the property from Proposition \ref{prop:biser} (i.e. a $(1,2)$-vertex 
has no $1$-regular successors or a $(2,1)$-vertex has no $1$-regular predecessors). \smallskip 

Next, consider $\La$ with shadows $\bbQ_{10}$ or $\bbQ_{12}$. Following Lemma \ref{lem:3.2}, there must be a 
loop in $Q$ at each vertex of $Q^{\circ}=\bbQ_{12}$ (then $p_1=\dots=p_5$), whereas for $Q^{\circ}=\bbQ_{10}$, 
there are no loops in $Q$, by Lemma \ref{lem:loops}. It follows that $Q$ is a $2$-regular quiver in both cases. 
Now, applying \cite[Main Theorem]{AGQT}, we conclude that $\La$ is a weighted surface algebra. In particular, there 
is a permutation $f$ on the set of arrows $Q_1$, such that $(Q,f)$ is a triangulation quiver. In other words, 
$Q$ is obtained by glueing finite number of cycles of length $3$ or loops. But this is impossible, since $\bbQ_{10}$ 
has no loops and $10$ arrows and $\bbQ_{12}$ has a cycle of length $5$. \smallskip 

Eventually, it remains to exclude $\bbQ=\bbQ_{22}$, because it contains a block forbidden in Proposition 
\ref{blockQ5}, and the proof is now complete. \end{proof} \medskip 

Now, we will reconstruct all possible Gabriel quivers of GQT algebras, whose shadows are the 
remaining ten shadows in $\bA$. Namely, we prove the following proposition. 

\begin{prop}\label{prop:Ga5} If $\La$ is a GQT algebra such that $Q=Q_\La$ has $n=5$ vertices, then $Q$ is one of 
the following $19$ quivers (up to permutation of vertices and taking $Q^{\op}$).  
$$\xymatrix@R=0.2cm@C=1.2cm{&2\ar@<-0.4ex>[dd]\ar@<+0.4ex>[dd]& \\ 4\ar[ru] \ar@/_7pt/[rdddd] && 5 \ar[lu] \ar@/^30pt/[ll]\\ 
& 1 \ar[lu] \ar[ru] &  \\ && \\ && \\ & 3 \ar@(ld,rd)[] \ar@/_7pt/[ruuuu] & } \qquad 
\xymatrix@R=0.2cm{&&\\ \ar@(dl,ul)[]4\ar[rr]&& 2\ar[ldd]\ar@(dr,ur)[]\\ && \\ &1\ar[rdd]\ar[luu]& \\ && \\ 
\ar@(dl,ul)[]3\ar[ruu] && 5\ar[ll]\ar@(dr,ur)[] } \qquad \qquad 
\xymatrix@R=0.2cm@C=0.8cm{ &2\ar[ld] \ar@{.>}@<-0.4ex>[dd]&& \\ 
\ar@(ul,dl)[] 1 \ar[rd] &&3\ar[lu]\ar@<+0.4ex>[r] & \ar@<+0.4ex>[l] 4 \ar@{.>}@(ur,dr)[] \\ 
&5 \ar[ru]\ar@{.>}@<-0.4ex>[uu]&& \\ &&&\\ && 1 \ar[ldd]\ar@(lu,ru)[] & \\ &&& \\  
\ar@{.>}@(ul,dl)[] 3 \ar@<+0.4ex>[r] & \ar@<+0.4ex>[l] 5 \ar[r] & 
2 \ar[uu] \ar@<+0.4ex>[r] & \ar@<+0.4ex>[l] 4 \ar@{.>}@(ur,dr)[] \\  }$$ 
$$\xymatrix@R=0.2cm@C=0.8cm{&&& \\ &&& \\ &2 \ar@<-0.4ex>[dd]\ar@<+0.4ex>[dd] && \\ 
\ar@(ul,dl)[] 5 \ar[ru] && 4 \ar[lu]\ar@<+0.4ex>[r] & \ar@<+0.4ex>[l] 3 \ar@{.>}@(ur,dr)[] \\ 
& 1 \ar[lu]\ar[ru]&& } \qquad 
\xymatrix@R=0.2cm@C=1.2cm{&3 \ar[rd]& \\ 
1\ar[rd]\ar[ru] \ar@/_7pt/[rdddd] && 2 \ar@/^30pt/[ll] \ar[ll]\\ 
& 4 \ar[ru] &  \\ && \\ && \\ & 5 \ar@(ld,rd)[] \ar@/_7pt/[ruuuu] & } \qquad 
\xymatrix@R=0.15cm@C=1.2cm{&2 \ar[rd]& \\ 
4\ar[rdd]\ar[ru] \ar@<-0.4ex>@{.>}[rr]  && 1 \ar[ldddd]\ar@<-0.4ex>@{.>}[ll] \\ 
&& \\ & 3 \ar[ruu] &  \\  && \\ & 5 \ar@(ld,rd)[] \ar[luuuu] & }\vspace*{0.3cm}$$ 
$$\xymatrix@R=0.2cm@C=0.8cm{ &&& \\ &2\ar[ld] && \\ 
\ar@(ul,dl)[] 1 \ar[rr] && 5 \ar[lu]\ar@<+0.4ex>[r] \ar[ld] & \ar@<+0.4ex>[l] 4 \ar@{.>}@(ur,dr)[] \\ 
& 3 \ar[lu]&& } \qquad \qquad
\xymatrix@R=0.7cm{ 4 \ar@/_30pt/[rdd] && \ar[ll] 3 \ar[lld] \\ 
1 \ar[rd] && 2 \ar[ll] \ar[llu] \\ 
& 5 \ar@(ld,rd)[] \ar[ru] \ar@/_30pt/[ruu] & } \qquad 
\xymatrix@R=0.16cm@C=1.2cm{& 2 \ar[ld] \ar@{.>}@<-0.4ex>[dd] & \\ 
1 \ar[rd] \ar@/_7pt/[rdddd] && 3 \ar[lu] \ar@/^30pt/[ll]\\ & 4  \ar[ru] \ar@{.>}@<-0.4ex>[uu] &  \\ && 
\\ && \\ & 5 \ar@(ld,rd)[] \ar@/_7pt/[ruuuu] & } \vspace*{0.4cm}$$ 
The dotted loops or $2$-cycles encode few possible quivers, with or without dotted arrows. 
\end{prop}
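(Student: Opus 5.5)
By the preceding lemma, $Q^\times$ is one of the ten shadows in $\bA=\{\bbQ_4,\bbQ_{11},\bbQ_{13},\bbQ_{14},\bbQ_{16},\bbQ_{17},\bbQ_{23},\bbQ_{24},\bbQ_{25},\bbQ_{26}\}$. The plan is to run, for each of these shadows in turn, the three-step reconstruction procedure described after Theorem \ref{thm:recon}. Step one: list all admissible sets $E$ of disjoint $2$-cycles allowed by Theorem \ref{thm:recon}(b) and the rules (R1)--(R4), using Remark \ref{noedge}. Step two: decide which vertices of $Q^\circ=\bbQ\sqcup E$ may carry loops, using Remark \ref{loops:rk0}, Lemma \ref{lem:loops} and Corollaries \ref{rmkloops}, \ref{nregloops}. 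Step three: decide which vertices must carry loops, using Lemmas \ref{lem:3.1}, \ref{lem:3.2} and the dimension identities $p_i^-=p_i^+$. Wherever the resulting candidate is biserial, it is further tested against Proposition \ref{prop:typeN}, Lemma \ref{precnonreg}, Proposition \ref{prop:biser} and Theorem \ref{bireg1}. This mirrors exactly the case $n=4$, and each surviving quiver should be one of the $19$ listed.

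In step one, the constraint from (R2) is decisive: when $n\le 5$, no $2$-cycle can join two $2$-vertices. Hence $2$-cycles arise only in three ways: a $1$-vertex of $\bbQ$ joined to an isolated vertex, two $1$-vertices in a configuration as in (R4), or a $(1,2)$/$(2,1)$-pair in a $V_3$-type block as in (R3). For shadows with an isolated vertex ($\bbQ_{13},\bbQ_{14},\bbQ_{24}$, and $\bbQ_{16},\bbQ_{17}$ where vertex $3$ or $4$ is isolated), connectedness forces at least one $2$-cycle to that vertex. The (R3)/(R4) rules then determine how many more are possible; these options produce the dotted $2$-cycles in the list.

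In steps two and three, loops at $2$-vertices with a $2$-regular neighbour are killed by Lemma \ref{lem:loops}. Loops at non-regular vertices are killed by Corollary \ref{nregloops}. Lemma \ref{lem:3.2} forces loops at vertices whose unique outgoing arrow ends at a vertex with a unique incoming arrow. Lemma \ref{lem:3.1} then decides the borderline cases, exactly as for $\bbQ_2$ with $n=3$ and $\bbQ_6$ with $n=4$: if a loop is missing, the equations $p_i^-=p_i^+$ typically force $\hat p_i=p_i$ somewhere. Isolated vertices attached by a single $2$-cycle behave like the end vertex in Corollary \ref{coro1}, which gives the optional dotted loop there. When a candidate turns out to be biregular, Theorem \ref{bireg1} is used to require that every $1$-vertex lies in a block $V_1$ or $V_2$; this removes configurations with too few loops.

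The main obstacle is the residual candidates that pass all these combinatorial tests but do not appear in the list. For these I would first try to exhibit a wild subcategory in covering (a $\wt{\wt{\bD}}_n$- or $\wt{\wt{\bE}}$-type tree), as was done for $\bbQ_5$ and $\bbQ_6$ when $n=4$. This requires checking, via Lemmas \ref{lem:3.3} and \ref{lem:3.4}, that the relevant length-two paths are not involved in minimal relations. If the candidate is $2$-regular, I would instead invoke \cite[Main Theorem]{AGQT}: the algebra must then be a weighted surface algebra, and I would check that the quiver is not a triangulation quiver. The delicate shadows are likely $\bbQ_{25}$ and $\bbQ_{26}$, where the number of loops at the $1$-vertices must be pinned down precisely. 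Throughout, the identification of each survivor with the drawn picture is only up to permutation of vertices and passing to $Q^{\op}$.
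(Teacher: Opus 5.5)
The overall plan is the paper's: a case-by-case reconstruction over the ten shadows in $\bA$, using the same toolbox. It has a genuine gap, however, exactly where the paper has to leave that toolbox. Namely, you give no way to force the loop at the $1$-vertex outside the type IV block in $Q^{(12)}$ (shadow $\bbQ_{17}$) and in $Q^{(14)}$ (shadow $\bbQ_{23}$ plus the $2$-cycle between its non-regular vertices).

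The loopless candidates pass every test you list:
\begin{itemize}
\item The two neighbours of that $1$-vertex are a $(2,3)$- and a $(3,2)$-vertex, so Lemma \ref{lem:3.2} is not triggered.
\item The equations $p_i^-=p_i^+$ admit symmetric solutions with $\hat p_i\neq p_i$ everywhere, so Lemma \ref{lem:3.1} is silent.
\item The quivers are neither biserial, biregular nor $2$-regular, so Propositions \ref{prop:typeN}, \ref{prop:biser}, Theorem \ref{bireg1} and \cite{AGQT} do not apply.
\item The paper exhibits no wild subcategory here.
\end{itemize}
The paper obtains these loops only from the mutation argument of Propositions \ref{prop:6.1}--\ref{prop:6.2}. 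Mutating at the $1$-vertex $c$ of the type IV block gives a TSP4 algebra $\mu_c(\La)$ whose Gabriel quiver is biregular with shadow $\bbQ_{26}$. There Theorem \ref{bireg1} forces a loop, and loops away from $c$ are unchanged by $\mu_c$. This works only because derived equivalence preserves periodicity of the algebra, i.e.\ TSP4; it need not preserve the GQT property. The paper says explicitly that for GQT algebras it only knows $Q=Q^{(12)}$ or $Q=\bbQ_{17}$. So your purely local argument would need an idea the paper does not have, or else you must import the mutation step together with the TSP4 hypothesis. (For $Q^{(13)}$ the loop does follow from Lemma \ref{lem:3.2}. Without it, the unique arrow leaving the $(2,1)$-vertex would be the unique arrow entering the $1$-vertex.)

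Your step one is also wrong as stated. The three ways you list for a $2$-cycle to occur omit a non-regular vertex of $\bbQ$ joined to an isolated vertex, which (R3) explicitly permits. This is precisely the configuration that produces $Q^{(15)},Q^{(16)}$. For $\bbQ_{24}$ the paper shows that joining the isolated vertex $4$ to a $1$-vertex yields a wild subcategory, using Proposition \ref{prop:typeN}. Hence the $2$-cycle must go to the $(1,2)$-vertex $5$, or dually to the $(2,1)$-vertex $1$. After that, loops at $2$ and $3$ are excluded by wild subcategories, and the loop at $1$ is forced by Lemma \ref{lem:3.1}. With your enumeration you would discard $\bbQ_{24}$ altogether on the basis of a false case distinction.

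Two smaller points:
\begin{itemize}
\item $\bbQ_{17}$ has no isolated vertex; it is a triangle glued to a type IV block.
\item The delicate shadows are $\bbQ_{17}$, $\bbQ_{23}$ and $\bbQ_{24}$, not $\bbQ_{25}$ and $\bbQ_{26}$. The latter two are settled immediately by Lemma \ref{lem:3.1} and Theorem \ref{bireg1} respectively.
\end{itemize}
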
 

\begin{proof} Fix a GQT algebra with $Q=Q_\La$ having $n=5$ vertices, and let $\bbQ=\bfQ_{\bS_\La}$ be 
the quiver identified with its shadow. It follows from the previous lemma that $\bbQ=Q^\times$ 
(up to permutation or taking the opposite quiver) is one of the quivers in $\bA$. We denote by 
$Q^{(1)},\dots,Q^{(19)}$, the 19 quivers obtained from the 10 quivers in the statement by adding 
or deleting dotted loops or $2$-cycles, ordered with respect to the number of added loops and $2$-cycles,  
where quivers with a single loop added come before those enlarged with a single $2$-cycle. \medskip 

In all cases we follow general strategy: we first describe where $2$-cycles can appear, 
i.e. we determine $2$-cycles in $E$ with $Q^\circ=\bbQ\sqcup E$, and then we describe 
which vertices of $Q^\circ$ admit a loop in $Q$ (or may admit). \medskip 

Suppose first that $\bbQ=\bbQ_4$. Then $\bbQ$ is a biregular quiver (with unique $1$-regular vertex 
$3$) of the form 
$$\xymatrix@R=0.2cm@C=1.2cm{&2\ar@<-0.4ex>[dd]\ar@<+0.4ex>[dd]& \\ 4\ar[ru] \ar@/_7pt/[rdddd] && 5 \ar[lu] \ar@/^30pt/[ll]\\ 
& 1 \ar[lu] \ar[ru] &  \\ && \\ && \\ & 3 \ar@/_7pt/[ruuuu] & } $$ 
In this case, it follows from the rules of Theorem \ref{thm:recon}, that $E$ is empty, or 
equivalently, $Q$ has no $2$-cycles. It means that $Q$ is obtained from the quiver $\bbQ=Q^\circ$ 
by adding loops. We know from Lemma \ref{lem:loops} that there are no loops at $2$-vertices 
of $\bbQ$, hence we can add at most one loop at vertex $3$. Now, it is sufficient to see 
that there must be a loop at $3$, because otherwise, $Q$ is a biregular quiver with a 
$1$-vertex which does not belong to a required block (see Theorem \ref{bireg1}). Therefore, 
we proved that there is only one $Q$ with $\bbQ=\bbQ_4$, and it is $Q^{(1)}$. \medskip 

Next, we claim that the unique $Q$ with $\bbQ=\bbQ_{11}$ is the quiver $Q^{(2)}$ obtained 
from $\bbQ=Q^\circ$  
$$\xymatrix@R=0.2cm{ 4\ar[rr]&& 2\ar[ldd] \\ && \\ &1\ar[rdd]\ar[luu]& \\ && \\ 
3\ar[ruu] && 5\ar[ll] }$$ 
by adding four loops at all $1$-vertices. Indeed, it is easy to check that $E=\emptyset$, by the rule (R4), so 
$Q$ is obtained from $Q^\circ=\bbQ$ by adding up to five loops. First, observe that there 
must be a loop at each $1$-vertex of $\bbQ=Q^\circ$. Indeed, all dimension vectors of indecomposable projective 
$\La$-modules are equal $p_1=\dots=p_5$ (because $p_i^-=p_i^+$, for all $1$-vertices $i$ in $Q$), and 
every $1$-vertex $i$ admits $1$ as a predecessor or successor. But then, for all $i\neq 1$, we have 
$\hat{p}_i=p_1+\dots=p_i+\dots \neq p_i$, by Lemma \ref{lem:3.1}, so there must be at least one more 
arrow in $Q$ starting from $i$ and different from an arrow in $\bbQ$. This can be only a loop, which shows the claim. \smallskip 

Now, it remains to see that each successor/predecessor of vertex $1$ is $2$-regular in $Q$ (admits a loop), 
so it follows from Lemma \ref{lem:loops} that there is no loop at vertex $1$.  

\medskip 

Assume now that $\bbQ=\bbQ_{13}$ is of the form  
$$\xymatrix@R=0.2cm@C=0.8cm{ &2\ar[ld] && \\  1 \ar[rd] &&3\ar[lu] &  4  \\ &5 \ar[ru]&&}$$ 
Here, we must have at least one $2$-cycle in $E$ connecting the isolated vertex $4$ with one of the 
remaining $1$-vertices in the square, because $Q$ is connected by the assumption. Up to permutation, 
we can assume there is a $2$-cycle $3\leftrightarrows 4$ in $E$. Moreover, 
there may be at most one additional $2$-cycle in $E$ connecting vertices $2$ and $5$, because this is 
the only possibility allowed by rule (R4). As a result, we proved that $Q^\circ=\bbQ\sqcup E$ has the 
following form 
$$\xymatrix@R=0.2cm@C=0.8cm{ &2\ar[ld] \ar@{.>}@<-0.4ex>[dd]&& \\ 
 1 \ar[rd] &&3\ar[lu]\ar@<+0.4ex>[r] & \ar@<+0.4ex>[l] 4 \\ 
&5 \ar[ru]\ar@{.>}@<-0.4ex>[uu]&&}$$ 
which give two possible quivers, depending on the second $2$-cycle. \smallskip 

If the number $e$ of $2$-cycles in $E$ is $2$, then using Lemma \ref{lem:loops} we infer that there are no 
loops at vertices $2,3$ and $5$. Moreover, then $Q$ is a biregular quiver obtained from $Q^\circ$ by adding 
at most two loops (at vertex $1$ or $4$). Note finally, that there must be a loop at vertex $1$, since 
otherwise $Q$ admits a $1$-vertex not lying in a block (of $Q$) required by Theorem \ref{bireg1} 
($Q$ is biregular). \smallskip 

Further, let $e=1$, that is, $Q^\circ$ contains one $2$-cycle. In this case, we have no loop at 
vertex $3$, by Corollary \ref{rmkloops} (path of length $2$ passing through a $1$-vertex is not involved 
in a minimal relation, by Lemma \ref{lem:3.3}). Hence 
$Q$ is a biregular quiver, and we can use Theorem \ref{bireg1} again, to get a loop at vertex $1$. 
The same argument shows that there cannot be exactly one loop at one of vertices $2$ or $5$. 
Consequently, there are either no loops in $Q$ at vertices $2$ and $5$, or we have two loops. 
The latter is impossible, because then $Q$ is $2$-regular, but not a triangulation quiver 
(see \cite[Theorem 6.1]{AGQT}). Therefore, we have proven that $Q$ has the required form 
$$\xymatrix@R=0.2cm@C=0.8cm{ & 2 \ar[ld] \ar@{.>}@<-0.4ex>[dd]&& \\ 
\ar@(ul,dl)[] 1  \ar[rd] && 3 \ar[lu]\ar@<+0.4ex>[r] & \ar@<+0.4ex>[l] 4 \ar@{.>}@(ur,dr)[] \\ 
& 5 \ar[ru]\ar@{.>}@<-0.4ex>[uu]&&}$$ 
which gives the next four quivers $Q^{(3)},\dots,Q^{(6)}$. These are all possible Gabriel quivers 
of GQT algebras with shadow $\bbQ_{13}$.  \medskip 

Consider next the case $\bbQ=\bbQ_{14}$, when $\bbQ$ is a disjoint union of a triangle (supported 
on vertices $1,2,5$) and two isolated vertices $3,4$. Since $Q$ is connected, we conclude from 
Theorem \ref{thm:recon} that $Q^\circ$ is obtained from $\bbQ$ by adding two $2$-cycles connecting the 
isolated vertices, so up to permutation, we can assume that $Q^\circ$ has the form 
$$\xymatrix@R=0.2cm@C=0.8cm{&&1\ar[ldd] & \\ &&& \\  
3 \ar@<+0.4ex>[r] & \ar@<+0.4ex>[l] 5 \ar[r] & 
2\ar[uu] \ar@<+0.4ex>[r] & \ar@<+0.4ex>[l] 4 \\  }$$  
It remains to see that there must be a loop at vertex $1$. Otherwise, we have $p_1^+=p_5$, but the identity 
$p_2^-=p_2^+$ gives $p_1=p_5$, so we would get $p_1^+=p_1$, which contradicts Lemma \ref{lem:3.1}. This 
proves that $Q$ is one of the three quivers represented by 
$$\xymatrix@R=0.2cm@C=0.8cm{&&&\\ && 1\ar[ldd]\ar@(lu,ru)[] & \\ &&& \\  
\ar@{.>}@(ul,dl)[] 3 \ar@<+0.4ex>[r] & \ar@<+0.4ex>[l] 5 \ar[r] & 
2 \ar[uu] \ar@<+0.4ex>[r] & \ar@<+0.4ex>[l] 4 \ar@{.>}@(ur,dr)[] \\  } \vspace*{0.2cm}$$ 
These are the quivers $Q^{(7)},Q^{(8)},Q^{(9)}$ with shadow $\bbQ_{14}$. Note that we do not need to 
consider two quivers with one loop as different cases, since they are opposite to each other. \medskip 

In the next step, suppose $Q$ has shadow $\bbQ=\bbQ_{16}$ 
$$\xymatrix@R=0.2cm@C=0.8cm{ &2 \ar@<-0.4ex>[dd]\ar@<+0.4ex>[dd] && \\ 
 5 \ar[ru] &&4\ar[lu] & 3 \\ 
& 1 \ar[lu]\ar[ru]&& } $$ 
Then the isolated vertex $3$ must be connected to one of $1$-regular vertices, according to the rule (R1), and 
$Q^\circ$ has no more $2$-cycles, by (R4). We may assume that there is a $2$-cycle 
$3\leftrightarrows 4$, and then $Q$ is biregular quiver without loops at vertices $1,2,4$ (see also 
Lemma \ref{lem:loops}). Then, due to Theorem \ref{bireg1}, vertex $5$ cannot be a $1$-vertex (in $Q$), 
hence we have a loop at $5$, and therefore, quiver $Q$ is one of the quivers $Q^{(10)},Q^{(11)}$, given as follows 
$$\xymatrix@R=0.2cm@C=0.8cm{ & 2 \ar@<-0.4ex>[dd]\ar@<+0.4ex>[dd] && \\ 
\ar@(ul,dl)[] 5 \ar[ru] && 4 \ar[lu]\ar@<+0.4ex>[r] & \ar@<+0.4ex>[l] 3 \ar@{.>}@(ur,dr)[] \\ 
& 1 \ar[lu]\ar[ru]&& } $$ 

Now, observe that $\bbQ_{17}$ is a glueing of a triangle $(5\to 2\to 1\to 5)$ with a block of type IV 
(consisting of the remaining arrows). It has no block 
of type V$_2$ and $|1^+|=|2^-|=3$, hence $Q$ with $\bbQ=\bbQ_{17}$ is the quiver obtained from $\bbQ=Q^\circ$  
by adding at most three loops at vertices $3,4$ or $5$. Moreover, we cannot have more than one loop, say at $4$ 
and $5$, since then we would get a wild subcategory of the form 
$$\xymatrix@R=0.4cm{1 \ar[r]\ar[rd] & 4 & \ar[l] 4 \ar[r] & 2 & \ar[l] 3 \\ & 5 & \ar[l] 5 \ar[ru] &&}$$ 
Therefore, $Q$ is obtained from $\bbQ$, by adding at most one loop, say at $5$. It will follow from the proof of 
Proposition \ref{prop:6.1} that there must be a loop, so $Q=Q^{(12)}$, if $Q$ is a Gabriel quiver of a TSP4 algebra, 
hence we skip the arguments here. For GQT algebras we only know that $Q=Q^{(12)}$, or $\bbQ_{17}$, which 
is $Q^{(12)}$ without the loop. \medskip 

Further, consider quivers $Q$ with shadow $\bbQ=\bbQ_{23}$. If $Q^\circ=\bbQ\sqcup E$ contains a $2$-cycle 
connecting the $(1,2)$- and the $(2,1)$-vertex of $\bbQ$, then it is the unique $2$-cycle and $Q^\circ$ is a block with unique 
outlet $\circ=5$, by The Reconstruction Theorem, part (b) (see also rule (R3) after). In this case, $Q=Q^{(14)}$ 
modulo possible loop at vertex $5$. If there is no $2$-cycle between the non-regular vertices in $\bbQ$, then 
$Q^\circ=\bbQ$, i.e. we have no $2$-cycles. Indeed, otherwise Theorem \ref{thm:recon} implies that there is a 
unique $2$-cycle connecting a pair of $1$-vertices forming a block of type V$_1$ in $\bbQ$, say $3 \leftrightarrows 5$. 
In this case, paths of length $2$ passing through vertices $2$ and $3$ are not involved in minimal relations 
(by Lemma \ref{lem:3.3}), hence it follows from Corollary \ref{nregloops}, that we cannot have a loop in $Q$ at 
vertices $1,4$. Applying Lemma \ref{lem:loops}, we conclude that there are no loops at vertices $3,5$, so $Q$ has 
at most one loop at vertex $2$. But then $Q$ is biserial, so both non-regular vertices $1,4$ are of type N, 
due to Proposition \ref{prop:typeN}. This gives the following wild subcategory 
$$\xymatrix@R=0.4cm{&&2&&&2& \\ 3 &\ar[l] 5 \ar[r] & 4 \ar[u]\ar[r] & 3 & \ar[l] 5\ar[r] & 4\ar[u]\ar[r]& 3}$$ 

As a result, if $Q$ has shadow $\bbQ=\bbQ_{23}$, then it is one of the quivers $Q^{(13)}$ or $Q^{(14)}$, 
except the loop. As in case of $Q^{(12)}$, the existence of a loop at vertex $5$ follows from the arguments presented 
in the proofs of Propositions \ref{prop:6.1} and \ref{prop:6.2}. \medskip 

Finally, it remains to consider the quivers $Q$ with shadow $\bbQ=\bbQ_{24},\bbQ_{25}$ or $\bbQ_{26}$. 

Let $\bbQ=\bbQ_{24}$. We have no block of type V$_2$ in $\bbQ$, so by Theorem \ref{thm:recon}(b), we have exactly one 
$2$-cycle in $Q$ connecting the isolated vertex $4$ of $\bbQ$ with one of the remaining vertices. We claim that 
the $2$-cycle connects vertex $4$ with one of the non-regular vertices $1$ or $5$. Indeed, we cannot have a 
$2$-cycle connecting vertex $4$ with one of the $1$-vertices of $\bbQ$, say $4\leftrightarrows 3$, since then 
$Q^\circ$ is biserial, and we have no loops at non-regular vertices of $Q^\circ$, due to Corollary \ref{nregloops} 
($3$ is a $2$-regular predecessor of $1$ and successor of $5$). It follows that $Q$ is obtained from 
$Q^\circ=\bbQ\sqcup\{4\leftrightarrows 3\}$ by adding at most two loops at vertices $4$ or $2$ (no loop at $3$ 
by Corollary \ref{rmkloops}), and then also $Q$ is biserial. As a result, it follows from Proposition \ref{prop:typeN} 
that $1$ and $5$ are vertices of type N, and hence we have the following wild subcategory in covering 
$$\xymatrix@R=0.4cm{& 2 &&&&2\ar[d]&& \\ 1\ar[r] & 5 \ar[u]\ar[r] & 3 & \ar[l] 4 & \ar[l] 3 \ar[r] & 1\ar[r] & 5 \ar[r] & 3}$$ 
Note that the paths $3\to 4\to 3$, $3\to 1\to 5\to 3$ and $2\to 1\to 5 \to 3$ are not involved in minimal relations, 
by Lemmas \ref{lem:3.3} and \cite[Lemma 4.5]{EHS1}, since there are no loops at $3$ and no arrows $3\to 2$. Therefore, 
we have a $2$-cycle $4\leftrightarrows i$, for $i=1$ or $5$. The first case gives an opposite quiver $Q^\circ$ (modulo 
permutation), so we can restrict to the case $i=5$. Then the quiver $Q$ is one of $Q^{(15)}$ or $Q^{(16)}$. To see this, 
it suffices to show that there are no loops at vertices $2$ and $3$, but there is a loop at vertex $1$. Indeed, a loop at vertex 
$3$ induces the following wild subcategory 
$$\xymatrix@R=0.4cm{ &&&& 2&&& \\ 2\ar[r] & 1 & \ar[l] 3\ar[r] & 3 & \ar[l] 5 \ar[u] \ar[r] & 4 \ar[r] & 5 &\ar[l] 1}$$ 
Note that the path $5\to 4\to 5$ is not involved in a minimal relation, by Lemma \ref{lem:3.3}, because there is 
no loop at vertex $5$ (it is not at most $2$-vertex of $Q^\circ$; see Remark \ref{loops:rk0}). Similarly, we can 
construct a wild subcategory of the same type, if there is a loop at vertex $2$. Now, observe that there must be 
a loop at vertex $1$, because otherwise, we get $p_1^+=p_5=p_1$, which contradicts Lemma \ref{lem:3.1}. \medskip 

Observe finally that the unique $Q$ with $\bbQ=\bbQ_{25}$ is the quiver $Q^{(17)}$. Indeed, by The Reconstruction 
Theorem, we cannot have any $2$-cycle in $Q$, so $Q^\circ=\bbQ$. We have no loops at non-regular vertices of 
$Q^\circ$, by Corollary \ref{nregloops}, and there must be a loop at vertex $5$, because otherwise, we 
obtain $p_5^+=p_2+p_3=p_1^-=p_1^+=p_5$, a contradiction with Lemma \ref{lem:3.1}. \medskip 

In the last case, we consider quivers $Q$ having $\bbQ=\bbQ_{26}$. Then we can have at most one $2$-cycle 
in $Q$ connecting a pair of $1$-vertices in a block of type V$_2$. Up to permutation, we can assume that there 
is at most one $2$-cycle of the form $4\leftrightarrows 2$, by the rule (R4). Moreover, vertices $1,3$ do not admit a loop, 
due to Lemma \ref{lem:loops}. If there exists a $2$-cycle, then there are no loops at vertices $4,2$, by  
Lemma \ref{lem:loops} again, and there is a loop at vertex $5$, by Theorem \ref{bireg1} (then $Q$ is biregular). Suppose 
eventually that there is no $2$-cycle in $Q$. Then $Q$ is biregular, and it has at most one loop, because otherwise 
we get a $1$-vertex not contained in a block required in Theorem \ref{bireg1}. We cannot have a loop at vertex $2$, 
because then again vertices $4,5$ are $1$-vertices in configuration not allowed by Theorem \ref{bireg1}. Thus we 
have at most one loop at vertices $4$ or $5$, which gives two isomorphic quivers. Depending on the $2$-cycle, 
we obtain the remaining two quivers $Q=Q^{(18)}$ or $Q^{(19)}$, and the proof is now finished. \end{proof}

\section{Algebra structures}\label{sec:algstr} Now, we will describe possible TSP4 algebras on quivers computed in 
Section \ref{sec:6}. First two subsections deal with cases $n=1$ and $2$, all other cases are covered in the 
last subsection. We will show that, for $n\geqslant 3$, all possible algebras except one case are weighted surface 
algebras or their virtual mutations. For one quiver $Q=Q^{(17)}$ in case $n=5$, the algebra given by $Q$ is the 
generalized weighted surface algebra; see Section \ref{subs:3.2}. \medskip

\subsection{Local TSP4 algebras}\label{subs:6.1} A classification of local tame symmetric algebras is given in 
\cite[Theorem III.1]{E90}, where we have 9 families, denoted by (a),(b),(b'),$\dots$,(e),(e'). \smallskip 

The algebras in (e) and (e') have period four. One can easily construct a minimal projective resolution of the simple module. 

All other algebras are either special biserial, or otherwise are hybrid algebras (described as 'of semidihedral type' in 
\cite{E90}). For these algebras we may apply the constructions in section 3 of \cite{EKM}. Namely, \cite[Lemma 3.5]{EKM} 
applies to a local special biserial algebra and shows that the simple module is not periodic. Then by \cite[Lemma 3.7]{EKM}, 
the simple module of a local hybrid algebra is not periodic.

\subsection{TSP4 algebras with two simple modules}\label{subs:6.2} 

We will show that the TSP4 algebras with two simple modules 
are precisely the algebras of quaternion type, described in Section VII.7 of \cite{E90}. Since the proofs in \cite{E90} 
assumed that the Cartan matrix of the algebra is non-singular, we will give a sketch for the proofs. The answer is then

\begin{cor} The following are equivalent for a symmetric algebra $\La$ with two simple modules:\\
(a) $\La$ is a TSP4 algebra, \\
(b) $\La$ is of generalized quaternion type, and is one of the algebras $Q(2\cA)$ or $Q(2\cB)_i$  ($i=1, 2, 3$). 
\end{cor}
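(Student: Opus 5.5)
The plan is to prove (a)$\Rightarrow$(b) by starting from the four possible Gabriel quivers listed in Section \ref{subs:5.1} for GQT algebras with $n=2$ (taken from \cite[Proposition 4.3]{PS1}). Let $\La$ be TSP4 with two simple modules. Then $\La$ is tame symmetric, and so it appears in Donovan's classification \cite{Don} (see also \cite[VI.8, VII.7]{E90}). We also need $\La$ to be of infinite type. For this, note that a symmetric algebra of finite type with two simples has a Brauer tree with two edges, and those algebras are periodic of period dividing $2\cdot$(number of edges); one checks directly that they do not have all simples of period exactly $4$ in the tame-infinite sense, or else one excludes them as the paper does via Remark \ref{der_inf}-style arguments. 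In any case $\La$ is GQT, and its quiver is one of the four quivers listed: the double $2$-cycle, the plain $2$-cycle, the $2$-cycle with two loops, and the $2$-cycle with one loop.

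Next, for each quiver I go through Donovan's families of tame symmetric algebras on it and test $4$-periodicity of $S_1$ and $S_2$. I use the exact sequence $0\to S_i\to P_i\to P_i^-\to P_i^+\to P_i\to S_i\to 0$, and in particular the necessary condition $p_i^-=p_i^+$, i.e. $C\cdot(\text{adjacency})$ balances, where $C$ is the Cartan matrix. I also use Lemmas \ref{lem:3.1}--\ref{lem:3.4}, which hold for all GQT algebras. These conditions should rule out the families of dihedral and semidihedral type. The dihedral ones are special biserial, so the argument of \cite[Lemma 3.5]{EKM} on string combinatorics shows that some simple module has a non-periodic or $\Omega$-orbit of the wrong shape. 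The semidihedral ones are hybrid algebras, and \cite[Lemma 3.7]{EKM} then transfers non-periodicity. The quaternion-type candidates remain: $Q(2\cA)$ on the double $2$-cycle quiver (or the plain $2$-cycle with loops) and $Q(2\cB)_i$ on the quiver with one or two loops. The main point is that the arguments of \cite[VII]{E90} used non-singularity of the Cartan matrix only to derive the relations. I would redo that step using periodicity instead. First I compute the syzygies $\Omega(S_i)=\rad P_i$ and $\Omega^2(S_i)$ explicitly from the presentation. Then I impose $\Omega^3(S_i)\cong\Omega^{-1}(S_i)$, which forces the relations of type (Q) and the zero relations exactly as in $Q(2\cA)$, $Q(2\cB)_i$. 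The degenerate parameter choices, such as $Q(2\cB)_1$ with parameters making the Cartan matrix singular, need a direct resolution check.

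For (b)$\Rightarrow$(a), I note that the algebras $Q(2\cA)$ and $Q(2\cB)_i$ are known to be tame symmetric. I then verify period $4$ as a bimodule. Either they are weighted surface algebras or virtual mutations on the relevant small triangulation quivers, hence TSP4 by \cite{WSA,VM}, or one writes down the standard bimodule resolution $\La\otimes\La$-complex of length $4$ as in \cite{AGQT}.

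I expect the main obstacle to be the singular-Cartan cases in the forward direction. There the argument of \cite{E90} breaks down, and I would have to compute $\Omega^k(S_i)$ explicitly for the families with parameters. The aim is to show that periodicity still forces the standard relations, or else yields a module of period $\ne 4$ or a non-periodic simple.
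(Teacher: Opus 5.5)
\textbf{Gap 1: infinite type cannot be derived.} The corollary is false for finite-type algebras, so no argument can get infinite type from (a). Take the Brauer tree algebra $KQ/\langle\beta\gamma\beta,\gamma\beta\gamma\rangle$ on the plain $2$-cycle. It is symmetric and of finite type, it has $\Omega^2(S_1)\cong S_2$ and $\Omega^2(S_2)\cong S_1$, and it is periodic of period $4$ as a bimodule. Your own observation that two-edge Brauer tree algebras have period dividing $2\cdot 2=4$ points the same way. The algebra $KQ/\langle\alpha\beta,\gamma\alpha,\alpha^m-\beta\gamma\rangle$ on the one-loop quiver, which arises in the lemma of Section~\ref{subs:6.2}, is another finite-type example: the remark after that lemma notes that all its simples have period $4$. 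Remark~\ref{der_inf} only gives $n\leqslant 2$ for such algebras, so it says nothing when $n=2$. Infinite representation type must therefore be a standing hypothesis; it is built into (b), since GQT algebras are representation-infinite. The paper relies on it when it discards $\gamma\alpha\prec I$ and $\beta\gamma\prec I$ on the one-loop quiver because they lead to finite type. Your sentence ``in any case $\La$ is GQT'' cannot be proved and has to be assumed.

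\textbf{Gap 2: the quiver step, and the family-based elimination.} First, the quivers are misassigned: $Q(2\cA)$ is the $2$-cycle with one loop, and the $Q(2\cB)_i$ live on the $2$-cycle with two loops. The double $2$-cycle carries none of the algebras in (b), so it must be \emph{excluded}. The plain $2$-cycle carries only Nakayama algebras and disappears once infinite type is assumed.

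You give no argument against the double $2$-cycle, and tameness plus symmetry do not exclude it (the trivial extension of the Kronecker algebra lives there). The missing idea is the paper's. In $0\to S_1\to P_1\to P_1^-\to P_1^+\to P_1\to S_1\to 0$, both $P_1^-$ and $P_1^+$ are sums of copies of $P_2$. So every minimal relation starting at $1$ has the form $\alpha_1X_1+\alpha_2X_2$ with $X_i\in e_2\La e_2$, and in particular ends at $2$. Hence no path $\alpha_i\beta_j$ is involved in a minimal relation, and one gets a wild subquiver of type $K_2^*$ in a covering. The same argument handles the quiver with a double arrow $1\to 2$ and a single arrow back, if you prefer not to take the list of Section~\ref{subs:5.1} on trust.

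Second, your elimination of the dihedral and semidihedral cases via \cite{EKM} is not available. The paper uses those lemmas only for local algebras. More seriously, sorting $\La$ into Donovan's or Erdmann's families presupposes that $\La$ already appears in those lists. Those lists were obtained under the non-singular Cartan hypothesis, so the difficulty you flag for the quaternion case affects every family.

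The paper's route avoids families altogether. On the one-loop quiver, the sequence $0\to\beta\La\to P_1\to P_1\to\gamma\La\to 0$ for $S_2$ gives a unique minimal relation $\gamma\vf=0$ with $\vf\beta=0$. The finite-type lemma then excludes $\gamma\alpha\prec I$ and $\beta\gamma\prec I$, which forces $\beta\gamma\beta,\gamma\beta\gamma,\alpha^2\prec I$. The Cartan-free part of \cite[VII.7]{E90} then yields $Q(2\cA)$. On the two-loop quiver one follows \cite[VII.7.2--7.3]{E90}, distinguishing whether $e_0\La e_0$ is uniserial. Your step ``impose $\Omega^3(S_i)\cong\Omega^{-1}(S_i)$'' is in this spirit. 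It must, however, be carried out directly on these two quivers, starting from the exact sequences rather than from an assumed presentation, and not after a family-by-family reduction.
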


We note that in \cite{Holm} Thorsten  Holm gives a refined parametrisation for these algebras, and he determined the   derived equivalence classification for these algebras.

\subsubsection{The possible quivers}

Assume $\La$ is a TSP4 algebra with two simple modules,  then $\La  = KQ/I$ where $Q$ is a connected quiver with two vertices. There are four possible quivers, they are determined in  IV.2.2 of \cite{E90}.

They are $Q(2\cA)$ and $Q(2\cB)$  which we will display  below. 
The other two are the quivers with
(i) two pairs of double arrows, and 
(ii) one double arrow $1\to 2$ and a single arrow $2\to 1$.
We will exclude these now.

(1)  Consider the  quiver (i) with two pairs of double arrows
$\alpha_1, \alpha_2: 1\to 2$ and $\beta_1, \beta_2: 2\to 1$. 
One may note  that this quiver occurs for a tame symmetric algebra with radical cube zero.
However, it  cannot occur for a TSP4 algebra. Recall that there are natural identifications 
$\Omega(S_i)= {\rm rad} P_i = \alpha_1\La + \alpha_2\La$ and $\Omega^{-1}(S_i)\cong (\beta_1, \beta_2)\La$. 

Assume for a contradiction that there is a TSP4 algebra with this quiver. We consider the exact sequence for the simple module $S_1$,
$$0\to (\beta_1, \beta_2)\La \to P_2\oplus P_2 \to P_2\oplus P_2 \to \alpha_1\La + \alpha_2\La\to 0.$$
This shows that the minimal relations starting with $\alpha_i$ are of the form 
$\alpha_1X_1 + \alpha_2X_2=0$ and 
$X_i\in \La e_2$ (see Proposition 4.3 in [Alg. of generalized quaternion type=17]. Therefore paths of length two $\alpha_i\beta_j$ are $\nprec I$ and $\La$ is wild, a contradiction.

(2) Assume  $\La$ is a TSP4 algebra where  the quiver has  double arrows $\alpha_1, \alpha_2: 1\to 2$ and one arrow $\beta: 2\to 1$. Using again the exact sequence for $S_1$, the argument as
in (1) shows that the paths $\alpha_i\beta$ are $\nprec I$ and the algebra is wild.

\subsubsection{TSP4 Algebras with quiver $Q(2\cB)$ }  

Assume $\La=KQ/I$ is a TSP4 algebra where $Q$ is the quiver $Q(2\cB)$, that is   
\[
% \xymatrix{
%  \bullet \ar@(dl,ul)[]^{\alpha} \ar@<+.5ex>[r]^{\sigma}
%     \save[] +<0pc,3mm> *{1} \restore
%   & \bullet \ar@<+.5ex>[l]^{\gamma} \ar@(ur,dr)[]^{\beta}
%     \save[] +<0pc,3mm> *{2} \restore
% }
 \xymatrix{
  0 \ar@(dl,ul)[]^{\alpha} \ar@<+.5ex>[r]^{\beta}
   & 1 \ar@<+.5ex>[l]^{\gamma} \ar@(ur,dr)[]^{\eta}
 }
\]

We sketch the strategy used in \cite{E90}. It 
 uses the structure of the local algebra $\La_0:= e_0\La e_0$
at vertex $0$.
Theorem VII.7.2 shows that if $\La_0$ is not uniserial then the algebra
is a WSA, with presentation given in $Q(2\cB)_1$. In particular, 
also the local algebra at vertex $1$ is not uniserial.

This leaves  to consider TSP4 algebras with quiver $Q(2\cB)$ where both local algebras are uniserial. In this case it turns out that
there are many commutativity relations, and the algebra is one of $Q(2\cB)_i$ for $i=1$ or $2$. The details for these are done in VII.7 of \cite{E90} . Both VII.7.2 and VII.7.3 do not use the Cartan matrix, in fact the details are
very similar to those used in \cite{EHS3}. That is one starts with identifying minimal relations of paths of length two, 
using the exact sequences for the simple modules,
then identify bases for the radical quotients, and then getting the presentation. We  refer to \cite{E90} for details.

\subsubsection{TSP4 algebras with quiver  $Q(2\cA)$}

Assume $\La = KQ/I$ is a TSP4 algebra where $Q$ is the quiver  $Q(2\cA)$, that is
\[
% \xymatrix{
%  \bullet \ar@(dl,ul)[]^{\alpha} \ar@<+.5ex>[r]^{\sigma}
%     \save[] +<0pc,3mm> *{1} \restore
%   & \bullet \ar@<+.5ex>[l]^{\gamma} \ar@(ur,dr)[]^{\beta}
%     \save[] +<0pc,3mm> *{2} \restore
% }
 \xymatrix{
  1 \ar@(dl,ul)[]^{\alpha} \ar@<+.5ex>[r]^{\beta}
   & 2 \ar@<+.5ex>[l]^{\gamma} 
 }
\]
To classify such algebras, we need again to find the minimal relations involving paths of length $2$ or $3$.
First,  consider the exact sequence for $S_2$, that is
$$0\to \beta\La \to P_1\to P_1\to \gamma\La \to 0$$
This shows that there is a unique minimal relation
$\gamma\vf=0$ with $\vf\in e_1\La e_1$ and $\vf \beta=0$. In particular
we do not have a minimal relation involving $\gamma\beta$.
The following appears in \cite{E90} without details, we include a proof since it is  (indirectly) relevant for 
\cite{EHS3}, see the remark below.

\bigskip

\begin{lemma} (a) If $\gamma\alpha \prec I$ then $\La$ is of finite type.\\
        (b) If $\beta\gamma \prec I$ then $\La$ is of finite type.
\end{lemma}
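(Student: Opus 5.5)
Here $\La=KQ/I$ with $Q=Q(2\cA)$: a loop $\alpha$ at $1$, arrows $\beta:1\to 2$ and $\gamma:2\to 1$. We already know that there is a unique minimal relation $\gamma\vf=0$ with $\vf\in e_1\La e_1$, and that $\vf\beta=0$. The plan in both (a) and (b) is to show that the relation in question forces the radical of $P_1$ or $P_2$ to be very small, so that $\La$ becomes a Brauer tree (Nakayama-type) algebra with two simples, which is of finite type.

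For (a), assume $\gamma\alpha\prec I$. Every path of length two starting at $2$ is $\gamma\alpha$ or $\gamma\beta$. Since $\gamma\beta\nprec I$, the minimal relation starting at $2$ must be the one $\gamma\vf=0$, and it has the form $\gamma\alpha=\gamma z$ with $z\in J^2$, up to a scalar. After adjusting $\alpha:=\alpha-z$, I will take $\gamma\alpha=0$ in $\La$. Then $e_2\La$ is spanned by $e_2$ and paths beginning $\gamma\beta\cdots$, and $\vf=\alpha+(\text{higher terms})$ satisfies $\vf\beta=0$, so $\alpha\beta\in J^3$ in a suitable sense. Next I use symmetry: $\dim e_2\La e_1=\dim e_1\La e_2$, together with the socle of $P_2$ being spanned by a cycle $(\gamma\beta)^m$. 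From this I want to deduce that $P_2$ is uniserial and $e_1\La e_2=\beta\La e_2$. Dually, $\alpha$ then generates a uniserial part of $P_1$ that meets the $\beta$-part only in the socle. This makes $\La$ special biserial with relations $\gamma\alpha=0=\alpha\beta$ and $\alpha^p=(\beta\gamma)^m$ up to scalars, which is a Brauer graph algebra whose Brauer graph is a tree with two edges. That algebra has finite type. The main obstacle is justifying $\alpha\beta\in\beta\gamma\La$, i.e. eliminating any mixed monomials. I would argue via the exact sequence for $S_1$, using $P_1^-=P_1^+=P_1\oplus P_2$ and the standard description of $d_2$ from \cite[Lemma 3.2]{EHS1}. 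Alternatively, if periodicity is used, I would use $\hat p_i\neq p_i$ (Lemma \ref{lem:3.1}) to bound dimensions.

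For (b), assume $\beta\gamma\prec I$. The minimal relations ending at $1$ come from $d_3$ for $S_1$, so there is a relation $\beta\gamma+\alpha w=0$, up to units and scalars, or $\beta\gamma$ lies in $\alpha\La$. This means $\beta\gamma\in\alpha J$. I then expect, by rotating this relation with symmetry, to get $\gamma\alpha\prec I$ as well, which reduces to (a). If that fails, I would argue directly: $e_1\La e_2$ is spanned by $\alpha^k\beta$, so $P_2$ is uniserial with socle $\gamma\alpha^r\beta$. This again yields a Brauer tree algebra, with the exceptional vertex possibly at the loop, hence finite type. The hard step here is the rotation argument; I would carry it out using the symmetrizing form, via the identity $\lambda(\beta\gamma\cdot x)=\lambda(\gamma x\beta)$.
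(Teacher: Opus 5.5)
There is a genuine gap in (a), in the last step. The algebra you reach, with $\gamma\alpha=0=\alpha\beta$ and $\alpha^p=(\beta\gamma)^m$, is a Brauer graph algebra. Its Brauer graph is a tree with two edges, whose vertices have multiplicities $p$ (the end carrying the loop), $m$ (the middle vertex) and $1$.

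Since $\alpha$ is an arrow, $p\geqslant 2$. So this algebra has finite type only when $m=1$. For $m\geqslant 2$ there are two exceptional vertices and the algebra is representation-infinite; for instance, $\alpha\gamma^{-1}\beta^{-1}$ is then a band.

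Nothing in your argument forces $m=1$. The following all hold for every $m$:
\begin{itemize}
\item the normalisation $\gamma\alpha=0$;
\item the uniseriality of $P_2$;
\item the shape of $\alpha\La$ and $\beta\La$;
\item the Cartan symmetry $\dim e_1\La e_2=\dim e_2\La e_1$ (both equal $m$).
\end{itemize}
Lemma \ref{lem:3.1}, which you propose as a fallback, does not help either. Here $P_1$ and $P_2$ have dimension vectors $[p+m,m]^T$ and $[m,m+1]^T$, and $\hat p_1=p_1+p_2\neq p_1$, $\hat p_2=p_1\neq p_2$ for every $m$.

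Conversely, the step you single out as the main obstacle is not one. After your adjustment the new $\alpha$ is a scalar multiple of $\vf$, so $\alpha\beta=\vf\beta=0$ exactly, and $e_1\La e_2=\beta\La e_2$ is immediate.

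The missing ingredient is the relation $\beta\gamma\prec I$, which you never use in (a). Here is how it closes the gap.
\begin{enumerate}
\item It follows from $\gamma\alpha\prec I$ by Lemma \ref{lem:3.4}, applied to the degenerate triangle formed by $\beta$, $\gamma$ and the loop $\alpha$; the unique arrows $1\to 2$ and $1\to 1$ are $\beta$ and $\alpha$. The same lemma turns $\beta\gamma\prec I$ into $\gamma\alpha\prec I$. This is exactly how the paper reduces (b) to (a), so no rotation via the symmetrising form is needed.
\item Next, $\alpha\La\cap\beta\La=\soc(e_1\La)$. Indeed, if $\beta x=\alpha y$, then $\alpha\beta x=0$ and $\gamma\beta x=\gamma\alpha y=0$, so $\beta x$ lies in the left socle, which is the socle.
\item Write $\beta\gamma=\alpha z_1+\beta z_2$ with $z_2\in e_2J^2e_1$; this is essentially the relation you write down in (b). Then $\beta(\gamma-z_2)\in\soc(e_1\La)$.
\item Replace $\gamma$ by $\gamma-z_2$. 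This keeps $\gamma\alpha=0$, since $z_2$ is a combination of the paths $(\gamma\beta)^k\gamma$ with $k\geqslant 1$. It yields $\beta\gamma\beta=0$.
\item Hence $e_1\La e_2=K\beta$, so by symmetry $e_2\La$ has basis $e_2,\gamma,\gamma\beta$. After rescaling, $I=\langle\alpha\beta,\gamma\alpha,\alpha^p-\beta\gamma\rangle$.
\end{enumerate}
This is the case $m=1$: a Brauer tree algebra with one exceptional vertex, hence of finite type.

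Alternatively, $4$-periodicity of $S_1$ rules out $m\geqslant 2$: then $\Omega^3(S_1)$ has top $S_1\oplus S_1$ and is not a quotient of $P_1$. But your proposal never aims at $m=1$.
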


\begin{proof} \  Part (b) follows from part (a). Namely,
suppose  $\beta\gamma \prec I$. Then by Lemma 4.3, we deduce $\gamma\alpha\prec I$.

(a) Assume now  $\gamma\alpha \prec I$.
Then $\alpha$ must be a term of an element $\vf$ as above, say
$\vf = \alpha + X$
and $X\in e_1J^2e_1$. We may assume that $X=0$, by replacing $\alpha$ by
$\alpha'=\alpha+X$. This means that
$$\gamma\alpha=0, \ \ \alpha\beta =0.
$$
In other words, the algebra $\La$ is special biserial.

 We claim that $\alpha\La \cap \beta\La = {\rm soc}(e_1\La)$: 
One inclusion is clear. For the other direction, let $\beta x = \alpha y$, then $\alpha\beta x=0$ and $\gamma\beta x = \gamma\alpha y =0$, which shows
$J\cdot \beta x=0$ and $\beta x$ is in the left socle, ie in the socle.
We assume $\beta\gamma \prec I$, so we have
$\beta\gamma = \alpha z_1 + \beta z_2$
with $z_2 \in e_2J^2e_1$.
Then
$\beta (\gamma - z_2)$ is in the socle of $e_1\La$. We may replace $\gamma$ and
get $\beta\gamma$ is in the socle.

\bigskip

We can now write down a basis for $e_1\La$ compatible with the radical quotients, that is
$\{ e_1, \alpha, \beta, \alpha^2,  
\ldots, \alpha^m \}$
and $\beta\gamma = \alpha^m$ (possibly after re-scaling).
Recall that the Cartan matrix has entries  $c_{ij}=\dim e_i\La e_j$, and it is
	symmetric.
The basis shows that 
	$c_{12} = 1$ and therefore $c_{21}=1$. It follows that
$e_2\La$ has basis $\{ e_2, \gamma, \gamma\beta\}$.
Moreover $\gamma\beta\gamma=0$ follows.
We have now obtained a presentation for the algebra, as
$KQ/I$ where
$$I = \langle  \alpha\beta, \gamma\alpha, \ \alpha^m-\beta\gamma \rangle$$
As a conclusion, the algebra is special biserial, with no 'bands'.
Hence it is of finite type.
\end{proof}

\begin{rem} \normalfont 
(1) \ This Lemma  is indirectly relevant for  \cite{EHS3}.  When we analyse a block of type
        $V_1$
(with $\ve:x\to y$ and $\eta: y\to x$ where $y$ is a 1-vertex) then we
        say that if $\ve\eta \prec I$ then there must be
        a loop at $x$ and therefore $Q$ has only two vertices. This is
        excluded in  \cite{EHS3}. 
So we should clarify what happens if $\ve\eta\prec I$, this is done by the Lemma.

(2)  The algebra obtained  is one of the symmetric algebras of finite type where
        all simples have period four, as  in \cite{E26}.
\end{rem}

To continue with finding minimal relations, we have the following.
First note that by the previous Lemma we must have $\gamma\alpha \nprec I$ and $\beta\gamma \nprec I$.

\begin{lemma} We have that each of $\beta\gamma\beta$, $\gamma\beta\gamma$ and $\alpha^2$ is $\prec I$. 
\end{lemma}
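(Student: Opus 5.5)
The plan is to use the four-term exact sequences for $S_1$ and $S_2$ together with tameness. Since $\gamma\alpha\nprec I$, $\beta\gamma\nprec I$, and $\gamma\beta\nprec I$, the only length-two path that can still occur in a minimal relation is $\alpha^2$.

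First I treat the vertex $2$. The sequence $0\to \beta\La\to P_1\to P_1\to\gamma\La\to0$ gives a unique minimal relation $\gamma\vf=0$ with $\vf\in e_1\La e_1$ and $\vf\beta=0$. Since $\gamma\alpha\nprec I$, the element $\vf$ has no $\alpha$-term modulo $J^2$, so $\vf\in e_1J^2e_1$. Modulo higher terms, $\vf$ is a combination of $\alpha^2$ and $\beta\gamma$, plus longer paths.

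To see which terms occur, I will argue by cases. Suppose $\alpha^2\nprec I$. Then no path of length two is involved in a minimal relation. I will then exhibit a wild subcategory in the covering, a tree obtained by unfolding the loop $\alpha$ together with $\beta,\gamma$, for instance
$$\xymatrix@R=0.4cm{ & & 2 & & \\ 1\ar[r]^{\alpha} & 1 \ar[r]^{\alpha}\ar[u]^{\beta} & 1 & \ar[l]_{\gamma} 2 & \ar[l]_{\beta} 1}$$
or a slightly longer branch. The aim is a wild hereditary tree of type $\wt{\wt{\bD}}_n$ or $\wt{\wt{\bE}}$. Its wildness needs only that the length-three paths along it are not in minimal relations, which I check using Lemma \ref{lem:3.3} and \cite[Proposition 4.5]{EHS1}. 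This contradiction shows $\alpha^2\prec I$.

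Next I consider $\gamma\beta\gamma$. The element $\vf$ lies in $J^2$ and is orthogonal to $\beta$ on the left. I compare it with the sequence for $S_1$, namely $0\to S_1\to P_1\to P_1\oplus P_2\to P_1\oplus P_2\to P_1\to S_1\to0$. The middle map $d_2$ encodes the minimal relations starting at $1$, so its entries from $P_2$ to $P_2$ and from $P_1$ to $P_2$ must be nonzero in suitable degrees. If $\gamma\beta\gamma\nprec I$ and $\beta\gamma\beta\nprec I$, then every minimal relation would have to start in $\alpha\La$. In that case the second column of $d_2$ could be chosen to involve only $\alpha$-paths. I then expect to derive a contradiction, either through Lemma \ref{lem:exseq} (the kernel would miss the socle of $P_2$) or again by a wild covering: the string $2\to1\to2\to1$ extended by $\alpha$-loops at both ends. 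Hence at least one of $\beta\gamma\beta$ and $\gamma\beta\gamma$ lies in $\prec I$.

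Finally I get both by symmetry. The relation $\gamma\vf=0$ with $\vf$ containing $\beta\gamma$ gives $\gamma\beta\gamma\prec I$. Dually, $\vf\beta=0$ gives $\beta\gamma\beta\prec I$ (after replacing $\alpha^2$-terms). The real point is to show that $\beta\gamma$ actually appears in $\vf$ with nonzero coefficient. If $\vf=\alpha^2+\dots$ without a $\beta\gamma$ term, then $\gamma\alpha^2=\dots$ and $\alpha^2\beta=\dots$ would make the algebra special biserial with a band-free shape, or would force $\gamma\alpha\prec I$, contradicting the previous lemma.

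The main obstacle is this coefficient argument, i.e. ruling out $\vf\in K\alpha^2+J^3$. I would first try to deduce from it, via the Cartan symmetry $c_{12}=c_{21}$ and a basis count of $e_2\La$ as in the previous lemma, that the algebra is of finite type.
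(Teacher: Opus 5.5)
\textbf{Gap: the length-three part of the lemma is not proved.} You reduce the problem correctly. Once $\alpha^2\prec I$, we have $\alpha^2\in J^3$ in $\La$, so $e_1J^2e_1/e_1J^3e_1$ is spanned by $\beta\gamma$ alone. Moreover $\gamma\vf$ and $\vf\beta$ are the minimal relations from $2$ to $1$ and from $1$ to $2$; for the latter, use $\La\vf=\{x\in\La e_1 : x\beta=0\}$, which follows from the double annihilator property. Hence each of $\gamma\beta\gamma\prec I$ and $\beta\gamma\beta\prec I$ is equivalent to $\vf\notin e_1J^3e_1$.

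Your proposal leaves exactly this open. The middle paragraph only expects a contradiction. Its premise, that every minimal relation would lie in $\alpha\La$, is false, since the relation from $2$ to $1$ starts with $\gamma$. The mechanisms you offer at the end also fail. The algebra is never special biserial here, because $\gamma\alpha$ and $\gamma\beta$ are both $\nprec I$. Nothing in the case $\vf\in e_1J^3e_1$ forces $\gamma\alpha\prec I$. The analogy with the preceding lemma breaks down: there the hypothesis put the arrow $\alpha$ into $\vf$ and produced zero relations of length two. Here you only get relations of length $\geqslant 4$ at vertex $2$, and a Cartan or basis count does not detect this. (The paper also omits this proof, pointing to the methods of \cite{E90} and \cite{EHS3}.)

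\textbf{The first step needs repair too.} The tree you display has a branch vertex with arms of $1$, $1$ and $3$ further vertices. That is the Dynkin diagram $\bD_6$, so it gives no contradiction. A working choice: take a four-valent lift $a$ of vertex $1$ in the universal cover of $KQ/R_Q^3$ (incoming $\alpha,\gamma$, outgoing $\alpha,\beta$), and add one further $\gamma$-arrow into the $\alpha$-successor of $a$. This tree has no paths of length $3$, so it is hereditary of wild type $\wt{\wt{\bD}}_4$. Separately, $\alpha\beta$ is missing from your list of length-two paths; it is $\nprec I$ by Lemma \ref{lem:3.4} applied to the triangle $\alpha,\beta,\gamma$.

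\textbf{How to close the main gap.} The same covering technique, one radical layer deeper, works. Suppose $\gamma\beta\gamma\nprec I$. The only paths of length $3$ from $2$ to $1$ are $\gamma\alpha^2$ and $\gamma\beta\gamma$. So factoring $\La$ by $J^4$ and the classes of $\alpha\beta\gamma$, $\beta\gamma\alpha$, $\beta\gamma\beta$ gives the monomial algebra $KQ/(\langle\alpha^2,\alpha\beta\gamma,\beta\gamma\alpha,\beta\gamma\beta\rangle+R_Q^4)$, in which $\gamma\alpha\beta$ and $\gamma\beta\gamma$ are nonzero. Its universal cover contains the tree with arrows $\gamma\colon b_0\to a_1$, $\alpha\colon a_1\to a_2$, $\beta\colon a_1\to b_1$, $\gamma\colon b_1\to a_3$, $\beta\colon a_2\to b_2$, $\gamma\colon b_3\to a_2$. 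Its only paths of length $\geqslant 2$ are $\gamma\alpha,\gamma\beta,\alpha\beta,\beta\gamma,\gamma\alpha\beta,\gamma\beta\gamma$, all nonzero. It is therefore a hereditary subcategory of wild type $\wt{\wt{\bD}}_5$, a contradiction. Hence $\gamma\beta\gamma\prec I$. Then $\beta\gamma\beta\prec I$ follows by the same argument applied to $\La^{\op}$, or from your $\vf\beta$ observation.
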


The proof is similar to proofs in \cite{E90} or  \cite{EHS3}  and we omit details.
To finish finding the presentation,  we refer to \cite{E90}, and we note that this uses similar arguments as in \cite{EHS3}.

\subsection{TSP4 algebras with $n=3,4$ or $5$ simples} In the last part, we will desrcibe TSP4 algebra 
structures for remaining $n=3,4$ or $5$. The case $n=3$ is almost immediate, and for other $n$, most of 
the cases follow from known classifications. The exceptions in cases $n=4$ and $5$ are treated in Propositions 
\ref{prop:6.1}-\ref{prop:6.3}. \medskip 

Fix a TSP4 algebra $\La=KQ/I$ with Gabriel quiver $Q=Q_\La$ having $n\in\{3,4,5\}$ vertices. \smallskip 

First, consider the case $n=3$. Then by Proposition \ref{prop:Ga3}, we know that $Q$ is always 
biregular. It follows from Theorem \ref{class:bireg} that $\La$ is a weighted surface algebra, where 
in case $Q$ is $2$-regular, we have no virtual arrows. \smallskip 

For $n=4$, we know from Proposition \ref{prop:Ga4} that $Q$ is the one of $6$ quivers $Q^{(1)},\dots,Q^{(6)}$. 
All quivers except $Q^{(6)}$ are biregular, so applying Theorem \ref{class:bireg} again, we obtain that 
algebras with $Q\neq Q^{(6)}$ are weighted surface algebras. \smallskip 

Similarly, if $n=5$ then $Q$ is one of the 19 quivers $Q^{(1)},\dots,Q^{(19)}$ presented in Proposition 
\ref{prop:Ga5}, and all cases except $Q^{(12)},\dots,Q^{(17)}$ are biregular. Therefore, by Theorem \ref{class:bireg} 
again, we infer that for $Q$ different from quivers $Q^{(12)},Q^{(13)},\dots,Q^{(17)}$, 
$\La$ is a weighted surface algebra. \medskip 

Hence, we are left with seven quivers: one quiver $Q^{(6)}$ for $n=4$ and six quivers $Q^{(12)},\dots,Q^{(17)}$ 
for $n=5$. 

\begin{prop}\label{prop:6.1} If $Q$ is one of the quivers $Q^{(6)}$ for $n=4$ or $Q^{(12)},Q^{(14)},Q^{(15)},Q^{(16)}$, 
for $n=5$, then $\La$ is a virtual mutation of a weighted surface algebra (up to socle equivalence). \end{prop}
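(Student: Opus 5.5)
The plan is the mutation strategy sketched after Theorem \ref{mut:per}. For each quiver $Q\in\{Q^{(6)},Q^{(12)},Q^{(14)},Q^{(15)},Q^{(16)}\}$ we look for a vertex $i$ without a loop whose mutation $\La'=\mu_i(\La)$ has a biregular Gabriel quiver $Q'$. In each case the candidate is the $(2,1)$- or $(1,2)$-vertex that would be the $1$-vertex $\bullet$ of a block $V_2$ before a virtual mutation. For example, in $Q^{(6)}$ the arrows $2\to 1$, $4\to 2$, $4\to 3$, $3\to 1$, $1\to 4$ form a block of type IV around vertex $4$ together with vertex $1$. We expect that mutating at the appropriate non-regular vertex ($4$ or $1$, loop-free after removing loops) turns the block IV back into a block $V_2$, exactly as in the definition of virtual mutation in Section \ref{subs:3.2}. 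Since $\La$ is TSP4, $\La'$ is again TSP4 by Theorems \ref{der_sym}, \ref{der_per} and \ref{der_type}. Once $Q'$ is shown to be biregular, Theorem \ref{class:bireg}(2) gives that $\La'$ is a non-exceptional WSA. Higher spherical algebras are excluded because they have $6$ vertices.

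The first concrete step is to compute $Q'=Q_{\mu_i(\La)}$ from $Q$. We use the tilting complex $T=T_i\oplus\bigoplus_{j\neq i}P_j$, with $T_i=(P_i\to P_i^-)$, and describe the irreducible maps between its summands in $\cK^b_\La$. We then check that the arrows of $Q'$ are as expected: at $i$ the arrows get reversed, and new arrows $j\to k$ appear for paths $j\to i\to k$ that are not involved in minimal relations. Paths through $i$ that are $\prec I$ yield no new arrows. This requires knowing which length-two paths through $i$ are $\prec I$. We obtain this from Lemma \ref{lem:3.3} (no triangle implies no relation) and from Proposition \ref{prop:typeN}, applied in the biserial subcases, where non-regular vertices are of type N. This is also where the loops at vertex $5$ in $Q^{(12)}$ and $Q^{(14)}$ should be forced, as announced in the proof of Proposition \ref{prop:Ga5}. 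Suppose the loop were missing. Then the mutation would produce a biregular quiver containing a $1$-vertex that violates Theorem \ref{bireg1}, or would give a contradiction with Lemma \ref{lem:3.1} for $\La'$. This is the point where TSP4, and not merely GQT, is needed, since we need $\La'$ to be TSP4.

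Next we identify $\La'$ as a WSA whose quiver contains virtual arrows. The vertex $i$ now lies in a block $V_2$ of $Q_{\La'}$; for $Q^{(15)}$ and $Q^{(16)}$ the relevant block is $V_3$ with the dotted $2$-cycle, arising from a triangle glued with a $V_2$. Then $\mu_i(\La')$ is by definition a virtual mutation of $\La'$. Since $i$ has no loop in $Q$ and $S_i$ is $4$-periodic, Theorem \ref{mut:per} gives that $\La$ is socle equivalent to $\mu_i^2(\La)=\mu_i(\La')$. Hence $\La$ is socle equivalent to a virtual mutation of a WSA.

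The main obstacle is the explicit computation of $Q_{\mu_i(\La)}$. One must rule out extra arrows in $J/J^2$ of $\End(T)$ and confirm that the relevant compositions factor, which depends on relations of $\La$ that are only partially known. We plan to handle this by computing dimension vectors of $\Hom(T_j,T_k)$ from the Cartan data ($p_i^-=p_i^+$) and by using the exact sequence for $S_i$ to describe $\Hom(T_i,T_i)$. If this becomes unwieldy, we will instead appeal to \cite{mut} and \cite{VM}, where the quiver of a mutation at a vertex of a block $V_2$ is described, and simply verify that their hypotheses hold in each of the five quivers.
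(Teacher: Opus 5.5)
You have the paper's overall strategy: mutate to a biregular quiver, identify $\La'$ as a WSA by Theorem~\ref{class:bireg}, then undo the mutation with Theorem~\ref{mut:per}. But you mutate at the wrong vertex, and the argument fails there.

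\textbf{The mutation vertex.} A virtual mutation at the $1$-vertex $\bullet$ of a block $V_2$ keeps that vertex a $1$-vertex. Mutation at a loop-free vertex swaps its in- and out-degree, which is your own ``arrows at $i$ get reversed''. So a non-regular vertex can never become the $\bullet$ of a $V_2$. Write block IV as $b\xrightarrow{\beta}c\xrightarrow{\alpha}a$, $b\xrightarrow{\nu}d\xrightarrow{\delta}a$, $a\xrightarrow{\tau}b$. The vertex to use is the $1$-vertex $c$, and Theorem~\ref{mut:per} says $\mu_c$ is its own inverse up to socle equivalence. Your candidate fails in every case:
\begin{itemize}
\item In $Q^{(12)}$ and $Q^{(14)}$ no $(1,2)$- or $(2,1)$-vertex exists at all; the $\circ$-vertices $a,b$ are $(3,2)$- and $(2,3)$-vertices there.
\item In $Q^{(6)},Q^{(15)},Q^{(16)}$ your candidate is non-regular only in $Q^\circ$ and carries a loop in $Q$.
\end{itemize}
Loops cannot be ``removed''. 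With a loop at $i$, $P_i^-$ contains $P_i$, so $P_i\to P_i^-$ is not a left $\add(\La/P_i)$-approximation. Also, Theorem~\ref{mut:per} explicitly requires no loop at $i$.

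\textbf{Computing $Q'$.} Even at $c$, your rule (reverse arrows at $i$, add $j\to k$ for paths $j\to i\to k$ not in minimal relations) keeps $\tau$ and produces a non-biregular quiver. The heart of the paper's proof is that $\tau$ becomes the composite $\alpha'\beta'$ through $T_c$. This needs a tameness step your proposal lacks.
\begin{itemize}
\item One of $\tau\beta,\tau\nu$ must be $\prec I$. Otherwise you get a $K_2^*$ subquiver (double arrows in $Q^{(12)}$), or a wild subcategory in covering built from the second arrows $\bar\tau\in a^+$ and $\tau^*\in b^-$.
\item After relabelling and adjusting $\tau$, one has $\tau\beta=0$, and the sequence for $S_c$ reads $P_c\xrightarrow{\beta}P_b\xrightarrow{\tau}P_a\xrightarrow{\alpha}P_c$.
\item Hence every $h:P_b\to P_i$ with $h\beta=0$ factors through $\tau$, using injectivity of $P_i$.
\end{itemize}
This factorization gives the whole local picture:
\begin{itemize}
\item there is no loop at $c$ in $Q'$;
\item $(0,\tau):T_c\to T_a$ is the only arrow into $c$, and $(0,1_{P_b})$ is the only arrow out of $c$;
\item $(\tau)=\alpha'\beta'\in J^2_{\La'}$.
\end{itemize}
Lemma~\ref{lem:3.4} then gives a relation $\beta\alpha+\nu\delta z_2+(\text{paths starting with the third arrow of }b^+)=0$. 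This shows that no other arrow of $Q$ is lost and none is created.

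Counting dimensions of $\Hom$ spaces cannot replace this argument. Appealing to \cite{VM} for the quiver of a mutation at a vertex of a $V_2$ is circular: that description is for weighted surface algebras, while $\La$ has a block IV and its relations are exactly what is unknown.
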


\begin{proof} First of all, observe that in all cases from the statement, quiver $Q$ contains 
the following block (of type IV), denoted by $\Gamma$.   
$$\xymatrix@R=0.4cm{ & \ar[ld]_{\alpha} \bullet_c& \\ 
\circ_a \ar[rr]_{\tau} && \circ_b \ar[lu]_{\beta} \ar[ld]^{\nu} \\ 
& \ar[lu]^{\delta} \bullet_d &  }$$ 
Actually, then $Q$ is a glueing of $\Gamma$ with one or two blocks of types I-III (or V1). \smallskip 

We will now use mutation $\mu_c(\La)$ of $\La$ at vertex $c$. Following Section \ref{sec:2}, $\mu_c(\La)$ is the 
endomorphism algebra $\mu_c(\La)=\End_{\cK^b_\La}(T)$ of the tilting complex 
$$T=T_c\oplus T'\in \cK^b_\La,$$  
such that $T_c=(\xymatrix@C=0.6cm{P_c\ar[r]^{\beta} & P_b})$ is concentrated in degrees $-1$ and $0$ and 
$T'=Q$ in degree $0$, where $\La=P_c\oplus Q$ ($\beta:P_c\to P_b$ is a left $\add Q$-approximation of $P_c$, 
because $\beta$ is the unique arrow in $Q$ ending at $c$). In particular, we get a TSP4 algebra 
$\La':=\mu_c(\La)=\End_{\mathcal{K}^b_\La}(T)$ (see Theorems \ref{der_sym}, \ref{der_per}, \ref{der_type}). 
Let $\La'=KQ'/I'$ be a presentation of $\La'$. \medskip  

We will now determine the Gabriel quiver $Q'$ of $\La'$ and show that it is biregular. \smallskip 

We claim first that one of $\tau\beta$ or $\tau\nu$ is involved in a minimal relation of $I$. Indeed, 
in case $Q=Q^{(12)}$, we have double arrows $\tau,\tau':a\to b$. If $\tau\beta,\tau\nu\nprec I$ 
and $\tau'\beta,\tau'\nu\nprec I$, then we have a wild subquiver of type $K_2^*$, therefore one 
of $\tau,\tau'$ satisfies the required condition, after possibly relabelling arrows. For 
$Q\neq Q^{(12)}$, there is an arrow $\bar{\tau}:a\to a'$, $\bar{\tau}\neq\tau$, and an arrow $\tau^*:b'\to b$, 
$\tau^*\neq\tau$, with $\tau^*\beta\nprec I$ and $\tau^*\nu\nprec I$. But then $\tau\beta,\tau\nu\nprec I$ 
induces the following wild subcategory in covering. \smallskip  
$$\xymatrix@R=0.4cm{& c && \\ 
b' \ar[r]_{\tau^*} & b \ar[u]^{\beta} \ar[d]_{\nu} & \ar[l]_{\tau} a \ar[r]^{\bar{\tau}} & a' \\ 
&d&&}$$ 
Consequently, we have $\tau\beta\prec I$ or $\tau\nu\prec I$. Up to relabelling of arrows, we may 
assume that $\tau\beta\prec I$. We can then adjust presentation to get $\tau\beta=0$ ($\beta$ is the 
unique arrow ending at $c$, so every relation involving $\tau\beta$ has the form $\tau\beta+z\beta=0$, 
$z\in J^2$ and we can adjust $\tau:=\tau+z$). In this case, we conclude that $\Omega_\La^2(S_c)$ has the 
unique (up to a unit) generator $\tau$, and we have the following exact sequence
$$\xymatrix{0 \ar[r] & S_c \ar[r] & P_c \ar[r]^{\beta} & P_b\ar[r]^{\tau} & P_a \ar[r]^{\alpha} & 
P_c\ar[r] & S_c \ar[r] & 0}.$$ 
In particular, we obtain that also $\alpha\tau=0$. \smallskip 

Next, using Lemma \ref{lem:3.4}, we deduce that also $\beta\alpha\prec I$. Because all paths starting at 
$b$ have the form $\beta\alpha\cdots$ or $\nu\delta\cdots$ or start with an arrow $\sigma\neq\beta,\nu$ 
($\sigma$ is unique in all cases under consideration, and $b^+=\{\beta,\nu,\sigma\}$), we can write relation 
involving $\beta\alpha$ as follows 
$$\beta\alpha+\beta\alpha z_1 + \nu\delta z_2 + A_\sigma,\leqno{(*)}$$ 
where $z_1\in J$, $z_2\in\La$ and $A_\sigma$ is a combination of paths starting with $\sigma$. 
If $z_1\in J$, then after postmultiplying by the inverse $u^{-1}$ of the unit $u=1+z_1$, 
we get a relation of the same form as above, but with $z_1=0$. If not, then $z_1$ is a unit itself, 
hence we can assume $z_1=0$, because this relation must involve $\beta\alpha$. \medskip 

Recall that we have an equivalence $\Hom(T,-): \add T \to \proj \La'$ and arrows $i\to j$ in $Q'$ 
correspond to morphisms $T_j\to T_i$ between indecomposable summands of $T$, which does not factor 
as $gf$, where $f:T_j\to \hat{T}$, $g:\hat{T}\to T_i$, and $\hat{T}$ belongs to $\add T$. \smallskip 

We will now investigate all morphisms between summands of $T$. For a morphism $f$ in $\mathcal{K}^b_{\La}$ 
we denote by $\tilde{f}$ the corresponding homomorphism $\tilde{f}=\Hom(T,f)$ in $\proj\La'$. We will 
find all $f$ such that $\tilde{f}$ corresponds to an arrow of $Q'=Q_{\La'}$. \smallskip 

(1) We claim first that there are no loops at $c$ in $Q'$. Indeed, any morphism $f:T_c\to T_c$ is a pair 
$f=(f^1,f^0)$ with $f^1:P_c\to P_c$ and $f^0:P_b\to P_b$ such that $f^0\beta=\beta f^1$. Suppose $f\neq 0$ 
belongs to $J_{\La'}$. We show that $f^1$ is in $J_\La$. Otherwise $f^1$ is an isomorphism, but $f^0$ is not 
(if also $f^0\notin J_{\La}$, then $f$ is an isomorphism, hence not in $J_{\La'}$), and therefore, 
$f^0\in J_{\La'}$, so $\beta=f^0\beta f_1^{-1}\in J^2_\La$, which is a contradiction, since $\beta$ is 
an arrow of $Q$. 
As a result, we have $f^1\in J_\La$, so there is a homomorphism $s:P_b\to P_c$ such that 
$f^1=s\beta$ ($\beta$ is the unique arrow ending at $c$). With this, one can see that the map 
$h=\beta s-f^0:P_b\to P_b$ satisfies $h\beta=0$, so it factorizes $\cok(\beta)\simeq\ima(\tau)$, and hence, 
there is $h':P_a\to P_b$ such that $h=h'\tau$; more details in (2) below. Thus $\beta s=f^0+h'\tau$, and consequently, 
using $\tau\beta=0$ we get the following commutative diagram in $\proj\La$  
$$\xymatrix{P_c \ar[r]^{\beta} \ar[d]_{f^1} & P_b \ar[d]^{f'=f^0+h'\tau} \\ P_c \ar[r]^{\beta} & P_b}$$ 
such that $s\beta=f^1$ and $\beta s=f'=f^0+h'\tau$. This means that $(f^1,f')=f+(0,h'\tau)$ is 
homotopic to zero, and consequently, we obtain that $f$ is homotopic to $-(0,h'\tau)$, which 
factorizes as $-\beta' h' \alpha'$, where $\alpha'=(0,\tau):T_c\to T_a$ and $\beta'=(0,1_{P_b}):T_b\to T_c$, 
and hence it belongs to $J^2_{\La'}$. Therefore, we have proved that any (non-zero) $f\in e_c J_{\La'} e_c$ belongs 
to $J_{\La'}^2$, and hence, we have no loops at $c$ in $Q'$. \medskip

(2) Now, let $f:T_c\to T_i$ be an arbitrary morphism from $T_c=(\xymatrix@C=0.6cm{P_c\ar[r]^{\beta} & P_b})$ 
to $T_i$ with $i\neq c$. Then $T_i$ is a stalk complex with zero degree $P_i$, and any $f$ is of the form 
$f=(0,h)$, where $h:P_b\to P_i$ satisfies $h\beta=0$. Every such $h$ factorizes through $\cok(\beta)\simeq 
\ima(\tau)=\Omega^{-2}_\La(S_c)$, i.e. $h=u\pi$, for some $u:\Omega^{-2}(S_c)\to P_i$, where $\pi:P_b\to\cok(\beta)$ 
is the cannonical projection. On the other hand, by the construction of injective resolution, also 
$\tau$ factorizes as $\tau=e\pi$, where $e:\Omega^{-2}(S_c)\to P_a$ is an injective envelope of $\Omega^{-2}(S_c)$. 
Now, since $P_i$ is injective, and $e$ is a monomorphism, we conclude that $u=h'e$, for some $h':P_a\to P_i$, 
and all together, we get the following factorization $h=h'\tau$. As a result, every morphism 
$f:T_c\to T_i$, $i\neq c$, admits a factorization $f=f'\circ(0,\tau)$, for some $f':T_a\to T_i$. \medskip 

(3) We claim that 
$f=(0,\tau):T_c\to T_a$ induces homomorphism $\tilde{f}:P_c'\to P_a'$ in $\proj\La'$ identified with an arrow 
$a\to c$ in $Q'$, denoted by $\alpha'$. Indeed, $f$ is clearly non-zero in $J_{\La'}$, so it is sufficient to see that 
$f$ does not belong to $J_{\La'}^2$. It was shown above in (1) that all morphisms $h:T_c\to T_c$ factorize as 
$h=h'f$ (up to homotopy), so including (2), we get that all morphisms $h:T_c\to T_k$ admit such a factorization. 
Consequently, if $f$ belongs to $J^2_{\La'}$, we would get a factorization $f=h_2h_1$, for $h_1:T_c\to X$ and 
$h_2:X\to T_a$ in $J_{\La'}$, and $X\in\add T$, which is impossible, because then $h_1$ factorizes as $h_1=h'_1f$, so 
we obtain $f=h_2h_1=h_2h_1'f$, and hence $(1-h_2h_1')f=0$, for an invertible element $1-h_2h_1'$ of the local 
algebra $e_a\La'e_a$. This proves, that indeed the morphism $(0,\tau):T_c\to T_a$ induces an arrow $\alpha':a\to c$ 
in $Q'$. \smallskip 

It is also the 
unique arrow in $Q'$ ending at $c$. In fact, we have no loops at $c$ and every path $\eta\in e_i J_{\La'} e_c$, $i\neq c$, 
can be written as $\eta=\eta'\alpha'$, for some $\eta'\in e_i \La e_a $, so either $\eta'\in J_{\La'}$, and 
then $\eta\in J_{\La'}^2$, so $\eta$ is not an arrow of $Q'$, or $\eta'\notin J_{\La'}$, which means that 
$i=a$ and $\eta'$ is a unit of the local algebra $e_a\La e_a$. Then $\eta$ is a scalar multiplication of $\alpha'$, 
modulo $J^2_{\La'}$. \smallskip 

In a similar way, one can check that the unique arrow starting from $c$ (and ending at vertex $\neq c$) 
is $\beta':c\to b$, identified with a morphism of the form $\beta'=(0,1_{P_b}):T_b\to T_c$. Obviously, every morphism 
$g:T_i\to T_c$ factorizes as $g=\beta'g'$, because $\beta'$ is given by identity. \smallskip 

Summing up, $c$ is a $1$-vertex of $Q'$ with $c^+=\{\beta'\}$ and $c^-=\{\alpha'\}$. \medskip 

(3') Note also that morphism $f=(\tau):T_b\to T_a$ given by an arrow $\tau:a\to b$ in $Q$ does not induce 
an arrow of $Q'$, because we have a factorization $f=(0,\tau)\circ (0,1_{P_b})=\alpha'\beta'$, so
$f\in J^2_{\La'}$. \medskip 

Using relation $(*)$, we conclude that morphisms given by arrows $\alpha',\beta'$, 
together with all morphisms of the form $\eta'=\tilde{\eta}:P_j'\to P_i'$, given by arrows $\eta:i\to j$ in $Q$ 
different from $\alpha,\beta,\tau$, generate $J_{\La'}$. \smallskip 

(4) Finally, we claim that these arrows exhaust all 
arrows in $Q'$. Indeed, let $\eta$ be a morphism $T_j\to T_i$ corresponding to an arrow 
$\eta\in e_i J_{\La'} e_j\setminus e_i J_{\La'}^2 e_j$ of $Q'$. If one of $i,j$ is $c$, then $\eta$ is either $\beta'$ 
or $\alpha'$, by (3), and we are done. 
If $i,j\neq c$, then $\eta$ is identified with a 
path $\eta\in e_i J_\La e_j=e_iJ_{\La'}e_j$, so it is sufficient to show that $\eta$ is an arrow in $Q_\La$, i.e. 
$\eta\notin J^2$ (then automatically, we get $\eta\neq\tau$, by (3'), and $\eta\neq\alpha,\beta$, because $i,j\neq c$). 
Suppose to the contrary that $\eta\in J^2_\La$, hence we have a factorization $\eta=\eta_1\gamma_1+\eta_2\gamma_2+\dots$, where 
$\eta_k,\gamma_k\in J_\La$. It follows that all summands with $t(\eta_k)\neq c$ belong to $J_{\La'}^2$, thus we can 
assume that all $t(\eta_k)=c=s(\gamma_k)$, so that each $\eta_k\gamma_k=\eta_k'\beta\alpha\gamma_k'$. 
But then, thanks to the relation $(*)$, we can rewrite each summand of $\eta$ as a combination 
of paths passing through vertices $\neq c$, and we get $\eta\in J_{\La'}^2$, a contradiction. \medskip 

It follows that $Q'$ consists of arrows 
$\alpha':a\to c$, $\beta':c\to b$, and all arrows from $Q_1\setminus\{\alpha,\beta,\tau\}$. 
In other words, if $Q$ is a glueing of a block $\Gamma$ with blocks $B_1,B_2,\dots$, then $Q'$ is 
a glueing of the following block 
$$\xymatrix@R=0.4cm{& \bullet c \ar[rd]^{\beta'} & \\ \circ_a\ar[ru]^{\alpha'} && \circ_b \ar[ld]^{\nu}\\ 
&\ar[lu]^{\delta} \bullet_d & }$$ 
with the unchanged blocks $B_1,\dots$ of $Q$ (remaining blocks in $Q'$). It is clear, that after changing 
orientation of arrows $\alpha,\beta$ in $Q$ (and removing $\tau$), we get a biregular quiver $Q'$, 
which is equal (up to permutation of vertices) to $Q^{(18)},Q^{(18)},Q^{(3)}$ or $Q^{(4)}$, if $Q$ is 
$Q^{(12)},Q^{(14)},Q^{(15)}$ or $Q^{(16)}$, respectively. \medskip 

Consequently, using Theorem \ref{class:bireg}, we deduce that $\La'$ is a weighted surface algebra. Eventually, there 
is no loop at $c$ in $Q$, hence applying Theorem \ref{mut:per}, we get that $\mu_c^2(\La)\cong \La$ (up to socle), 
and therefore, algebra $\La=\mu_c(\mu_c(\La))=\mu_c(\La')$ is a mutation of a weighted surface algebra $\La'$ at vertex $c$. 
It means that $\La$ is (socle equivalent to) a virtual mutation of a weighted surface algebra $\La'$, by definition, 
and the proof is finished (see also Section \ref{subs:3.2}). \end{proof}

We have the same result for the quiver $Q^{(13)}$. 

\begin{prop}\label{prop:6.2} If $Q=Q^{(13)}$, then $\La$ is a virtual mutation of a weighted surface algebra. \end{prop}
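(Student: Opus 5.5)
The plan is to follow the strategy of Proposition \ref{prop:6.1}: mutate at a suitable vertex without a loop, show that the resulting Gabriel quiver is biregular, and then apply Theorem \ref{class:bireg} together with Theorem \ref{mut:per}. Recall from the proof of Proposition \ref{prop:Ga5} that $Q^{(13)}$ has shadow $\bbQ_{23}$ and no $2$-cycles. The first task is to show that $Q^{(13)}$, with the loop at vertex $5$, contains a block $\Gamma$ of type IV,
$$\xymatrix@R=0.4cm{ & \ar[ld]_{\alpha} \bullet_c& \\ \circ_a \ar[rr]_{\tau} && \circ_b \ar[lu]_{\beta} \ar[ld]^{\nu} \\ & \ar[lu]^{\delta} \bullet_d & }$$
glued with a triangle and a loop block. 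In $\bbQ_{23}$ the candidates are: $a=4$, $b=3$, the $1$-vertices $c,d$ among $\{1,2,5\}$ along $3\to1\to5\to4$, and $4\to3$. Equivalently, $Q^{(13)}$ is the quiver of Proposition \ref{prop:6.1} type obtained by detaching a block V$_2$ from a WSA quiver.

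Along the way I will also settle the point left open in the proof of Proposition \ref{prop:Ga5}, namely that the loop at $5$ must exist. The argument is that if the loop is missing, then after mutation the quiver $Q'$ would be biregular with a $1$-vertex not lying in a block $V_1$ or $V_2$. This contradicts Theorem \ref{bireg1}, since $\mu_c(\La)$ is again TSP4.

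Next, fix the vertex $c$, the unique $1$-vertex of $\Gamma$ whose incoming arrow is $\beta:b\to c$, and consider the tilting complex $T=T_c\oplus T'$ with $T_c=(P_c\xrightarrow{\beta}P_b)$. As in Proposition \ref{prop:6.1}, the first step is to prove that $\tau\beta\prec I$ or $\tau\nu\prec I$. If both paths were $\nprec I$, then together with the second arrow $\bar\tau$ out of $a$ and the second arrow $\tau^*$ into $b$ we would obtain the same wild subcategory in covering as there. The one extra check is that $\tau^*\beta,\tau^*\nu\nprec I$, which follows from Lemma \ref{lem:3.3} because the necessary closing arrows are absent in $Q^{(13)}$. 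After relabelling, this gives $\tau\beta=0$, the exact sequence $P_c\to P_b\to P_a\to P_c$ for $S_c$, and $\alpha\tau=0$. Lemma \ref{lem:3.4} then gives $\beta\alpha\prec I$ in the normalized form $(*)$.

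Steps (1)--(4) of the proof of Proposition \ref{prop:6.1} then go through verbatim, since they only use the local shape of $\Gamma$ and the relation $(*)$. They show that $Q'=Q_{\mu_c(\La)}$ is obtained by reversing $\alpha,\beta$ and deleting $\tau$. I will then verify directly that the resulting quiver is biregular, and identify it with one of the biregular quivers of Proposition \ref{prop:Ga5}, presumably $Q^{(3)}$ or $Q^{(5)}$-type, with a block $V_2$ containing $c$. Theorem \ref{class:bireg} then makes $\La'$ a WSA. Since $c$ carries no loop, Theorem \ref{mut:per} gives $\La\cong\mu_c(\La')$ up to socle equivalence, that is, a virtual mutation.

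The main obstacle will be the identification of $\Gamma$ inside $Q^{(13)}$ and the verification of the non-relation conditions needed for the wild covering. The shadow $\bbQ_{23}$ has two non-regular vertices of types $(1,2)$ and $(2,1)$, and one must check that the arrow $b^+\setminus\{\beta,\nu\}$ is unique. I would handle this by a careful inspection of $\bbQ_{23}$, using Lemmas \ref{lem:3.3} and \ref{lem:3.4} and Proposition \ref{prop:typeN}.
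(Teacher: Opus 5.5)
\textbf{There is a genuine gap: $Q^{(13)}$ does not contain a block of type IV.} Its loop-free part is the shadow $\bbQ_{23}$, which is a block $V_3$. The $(1,2)$-vertex $b$ (vertex $4$) sends $\beta:b\to c$ and $\nu:b\to d$ to the two $1$-vertices (vertices $2,3$). Both of these map to the $(2,1)$-vertex $a$ (vertex $1$) via $\alpha,\delta$. The only way back from $a$ to $b$ is the path $a\xrightarrow{\sigma}e\xrightarrow{\gamma}b$ through the outlet $e$ (vertex $5$, which carries the loop). There is no arrow $\tau:a\to b$. Your candidate $4\to 3$ ends at a $1$-vertex, so $b=3$ cannot emit both $\beta$ and $\nu$. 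The configuration you describe is that of $Q^{(14)}$, where the $2$-cycle $a\leftrightarrows b$ is present, and that case is already handled in Proposition \ref{prop:6.1}.

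Consequently ``$\tau\beta\prec I$ or $\tau\nu\prec I$'' has nothing to refer to, and the relation $(*)$ is unavailable. Since there is no arrow $a\to b$, Lemma \ref{lem:3.3} gives $\beta\alpha\nprec I$; in fact no path of length $2$ lies in a minimal relation. What you need instead is a relation of length three: $\sigma\gamma\beta\prec I$ or $\sigma\gamma\nu\prec I$. If neither holds, \cite[Lemma 4.6]{EHS1} gives $\gamma\beta\alpha,\gamma\nu\delta\nprec I$. Then the arrows $\gamma,\beta,\nu,\alpha,\delta$ (a non-oriented $4$-cycle with a pendant vertex) give a wild subcategory in covering. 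After normalizing to $\sigma\gamma\beta=0$, the sequence for $S_c$ is $P_c\xrightarrow{\beta}P_b\xrightarrow{\sigma\gamma}P_a\xrightarrow{\alpha}P_c$, with $\alpha\sigma\gamma=0$.

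\textbf{This also makes your computed $Q'=Q_{\mu_c(\La)}$ wrong.} Your choice of $c$ and of $T_c=(P_c\xrightarrow{\beta}P_b)$ is right. As before, $\alpha'=(0,\sigma\gamma)$ and $\beta'=(0,1_{P_b})$ are the only arrows at $c$, and $c$ has no loop. But nothing is deleted. Because $\beta\alpha$ cannot be rewritten through vertices other than $c$, the morphism $\beta\alpha:T_a\to T_b$ lies in $J_{\La'}\setminus J_{\La'}^2$ and creates a \emph{new} arrow $b\to a$ in $Q'$.

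If you only reversed $\alpha$ and $\beta$, then $a$ would be a $(1,2)$-vertex and $b$ a $(2,1)$-vertex. So $Q'$ would not be biregular, and Theorem \ref{class:bireg} could not be applied. With the new arrow, $a$ and $b$ become $2$-vertices and $Q'$ has shadow $\bbQ_{26}$. Hence $Q'$ is $Q^{(18)}$ or $Q^{(19)}$, not a $Q^{(3)}$- or $Q^{(5)}$-type quiver. Since $c$ carries no loop in $Q'$, the loop of these quivers must sit at $e$. This is how the existence of the loop at $e$ in $Q$ is actually settled.

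From that point on, your final step goes through as you describe: apply Theorem \ref{class:bireg} to $\La'$, then Theorem \ref{mut:per} at the loop-free vertex $c$.
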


\begin{proof} Suppose the Gabriel quiver of $\La$ has the form 
$$\xymatrix@R=0.25cm@C=1.2cm{ & \ar[ld]_{\alpha} c& \\ 
a \ar@/_15pt/[rdd]_{\sigma} && b \ar[lu]_{\beta} \ar[ld]^{\nu} \\ 
& \ar[lu]^{\delta} d &  \\ & e \ar@(dl,dr)@{..>}[] \ar@/_15pt/[ruu]_{\gamma}&}$$ \vspace*{0.2cm}

In this case, we must have $\sigma\gamma\nu\prec I$ or $\sigma\gamma\beta\prec I$. Indeed, otherwise 
both $\gamma\nu\delta\nprec I$ and $\gamma\beta\alpha\nprec I$, by \cite[Lemma 4.6]{EHS1}. But all paths of 
length $2$ are not involved in minimal relations of $I$, by Lemma \ref{lem:3.3}, hence we obtain the 
following wild subcategory in covering  
$$\xymatrix@R=0.4cm{ & \ar[ld]_{\alpha} c& & \\ 
a && b \ar[lu]_{\beta} \ar[ld]^{\nu} & \ar[l]_{\gamma} e\\
& \ar[lu]^{\delta} d & & }$$ 

Without loss of generality we can take $\sigma\gamma\beta\prec I$. Since $c$ is a $1$-vertex and $b^-=\{\gamma\}$, 
every path ending at $c$ has the form $\cdots\gamma\beta$, and hence, we can assume $\sigma\gamma\beta=0$ 
(after possibly adjusting $\sigma$). Consequently, the exact sequence for $S_c$ has the form 
$$\xymatrix{0 \ar[r] & S_c \ar[r] & P_c \ar[r]^{\beta} & P_b\ar[r]^{\sigma\gamma} & P_a \ar[r]^{\alpha} & 
P_c\ar[r] & S_c \ar[r] & 0},$$ 
and in particular, we also have $\alpha\sigma\gamma=0$. \medskip 

Now, we take the mutation $\La'=\mu_c(\La)$ of $\La$ at vertex $c$. This is an endomorphism algebra 
$\End_{\cK^b_\La}(T)$ of the complex $T=T_c\oplus T'=(\xymatrix@C=0.4cm{P_c \ar[r]^{\beta} & P_b})\oplus T'$. As in the proof of 
Proposition \ref{prop:6.1}, we deduce from the results of Section \ref{sec:2} that $\La'$ is again 
a TSP4 algebra. Moreover, one can easily show that the 
Gabriel quiver $Q'=Q_{\La'}$ contains reversed arrows $\alpha':a\to c$ and $\beta':c\to b$ given by 
morphisms $(0,\sigma\gamma):T_c\to T_a$ and $(0,1_{P_b}):T_b\to T_c$, respectively. Similar arguments 
prove that $\alpha'$ is the unique arrow in $Q'$ ending at $c$, and $\beta'$ is the unique arrow in 
$Q'$ starting at $c$. We may also repeat arguments from the proof of \ref{prop:6.1} and show that there 
is no loop at $c$. \medskip 

Consider a morphism $\eta:T_j\to T_i$ associated to an arbitrary arrow $\eta\in 
e_i J_{\La'} e_j\setminus e_i J_{\La'}^2 e_j$ in $Q'$. If $c=j$ or $i$, then $\eta=\alpha'$ or $\beta'$. Otherwise, 
we have $i,j\neq c$, hence $\eta\in J_\La$ is either an an arrow of $Q$ (different from $\alpha,\beta$) 
or $\eta\in J_\La^2$, and then it factorizes as $\eta=\eta_1\gamma_1+\dots$, where all $t(\eta_k)=c$ 
(modulo $J_{\La'}^2$). As before, we have $\eta_k\gamma_k=\eta'_k\beta\alpha\gamma'_k$, for all $k$. 
Observe also that the path $\beta\alpha$ induces a morphism $T_a\to T_b$, which belongs to $J_{\La'}$, 
but not to $J_{\La'}^2$, because it would give $\eta\in J^2_{\La'}$.  
Therefore, we have an arrow $\tau=\wt{\beta\alpha}:b\to a$ induced from the morphism $\beta\alpha:T_a\to T_b$. 
As a result, all $\eta_k',\gamma_k' \notin J_{\La'}$, hence these are units of local algebras, and consequently, 
$\eta=\tau$ in $Q'$. Note also that all arrows $\eta:i\to j$ in $Q_\La$ different from $\alpha,\beta$ (equivalently, 
$i,j\neq c$) induce arrows $\eta$ in $Q'$. Indeed, any such $\eta$ cannot factorize as $\eta=\eta'\gamma'$, 
for $\gamma':P_j\to T_c$ and $\eta':T_c\to T_i$ in $J_{\La'}$, since then $\eta'=\eta''\alpha'$, for 
some $\eta'':P_a\to P_i$, and hence $\eta'=(0,\eta''\sigma\gamma)$, so $\eta\in J^2$, which is impossible 
for an arrow in $Q_\La$. Clearly, $\eta$ cannot factorize through other summands $T_x=P_x$, $x\neq c$, thus 
$\eta\notin J^2_{\La'}$. \medskip 

Summing up, we have proven that $Q'$ consists of arrows $\alpha',\beta',\tau$ and all arrows of 
$Q$ different from $\alpha,\beta$, and hence, it is of the form 
$$\xymatrix@R=0.25cm@C=1.2cm{ &  c\ar[rd]^{\beta'}& \\ 
a \ar@/_15pt/[rdd]_{\sigma} \ar[ru]^{\alpha'} && b \ar[ll]_{\tau}  \ar[ld]^{\nu} \\ 
& \ar[lu]^{\delta} d &  \\ & e \ar@(dl,dr)[] \ar@/_15pt/[ruu]_{\gamma}&}$$ \vspace*{0.3cm} 
 
It follows that the associated shadow $\bbQ'=\bfQ_{\bS_{\La'}}$ is $\bbQ_{26}$, and 
therefore, we have exactly two Gabriel quivers $Q'=Q^{(18)}$ or $Q'=Q^{(19)}$ of GQT algebras 
obtained from this shadow. In particular, there must be a loop at $e$ in $Q$, since 
there is a loop at $e$ in $Q'$. \medskip 

Finally, as in the previous proof Theorem \ref{class:bireg} implies that $\La'$ is a weighted 
surface algebra, and then, by Theorem \ref{mut:per}, we conclude that $\La=\mu_c^2(\La)=\mu_c(\La')$ 
is a virtual mutation of a weighted surface algebra $\La'$. \end{proof}

The last step is to find all TSP4 algebras with Gabriel quiver $Q=Q^{(17)}$. In this case, we 
have to consider larger class of algebras. 

\begin{prop}\label{prop:6.3} If $Q=Q^{(17)}$, then $\La$ is a generalized weighted surface algebra. \end{prop}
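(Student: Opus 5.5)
We follow the mutation strategy of Propositions \ref{prop:6.1} and \ref{prop:6.2}, now run in the reverse direction. The quiver $Q=Q^{(17)}$ (the one with shadow $\bbQ_{25}$, a $2$-regular loop vertex $5$ and no $2$-cycles) is expected to be a glueing of a block of type V with a loop at its outlet. We choose a vertex $c$ of this block without a loop at which mutation produces a block of type $V_3$. Concretely, $c$ is a $2$-vertex of the block V whose unique neighbour configuration makes it the ``$(1,2)$-vertex after mutation''. Then $\La':=\mu_c(\La)$ is again TSP4 by Theorems \ref{der_sym}, \ref{der_per} and \ref{der_type}. The aim is to show that $Q'=Q_{\La'}$ is one of the quivers $Q^{(14)}$--$Q^{(16)}$, i.e. one containing a block $V_3$. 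Propositions \ref{prop:6.1} and \ref{prop:6.2} then give that $\La'$ is socle equivalent to a virtual mutation of a WSA whose Gabriel quiver contains $V_3$, with $c$ its $(1,2)$-vertex. By Theorem \ref{mut:per}, since there is no loop at $c$, we get $\La\simeq\mu_c(\La')$ up to socle equivalence. This is exactly the definition of a GWSA obtained by mutating at the $(1,2)$-vertex of a $V_3$ block, as recalled in Section \ref{subs:3.2}.

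\textbf{Step 1: relations at $c$.} Determine the minimal relations of length two around $c$, using Lemma \ref{lem:3.3}, Lemma \ref{lem:3.4} and the tameness exclusions ($K_2^*$ subquivers, wild subcategories in covering), exactly as in the first part of the proof of Proposition \ref{prop:6.1}. Since $c$ is $2$-regular, the exact sequence
$$0\to S_c\to P_c\to P_c^-\to P_c^+\to P_c\to S_c\to 0$$
describes $\Omega^{\pm 2}(S_c)$. From it, identify which compositions through $c$ vanish after adjusting arrows. Of the two cyclic paths through $c$, one lies in a triangle, giving commutativity- or zero-type relations; the other exits the block.

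\textbf{Step 2: the Gabriel quiver of $\La'$.} Compute $Q'$ via the tilting complex $T=T_c\oplus T'$, with
$$T_c=\bigl(P_c\xrightarrow{f}P_c^-\bigr)$$
and $f$ given by the two arrows ending at $c$. As in (1)--(4) of the proof of Proposition \ref{prop:6.1}, carry out the following checks.
\begin{enumerate}
\item[(a)] Show there is no loop at $c$ in $Q'$.
\item[(b)] Use injectivity of the $P_i$ to prove that morphisms out of $T_c$ factor through the maps given by the generators of $\cok(f)\simeq\Omega^{-2}(S_c)$; these maps give the new arrows ending at $c$.
\item[(c)] Show the arrows starting at $c$ are the inclusions $(0,1)$ of the summands of $P_c^-$.
\item[(d)] Decide which old arrows $x\to y$ with $x,y\neq c$ survive and which new arrows appear, namely compositions through $c$ that are no longer in $J_{\La'}^2$, as $\tau=\wt{\beta\alpha}$ did in Proposition \ref{prop:6.2}.
\end{enumerate}
The expected outcome is that $c$ becomes a $(1,2)$-vertex, or its opposite, and that the block V turns into a block $V_3$. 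The resulting $Q'$ must then match one of the quivers of Proposition \ref{prop:Ga5}, which serves as a consistency check.

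\textbf{Main obstacle.} The hard part is Step 2(b) together with the arrow bookkeeping in (d). Here $P_c^-$ has two summands, so $\cok(f)$ need not be cyclic. Determining exactly which compositions lie in $J_{\La'}^2$ therefore requires the precise form of $\Omega^2(S_c)$ from Step 1. I would first pin down the relations so that the map $P_c^-\to P_c^+$ is a $2\times2$ matrix with one zero entry, since this is the analogue of $\tau\beta=0$. I would then reuse the factorization arguments verbatim. If the count of arrows does not match $Q^{(14)}$--$Q^{(16)}$, the fallback is to mutate at another vertex of the block V, or to use $\mu_c^{-1}$ with $P_c^+$ in place of $P_c^-$.
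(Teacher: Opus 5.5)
\textbf{Gap: wrong mutation vertex and missing relations.} Your global plan is the paper's: mutate at a loop-free vertex of the block V, recognise $Q_{\La'}$ as an already classified five-vertex quiver with a block $V_3$, and pull back via Theorem~\ref{mut:per}. The concrete execution, however, rests on a misreading of $Q^{(17)}$.

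In the paper's notation, $x_1,y_1,x_2,y_2,e$ correspond to $1,4,2,3,5$ in the picture of Proposition~\ref{prop:Ga5}. The four $\bullet$-vertices of V are two $(2,1)$-vertices $x_1,y_1$ and two $(1,2)$-vertices $x_2,y_2$. The only vertex that is $2$-regular in $Q^\circ$ is the outlet $e$, and it carries the loop. So there is no loop-free $2$-vertex $c$.

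Even if there were one, it could not work. As in steps (1)--(3) of Proposition~\ref{prop:6.1}, the new arrows into $c$ come from the summands of $P_c^+$ (the injective envelope of $\cok(f)$), and the new arrows out of $c$ come from the summands of $P_c^-$. So $\mu_c$ turns a $(p,q)$-vertex into a $(q,p)$-vertex, and a $2$-vertex stays a $2$-vertex.

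The vertex that becomes the $(1,2)$-vertex of $V_3$ is a $(2,1)$-vertex, e.g. $x_1$, with $T_{x_1}=(P_{x_1}\xrightarrow{\vec{\sigma\\ \rho}}P_{x_2}\oplus P_{y_2})$. Here $P_{x_1}^+=P_e$ is indecomposable, so your ``main obstacle'' (b) is actually easy. Every morphism out of $T_{x_1}$ factors through the single map $(0,[\gamma\ \phi]):T_{x_1}\to T_e$. This gives exactly one new arrow $\omega':e\to x_1$, and $\gamma=\sigma'\omega'$, $\phi=\rho'\omega'$ drop out of $Q'$. Your planned $2\times 2$ matrix $P_c^-\to P_c^+$ with a zero entry has no counterpart here.

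What is genuinely missing is the relation input for (d). You must show that compositions through $x_1$ do \emph{not} create new arrows $x_2\to e$ or $y_2\to e$, unlike $\beta\alpha$, which did create $\tau$ in Proposition~\ref{prop:6.2}. The paper obtains this in three steps.
\begin{enumerate}
\item[(i)] Because $e$ carries a loop, Corollary~\ref{rmkloops} forces every length-two path through $e$ into a minimal relation. Lemmas~\ref{lem:3.3} and \ref{lem:3.4} then propagate this around the four triangles through $e$.
\item[(ii)] Since $\gamma$ is the only arrow into $x_2$ and $\omega$ the only arrow out of $x_1$, one can adjust arrows to get $\omega\gamma=\psi\gamma=0$ and $\omega\phi=0$.
\item[(iii)] The sequence for $S_{x_2}$ then reads $0\to S_{x_2}\to P_{x_2}\xrightarrow{\gamma}P_e\xrightarrow{\vec{\omega\\ \psi}}P_{x_1}\oplus P_{y_1}\xrightarrow{[\sigma\ \eta]}P_{x_2}\to S_{x_2}\to 0$. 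This yields the commutativity relation $\sigma\omega+\eta\psi=0$, and analogously at $y_2$. Hence every element of $e_{x_2}Je_e$ can be rewritten through $y_1\neq x_1$ and lies in $J_{\La'}^2$.
\end{enumerate}

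With this, $Q'$ is the block $V_3$, with $x_1$ as its $(1,2)$-vertex, glued with the loop at $e$. That is $Q^{(13)}$, and Proposition~\ref{prop:6.2} applies. The target is not $Q^{(14)}$--$Q^{(16)}$: those contain a block of type IV and are handled by Proposition~\ref{prop:6.1}, and $Q^{(13)}$ is the only five-vertex quiver in the list with a block $V_3$. From there, your final step via Theorem~\ref{mut:per} is exactly as in the paper.
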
 

\begin{proof} Denote vertices and arrows of $Q=Q_\La$ as follows 
$$\xymatrix@R=0.7cm{ y_1 \ar@/_30pt/[rdd]_{\psi} && \ar[ll]_{\ve} y_2 \ar[lld]_(.35){\rho} \\ 
x_1 \ar[rd]_{\omega} && x_2 \ar[ll]_{\sigma} \ar[llu]_(.25){\eta} \\ 
& e \ar@(ld,rd)[] \ar[ru]_{\gamma} \ar@/_30pt/[ruu]_{\phi} & } \vspace*{0.3cm}$$ 
Due to Corollary \ref{rmkloops}, we can assume that all paths of length $2$ starting or ending at vertex 
$e$ are involved in minimal relations of $I$. All these paths are in triangles, so using Lemma \ref{lem:3.4} 
(and \ref{lem:3.3}), we get all paths of length $2$ in $Q$ involved in minimal relations 
of $I$. Since $\gamma$ is the unique arrow ending at $x_2$, we can write the relation involving $\omega\gamma\prec I$ 
as $\omega\gamma+z\gamma=0$, so that $(\omega+z)\gamma=0$ in $\La$, and hence, we can assume that $\omega\gamma=0$, 
after adjusting $\omega:=\omega+z$. In the same way, we can adjust $\psi$ to get $\psi\gamma=0$. Similarly, because 
$\omega$ is the unique arrow starting from $x_1$, we obtain $\omega\phi=0$ (possibly adjusting arrow $\phi$). \medskip 

We consider the mutation $\La':=\mu_{x_1}(\La)$ with respect to vertex $x_1$. The exact sequence for 
associated simple module $S_{x_1}$ has the following form 
$$\xymatrix{0 \ar[r] & S_{x_1} \ar[r] & P_{x_1} \ar[r]^(0.4){\vec{\sigma\\ \rho}} & P_{x_2}\oplus P_{y_2} 
\ar[r]^(.6){[\gamma \ \phi] } & P_e \ar[r]^{\omega} & P_{x_1}\ar[r] & S_{x_1} \ar[r] & 0}$$ 
where $\vec{\sigma\\ \rho}:P_{x_1}\to P_{x_2}\oplus P_{y_2}$ is a left $\add Q$-approximation of 
$P_{x_1}$, $\La=P_{x_1}\oplus Q$. Then $\La'$ is the endomorphism algebra 
$\End_{\cK^b_\La}(T)$ of tilting complex $T=T_{x_1}\oplus T'$ with 
$$T_{x_1}=(\xymatrix{P_{x_1} \ar[r]^(0.4){\vec{\sigma\\ \rho}} & P_{x_2}\oplus P_{y_2}})\quad\mbox{and}\quad 
T'=Q\mbox{ (in degree $0$).}$$ 

As in two previous proofs, $\La'$ is a TSP4 algebra of infinite representation type. We will describe the 
quiver $Q'$ and show that this is in fact $Q^{(13)}$. It is easy to check that $Q'$ contains arrows 
$\sigma':x_1\to x_2,\rho':x_1\to y_2$ and $\omega':e\to x_1$, given respectively, by morphisms 
$(0,1_{P_{x_2}}):T_{x_2}\to T_{x_1},(0,1_{P_{y_2}}):T_{y_2}\to T_{x_1}$ and $(0,[\gamma \ \phi]):T_{x_1}\to T_{e}$. 
\smallskip 

Applying arguments as presented in the proofs of Propositions \ref{prop:6.1}-\ref{prop:6.2}, one can show that 
these arrows exhaust all arrows in $Q'$ starting or ending at $x_1$. In a similar way, one can also check 
that there is no loop in $Q'$ at $x_1$, so that $x_1$ is a $(1,2)$-vertex of $Q'$. \medskip 

Now, we will determine all other arrows of $Q'$. Observe first, that any morphism $g:T_{x_1}\to T_i$, $i\neq x_1$, 
is identified with its degree zero part $g:P_{x_2}\oplus P_{y_2}\to P_i$ satisfying $g\circ\vec{\sigma \\ \rho}=0$. Using 
the exact sequence for $S_{x_1}$, we conclude that such $g$ factorizes as $g=h \circ [\gamma \ \phi]$, for some 
$h:P_e\to P_i$, equivalently, $g$ factorizes through the map identified with the arrow $\omega'$. 

Consider arbitrary arrow $\alpha:y\to x$ in $Q$ between vertices $x,y\neq x_1$. It induces a morphism 
$\alpha:T_x\to T_y$, which gives an arrow $\alpha:y\to x$ in $Q'$, provided that $x,y\neq e$. For example,  
arrows $\eta:x_2\to y_1$ and $\ve:y_2\to y_1$ become arrows in $Q'$. \smallskip 

Further, let $\alpha$ be an arrow in 
$Q$ starting from $y=e$, and suppose it belongs to $J_{\La'}^2$. Then the induced map $\alpha:T_x\to T_e$ 
factorizes as $\alpha=gf\in J^2_{\La'}$), for some $f:T_x\to T_{x_1}$ and $g:T_{x_1}\to T_e$. By the above 
property, $g$ factorizes as $g=h\circ[\gamma \ \phi]$, and $h\notin J$, because otherwise $g\in J^2$, 
and $\alpha$ is not an arrow of $Q$. If $x\neq x_2,y_2$, then $f\in J$, and $\alpha\in J^2$, a contradiction. 
This shows, that any arrow $\alpha:e\to x$ in $Q$ induces an arrow $e\to x$ in $Q'$ provided that 
$x$ is different from $x_2,y_2$. Note also that arrows $\gamma:e\to x_2$ and $\phi:e\to y_2$ are not 
arrows in $Q'$, since they factorize as $\gamma=\sigma'\omega'$ and $\phi=\rho'\omega'$ (both belong to $J_{\La'}^2$). \smallskip 

In a similar way, one can show that any arrow $\alpha:y\to e$ induces arrow $y\to e$ in $Q'$, if $y$ is 
different from $x_2,y_2$. Every element $\alpha\in e_{x_2} J e_{e}$ factorizes as $\sigma\omega z_0+\eta\psi z_1$ 
for $z_0,z_1\in \La$. But relations $\omega\gamma=\psi\gamma=0$ imply that the exact the exact sequence 
associated to $S_{x_2}$ is of the form 
$$\xymatrix{0 \ar[r] & S_{x_2} \ar[r] & P_{x_2} \ar[r]^(0.4){\gamma} &  P_e
\ar[r]^(.4){\vec{\omega \\ \psi} } &  P_{x_1}\oplus P_{y_1} \ar[r]^(.6){[\sigma \ \eta]} & P_{x_2}\ar[r] & S_{x_2} \ar[r] & 0},$$
which gives a commutativity relation 
$\sigma\omega+\eta\psi=0$, and hence every $\alpha\in J$ induces a map $\alpha:T_{e}\to T_{x_2}$ which 
belongs to $J^2$ and $J_{\La'}^2$. Similarly, one can show that any path $\alpha\in e_{y_2}J e_e$ 
also induces a map in $J^2$ and $J_{\La'}^2$. \smallskip 

As a result, we conclude that $Q'$ is a glueing of the following block 
$$\xymatrix@R=0.25cm@C=1.2cm{ & \ar[ld]_{\eta} x_2 & \\ 
y_1 \ar@/_15pt/[rdd]_{\psi} && x_1 \ar[lu]_{\sigma'} \ar[ld]^{\rho'} \\ 
& \ar[lu]^{\ve} y_2 &  \\ & e \ar@/_15pt/[ruu]_{\omega'}&}$$
with the loop at vertex $e$. In other words, $Q'=Q^{(13)}$, and hence, by Proposition \ref{prop:6.2}, we 
conclude that $\La'$ is a virtual mutation of a weighted surface algebra. Using Theorem \ref{mut:per}, we 
conclude that $\La \cong \mu_{x_1}^2(\La)=\mu_{x_1}(\La')$ is a mutation of $\La'$. By the construction 
(see Section \ref{sec:3}), we obtain that $\La$ is a generalized weighted surface algebra with the Gabriel 
quiver being a glueing of a block of type I and a block of type V. \end{proof}

\end{document}